\newcommand{\R}{\mathbb R}
\newcommand{\cR}{\mathbb{R}}
\newcommand{\indentalign}{\hspace{0.3in}&\hspace{-0.3in}}
\newcommand{\la}{\langle}
\newcommand{\ra}{\rangle}
\newcommand{\ds}{\displaystyle}
\newcommand{\sech}{\operatorname{sech}}
\newcommand{\defeq}{\stackrel{\rm{def}}{=}}
\newtheorem{theorem}{Theorem}
\newtheorem{proposition}[theorem]{Proposition}
\newtheorem{lemma}[theorem]{Lemma}
\newtheorem{corollary}[theorem]{Corollary}
\theoremstyle{remark}
\numberwithin{equation}{section}
\numberwithin{theorem}{section}
\numberwithin{table}{section}
\numberwithin{figure}{section}
\author[L. G. Farah]{Luiz Gustavo Farah}
\address{Department of Mathematics\\
Universidade Federal de Minas Gerais\\Belo Horizonte, Brazil}
\curraddr{}
\email{farah@mat.ufmg.br}
\thanks{}
\author[J. Holmer]{Justin Holmer}
\address{Department of Mathematics\\Brown University\\Providence, RI, USA}
\curraddr{} 
\email{justin\_holmer@brown.edu}
\thanks{}
\author[S. Roudenko]{Svetlana Roudenko}
\address{Department of Mathematics \& Statistics\\Florida International University,  Miami, FL, USA}
\curraddr{}
\email{sroudenko@fiu.edu}
\thanks{}
\author[K. Yang]{Kai Yang}
\address{College of Mathematics and Statistics, Chongqing University, Chongqing, China}
\curraddr{}
\email{yangkai10@cqu.edu.cn}
\thanks{} 
\subjclass[2020]{Primary: 35Q53, 37K40, 37K45, 37K05}
\keywords{Zakharov-Kuznetsov equation, blow-up, Liouville theorem, localized virial, generalized Korteweg-de Vries equation, monotonicity estimates}
\title[Blow-up for 2D cubic ZK]{Blow-up in the 2D cubic Zakharov-Kuznetsov\\ equation in finite or infinite time}
\begin{document}

\begin{abstract}
We consider the 2D cubic Zakharov-Kuznetsov (ZK) equation, a physically relevant model in 
physics, which is a higher-dimensional extension of the generalized KdV equation. The cubic ZK equation is $L^2$-critical in 2D, and exhibits instability of solitons as we have shown in \cite{FHR3}. Here we prove that near-threshold negative energy solutions to this ZK equation blow-up in finite or infinite time, 
the first such result for higher-dimensional extensions of the gKdV family of equations.
The proof consists of several steps.  First, we show that if the blow-up conclusion is false, there are negative energy solutions arbitrarily close to the threshold that are globally bounded in $H^1$ and are spatially localized, uniformly in time.  In the second step, we show that such solutions must in fact be exact remodulations of the ground state, and hence, have zero energy, which is a contradiction. This second step, a nonlinear Liouville theorem, is proved by contradiction, with a limiting argument producing a nontrivial solution to a (linear) linearized ZK equation obeying uniform-in-time spatial localization.  Such nontrivial linear solutions are excluded by a local-virial space-time estimate. 
We introduce several new features to handle the 2D cubic ZK equation.  
\end{abstract}

\maketitle

%\tableofcontents

\section{Introduction}

We consider the generalized Zakharov-Kuznetsov (gZK) equation
\begin{equation}\label{gZK}
\partial_t u + \partial_x \left(\Delta u + u^p \right) = 0, 
\end{equation}
where $(x,y_1,\ldots, y_{N-1}) \in \mathbb{R}^N$, $t\in \mathbb{R}$, $u$ is real-valued and $\Delta = \partial_x^2 +\partial_{y_1}^2 + \cdots +\partial_{y_{N-1}}^2$.   In particular, we study gZK in two dimensions ($N=2$) with the power of nonlinearity $p=3$. This equation is a higher-dimensional extension of the well-studied Korteweg-de Vries (KdV) equation, which models, for example, the weakly nonlinear waves in shallow water:
\begin{equation}
\label{gKdV}
\qquad \partial_t u + \partial_x^3 u + \partial_x (u^p) = 0, \quad p=2, \qquad x \in \cR, \qquad t \in \cR. \qquad
\end{equation}
When other integer powers $p \neq 2$ are considered, it is referred to as the {\it generalized} KdV (gKdV) equation. Despite its universality, the gKdV equation is limited as a spatially one-dimensional model. While there are several higher dimensional generalizations of it, in this paper we are interested in the gZK equation \eqref{gZK}. In the three dimensional setting and quadratic power ($N=3$, $p=2$), the equation \eqref{gZK} was originally proposed by Zakharov and Kuznetsov to describe weakly magnetized ion-acoustic waves in a strongly magnetized plasma \cite{ZK}, thus, the name of the equation. In two dimensions, it has also physical meaning; for example, with $p=2$, it governs the behavior of weakly nonlinear ion-acoustic waves in a plasma comprising cold ions and hot isothermal electrons in the presence of a uniform magnetic field \cite{MP-1, MP-2}. Melkonian and Maslowe \cite{MM-longwaves} showed that the equation \eqref{gZK} is the amplitude equation for two-dimensional long waves on the free surface of a thin film flowing down a vertical plane with moderate values of the surface fluid tension and large viscosity. Lannes, Linares \& Saut in \cite{LLS} derived the equation \eqref{gZK} from the Euler-Poisson system with magnetic field in the long wave limit, yet another derivation was carried by Han-Kwan in \cite{HK} from the Vlasov-Poisson system in a combined cold ions and long wave limit.

In this paper we consider $H^1$ solutions to the 2D cubic focusing Zakharov-Kuznetsov (ZK) equation 
\begin{equation}\label{ZK}
\partial_t u + \partial_x (\Delta u + u^3) =0.
\end{equation}
Such solutions have a maximal forward lifespan $[0,T)$, where either $T<+\infty$ or $T=+\infty$.   It follows from the local well-posedness theory\footnote{In fact, Linares \& Pastor \cite{LP} obtain local well-posedness in $H^s(\mathbb{R}^2)$ for $s>\frac34$, and Ribaud \& Vento \cite{RV} obtain local well-posedness for $s>\frac14$.} (see Faminski \cite{Fam}, Linares \& Pastor \cite{LP}, Ribaud \& Vento \cite{RV}) that if $T<+\infty$, then
$$
\lim_{t\nearrow T} \|\nabla u(t) \|_{L_{xy}^2} = +\infty.
$$
If $T=+\infty$, then either 
$$\lim_{t\nearrow T} \|\nabla u(t) \|_{L_{xy}^2} = +\infty \; \text{ or } \;\liminf_{t\nearrow T} \|\nabla u(t) \|_{L_{xy}^2} < +\infty$$
is \emph{a priori} possible. In either case $T<+\infty$ or $T=+\infty$, we say that $u(t)$ \emph{blows-up} at forward time $T$ if 
$$
\liminf_{t\nearrow T} \|\nabla u(t) \|_{L_{xy}^2} = +\infty.
$$

During their lifespan $[0,T)$, solutions $u(x,y,t)$ to ZK conserve mass and energy:
\begin{equation}\label{MC}
M(u(t))=\int_{\cR^2} u^2(t)\, dxdy = M(u(0))
\end{equation}
and
\begin{equation}\label{EC}
E(u(t))=\dfrac{1}{2}\int_{\cR^2}|\nabla u(t)|^2\;dxdy - \dfrac{1}{4}\int_{\cR^2} u^{4}(t)\;dxdy = E(u(0)).
\end{equation}

Similar to the gKdV equation, for solutions $u(x,y,t)$ decaying at infinity on $\cR^2$ the following invariance holds
\begin{equation}
\label{L1-inv}
\int_{\cR} u(x,y,t) \, dx = \int_{\cR} u(x, y,0) \, dx,
\end{equation}
which is obtained by integrating the original equation on $\cR$ in the first coordinate $x$.

One of the useful symmetries in the evolution equations is {\it the scaling invariance}, which states that an appropriately rescaled version of the original solution is also a solution of the equation. For the equation \eqref{gZK} it is
$$
u_\lambda(x,y,t)=\lambda^{\frac{2}{p-1}} u(\lambda x, \lambda y,\lambda^3t).
$$
This symmetry has the following effect on the Sobolev norm:
\begin{equation*}
\|u_\lambda (\bullet,\bullet,0) \|_{\dot{H}^s}=\lambda^{\frac{2}{p-1}+s-1} \|u_0\|_{\dot{H}^s},
\end{equation*}
and the index $s$ gives rise to the critical-type classification of equations.  For the gKdV equation \eqref{gKdV} the critical index is $s=\frac12-\frac2{p-1}$, and for the gZK equation \eqref{gZK} it is $s = \frac{N}2- \frac2{p-1}$. When $s=0$, the equation \eqref{gZK} is called the $L^2$-critical equation, in two dimensions this corresponds to $p=3$. The gZK equation has other invariances such as space and time translation, time reversal symmetry, and sign invariance.  We also note that gZK is not integrable unlike several other generalizations of KdV.

The gZK equation has a family of traveling waves (or solitary waves), moving only in positive $x$-direction
\begin{equation}\label{Eq:TW}
u(x,y,t) = Q_c(x-c t, y), \quad c>0,
\end{equation}
with $Q_c(x,y) \to 0$ as $|x| \to + \infty$. Here, $Q_c$ is the dilation of the ground state $Q$:
$$
Q_c(x,y) = c^{\frac{2}{{p-1}}} Q (cx,cy)
$$
with $Q$ being a radial positive solution in $H^1(\cR^2)$ of the well-known nonlinear elliptic equation $-\Delta Q+Q  - Q^p = 0$.  Note that $Q \in C^{\infty}(\R^2)$, $\partial_r Q(r) <0$ for any $r = |(x,y)|>0$ and for any multi-index $\alpha$
\begin{equation}\label{prop-Q}
|\partial^\alpha Q(x,y)| \leq c(\alpha) \, e^{-r} \quad \mbox{for any}\quad (x,y) \in \cR^2,
\end{equation}
see, for example, \cite[Thm. 1]{BL1983} or \cite[Prop. B.7]{Tao2006}.

It follows from energy and mass conservations and the sharp Gagliardo-Nirenberg inequality that if $\|u\|_{L^2}< \|Q\|_{L^2}$, then
\begin{equation}\label{E:Weinstein1}
\| \nabla u(t) \|_{L^2}^2 \leq 2\,  \left( {1 - \frac{\|u\|_{L^2}^2}{\|Q\|_{L^2}^2}}\right)^{-1} \, E(u).
\end{equation}
Thus, in this context solutions do not blow-up in finite or infinite time, and blow-up is only possible for solutions with $\|u\|_{L^2} \geq \|Q\|_{L^2}$.  

In her study of dispersive solitary waves in higher dimensions, de Bouard \cite{DeB} showed (in dimensions 2 and 3) that the traveling waves of the form \eqref{Eq:TW} are stable for $p<p_c$ and unstable for $p>p_c$, where $p_c=3$ in 2D. She followed the ideas developed for the gKdV equation by Bona, Souganidis \& Strauss \cite{BSS} for the instability, and Grillakis, Shatah \& Strauss \cite{GSS} for the stability arguments.  The first three authors of the present paper proved in \cite{FHR3} the instability of the traveling wave solutions of the form \eqref{Eq:TW} for the $p=3$ case, thus, completing the stability picture for the two-dimensional ZK equation (see also the alternative proof of instability for the $L^2$-supercritical gZK , $p>3$, in \cite{FHR2}). 
Further study, or a more refined question of stability, the asymptotic stability, was studied by C\^{o}te, Mu\~{n}oz, Pilod \& Simpson in \cite{CMPS}, where authors considered 2D quadratic ZK equation and were able to show the asymptotic stability, which could also be extended for nonlinearities slightly above quadratic, though not all the way up to the critical power $p< p_c=3$. This interesting work also motivated our studies of the 2D ZK equation.  

Once instability is shown, the next question to answer is if the instability leads to blow-up. Unlike other dispersive equations such as the nonlinear Schr\"odinger equation (NLS), the KdV-type equations do not have a convenient virial identity which usually gives a straightforward proof of existence of blow-up solutions.  The absence of a good virial identity makes it very difficult to prove the existence of finite time blow-up solutions in the gKdV-type equations, although intuitively for large nonlinearities such solutions should exists, \cite{K}, also, see numerical investigations in \cite{BDKM}. (The exception is gKP-I, where a virial-type identity shows the existence of blow-up; for $p \geq 4$ see \cite{Sau1} and a formal derivation in \cite{TuFa}, a refined version down to the (critical) power $p\geq 4/3$ is in \cite{Liu}.)  The breakthrough for the critical gKdV ($p=5$) equation in terms of proving the existence of blow-up solutions was obtained by Merle in \cite{M-blowup} (in finite or infinite time) using Martel \& Merle's previous result in \cite{MM-liouville} with further details about blow-up solutions in subsequent papers. In particular, they obtain an explicit construction of blow-up solutions close to solitons. The proof of the finite time blow-up was then developed in \cite{MM-KdV3} provided initial data have certain spatial decay, furthermore, an upper estimate on the blow-up rate was also given; later the universality of blow-up profile and lower bound estimates were obtained in \cite{MM-KdV4}.
The only extension of this approach to showing existence of blow-up to other KdV-type equations is \cite{KMR}, where authors show the existence of blow-up solutions in the finite or infinite time for the dispersion generalized $L^2$-critical Benjamin-Ono equation (dgBO) via a perturbation of the $L^2$-critical gKdV theory.  
\smallskip

The main result of this paper is
\begin{theorem}[main theorem]
\label{T:main}
There exists $\alpha_0>0$ such that the following holds.
Suppose that $u(t)$ is an $H^1$ solution to the ZK equation \eqref{ZK} with $E(u)<0$ and
\begin{equation}
\label{E:alpha-def}
\alpha(u) \defeq \|u\|^2 - \|Q\|^2 \leq \alpha_0.
\end{equation}
Then $u(t)$ blows up in finite or infinite forward time.
\end{theorem}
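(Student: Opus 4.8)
The plan is to argue by contradiction, following the scheme of Merle \cite{M-blowup} and Martel \& Merle \cite{MM-liouville} for the $L^2$-critical gKdV equation, adapted to the two-dimensional anisotropic setting. Suppose $u(t)$ does \emph{not} blow up in finite or infinite forward time; then, by definition, $T=+\infty$ and there is a sequence $t_n\nearrow\infty$ with $\sup_n\|\nabla u(t_n)\|_{L^2_{xy}}<\infty$. First note that, as recalled in the Introduction, the Weinstein inequality together with conservation of mass and energy rules out blow-up whenever $\|u\|_{L^2}<\|Q\|_{L^2}$, so we may assume $0\le\alpha(u)\le\alpha_0$. Since $E(u)<0=E(Q)$ and $\alpha(u)$ is small, the variational characterization of $Q$ forces $u(t)$ to remain, for all $t\in[0,\infty)$, in a small tube around the family of rescaled and translated ground states; quantitatively, there exist $C^1$ modulation parameters $\lambda(t)>0$, $(x(t),y(t))\in\mathbb{R}^2$ and a remainder $\epsilon(t)$ with $\|\epsilon(t)\|_{H^1}\ll 1$ such that
\[
u(t,x,y)=\frac{1}{\lambda(t)}\,\big(Q+\epsilon(t)\big)\!\left(\frac{x-x(t)}{\lambda(t)},\,\frac{y-y(t)}{\lambda(t)}\right),
\]
with $\epsilon(t)$ satisfying suitable orthogonality conditions (against $Q$, $\Lambda Q$, $\partial_xQ$, $\partial_yQ$, or their $L$-adjoints). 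Differentiating in time produces the modulation system controlling $\lambda_t/\lambda$, $x_t$, $y_t$ by $\epsilon$. Since $\|\nabla u(t_n)\|_{L^2}\approx\lambda(t_n)^{-1}\|\nabla Q\|_{L^2}$ up to $O(\|\epsilon\|)$, boundedness along $t_n$ yields $\lambda(t_n)\gtrsim 1$.

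The second step is to upgrade this to uniform spatial localization. The structural input is a Kato-type almost-monotonicity: localized mass functionals of the form $\int u^2(t,x,y)\,\psi\big(x-x(t)-\tfrac12(t'-t)\big)\,dx\,dy$, with $\psi$ a smoothed increasing step, are almost monotone in time, because the dispersive term $\partial_x(\Delta u+u^3)$ transports mass to the right in $x$; the relevant differential identity produces a good coercive term $-\int(3u_x^2+u_y^2)\psi'\le 0$ together with lower-order contributions absorbed by the smallness of $\epsilon$. Combining this monotonicity with $\lambda(t_n)\gtrsim 1$ and mass conservation, one deduces that $\lambda(t)$ is bounded above and below uniformly on $[0,\infty)$ and that $u$ is $L^2$-compact: for every $\delta>0$ there is $R>0$ with $\int_{|(x-x(t),\,y-y(t))|>R}\big(u^2+|\nabla u|^2\big)\,dx\,dy\le\delta$ for all $t\ge 0$. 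I expect this to be the principal obstacle and the place where the 2D geometry bites: the ZK flow drifts only in $x$, so transverse ($y$) localization must be extracted by a separate argument, and the monotonicity computations must handle the anisotropy of $\Delta=\partial_x^2+\partial_y^2$ against the single $\partial_x$ in the evolution.

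The third step is the nonlinear Liouville theorem: with $u$ now uniformly close to the soliton manifold and $L^2$-compact, I claim $\epsilon\equiv 0$. If not, renormalize the remainder by its size and, using the compactness from the previous step, pass to a limit (in $C_{\mathrm{loc}}$ and weakly in $H^1$) along a suitable sequence of times to obtain a nonzero global solution $\eta(t)$ of the linearized ZK equation $\partial_t\eta=\partial_x(L\eta)$, where $L=-\Delta+1-3Q^2$ is the linearized operator around $Q$, with $\eta$ itself uniformly spatially localized and satisfying the inherited orthogonality conditions. A local virial (space–time) estimate — obtained by computing $\frac{d}{dt}\int(x-x(t))\,\eta^2$, or $\frac{d}{dt}\int\phi\,\eta^2$ for a well-chosen weight $\phi$, and invoking coercivity of the associated quadratic form on the orthogonal complement (the 2D analog of the gKdV virial coercivity for $L$) — forces the localized energy of $\eta$ to be integrable in time and, ultimately, $\eta\equiv 0$, a contradiction. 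Hence $\epsilon\equiv 0$, so $u(t)$ is an exact modulated ground state.

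Finally, since $E(Q_c)=0$ for every $c>0$ in the $L^2$-critical case and $E$ is invariant under the scaling and translation symmetries, it follows that $E(u)=0$, contradicting the hypothesis $E(u)<0$. This contradiction shows that the assumption "$u$ does not blow up" is untenable, proving Theorem \ref{T:main}. Besides the localization step, the other genuinely 2D-specific difficulty is the coercivity/spectral analysis of $L=-\Delta+1-3Q^2$ needed for the virial estimate, where the algebraic identities available in 1D gKdV are no longer at hand.
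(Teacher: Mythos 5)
Your high-level architecture matches the paper's (argue by contradiction, modulate around the ground state, use monotonicity for localization, conclude with a nonlinear Liouville theorem driven by a linearized virial estimate), but there are two genuine gaps in the way you set the argument up.

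First, you fix a \emph{single} solution $u$ and try to run the rigidity step on it. The correct negation of ``there exists $\alpha_0>0$ such that \dots'' produces, for each $n$, a solution with $\alpha\le 1/n$ that fails to blow up, i.e.\ a \emph{sequence} of solutions with $\alpha_n\to 0$, and this is not cosmetic. In your third step you renormalize the remainder by its size $b=\|\epsilon\|_{L_t^\infty L^2}$ and claim that a limit taken along a sequence of times solves the \emph{linear} linearized equation $\partial_t\eta=\partial_x(\mathcal{L}\eta)$. For a fixed solution, $b$ is a fixed positive number, and the renormalized remainder satisfies an equation whose quadratic and cubic terms are of size $O(b)$ and $O(b^2)$; they do not vanish in any limit taken along times of one solution. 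The linearization occurs only because $b_n\lesssim\alpha(\tilde u_n)^{1/2}\to 0$ along the sequence of solutions (this is the content of Prop.~\ref{P:conv-linear}). Without the sequence, the Liouville step collapses.

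Second, your localization step claims that monotonicity plus $\lambda(t_n)\gtrsim1$ plus mass conservation gives two-sided bounds on $\lambda(t)$ and uniform-in-time $L^2$-compactness of $u$ itself. The paper does not establish this for the original solution, which may carry radiation incompatible with uniform localization. These properties are obtained only for the ``future-state'' weak limit $\tilde u_n(0)$ of $u_n(\cdot+x_n(t_{n,m}),\cdot+y_n(t_{n,m}),t_{n,m})$ as $m\to\infty$ --- the weak limit strips the radiation --- combined with stability of weak convergence under the ZK flow. Moreover, the lower bound on the scale (equivalently the upper bound on $\|\nabla\tilde u_n(t)\|_{L^2}$) does not come from mass conservation: it comes from the $L^1$-type invariant $\|\int_x u\,dx\|_{L^2_y}$ (Lemma~\ref{L:scale-control}), whose use requires the exponential $x$-decay first proved for $\tilde u_n$ via the monotonicity estimates. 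This intermediate weak-limit object is missing from your outline, and without it neither the uniform localization nor the scale control you assert is justified.
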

Note that $E(u)<0$ implies that $\alpha(u) >0$ from the inequality \eqref{E:Weinstein1}.  Also note that by time reversal symmetry, the result applies to negative time as well.
\smallskip

The entire paper is devoted to the proof of Theorem \ref{T:main}, which consists of several steps.  The first step, Prop. \ref{P:reduction}, under the assumption that the theorem is false, is the construction of a sequence of well-behaved solutions $\{ \tilde u_n \}$ of the ZK equation \eqref{ZK}, which are ultimately shown not to exist in Prop. \ref{P:nonlin-liouville}.
\begin{proposition}\label{P:reduction}{\rm (reduction to uniformly-in-time $H^1$ bounded and spatially localized sequence)}
If the statement of Theorem \ref{T:main} is false, then there exists a sequence of $H^1$ solutions $\{\tilde u_n(t)\}$ to the ZK equation \eqref{ZK} such that $E(\tilde u_n)<0$ and $\alpha(\tilde u_n) \to 0$ as $n \to \infty$  such that
\begin{enumerate}
\item (global existence and bounds)  each $\tilde u_n(t)$ exists globally in time  such that for all $t$
$$
\frac12 \|\nabla Q \|_{L^2} \leq \| \nabla \tilde u_n(t)\|_{L^2} \leq 2 \|\nabla Q\|_{L^2},
$$
\item (uniform-in-time spatial localization)  there exists a path $(\tilde x_n(t), \tilde y_n(t))$ and a scale $\tilde \lambda_n(t)$ such that the remainder function
$$
\tilde \epsilon_n(x,y,t) \defeq \tilde \lambda_n(t) \, \tilde u_n(\tilde\lambda_n(t) x + \tilde x_n(t), \tilde \lambda_n(t) y + \tilde y_n(t), t)-Q(x,y)
$$
satisfies the orthogonality conditions $\la \tilde \epsilon_n, \nabla Q\ra =0$, $\la \tilde \epsilon_n, Q^3 \ra=0$ and the uniform-in-time spatial localization: for each $r>0$,
$$
\| \tilde \epsilon_n  \|_{L_t^\infty L_{B(0,r)^c}^2} \lesssim e^{-\omega r} \| \tilde \epsilon_n \|_{L_t^\infty L_{xy}^2},
$$
where $B(0,r)$ denotes the ball, centered at $0$ and of radius $r>0$ in $\mathbb{R}^2$, $B(0,r)^c$ denotes the complement in $\mathbb{R}^2$, and $\omega>0$ is some absolute constant.
\end{enumerate} 
\end{proposition}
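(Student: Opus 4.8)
The plan is to argue by contradiction in a quantitative way. Suppose Theorem \ref{T:main} fails; then there is a sequence $u_n(t)$ of $H^1$ solutions to (ZK) with $E(u_n) < 0$, $\alpha(u_n) \to 0$, and such that each $u_n$ does \emph{not} blow up in finite or infinite forward time, i.e. $\liminf_{t \nearrow T_n} \|\nabla u_n(t)\|_{L^2} < +\infty$. The first task is to upgrade this non-blow-up statement to a uniform-in-time $H^1$ bound. I would introduce the standard modulation decomposition: as long as $u_n(t)$ stays in a small $H^1$-tube around the manifold of remodulated ground states $\{\lambda Q(\lambda \cdot + (x_0,y_0))\}$, the implicit function theorem produces $C^1$ parameters $\lambda_n(t)$, $x_n(t)$, $y_n(t)$ and a remainder $\epsilon_n$ satisfying the two orthogonality conditions $\la \epsilon_n, \nabla Q\ra = 0$ and $\la \epsilon_n, Q^3\ra = 0$ (the second replaces the scaling orthogonality used in the gKdV case). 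Using mass conservation, energy conservation, $E(u_n) < 0$, and the coercivity of the linearized energy on the orthogonal complement (Weinstein-type), one shows that $\|\epsilon_n(t)\|_{L^2}$ is controlled by $\alpha(u_n)$ plus a small multiple of itself, hence stays small, and that $\lambda_n(t)$ stays in a fixed compact subinterval of $(0,\infty)$; combined with the definition of blow-up this is exactly how one extracts solutions obeying $\frac12\|\nabla Q\|_{L^2} \le \|\nabla u_n(t)\|_{L^2} \le 2\|\nabla Q\|_{L^2}$ for all $t$, after passing to a subsequence and possibly a further rescaling/time-translation to realize the $\liminf$. Global existence in time is then automatic from the $H^1$ bound and the local theory. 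This gives the tilde solutions $\tilde u_n$ with property (1).

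The heart of the proposition is property (2), the uniform-in-time exponential spatial localization of $\tilde\epsilon_n$. This is where the (ZK)-specific analysis enters and where I expect the main obstacle. The idea, following Martel--Merle, is to propagate an $L^2$ mass-monotonicity (virial-type) estimate: one tests the equation against a smooth weight $\psi\big((x - \tilde x_n(t))/R\big)$ that is, roughly, an anti-derivative in the $x$-variable of a bump, and tracks the evolution of the localized mass $\int \tilde u_n^2\, \psi$. Because solitons travel to the right (positive $x$ direction) one gets a sign-definite main term from the translation $\dot{\tilde x}_n \approx \tilde\lambda_n^{-2}$, while the nonlinear and dispersive error terms are absorbed using the smallness of $\tilde\epsilon_n$ in $L^2$ and the decay \eqref{prop-Q} of $Q$; a Gronwall/iteration-in-$R$ argument then yields the exponential factor $e^{-\omega r}$. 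The genuinely 2D difficulty is that the weight can only be monotone in $x$, not in $y$, so localization in the transverse variable $y$ is not produced by the same monotonicity mechanism; I would handle the $y$-direction separately, exploiting the exponential decay of $Q$ in $y$ together with an elliptic-type argument on $\tilde\epsilon_n$ coming from the (ZK) equation written in the moving frame (the $\p_y^2$ term in $\Delta$), or a second virial weight adapted to $|y|$ large after one already controls the $x$-tails. Care must also be taken that the modulation parameters $\tilde x_n(t), \tilde y_n(t)$ do not drift too wildly: one needs the modulation equations to show $|\dot{\tilde x}_n - \tilde\lambda_n^{-2}|$ and $|\dot{\tilde y}_n|$ are small relative to the main transport term, which again follows from the orthogonality conditions and the smallness of $\tilde\epsilon_n$.

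So the skeleton is: (i) modulation decomposition near the ground-state manifold with the two stated orthogonality conditions; (ii) coercivity $+$ conservation laws $\Rightarrow$ $\|\tilde\epsilon_n(t)\|_{L^2}$ small and $\tilde\lambda_n(t)$ bounded above and below for all $t$, hence the $H^1$ bounds and global existence in (1); (iii) modulation equations $\Rightarrow$ control on $\dot{\tilde\lambda}_n, \dot{\tilde x}_n, \dot{\tilde y}_n$; (iv) a localized virial/mass-monotonicity estimate in the $x$-variable, iterated over dyadic radii, to get $e^{-\omega r}$ decay of the $x$-tails; (v) a separate argument (transverse elliptic regularity or a $|y|$-adapted weight) to upgrade to full radial localization on $B(0,r)^c$. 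The main obstacle is step (iv)--(v): making the local-virial argument work in two dimensions where the natural monotone weight sees only one spatial direction, and patching it with the transverse decay — this is precisely one of the "several new features" the abstract refers to, and it is the technical core of the proof.
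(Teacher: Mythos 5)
Your outline correctly identifies the modulation decomposition, the orthogonality conditions, and the role of an $x$-directional monotonicity estimate, but it has a genuine gap at the very first step, and that gap propagates through the rest. You assert in step (ii) that mass/energy conservation plus coercivity of the linearized energy yield that $\tilde\lambda_n(t)$ stays in a compact subinterval of $(0,\infty)$, hence the two-sided bound on $\|\nabla \tilde u_n(t)\|_{L^2}$. This is false in the regime at hand: for $\alpha(u)>0$ and $E(u)<0$ the Weinstein inequality gives no upper bound on $\|\nabla u(t)\|_{L^2}$ (if it did, blow-up would be impossible and the theorem vacuous), and the coercivity estimate $\|\epsilon(t)\|_{H^1}\lesssim \sqrt{\alpha(u)}$ controls only the remainder in the \emph{rescaled} frame, leaving $\lambda_n(t)$ completely free. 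The negation of Theorem \ref{T:main} only supplies $\liminf_{t\to\infty}\|\nabla u_n(t)\|_{L^2}<\infty$, so $\|\nabla u_n(t)\|$ may still become arbitrarily large between the "good" times. The paper's resolution is structurally different from a rescaling/time-translation of $u_n$: one extracts a \emph{weak $H^1$ limit} $\tilde u_n(0)$ of $u_n(\cdot+x_n(t_{n,m}),\cdot+y_n(t_{n,m}),t_{n,m})$ as $m\to\infty$ (which strips away radiation), proves stability of weak limits under the ZK flow (Lemma \ref{L:weakstab}, itself a substantial bootstrap because no $L^2$ local theory is available), establishes the $x$-exponential decay of $\tilde u_n$ via monotonicity estimates whose constants are deliberately \emph{independent of any upper gradient bound} (Lemmas \ref{L:mon1}, \ref{L:mon2}, using the weighted Gagliardo--Nirenberg Lemma \ref{L:GN-weight}), and only then controls $\tilde\lambda_n(t)$ from below using the conservation in time of $\|\int_x u\,dx\|_{L^2_y}$ (Lemma \ref{L:scale-control}) — a conservation law that is meaningful only because the decay already puts $\tilde u_n$ in $L^2_yL^1_x$. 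Your proposal contains no mechanism playing the role of this $L^1$-type invariance, and without it part (1) is not proved.

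Two further points on part (2). For the transverse localization you correctly flag the difficulty that the monotone weight sees only the $x$-direction, but your proposed fixes (transverse elliptic regularity, or a $|y|$-adapted weight) do not work as stated: $\tilde\epsilon_n$ solves an evolution equation, not an elliptic one, and a weight monotone in $|y|$ has no sign-definite drift term since the soliton does not translate in $y$. The paper instead runs the same monotonicity argument in a slightly rotated frame (the equation $\mathrm{ZK}_\theta$ for small $\theta$ still admits the $I_\pm$ estimates, Lemmas \ref{L:mon1gen}--\ref{L:mon2gen}), and intersects the $x$-decay with the decay in the rotated variable to cover $B(0,r)^c$. Finally, the stated localization carries the sharp coefficient $\|\tilde\epsilon_n\|_{L_t^\infty L_{xy}^2}$ on the right-hand side (not merely an absolute constant); this linearity in the size of the remainder is essential for the later compactness argument and requires a separate monotonicity estimate performed directly on the (rescaled) remainder rather than on $\tilde u_n$ (Lemma \ref{L:ep-strong-decay} and Corollary \ref{C:tilde-ep-decay}), which your proposal does not address.
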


First, a preliminary $x$-compactness property is obtained -- there exists a path $\tilde x_n(t)$ such that for all $x$ and $t$,
$$
\| \tilde u_n(x+\tilde x_n(t),y,t) \|_{L_y^2} \lesssim C_n^{1/2}  e^{-|x|/32}.
$$
This is used to prove (1) in the statement of Prop. \ref{P:reduction}.  With (1) in hand, the stronger localization in (2) is obtained.  We next obtain the following rigidity statement (we keep the notation of a sequence $\tilde u_n$ rather than $u_n$ to remain consistent with the previous proposition).

\begin{proposition}[nonlinear Liouville property]
\label{P:nonlin-liouville}
Given a sequence of $H^1$ solutions $\{ \tilde u_n(t)\}$ to the ZK equation \eqref{ZK} such that  ~$0 \leq \alpha(\tilde u_n) \to 0$ as $n \to \infty$ satisfying properties (1) and (2) of Prop. \ref{P:reduction}, then the following holds.  For $n$ sufficiently large, there exist constants $\tilde \lambda_n>0$, $(\tilde x_n,\tilde y_n) \in \mathbb{R}^2$ such that 
$$
\tilde \lambda_n \, \tilde u_n\left(\tilde \lambda_n x + \tilde \lambda_n^{-2}t+  \tilde x_n,\tilde \lambda_n y + \tilde y_n, t\right) = Q(x,y).
$$
\end{proposition}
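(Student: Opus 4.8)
The plan is to follow the Martel--Merle contradiction/compactness scheme, adapted to the two--dimensional ZK geometry. Suppose the conclusion fails; then along a subsequence (not relabeled) $b_n \defeq \|\ti\epsilon_n\|_{L^\infty_t L^2_{xy}} > 0$, where $\ti\epsilon_n$ is the modulated remainder furnished by property (2) of Prop.~\ref{P:reduction}. (If instead $b_n=0$ for all large $n$, then $v_n \defeq \ti\lambda_n\ti u_n(\ti\lambda_n\,\cdot+\ti x_n,\ti\lambda_n\,\cdot+\ti y_n,t)\equiv Q$; inserting this into (ZK), the compatibility conditions force $\ti\lambda_n$ constant, $\ti y_n$ constant, and $\ti x_n(t)=\ti\lambda_n^{-2}t+x_0$, which is exactly the claimed identity, so there is nothing to prove.) First I would install a quantitative modulation theory: choosing $(\ti\lambda_n(t),\ti x_n(t),\ti y_n(t))$ so that $v_n=Q+\ti\epsilon_n$ satisfies $\la\ti\epsilon_n,\p_xQ\ra=\la\ti\epsilon_n,\p_yQ\ra=\la\ti\epsilon_n,Q^3\ra=0$, passing to the rescaled time $s$ with $ds/dt=\ti\lambda_n^{-3}$, and deriving
\[
\p_s\ti\epsilon_n=\p_x(L\ti\epsilon_n)+\tfrac{(\ti\lambda_n)_s}{\ti\lambda_n}\Lambda(Q+\ti\epsilon_n)+\Big(\tfrac{(\ti x_n)_s}{\ti\lambda_n}-1\Big)\p_x(Q+\ti\epsilon_n)+\tfrac{(\ti y_n)_s}{\ti\lambda_n}\p_y(Q+\ti\epsilon_n)-\p_x(3Q\ti\epsilon_n^2+\ti\epsilon_n^3),
\]
where $L\epsilon=-\dl\epsilon+\epsilon-3Q^2\epsilon$ and $\Lambda f=f+x\p_xf+y\p_yf$. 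Differentiating the orthogonality relations (using $\la\Lambda Q,Q^3\ra=\tfrac12\|Q\|_{L^4}^4\neq0$, $\la\p_xQ,Q^3\ra=\la\p_yQ,Q^3\ra=0$, and nondegeneracy of the resulting $3\times 3$ matrix) yields the modulation bound $\big|\tfrac{(\ti\lambda_n)_s}{\ti\lambda_n}\big|+\big|\tfrac{(\ti x_n)_s}{\ti\lambda_n}-1\big|+\big|\tfrac{(\ti y_n)_s}{\ti\lambda_n}\big|\lesssim\|\ti\epsilon_n(s)\|_{\mathcal Y}$, where $\|\cdot\|_{\mathcal Y}$ is a $Q$-weighted $L^2$ norm. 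Finally, combining the conservation laws with the expansion $\la L\ti\epsilon_n,\ti\epsilon_n\ra=2\ti\lambda_n^2E(\ti u_n)+\alpha(\ti u_n)+O(\|\ti\epsilon_n\|_{H^1}^3)$, the coercivity of $L$ on $\{\p_xQ,\p_yQ,Q^3\}^\perp$ (valid because $\ker L=\spn\{\p_xQ,\p_yQ\}$ and $\la L^{-1}Q^3,Q^3\ra=-\tfrac12\|Q\|_{L^4}^4<0$, the latter from $LQ=-2Q^3$), and $E(\ti u_n)<0$, I would deduce $\|\ti\epsilon_n(t)\|_{H^1}^2\lesssim\alpha(\ti u_n)\to0$ (hence $b_n\to0$) and that $\ti\lambda_n(t)$ stays in a fixed compact subinterval of $(0,\infty)$, uniformly in $t$.

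Next I would \emph{renormalize}: set $\eta_n\defeq\ti\epsilon_n/b_n$, so $\|\eta_n\|_{L^\infty_sL^2_{xy}}=1$, and after choosing $t_n$ with $\|\ti\epsilon_n(t_n)\|_{L^2}\geq\tfrac12 b_n$ and translating in time, $\|\eta_n(0)\|_{L^2}\geq\tfrac12$. Dividing the $\ti\epsilon_n$-equation by $b_n$: the cubic term and the modulation terms acting on $\ti\epsilon_n$ are $O(b_n)\to0$, while the remaining modulation coefficients $\tfrac1{b_n}\tfrac{(\ti\lambda_n)_s}{\ti\lambda_n}$, $\tfrac1{b_n}\big(\tfrac{(\ti x_n)_s}{\ti\lambda_n}-1\big)$, $\tfrac1{b_n}\tfrac{(\ti y_n)_s}{\ti\lambda_n}$ are uniformly bounded (by the modulation bound and $\|\eta_n\|_{L^2}\le1$), so along a further subsequence they converge weak-$*$ to functions $a(s),b(s),c(s)$. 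Using a Kato-type local smoothing estimate for the ZK flow together with the uniform-in-time spatial localization inherited from Prop.~\ref{P:reduction}(2), I would extract a limit $\eta$ with $\eta_n\to\eta$ strongly in $L^2_{\mathrm{loc}}$ in space--time and weak-$*$ in $L^\infty_sL^2_{xy}$; the localization forbids mass escaping to spatial infinity, so $\|\eta(s)\|_{L^2}$ stays bounded below for $s$ near $0$ and in particular $\eta\not\equiv0$. The limit solves the linearized ZK equation $\p_s\eta=\p_x(L\eta)+a(s)\Lambda Q+b(s)\p_xQ+c(s)\p_yQ$, satisfies $\la\eta,\p_xQ\ra=\la\eta,\p_yQ\ra=\la\eta,Q^3\ra=0$ for a.e.\ $s$, and obeys the same uniform exponential spatial localization.

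The decisive step is to \emph{exclude such a nontrivial localized linear solution}. Using $L\p_xQ=L\p_yQ=0$ and $L\Lambda Q=-2Q$ one first checks that $\la\eta(s),Q\ra$ is conserved along the linearized flow and that a suitable quadratic functional built from $\la L\eta,\eta\ra$ and $\la\eta,Q\ra$ is conserved and, by the coercivity of $L$ on $\{\p_xQ,\p_yQ,Q^3\}^\perp$, satisfies $\gtrsim\|\eta(s)\|_{H^1}^2\geq0$. Then one runs a localized virial (Morawetz-type) argument: for the weight $x$ alone (only $x$ is available, since the soliton moves only in the $x$-direction),
\[
\frac{d}{ds}\int x\,\eta^2\,dxdy=-2\|\p_x\eta\|_{L^2}^2-\la L\eta,\eta\ra-6\int x\,Q\,\p_xQ\,\eta^2\,dxdy+2a\!\int\! x\eta\Lambda Q+2b\!\int\! x\eta\p_xQ+2c\!\int\! x\eta\p_yQ,
\]
where the $\p_y$-derivatives have cancelled out favorably; the error terms on the right are, up to exponentially small tails, localized near the origin by the decay of $Q$ and the bound $|a|+|b|+|c|\lesssim\|\eta\|_{\mathcal Y}$. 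With a judiciously chosen virial weight, so that these localized errors are dominated by the coercive good terms, one obtains $\tfrac{d}{ds}\int x\,\eta^2\leq-\theta\|\eta(s)\|_{H^1}^2$ for some $\theta>0$; since $\int x\,\eta^2$ is bounded uniformly in $s$ by the spatial localization, integrating over $s\in\R$ gives $\int_\R\|\eta(s)\|_{H^1}^2\,ds<\infty$, hence $\|\eta(s_k)\|_{H^1}\to0$ along a sequence $s_k\to+\infty$. The conserved coercive functional then vanishes identically, forcing $\eta\equiv0$ --- contradicting $\|\eta(0)\|_{L^2}\geq\tfrac12$. Therefore $b_n=0$ for all large $n$, and the proposition follows from the ``$b_n=0$'' case.

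I expect the hard part to be this virial step: in two dimensions one must track the interplay of $\p_x$- and $\p_y$-derivatives, cope with the fact that only an $x$-weight is available, and --- the real technical core --- produce a virial functional (and the accompanying sharp positivity/coercivity estimate for the quadratic form associated with $L$ on the codimension-three orthogonality subspace) for which the $Q$-localized error terms are genuinely dominated; this is the two-dimensional analogue of the local virial estimate of Martel--Merle. On the soft side, setting up the local smoothing and the space--time compactness needed to produce the nontrivial limit $\eta$ also requires care, since the ZK flow is dispersive (not parabolic) and anisotropic, so one must rule out a loss of derivatives or of mass in the limit.
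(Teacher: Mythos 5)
Your overall architecture coincides with the paper's: negate the conclusion, renormalize the remainder by $b_n=\|\tilde\epsilon_n\|_{L_t^\infty L^2_{xy}}$, use the uniform spatial localization to extract a nontrivial limit solving the linear linearized ZK equation with the inherited orthogonality conditions and localization, and then kill that limit with a linear Liouville theorem driven by a virial estimate. However, the decisive step as you have written it contains a genuine gap. You propose to run the virial computation directly on the limit $\eta$ via $\frac{d}{ds}\int x\,\eta^2$, which produces the quadratic form of the Schr\"odinger-type operator $\tilde{\mathcal L}=-3\partial_x^2-\partial_y^2+1-3Q^2+6xQQ_x$ restricted to the subspace $\{Q^3,Q_x,Q_y,Q\}^\perp$. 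This is precisely the ``direct'' Martel--Merle route, and for 2D cubic ZK the required coercivity of this operator on that subspace appears to \emph{fail} (the authors report that their numerics indicate this). Saying that one should choose ``a judiciously chosen virial weight'' so that the localized errors are dominated does not close this: the obstruction is spectral, not a matter of tuning the weight. The paper escapes it by passing to the dual variable $v=(1-\delta\Delta)^{-1}\mathcal{L}w$, whose equation is $\partial_t v=\mathcal{L}\partial_x v-2\alpha Q$ and whose orthogonality conditions become (up to $O(\delta)$ errors controlled by commutator lemmas) $\la v,Q\ra=\la v,Q_x\ra=\la v,Q_y\ra=0$; the virial computation for $v$ then yields an operator $B+P$ with a rank-two perturbation $P$, whose positivity on $\{Q,Q_x,Q_y\}^\perp$ is established numerically together with an angle lemma, and the estimate is transferred back to $w$ via the regularization and commutator estimates. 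Without this (or some substitute for the missing coercivity), your argument does not go through.

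Two secondary points. First, you assert that a conserved quadratic functional built from $\la L\eta,\eta\ra$ and $\la\eta,Q\ra$ controls $\|\eta\|_{H^1}^2$; but $\partial_s\la L\eta,\eta\ra=-4a(s)\la\eta,Q\ra$, so you need the \emph{vanishing} $\la\eta,Q\ra=0$, not merely its conservation. The paper derives this from the uniform $x$-localization by showing that $J(t)=\la w,F\ra$ with $F(x,y)=\int_0^x\Lambda Q$ is bounded and satisfies $J''=0$, $J'=2\la w,Q\ra$; this step should be made explicit. Second, the Rellich--Kondrachov compactness for $\eta_n=\tilde\epsilon_n/b_n$ requires a uniform $H^1$ bound, i.e.\ $\|\tilde\epsilon_n\|_{L_t^\infty H^1}\lesssim\|\tilde\epsilon_n\|_{L_t^\infty L^2}$; this comparability of norms is itself a nontrivial virial-type estimate (Prop.~\ref{P:normcomp} in the paper) and is not supplied by the modulation bound $\|\tilde\epsilon_n\|_{H^1}\lesssim\sqrt{\alpha(\tilde u_n)}$ alone, since the latter compares to $\alpha$ rather than to $b_n$.
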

This proposition implies $\tilde{\lambda}_n^2 E(\tilde u_n) = E(Q)=0$ for sufficiently large $n$, which contradicts $E(\tilde u_n)<0$ for Prop. \ref{P:reduction}, completing the proof of Theorem \ref{T:main}.
%\smallskip

The proof of Prop. \ref{P:nonlin-liouville} involves two steps.  First, assuming the conclusion of Prop. \ref{P:nonlin-liouville} is false, the proof of the convergence of remainders $\tilde \epsilon_n$ (after passing to a subsequence) to a nontrivial solution to a 
linearized ZK equation exhibiting uniform-in-time spatial localization.  This is given in Prop. \ref{P:conv-linear}. On the other hand, any uniformly-in-time spatially localized solution to the %linear 
linearized ZK equation must be trivial by Prop. \ref{P:linear-liouville}, a \emph{linear} Liouville property.
\medskip

{\it Update.} The original version of this paper was posted to arxiv in 2018.  This revision streamlines \S\ref{section-M} by combining rotated monotonicity estimates from \S9 of the previous version and we provide additional Figures \ref{F:1}-\ref{F:4}. We remark that in our 3D ZK asymptotic stability paper \cite{FHRY}, we found a way to avoid using the rotated estimates and directly obtain decay outside the $\pi/3$ cone centered on the negative $x$-axis.  We left our original method in place here, to show an alternative approach.  Another update to the 2018 version included here is a clarification of the role of numerics in confirming the spectral estimates, which is unavoldable currently in this type of problems (see further comments at the end of the introduction and in the proof of Lemma \ref{L:v-virial}), we further investigate these properties with different numerically-assisted approaches in \cite{HRY2025}. The bibliography is also updated.   

\subsection{Outline}
We start with an outline of the proof of Prop. \ref{P:reduction}.  The arguments take place in the context of ``near threshold and negative energy'', for which there is a modulational characterization of solutions given by Lemmas \ref{L:varchar}, \ref{L:geom}, \ref{L:epparam} below.  It states that for every such solution $u(t)$ of the ZK equation \eqref{ZK}, there exists parameters of scale $\lambda(t)>0$ and position $(x(t),y(t)) \in \mathbb{R}^2$ such that the remainder
$$
\epsilon(x,y,t) = \lambda(t) u(\lambda(t)x+x(t),\lambda(t)y+y(t),t) - Q(x,y)
$$
is small in $H^1_{xy}$.    

While we are following the general pattern of argument introduced by Merle \cite{M-blowup} to address the corresponding problem for the $L^2$-critical gKdV equation, there are several aspects, in which we had to to make significant alternations to handle the 2D $L^2$-critical ZK setting, which are indicated in the discussion below.

The negation of Theorem \ref{T:main} yields a sequence of ZK solutions $\{u_n\}$, which, after renormalization, have the following properties: $\alpha(u_n) \to 0$ as $n\to \infty$; for each $n$ $E(u_n)<0$ (they are near threshold and negative energy solutions), and for all $t\geq 0$,
$$
(1-\frac1n) \| \nabla Q \|_{L^2_{xy}} \leq \| \nabla u_n(t) \|_{L^2_{xy}}.
$$
There is no \emph{a priori} upper bound on $\| \nabla u_n(t) \|_{L^2_{xy}}$, although we are granted the existence of sequence of times $t_{n,m} \to +\infty$ such that
$$
\lim_{m\to +\infty} \| \nabla u_n(t_{n,m}) \|_{L^2_{xy}} = \|\nabla Q \|_{L^2_{xy}}.
$$
For each $u_n(t)$ and the time sequence $t_{n,m} \to +\infty$, we can extract a weak limit
$$
u_n(\bullet + x(t_{n,m}), \bullet + y(t_{n,m}), t_{n,m}) \rightharpoonup \tilde u_n(\bullet, \bullet, 0) \quad \mbox{in} ~~ H^1_{xy}.
$$
The properties that $\alpha(\tilde u_n) \leq \alpha(u_n)$ and $E(\tilde u_n)<0$, proved in Lemma \ref{L:energyconstraints}, are inherited from the weak limit and modulational characterization of the near threshold solutions.

Taking $\tilde u_n(t)$ to be the ZK evolution of initial data $\tilde u_n(0)$, we have a result on the stability of weak convergence (Lemma \ref{L:weakchar})
\begin{equation}
\label{E:intro-1}
u_n(x+x_n(t_{n,m}+t), y+y_n(t_{n,m}+t), t_{n,m}+t)
\rightharpoonup \tilde u_n(x+\tilde x_n(t), y+\tilde y_n(t), t).
\end{equation}
We then show that the weak limiting process effectively strips away radiation leaving $\tilde u_n(t)$ with uniform in time, exponential decay in the $x$-spatial direction.    This is a consequence of monotonicity estimates (Lemma \ref{L:mon1}) analogous to that of Merle \cite{M-blowup} for gKdV.  It is key here that no upper bound on $\|\nabla u_n(t) \|_{L^2_{xy}}$ is required (equivalently, no lower bound on $\lambda_n(t)$).  To adapt the monotonicity estimates to the 2D context, we needed a  Gagliardo-Nirenberg estimate with a spatial weight in an external region (Lemma \ref{L:GN-weight}).  The monotonicity estimates are applied on long time scales for the solutions $u_n(t)$, which has the implication of decay for the weak limit $\tilde u_n(t)$ owing to the convergence \eqref{E:intro-1}.

With the decay estimates on hand, we can now use an integral type conservation law.  For smooth, rapidly decaying solutions to the ZK equation \eqref{ZK}, one can verify by direct computation from the  equation \eqref{ZK} that (integrating only in the variable $x$)
$$
\partial_t \int u(x,y,t) \, dx =0
$$
for each $y\in \mathbb{R}$, and thus, 
$$ 
\int u(x,y,t) \, dx \quad \text{ is constant in time}.
$$
Our solutions $\tilde u_n(t)$ do not have a high level of regularity, but at least do belong to $L_y^2L_x^1$ uniformly in time, from our decay estimates, and thus, the ability to approximate solutions by regular solutions (a consequence of the local theory machinery) yields that
$$
\left\| \int u(x,y,t) \, dx \right\|_{L_y^2} \text{ is constant in time}.
$$
This provides a means for controlling $\|\nabla \tilde u_n(t)\|_{L^2_y}$ from above (equivalently, the scale parameter $\tilde \lambda_n(t)$ from below).  This gives all the properties of $\tilde u_n(t)$ claimed in Prop. \ref{P:reduction}.

Let us highlight, at the more technical level, two ways in which our proof adds new elements to the method of Merle \cite{M-blowup}.  The ``stability of weak convergence" (Lemma \ref{L:weakchar}) is the statement that if $v_m(0) \to v(0)$ weakly in $H^1$, then $v_m(t) \to v(t)$ weakly in $H^1$, where $v_m(t)$ and $v(t)$ are the ZK flows of $v_m(0)$ and $v(0)$, respectively.  This was done in Appendix D of Martel \& Merle \cite{MM-liouville} for gKdV, and they used the fact that an $L^2$ local theory is available, which removes the need for an \emph{a priori} bound on the $H^1$ norm of $v_m(t)$.   In our case, an $L^2$ local theory for the ZK equation \eqref{ZK} is not available, and we need to assume an \emph{a priori} bound on the $H^1$ norms of $v_m(t)$, which means that we need to strengthen the bootstrap hypothesis on the admissible time scale $(-t_1(n),t_2(n))$, see \eqref{E:timeframe}, to include this assumption.  In the end, the argument still works, since the convergence of $\lambda_m(t) \to \lambda(t)$ and local theory estimates as employed in Lemma \ref{L:nontrivial} imply that a strengthening of the left part of the bootstrap hypothesis \eqref{E:timeframe} implies a corresponding strengthening of the right part.

In the monotonicity estimates (Lemma \ref{L:mon1}), we need a Gagliardo-Nirenberg estimate with spatial weight in an external spatial region (Lemma \ref{L:GN-weight}).  For this we found a nice classical proof, iterating the standard 1D estimate $\|f\|_{L^\infty}^2 \leq \|f\|_{L^2} \|f'\|_{L^2}$ with the weight appropriately distributed between the two terms.   It was necessary to have an estimate that positioned the weight fully on the gradient term in the right-side -- we could not use the method employed by \cite{CMPS} of passing through $H^1$ estimates, since then the $\alpha$ threshold of the result would be $H^1$ upper bound dependent, which is not suitable for our purposes.  It was suitable in the asymptotic stability context of \cite{CMPS} but not in our blow-up context, where there is no control on scale. 
%\smallskip

Let us now give some details on the proof of Prop. \ref{P:nonlin-liouville}.  We note that the conclusion of Prop. \ref{P:nonlin-liouville} is equivalent to $\tilde \epsilon_n \equiv 0$.  Indeed, if $\tilde \epsilon_n \equiv 0$, we have for all $t$ that
$$
\tilde \lambda_n(t) \tilde u_n( \tilde\lambda_n(t) x + \tilde x_n(t), \tilde\lambda_n(t) y + \tilde y_n(t), t) = Q(x,y).
$$
Moreover, the estimates for parameters given in \eqref{E:param-dynam} yield, in this case, that $(\tilde \lambda_n)_t =0$, $(\tilde y_n)_t =0$ and $(\tilde x_n)_t = \tilde \lambda_n^{-2}$, from which it follows that $\tilde \lambda_n$ and $\tilde y_n$ are constant, and $\tilde x_n = \lambda_n^{-2}t + \tilde x_{n0}$.  Substituting, we obtain the conclusion as stated in Prop. \ref{P:nonlin-liouville}.  

We begin the proof of Prop. \ref{P:nonlin-liouville} by negating the conclusion, obtaining that there exists a subsequence (still labeled with $n$-subscript) such that $\tilde \epsilon_n \neq 0$ for all $n$.  We renormalize this sequence as follows.  Let $b_n = \|\tilde \epsilon_n \|_{L_t^\infty L_{xy}^2}$, which by our construction makes $b_n>0$ for all $n$.  Let $t_n\in \mathbb{R}$ be a time so that $\| \tilde \epsilon_n(t_n) \|_{L_{xy}^2} \geq \frac12 b_n$.  Then consider
\begin{equation}
\label{E:renorm-remainder}
w_n(t) = \frac{\tilde \epsilon_n(t+t_n)}{b_n}.
\end{equation}
By Prop. \ref{P:normcomp}, we in fact have that $\| \tilde \epsilon_n \|_{L_t^\infty H_{xy}^1} \sim \| \tilde \epsilon_n \|_{L_t^\infty L_{xy}^2}$, and hence, $\| w_n \|_{L_t^\infty H_{xy}^1} \lesssim 1$ for all $n$.  Moreover, by Prop. \ref{P:reduction}\,(2), it follows that  $w_n$ has a uniform-in-time spatial localization: for each $r>0$,
$$
\| w_n  \|_{L_t^\infty L_{B(0,r)^c}^2} \lesssim e^{-\omega r}, 
$$
where $B(0,r)$ denotes the ball, centered at $0$ and of radius $r>0$ in $\mathbb{R}^2$, $B(0,r)^c$ denotes the complement in $\mathbb{R}^2$, and $\omega>0$ is some absolute constant.  

By the Rellich-Kondrachov theorem, we can pass to a subsequence (still labeled with index $n$) so that $w_n(0)$ converges strongly in $L_{xy}^2$.  Denoting the limit by $w_\infty(0)$, it follows that $\|w_\infty(0)\|_{L_{xy}^2} \geq \frac12$ from the fact that $\| \tilde \epsilon_n(t_n) \|_{L_{xy}^2} \geq \frac12 b_n$.  Moreover, we prove, in Prop. \ref{P:conv-linear} that for each $T>0$, the strong convergence
$$w_n(t) \to w_\infty(t) \quad \text{in} \quad C([-T,T]; L_{xy}^2)$$
holds, where $w_\infty$ solves the %linear 
linearized ZK equation \eqref{E:w-inf-eqn}.  The orthogonality conditions and uniform-in-time spatial decay are inherited in the limit, i.e.,
$$
\| w_\infty  \|_{L_t^\infty L_{B(0,r)^c}^2} \lesssim e^{-\omega r}. 
$$
A contradiction is obtained by appealing to the linear Liouville theorem, Prop. \ref{P:linear-liouville}, which forces that $w_\infty \equiv 0$.  The proof of Prop. \ref{P:linear-liouville} proceeds by deducing the additional orthogonality condition $\la w_\infty, Q\ra =0$, from which it follows that $\la \mathcal{L}w_\infty, w_\infty \ra$ is constant in time (here, $\mathcal{L}=1-\Delta - 3 Q^2$ is the linearized operator), and thus, the result follows from a dispersive estimate, the local virial estimate Lemma \ref{L:w-virial}
$$
\| w_\infty \|_{L_t^2 H_{xy}^1} \lesssim \| \la x \ra^{1/2} w_\infty \|_{L_t^\infty L_{xy}^2}.
$$
This type of estimate is ordinarily proved by a positive commutator argument leading to a spectral coercivity estimate for a Schr\"odinger operator $\tilde{\mathcal{L}}$ (different from $\mathcal{L}$).  If this is applied directly in this case, the corresponding $\tilde{\mathcal{L}}$ does not satisfy the needed coercivity estimate.  To escape this problem, we pass to the adjoint problem 
$$
v = (1-\delta \Delta)^{-1} \mathcal{L} w_\infty,
$$ 
for which the positive commutator argument yields an operator $\tilde{\tilde{\mathcal{L}}} $ that does indeed satisfy the coercivity estimate. This technique was used in the gKdV setting by Martel \cite[p. 775]{Martel-dual}.  The operator $\tilde{\tilde{\mathcal{L}}}$ (denoted by $A$ in Section \ref{S:virial-1}, see Lemma \ref{L:v-virial}) is of standard Schr\"odinger type but with a rank 2 perturbation (operator $P$ in our case, see Lemma \ref{L:v-virial}). 
To obtain coercivity estimate for $\tilde{\tilde{\mathcal{L}}}$ (or for the operator $A$ in our case), we obtain its discrete spectrum numerically and then use the angle lemma (Lemma \ref{L:angle}) to prove coercivity of $\tilde{\tilde{\mathcal L}}$,
giving the local viral estimate Lemma \ref{L:v-virial} 
$$
\| v \|_{L_t^2 H_{xy}^1} \lesssim \| \la x \ra^{1/2} v \|_{L_t^\infty L_{xy}^2}.
$$
The estimates allowing the conversion from $w_\infty$ to $v$ are given in \S \ref{S:conversion}.  

Prop. \ref{P:reduction}\,(2) combined with Prop. \ref{P:nonlin-liouville} were established as a ``nonlinear Liouville theorem'' in the case of $L^2$-critical gKdV by Martel \& Merle \cite{MM-liouville}.  Our proof uses some of the similar elements adapted to the 2D case -- for example, the comparability of $L^2$ and $H^1$ norms of the remainder functions $\tilde \epsilon_n$ (see Prop. \ref{P:normcomp}) proved by a virial-type estimate, and the convergence of renormalized remainders \eqref{E:renorm-remainder} to $w_\infty$, solving the %linear 
linearized equation (see Prop. \ref{P:conv-linear}), proved via adaptation of the local ZK theory estimates in an appropriate reference frame.  However, our proof differs in the following aspects.  The decay estimate with sharp coefficient (as in our Prop. \ref{P:reduction}\,(2)) is proved in the gKdV case by Martel \& Merle \cite{MM-liouville} by appealing to the $L^2$-critical scattering theory available for that equation (Kenig, Ponce \& Vega \cite{KPV}). No such result is available for 2D cubic ZK (the best well-posedness result for 2D cubic ZK is $H^s$ for $s\geq \frac14$ by Kinoshita \cite{Kin}), so we instead prove a monotonicity estimate directly on the remainder $\epsilon$ (rescaled to $\eta$) -- see Lemma \ref{L:ep-strong-decay} and Corollary \ref{C:tilde-ep-decay}.   This type of calculation was previously done by Martel \& Merle \cite[Claim 14]{MM-refined} in the gKdV context, and we combine it here with our rotation method (see Lemma \ref{L:mon2}) 
to obtain the decay in $y$-direction. Finally, we prove the linear virial estimate, Lemma \ref{L:w-virial}, key to the proof of the linear Liouville property (Prop. \ref{P:linear-liouville}), by transforming from $w$ to $v= (1-\delta \Delta)^{-1}\mathcal{L}w$, and addressing the analogous estimate for $v$, which appears as Lemma \ref{L:v-virial}.   In \cite{MM-liouville}, this method was not used and the estimate for $w$ was proved directly, essentially by calculating $\partial_t \int x w^2 \, dx$, which leads to a positivity estimate for a Schr\"odinger operator $\tilde{\mathcal{L}}$.  If this were done in our case of 2D ZK, by computing $\partial_t \iint x w^2 \, dx \,dy$, we would obtain a Schr\"odinger operator $\tilde{\mathcal{L}}$, for which the positivity estimate seems to fail (as suggested by our numerics).  The method of converting from $w$ to $v$ was mentioned 
by Martel  \cite{Martel-dual} in the gKdV context, where the transformation $v= \mathcal{L}w$ is used.  The addition of the regularization operator, $v= (1-\delta \Delta)^{-1} \mathcal{L} w$ was used by Kenig \& Martel \cite{KM-BO} in their treatment of asymptotic stability for the Benjamin-Ono equation. We use it here, together with a few additional commutator arguments, that we establish in \S \ref{S:conversion}.  

Finally, we mention that the asymptotic stability for the 2D quadratic ZK was established by C\^{o}te, Mu\~{n}oz, Pilod \& Simpson \cite{CMPS} using a virial estimate established with the transformation $v=\mathcal{L}w$, without the use of regularization (and thus, requiring additional higher regularity estimates).  A key difference between \cite{CMPS} and our paper is the choice of orthogonality conditions.  The orthogonality conditions used in \cite{CMPS} do not seem to work in our context, as they are not sufficient to establish the spectral coercivity of $\tilde{\tilde{\mathcal{L}}}$ for nonlinearities $\partial_x (|u|^{p-1}u)$ outside $1.8<p<p^* \approx 2.1491$ as remarked in \cite[Appendix A.2.2]{CMPS}.  Our different choice of orthogonality conditions leads to a different linear operator $\tilde{\tilde{\mathcal{L}}}$, which includes a rank two projection operator. To obtain the discrete spectrum, we use numerics detailed in \S \ref{S:numerical}, and then prove the positivity of $\tilde{\tilde{\mathcal{L}}}$ via an angle lemma (Lemma \ref{L:angle}). 
Investigation of the spectrum of $\tilde{\tilde{\mathcal L}}$ (and also of $\tilde{\mathcal L}$) can be done via different approaches, which include numerically-assisted verifications to a certain extent, we discuss this elsewhere, for example, in \cite{HRY2025}; however, we note that even in the 1D case of the critical gKdV equation, a numerically-assisted proof was used in order to numerically obtain a function, an inverse under a certain elliptic operator of the explicit (in 1D) ground state $Q$ and verify the sign of an inner product/angle, see \cite[Lemma 26]{MM-liouville} and further discussion in the proof of Lemma \ref{L:v-virial}. 

\vspace{-.5cm}
{\small
\subsection{Acknowledgments}
L.G.F. was partially supported by Conselho Nacional de Desenvolvimento Cient\'ifico e Tecnol\'ogico (Brazil) - CNPq grant 307323/2023-4, Coordena\c{c}\~ao de Aperfei\c{c}oamento de Pessoal de N\'ivel Superior - CAPES grant 88881.974077/2024-01 and CAPES /COFECUB grant 88887.879175/2023-00.
J.H. was partially supported by the NSF grants DMS-1500106, DMS-2055072 and DMS 2452781.
S.R. was partially supported by the NSF CAREER grant DMS-1151618, NSF grants DMS-1815873/1927258, DMS-2055130 and DMS-2452782. 
K.Y. research support to work on this project came from grants DMS-1151618 and 2055130 (PI: Roudenko).
}

\section{Notation}

Throughout the paper we consistently use 
$$\alpha(u) = \|u\|_{L^2}^2 - \|Q\|_{L^2}^2.$$

Many results apply generally for solutions $u(t)$ to ZK that are ``near threshold negative energy'', that is, there exists $\alpha'>0$ (small) such that for all $H^1$ solutions $u(t)$ such that $\alpha(u)<\alpha'$ and $E(u)<0$, the result applies on the maximal lifespan of the solution $u(t)$ (perhaps together with some other hypotheses).  For such solutions, Lemmas \ref{L:varchar}, \ref{L:geom}, and \ref{L:epparam} apply, giving a description of the solution in terms of modulation parameters of scale $\lambda(t)>0$ and position $(x(t),y(t))$ and a remainder function
$$
\epsilon(x,y,t) \defeq \lambda(t)\, u(\lambda(t)x + x(t), \lambda(t)y+y(t),t)- Q(x,y)
$$
with dynamical information about the parameters and remainder function given by Lemma \ref{L:epparam}.
Results that apply in this general situation include the monotonicity estimate, Lemma \ref{L:mon1}, and the integral conservation law yielding scale control, Lemma \ref{L:scale-control}.  For each result of this type, one needs $\alpha(u)$ sufficiently small, so that we introduce a list of thresholds 
$$
\alpha_1 \geq \alpha_2 \geq \alpha_3 \geq \alpha_4 \geq \alpha_5  >0.
$$
Each new threshold $\alpha_j>0$ is taken smaller than the previously introduced thresholds $\alpha_1$, $\ldots$, $\alpha_{j-1}$, so that the earlier results apply as well.

The linearized operator is defined as
\begin{equation}
\label{L-def}
\mathcal{L} \defeq - \Delta  + 1 - 3 Q^2,
\end{equation}
where $Q$ is the unique radial positive solution in $H^1(\cR^2)$ of the well-known nonlinear elliptic equation 
\begin{equation}
\label{Q-def}
-\Delta Q+Q  - Q^3 = 0.
\end{equation}

We also introduce the generator $\Lambda$ of scaling symmetry
\begin{equation}
\label{Lambda}
\Lambda f = f + x\partial_x f + y \partial_y f.
\end{equation}

\section{Local theory estimates and ground state properties}

Let $U(t)\phi$ denote the solution to the linear homogenous problem
$$
\begin{cases}
\partial_t \rho + \partial_x \Delta \rho =0 \\
\rho(x,y,0) = \phi(x,y).
\end{cases}
$$
Then
$$
u(t) = U(t) \phi + \int_0^t U(t-t') \, \partial_x [u(t')^3] \, dt'.
$$

\begin{lemma}[linear homogeneous estimates]
\label{L:linhom}
We have
\begin{enumerate}
  \item
$\ds \|U(t) \phi \|_{L_T^\infty H_{xy}^1} \lesssim \| \phi \|_{H_{xy}^1},$

  \item
$\ds \|\partial_x U(t) \phi \|_{L_x^\infty L_{yT}^2} \lesssim \| \phi \|_{L_{xy}^2}.$
\end{enumerate}
For $0< T \leq 1$,
\begin{enumerate}
  \item[(3)]
$\ds \| U(t) \phi \|_{L_x^4 L_{yT}^\infty} \lesssim \|\phi \|_{H_{xy}^1}.$
\end{enumerate}
\end{lemma}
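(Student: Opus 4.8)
The plan is to treat the three assertions separately, in increasing order of difficulty: the first is trivial, and the other two reduce to standard one-dimensional estimates for the Airy group. Assertion (1) is immediate: $U(t)$ is the Fourier multiplier with unimodular symbol $e^{it(\xi^{3}+\xi\eta^{2})}$, hence a unitary group on $H^{s}_{xy}$ for every $s$ (the $H^{1}$ Fourier weight commutes with it), so $\|U(t)\phi\|_{H^{1}_{xy}}=\|\phi\|_{H^{1}_{xy}}$ for each $t$, and in particular $\|U(t)\phi\|_{L^{\infty}_{T}H^{1}_{xy}}=\|\phi\|_{H^{1}_{xy}}$.

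For assertion (2), the sharp Kato-type local smoothing estimate, I would freeze the transverse frequency: taking the Fourier transform in $y$ only reduces matters to the one-parameter family of one-dimensional estimates for the group $V_{\eta}(t)$ with phase $\varphi_{\eta}(\xi)=\xi^{3}+\eta^{2}\xi$. Since $\varphi_{\eta}'(\xi)=3\xi^{2}+\eta^{2}>0$ off the origin, $\mu=\varphi_{\eta}(\xi)$ is a global change of variable on $\mathbb{R}$; performing it and then applying Plancherel in $t$, the Jacobian $(3\xi^{2}+\eta^{2})^{-1}$ combines with the $|\xi|^{2}$ produced by the $\partial_{x}$ derivative to leave the bounded weight $\xi^{2}/(3\xi^{2}+\eta^{2})\le\tfrac13$; and, since the only $x$-dependent factor in the integrand is the unimodular $e^{ix\xi}$, the resulting bound is independent of $x$. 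Integrating back in $\eta$ and using Plancherel in $y$ gives the estimate, in fact with $t$ over all of $\mathbb{R}$, so restricting to $[0,T]$ only helps; as usual one first truncates to a bounded time interval to legitimize the Fubini and Plancherel steps for rough data. This is the Airy-group computation of Kenig--Ponce--Vega carried through with the frozen transverse variable, and it is contained in the linear theory of \cite{Fam,LP,RV}.

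Assertion (3), the time-local maximal function estimate, is where real work is needed, and the difficulty is confined to the transverse $L^{\infty}_{y}$ norm, since $U(t)$ disperses in a direction that mixes $x$ and $y$. The plan has two ingredients. First, freezing $\eta\neq0$ and rescaling $\xi\mapsto|\eta|\xi$ with the matching scalings of $x$ and $t$, together with the passage to a uniformly translating frame that removes the linear-in-$\xi$ term of $\varphi_{\eta}$, exhibits $V_{\eta}(t)$ as a rescaled copy of the Airy group (the case $\eta=0$ being the Airy group itself); the classical Airy maximal estimate $\|e^{-t\partial_{x}^{3}}h\|_{L^{4}_{x}L^{\infty}_{t\in[0,1]}}\lesssim\|h\|_{H^{1/4}_{x}}$ of Kenig--Ponce--Vega \cite{KPV} then translates, after bookkeeping of the scaling (the time-local maximal constant over $[0,L]$ grows at worst like $(1+L)^{0+}$, and $L=|\eta|^{3}T\lesssim|\eta|^{3}$ because $T\le1$), into $\|V_{\eta}(t)h\|_{L^{4}_{x}L^{\infty}_{t\in[0,T]}}\lesssim\langle\eta\rangle^{0+}\|h\|_{H^{1/4}_{x}}$ --- this is exactly where the hypothesis $T\le1$ enters. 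Second, to manufacture the $L^{\infty}_{y}$ norm I would use the one-dimensional Sobolev embedding $\sup_{y}|g(y)|\lesssim\|\langle\partial_{y}\rangle^{1/2+\epsilon}g\|_{L^{2}_{y}}$ pointwise in $(x,t)$, so that with $\psi=\langle\partial_{y}\rangle^{1/2+\epsilon}\phi$ it remains to bound $\|U(t)\psi\|_{L^{4}_{x}L^{\infty}_{T}L^{2}_{y}}$; using $\sup_{t}\|U(t)\psi(x,\cdot)\|_{L^{2}_{y}}^{2}\le\int_{\eta}\sup_{t}|V_{\eta}(t)\widetilde\psi(x,\eta)|^{2}\,d\eta$ (Plancherel in $y$), Minkowski's integral inequality in $\eta$, and the frozen-$\eta$ maximal estimate above, one obtains $\|U(t)\psi\|_{L^{4}_{x}L^{\infty}_{T}L^{2}_{y}}\lesssim\|\langle\partial_{y}\rangle^{0+}\langle\partial_{x}\rangle^{1/4}\psi\|_{L^{2}_{xy}}$. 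Assembling the two steps gives $\|U(t)\phi\|_{L^{4}_{x}L^{\infty}_{yT}}\lesssim\|\langle\partial_{y}\rangle^{1/2+\epsilon}\langle\partial_{x}\rangle^{1/4}\phi\|_{L^{2}_{xy}}\lesssim\|\phi\|_{H^{3/4+\epsilon}_{xy}}\le\|\phi\|_{H^{1}_{xy}}$. The estimate is thus comfortably non-sharp, consistent with the $s>\tfrac34$ local theory of \cite{Fam,LP,RV}, so that an acceptable alternative to the above is simply to quote it from there.

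To summarize, the only genuine obstacle is the transverse $L^{\infty}_{y}$ control in (3): there is no one-dimensional dispersive gain in the $y$-direction, so this must be paid for by a fractional Sobolev embedding in $y$ (half a derivative), on top of the quarter derivative in $x$ from the one-dimensional maximal function estimate; the total, $3/4+\epsilon$ derivatives, is absorbed with room to spare by the $H^{1}_{xy}$ on the right-hand side. Assertions (1) and (2) are routine.
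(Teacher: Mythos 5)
Your treatment of (1) and (2) is correct and, in fact, more detailed than the paper, which simply cites Plancherel for (1) and Faminskii's Theorem 2.2 for (2). For (2) the frozen-$\eta$ reduction is exactly the standard Kato smoothing computation: the change of variables $\mu=\xi^3+\eta^2\xi$ turns the Jacobian and the symbol $|\xi|^2$ of $\partial_x^2$ into the bounded weight $\xi^2/(3\xi^2+\eta^2)\le\tfrac13$, uniformly in $x$ and $\eta$, and Plancherel in $y$ finishes it. No objection there.

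The primary argument you give for (3), however, has a genuine gap, and it is not a repairable technicality. The claim that the frozen-$\eta$ maximal estimate holds in the form $\|V_\eta(t)h\|_{L^4_xL^\infty_{t\in[0,T]}}\lesssim\langle\eta\rangle^{0+}\|h\|_{H^{1/4}_x}$ is false. The obstruction is precisely the ``passage to a uniformly translating frame'': writing $V_\eta(t)h(x)=A(t)h(x+\eta^2t)$ with $A(t)$ the Airy group, the mixed norm $L^4_xL^\infty_t$ is \emph{not} invariant under the time-dependent shift $x\mapsto x+\eta^2t$, because the supremum in $t$ is taken before the $L^4_x$ norm. Concretely, take $\hat h=\mathbf{1}_{[0,1]}$, so $\|h\|_{H^{1/4}_x}\sim1$. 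For each $x\in[-\eta^2/10,\,0]$ choose $t=-x/\eta^2\in[0,1/10]$; then $V_\eta(t)h(x)=\int_0^1e^{it\xi^3}\,d\xi$ has modulus $\ge\cos(1/10)$, so $\sup_{t\in[0,1]}|V_\eta(t)h(x)|\gtrsim1$ on a set of measure $\sim\eta^2$, whence $\|V_\eta(t)h\|_{L^4_xL^\infty_{t\in[0,1]}}\gtrsim|\eta|^{1/2}$. Thus the frozen-$\eta$ constant must grow at least like $|\eta|^{1/2}$ (half a transverse derivative), reflecting the fact that the $x$-component of the group velocity is $3\xi^2+\eta^2\ge\eta^2$. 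Feeding this into your assembly, the right-hand side becomes $\|\langle\partial_y\rangle^{1+\epsilon}\langle\partial_x\rangle^{1/4}\phi\|_{L^2_{xy}}$, which is \emph{not} controlled by $\|\phi\|_{H^1_{xy}}$ (test $\hat\phi$ supported where $\xi=O(1)$ and $|\eta|$ large). The tensorization loses exactly the genuinely two-dimensional dispersion that the proof of this estimate must exploit: the direction of propagation varies with the frequency direction, and the $|t|^{-1}$ decay of the 2D oscillatory kernel (rather than any 1D rate) is what yields the $s>\tfrac34$ threshold. Your fallback — quoting Faminskii's Theorem 2.4 at $s=1$, as the paper does — is therefore not merely an acceptable alternative but the correct route; the sketched reduction to the 1D Airy maximal function should be removed or replaced by the 2D kernel argument.
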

\begin{proof}
The first estimate is a standard consequence of Plancherel and Fourier representation of the solution.
The second estimate (local smoothing) is Faminskii \cite{Fam}, Theorem 2.2 on p. 1004.    The third estimate (maximal function estimate) is a special case ($s=1$) of Faminskii \cite{Fam}, Theorem 2.4 on p. 1007.  All of these estimates are used by Linares-Pastor \cite{LP}, and quoted as Lemma 2.7 on p. 1326 of that paper.
\end{proof}

\begin{lemma}[linear inhomogeneous estimates]
\label{L:lininhom}
For $0<T\leq 1$,
\begin{enumerate}
  \item
$\ds \left\| \int_0^t U(t-t') \, \partial_x f(t') \, dt' \right\|_{L_T^\infty H_{xy}^1 \cap L_x^4 L_{yT}^\infty} \lesssim \| \partial_x f \|_{L_x^1 L_{yT}^2}+ \| \partial_y f \|_{L_x^1 L_{yT}^2},$

  \item
$\ds \left\| \int_0^t U(t-t') f(t') \, dt' \right\|_{L_T^\infty H_{xy}^1 \cap L_x^4 L_{yT}^\infty} \lesssim \| f \|_{L_T^1 H_{xy}^1}.$
\end{enumerate}
\end{lemma}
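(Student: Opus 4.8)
The plan is to deduce both estimates from the homogeneous bounds of Lemma~\ref{L:linhom}, combining a duality argument with the Christ--Kiselev lemma; the two parts are handled separately because they rest on different homogeneous inputs.

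Estimate (2) is the elementary one. Writing $U(t-t')=U(t)\,U(-t')$ and using that $U(s)$ is an isometry on $H^1_{xy}$, one has, for each fixed $t'$, that $\|U(t-t')f(t')\|_{L_T^\infty H^1_{xy}}\lesssim\|f(t')\|_{H^1_{xy}}$ by Lemma~\ref{L:linhom}(1) and $\|U(t-t')f(t')\|_{L_x^4 L_{yT}^\infty}=\|U(t)[U(-t')f(t')]\|_{L_x^4 L_{yT}^\infty}\lesssim\|U(-t')f(t')\|_{H^1_{xy}}=\|f(t')\|_{H^1_{xy}}$ by Lemma~\ref{L:linhom}(3), the time translation being harmless since $T\le1$. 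Minkowski's integral inequality applied to the $t'$-integral then gives both halves of (2) with the bound $\int_0^T\|f(t')\|_{H^1_{xy}}\,dt'=\|f\|_{L_T^1 H^1_{xy}}$.

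The core of estimate (1) is the dual form of the local smoothing bound Lemma~\ref{L:linhom}(2). Since $U(t)$ commutes with $\partial_x$ and $\partial_y$ and is unitary on $L^2_{xy}$, the adjoint of $\phi\mapsto\partial_x U(t)\phi$ (from $L^2_{xy}$ into $L_x^\infty L_{yT}^2$) is formally $h\mapsto-\partial_x\int_0^T U(-t')h(t')\,dt'$, so Lemma~\ref{L:linhom}(2) yields, for every $0\le\tau\le T$,
$$
\Bigl\|\int_0^{\tau} U(-t')\,\partial_x h(t')\,dt'\Bigr\|_{L^2_{xy}}\lesssim\|h\|_{L_x^1 L_{yT}^2}
$$
(apply the non-retarded version to $\mathbf{1}_{[0,\tau]}h$; the mismatch $(L_x^\infty L_{yT}^2)^\ast\ne L_x^1 L_{yT}^2$ is dispatched by testing against Schwartz functions and using density). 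Writing $u(t)=\int_0^t U(t-t')\partial_x f\,dt'=U(t)\int_0^t U(-t')\partial_x f\,dt'$ and using that $U(t)$ is an $L^2_{xy}$- and $H^1_{xy}$-isometry, the displayed estimate with $h=\partial_x f$ and $h=\partial_y f$ bounds $\|\partial_x u(t)\|_{L^2_{xy}}$ and $\|\partial_y u(t)\|_{L^2_{xy}}$, uniformly in $t\in[0,T]$, by $\|\partial_x f\|_{L_x^1 L_{yT}^2}$ and $\|\partial_y f\|_{L_x^1 L_{yT}^2}$. The remaining $L^2_{xy}$-level estimate for $u$ I would extract from the energy identity $\|u(t)\|^2_{L^2_{xy}}=2\int_0^t\langle\partial_x f(t'),u(t')\rangle\,dt'$ (note $\partial_t u+\partial_x\Delta u=\partial_x f$, $u(0)=0$): bounding $|\langle\partial_x f,u\rangle|\le\|\partial_x f\|_{L^1_xL^2_y}\|u\|_{L^\infty_xL^2_y}$, inserting the one-dimensional Gagliardo--Nirenberg inequality $\|u\|_{L^\infty_x L^2_y}\lesssim\|u\|_{L^2_{xy}}^{1/2}\|\partial_x u\|_{L^2_{xy}}^{1/2}$ together with the $\partial_x u$ bound just obtained, and then applying Cauchy--Schwarz in $t'$, Minkowski's inequality, and $T\le1$ to close by absorption. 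Together these give the $L_T^\infty H^1_{xy}$ half of (1).

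For the $L_x^4 L_{yT}^\infty$ half, one composes the $H^1_{xy}$ bound on $\int_0^T U(-t')\partial_x f\,dt'$ (obtained exactly as above) with the homogeneous maximal-function estimate Lemma~\ref{L:linhom}(3) to control the non-retarded operator $f\mapsto U(t)\int_0^T U(-t')\partial_x f\,dt'$ in $L_x^4 L_{yT}^\infty$, and then invokes the Christ--Kiselev lemma (rearranging norms via Minkowski so the time variable sits outermost) to pass to the retarded operator $f\mapsto\int_0^t U(t-t')\partial_x f\,dt'$; this is legitimate because the source space $L_x^1 L_{yT}^2$ has time exponent $2$, strictly less than the time exponent $\infty$ of the target. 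This last step is the main obstacle: it is where the one-derivative gain in local smoothing must be married to the maximal-function estimate and the retarded Duhamel integral handled by Christ--Kiselev, all while carefully tracking which first-order derivative norms of $f$ land on the right-hand side. By comparison, the $L^2$-level absorption argument, the Schwartz-density treatment of the $L_x^1$ versus $(L_x^\infty)^\ast$ mismatch, and the sub-interval time translations needed for Lemma~\ref{L:linhom}(3) are routine once organized properly.
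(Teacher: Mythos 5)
Your proposal is correct and follows the same route the paper takes, namely duality/$T^*T$ applied to the local smoothing estimate together with the Molinet--Ribaud version of the Christ--Kiselev lemma for the mixed norms, plus Minkowski for the $H^1$-level estimate (2); the paper's proof is just a citation of these ingredients, so you have simply written out the details. Your energy-identity/Gagliardo--Nirenberg absorption step for the $L^2_{xy}$ component of $\|\cdot\|_{L_T^\infty H^1_{xy}}$ in part (1) is a legitimate (and in fact necessary) supplement if one insists, as the statement does, on having only the derivative norms $\|\partial_x f\|_{L_x^1L_{yT}^2}+\|\partial_y f\|_{L_x^1L_{yT}^2}$ on the right-hand side, since the bare dual smoothing bound for that component produces $\|f\|_{L_x^1L_{yT}^2}$ instead.
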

\begin{proof}
These follow from Lemma \ref{L:linhom} by duality, $T^*T$, and the Christ-Kiselev lemma.  The needed version of the Christ-Kiselev lemma is provided by Molinet \& Ribaud \cite{MolRib}.  It is stated as Lemma 3 on p. 287 and proved in Appendix A on p. 307--311 of that paper.
\end{proof}

Let us now summarize the proof of $H_{xy}^1$ local well-posedness following from these estimates.  We note that Linares-Pastor \cite{LP}, in fact, achieved local well-posedness in $H_{xy}^s$ for $s>\frac34$, although we only need the $s=1$ case.
Let $X$ be the $R$-ball in the Banach space $C([0,T]; H_{xy}^1) \cap L_x^4 L_{yT}^\infty$, for $T$ and $R$ yet to be chosen.  Consider the mapping $\Lambda$ defined for $u\in X$ by
$$
\Lambda u = U(t) \phi + \int_0^t U(t-t') \partial_x [u(t')^3] \, dt'.
$$
Then we claim that for suitably chosen $R>0$ and $T>0$, we have $\Lambda:X\to X$ and $\Lambda$ is a contraction.  Indeed, by the estimates in Lemmas \ref{L:linhom} and \ref{L:lininhom}, we have
$$
\| \Lambda u \|_X \lesssim \| \phi\|_{H_{xy}^1} + \| \partial_x (u^3) \|_{L_x^1 L_{yT}^2} + \| \partial_y(u^3) \|_{L_x^1 L_{yT}^2}.
$$
We estimate
$$
\| u_x u^2 \|_{L_x^1 L_{yT}^2} \lesssim \| u_x \|_{L_x^2L_{yT}^2} \|u\|_{L_x^4 L_{yT}^\infty}^2 \leq  T^{1/2} \| u_x \|_{L_T^\infty L_{xy}^2} \|u\|_{L_x^4 L_{yT}^\infty}^2,
$$
and similarly, for the $x$ derivative replaced by the $y$ derivative.  Consequently,
$$
\| \Lambda u \|_X \leq C \| \phi\|_{H_{xy}^1} + C T^{1/2} \|u\|_X^3
$$
for some constant $C>0$.  By similar estimates,
$$
\| \Lambda u_2 - \Lambda u_1 \|_X \leq CT^{1/2} \| u_2-u_1 \|_X \max(\|u_1\|_X,\|u_2\|_X)^2.
$$
We can thus take $R = 2C\|\phi \|_{H_{xy}^1}$ and $T>0$ such that $CR^2T^{1/2}=\frac12$ to obtain that $\Lambda:X\to X$ and is a contraction.  The fixed point is the desired solution.

For the uniqueness statement, we can take $R \geq 2C\|\phi \|_{H_{xy}^1}$ large enough so that the two given solutions $u_1,u_2$ lie in $X$, and then take $T$ so that $CR^2T^{1/2}=\frac12$.  Then $u_1$ and $u_2$ are both fixed points of $\Lambda$ in $X$, and since fixed points of a contraction are unique, $u_1=u_2$.

We now state the properties of the linearized operator $\mathcal{L} = - \Delta + 1 -3 Q^2 $ (see Kwong \cite{K89} for all dimensions, Weinstein \cite{W85} for dimension 1 and 3, also Maris \cite{M02} and  Chang et al. \cite{CGNT}).
\begin{theorem}\label{L-prop}
The following holds for an operator $\mathcal{L}$ defined in \eqref{L-def}:
\begin{itemize}
\item
$\mathcal{L}$ is a self-adjoint operator and $\sigma_{ess}(\mathcal{L}) = [ 1, +\infty )$,

\item
$\ker \mathcal{L} = \mbox{span} \{Q_{x_1}, Q_{x_2} \}$,
	
\item
$\mathcal{L}$ has a unique single negative eigenvalue $-\lambda_0$ (with $\lambda_0 > 0$) associated to a
positive radially symmetric eigenfunction $\chi_0$. Without loss of generality, $\chi_0$ can be chosen
such that $\| \chi_0 \|_{L^2} = 1$. Moreover, there exists $\delta > 0$ such that
$|\chi_0(x)| \lesssim e^{- \delta |x|}$
for all $x \in \cR^2$.
\end{itemize}
\end{theorem}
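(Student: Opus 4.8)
The plan is to deduce all four items from standard spectral theory together with one genuinely nontrivial ODE input that I would import from the literature rather than reprove. First I would dispose of the soft part. The operator $-\Delta + 1$ is self-adjoint on $H^2(\cR^2) \subset L^2(\cR^2)$, and by the ground-state bound \eqref{prop-Q} the multiplication operator $3Q^2$ is bounded, exponentially decaying, and $(-\Delta+1)$-compact; hence $\mathcal{L}$ is self-adjoint with domain $H^2(\cR^2)$, and Weyl's theorem on stability of the essential spectrum gives $\sigma_{ess}(\mathcal{L}) = \sigma_{ess}(-\Delta+1) = [1,+\infty)$. This settles the first bullet and confines the remaining discussion to isolated eigenvalues of finite multiplicity in $(-\infty,1)$.

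For the kernel, the inclusion $\mathrm{span}\{\partial_x Q, \partial_y Q\} \subseteq \ker\mathcal{L}$ is immediate: differentiating \eqref{Q-def} in $x$ and in $y$ (translation invariance of the elliptic equation) yields $\mathcal{L}\partial_x Q = \mathcal{L}\partial_y Q = 0$, and these are linearly independent since $\partial_r Q < 0$ for $r>0$. For the reverse inclusion I would pass to polar coordinates, write $\phi\in\ker\mathcal{L}$ as $\phi = \sum_{k\in\mathbb{Z}} \phi_k(r)e^{ik\theta}$, and examine the reduced radial operators $L_k = -\partial_r^2 - r^{-1}\partial_r + k^2 r^{-2} + 1 - 3Q^2$: for $|k|\ge 2$ the centrifugal term forces $L_k$ strictly positive so $\phi_k = 0$; for $k = \pm 1$ the kernel of $L_1$ is spanned by $\partial_r Q$, which reproduces exactly $\mathrm{span}\{\partial_x Q, \partial_y Q\}$; and the radial operator $L_0$ has trivial kernel. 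This last statement — nondegeneracy of $Q$ within the radial class — is the substantive point, and I would cite Kwong \cite{K89} (see also \cite{W85, M02, CGNT}) for it rather than reprove the underlying Sturm--Liouville/shooting analysis.

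For the negative eigenvalue, existence is a one-line computation: from \eqref{Q-def}, $\mathcal{L}Q = -\Delta Q + Q - 3Q^3 = -2Q^3$, so $\langle\mathcal{L}Q,Q\rangle = -2\int Q^4 < 0$, whence $\mathcal{L}$ has spectrum below $0$ and, by the first step, a lowest eigenvalue $-\lambda_0 < 0$. Since $\mathcal{L}$ is a Schr\"odinger operator bounded below, this bottom eigenvalue is simple with a one-signed eigenfunction $\chi_0 > 0$ (Perron--Frobenius), which is radial because $\mathcal{L}$ commutes with rotations and the eigenspace is one-dimensional; I then normalize $\|\chi_0\|_{L^2} = 1$. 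To see $-\lambda_0$ is the \emph{only} negative eigenvalue I would again use the angular decomposition: the sectors $|k|\ge 1$ satisfy $L_k \ge L_1 \ge 0$ and contribute nothing negative, while the radial operator $L_0$ has Morse index exactly one — the second half of the radial ODE analysis, again drawn from \cite{K89}. Exponential decay of $\chi_0$ is then an Agmon-type estimate: writing $-\Delta\chi_0 = (3Q^2 - 1 - \lambda_0)\chi_0$ and using $3Q^2 \to 0$, the coefficient is bounded above by a negative constant for $|x|$ large, giving $|\chi_0(x)| \lesssim e^{-\delta|x|}$ for any $\delta < \sqrt{1+\lambda_0}$.

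The main obstacle is the radial-sector ODE analysis — triviality of $\ker L_0$ and Morse index one for $L_0$ — which is precisely the nondegeneracy/uniqueness theory for the ground state and is the only ingredient I would not carry out from scratch; everything else (Weyl's theorem, differentiating \eqref{Q-def}, the Perron--Frobenius argument, and the Agmon decay) is routine. In effect the write-up is a careful citation of \cite{K89} for the two radial facts, packaged inside the soft spectral-theory scaffolding above.
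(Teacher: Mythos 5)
Your proposal is correct and matches what the paper does: the paper states Theorem \ref{L-prop} without proof, deferring entirely to Kwong \cite{K89}, Weinstein \cite{W85}, Maris \cite{M02}, and Chang et al.\ \cite{CGNT}, and your write-up is exactly the standard scaffolding (Weyl's theorem, harmonic decomposition into the sectors $L_k$, the computation $\la \mathcal{L}Q,Q\ra=-2\|Q\|_{L^4}^4<0$, Perron--Frobenius, Agmon decay) wrapped around the same two radial ODE facts ($\ker L_0=\{0\}$ and Morse index one for $L_0$) that you, like the authors, import from those references. Nothing further is needed.
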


\begin{lemma}\label{L-prop2}
The following identities hold
\begin{enumerate}
\item[(i)]
$\mathcal{L} \,Q = -2 Q^3$,

\item[(ii)]
$\mathcal{L} (\Lambda Q) = - 2 Q$, where $\Lambda$ is defined in \eqref{Lambda}.
Moreover,   $\int Q \, \Lambda Q = 0$.

\end{enumerate}
\end{lemma}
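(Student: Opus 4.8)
The plan is to read both identities directly off the ground state equation \eqref{Q-def}, which I will use in the form $-\Delta Q + Q = Q^3$, together with the definitions \eqref{L-def} of $\mathcal{L}$ and \eqref{Lambda} of $\Lambda$. All differentiations under the integral sign and integrations by parts below are legitimate because $Q$ is smooth and $Q$, together with all its derivatives, decays exponentially by \eqref{prop-Q}. For (i), I would simply substitute: $\mathcal{L}Q = (-\Delta Q + Q) - 3Q^2\cdot Q = Q^3 - 3Q^3 = -2Q^3$, using $-\Delta Q + Q = Q^3$; nothing further is needed.

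For (ii), write $\Lambda Q = Q + \x\cdot\nabla Q$ with $\x=(x,y)$, so that by linearity it suffices to compute $\mathcal{L}(\x\cdot\nabla Q)$ and add the outcome of (i). I would obtain $\mathcal{L}(\x\cdot\nabla Q)$ by differentiating the ground state equation along the scaling direction: setting $Q^{(\mu)}(\x) := Q(\mu\x)$ and multiplying \eqref{Q-def}, evaluated at the point $\mu\x$, by $\mu^2$, one finds that $-\Delta Q^{(\mu)} + \mu^2 Q^{(\mu)} - \mu^2 (Q^{(\mu)})^3 = 0$ for every $\mu>0$. Differentiating this in $\mu$ and then setting $\mu=1$, with $\psi := \partial_\mu|_{\mu=1}Q^{(\mu)} = \x\cdot\nabla Q$, yields $-\Delta\psi + 2Q + \psi - 2Q^3 - 3Q^2\psi = 0$, i.e. $\mathcal{L}(\x\cdot\nabla Q) = 2Q^3 - 2Q$. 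Adding $\mathcal{L}Q = -2Q^3$ from (i) gives $\mathcal{L}(\Lambda Q) = -2Q$, as claimed. (An equivalent scaling-free derivation: check the commutator identity $\Delta(\x\cdot\nabla Q) = \x\cdot\nabla(\Delta Q) + 2\Delta Q$ by expanding second derivatives, apply the vector field $\x\cdot\nabla$ to \eqref{Q-def}, and use $-\Delta Q = Q^3-Q$ once more.)

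For the last assertion $\int Q\,\Lambda Q = 0$, I would expand $\int Q\,\Lambda Q = \int Q^2 + \int Q\,(\x\cdot\nabla Q)$ and integrate the second term by parts: $\int Q\,(\x\cdot\nabla Q) = \tfrac12\int \x\cdot\nabla(Q^2) = -\tfrac12\int(\operatorname{div}\x)\,Q^2 = -\int Q^2$, since $\operatorname{div}\x = 2$ on $\cR^2$; the two terms then cancel. Conceptually this is just the fact that the mass $\tfrac12\|\mu Q(\mu\,\cdot)\|_{L^2}^2$ is independent of $\mu$ because $Q\mapsto \mu Q(\mu\,\cdot)$ is the $L^2$-critical rescaling in dimension two, and its $\mu$-derivative at $\mu=1$ equals $\la \Lambda Q, Q\ra$.

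There is no real obstacle here: the whole content is the elementary computation above. The only place where the dimension matters is the value $\operatorname{div}\x = 2$ and, equivalently, the single power $\mu^2$ multiplying both $Q^{(\mu)}$ and $(Q^{(\mu)})^3$ in the rescaled elliptic equation; these are precisely the two-dimensional $L^2$-critical features that make the right-hand sides come out as the stated multiples of $Q^3$, $Q$, and $0$.
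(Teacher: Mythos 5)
Your computation is correct in every step: (i) is immediate from $-\Delta Q+Q=Q^3$, the scaling derivation of $\mathcal{L}(\x\cdot\nabla Q)=2Q^3-2Q$ is right (as is the commutator alternative), and the integration by parts with $\operatorname{div}\x=2$ correctly gives $\la Q,\Lambda Q\ra=0$. The paper states this lemma without proof, as these are standard identities; your argument is precisely the expected one, so there is nothing to compare beyond noting that it is complete and correct.
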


In \cite{FHR3} we summarized several known positivity estimates for the operator $\mathcal{L}$ following the works of Chang et al. \cite{CGNT} and Weinstein \cite{W85} (see Lemmas 3.3-3.6 in \cite{FHR3}). In this paper, we use the following set of orthogonality conditions to keep the quadratic form, generated by $\mathcal{L}$, positive-definite.

\begin{lemma}\label{Lemma-ort2}
For any $f \in H^1(\R^2)$ such that
\begin{equation}\label{Ort-Cond2}
\la f, Q^3 \ra = \la f, Q_{x_j} \ra =0, \quad j=1,2,
\end{equation}
there exists a universal constant $C_1>0$ such that
$$
 \la f,f \ra \leq C_1\, \la \mathcal{L}f,f \ra.
$$
\end{lemma}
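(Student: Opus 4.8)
The plan is to reduce the three-orthogonality-condition coercivity estimate to known positivity facts for $\mathcal{L}$ by a standard finite-dimensional perturbation argument. First I would recall from Theorem \ref{L-prop} that $\mathcal{L}$ has exactly one negative eigenvalue $-\lambda_0$ with positive, radial, $L^2$-normalized eigenfunction $\chi_0$, that $\ker\mathcal{L}=\spn\{Q_{x_1},Q_{x_2}\}$, and that $\sigma_{ess}(\mathcal{L})=[1,\infty)$; in particular, on the orthogonal complement of $\spn\{\chi_0,Q_{x_1},Q_{x_2}\}$ the quadratic form obeys $\la\mathcal{L}g,g\ra\geq\mu\|g\|^2$ for some $\mu>0$ (the bottom of the positive part of the spectrum). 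Next, given $f\in H^1(\R^2)$ satisfying \eqref{Ort-Cond2}, I would decompose $f = a\chi_0 + b_1 Q_{x_1} + b_2 Q_{x_2} + g$ with $g\perp\spn\{\chi_0,Q_{x_1},Q_{x_2}\}$ in $L^2$, so that $\la\mathcal{L}f,f\ra = -\lambda_0 a^2 + \la\mathcal{L}g,g\ra \geq -\lambda_0 a^2 + \mu\|g\|^2$. The entire content of the lemma is then to show that the orthogonality conditions force $a^2$ to be controlled by $\|g\|^2$ (with a constant strictly less than $\mu/\lambda_0$), after which $\la\mathcal{L}f,f\ra\gtrsim\|g\|^2\gtrsim\|f\|^2$ follows, the last step using that $b_1,b_2$ are also controlled by the data via the $Q_{x_j}$-orthogonality.

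The key computation is to extract $a$ from the orthogonality condition $\la f,Q^3\ra=0$. Using Lemma \ref{L-prop2}(i), $Q^3 = -\tfrac12\mathcal{L}Q$, so $\la f,Q^3\ra = -\tfrac12\la f,\mathcal{L}Q\ra = -\tfrac12\la\mathcal{L}f,Q\ra$; but more directly I would pair $f$ against $Q^3$ in the decomposition: the $Q_{x_j}$-terms vanish since $\la Q_{x_j},Q^3\ra = \tfrac14\partial_{x_j}\int Q^4 = 0$ by radial symmetry, hence $0 = \la f,Q^3\ra = a\la\chi_0,Q^3\ra + \la g,Q^3\ra$. Provided $\la\chi_0,Q^3\ra\neq 0$ — which holds because $\chi_0$ and $Q$ are both strictly positive, so $\la\chi_0,Q^3\ra>0$ — this gives $a = -\la g,Q^3\ra/\la\chi_0,Q^3\ra$, whence by Cauchy-Schwarz $|a|\leq C\|g\|$ with $C = \|Q^3\|/\la\chi_0,Q^3\ra$. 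Plugging this into $\la\mathcal{L}f,f\ra\geq -\lambda_0 a^2 + \mu\|g\|^2 \geq (\mu - \lambda_0 C^2)\|g\|^2$ would finish it \emph{if} $\mu > \lambda_0 C^2$.

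The main obstacle, and the step requiring real care, is precisely this inequality $\mu > \lambda_0 C^2$: the naive bound $|a|\leq\|Q^3\|\,\|g\|/\la\chi_0,Q^3\ra$ is almost certainly too lossy, and a crude estimate will not close. The standard fix is the sharpened coercivity inequality: for $g\perp\spn\{\chi_0,Q_{x_1},Q_{x_2}\}$ one does not merely have $\la\mathcal{L}g,g\ra\geq\mu\|g\|^2$ but can exploit the scaling identity of Lemma \ref{L-prop2}(ii), $\mathcal{L}(\Lambda Q) = -2Q$ with $\la Q,\Lambda Q\ra = 0$, which is the classical mechanism (Weinstein) showing $\la\mathcal{L}h,h\ra\geq 0$ for all $h\perp Q$ even though $\mathcal{L}$ has a negative direction — so the negative contribution of the $\chi_0$-mode is partially cancelled. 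Concretely, I would instead project onto $\spn\{\Lambda Q\}$ rather than $\spn\{\chi_0\}$: writing $f = \beta\,\Lambda Q + h$ with $h\perp Q$ in $L^2$ (possible since $\la Q,\Lambda Q\ra=0\neq\|Q\|^2$... actually one projects so that $\la h,Q\ra$ absorbs the right piece), one gets $\la\mathcal{L}f,f\ra = \beta^2\la\mathcal{L}\Lambda Q,\Lambda Q\ra + \la\mathcal{L}h,h\ra = -2\beta^2\la Q,\Lambda Q\ra + \la\mathcal{L}h,h\ra = \la\mathcal{L}h,h\ra$, and then the known positivity $\la\mathcal{L}h,h\ra\geq 0$ together with the orthogonality conditions \eqref{Ort-Cond2} — which are exactly three conditions, matching $\dim\ker\mathcal{L}+1$ and chosen to kill the directions where $\mathcal{L}$ degenerates — upgrades this to the strict bound $\la\mathcal{L}f,f\ra\geq c_0\|f\|^2$. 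Making the last upgrade quantitative is the crux: it is a compactness/contradiction argument (if false, there is a minimizing sequence $f_n$ with $\|f_n\|=1$, $\la\mathcal{L}f_n,f_n\ra\to 0$, satisfying \eqref{Ort-Cond2}; extract a weak limit $f_\infty$, use weak lower semicontinuity of $\la\mathcal{L}\cdot,\cdot\ra$ on the positive part plus the compact embedding to handle the $-3Q^2$ term, deduce $\mathcal{L}f_\infty = \sum c_j Q_{x_j} + dQ^3$ from the Euler-Lagrange equations with the three multipliers, pair against $f_\infty$, $Q_{x_j}$, $Q^3$ to pin down $f_\infty\in\ker\mathcal{L}$, and then \eqref{Ort-Cond2} forces $f_\infty=0$, contradicting $\|f_\infty\|=1$ after checking strong convergence). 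I expect the paper either cites this from \cite{FHR3}, \cite{CGNT}, or \cite{W85}, or runs exactly this compactness argument; the only genuinely lemma-specific check is that the chosen multipliers $Q^3, Q_{x_1}, Q_{x_2}$ together with $\ker\mathcal{L}$ and the single negative mode $\chi_0$ span a space on which one can verify the non-degeneracy $\la\chi_0,Q^3\ra\neq 0$ (true by positivity) so that no residual negative direction survives.
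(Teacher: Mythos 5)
The paper does not prove this lemma at all: its ``proof'' is the single line ``See Lemma 3.5 in \cite{FHR3}'', exactly as you anticipated in your closing sentence. Your sketch is, in substance, the standard argument that lies behind such a citation, and the essential ingredients are all correctly identified: the spectral decomposition from Theorem \ref{L-prop}, the identities $\mathcal{L}Q=-2Q^3$ and $\mathcal{L}(\Lambda Q)=-2Q$ with $\la Q,\Lambda Q\ra=0$, the vanishing $\la Q_{x_j},Q^3\ra=0$, the positivity $\la \chi_0,Q^3\ra>0$, and a compactness/Lagrange-multiplier argument to convert nonnegativity into quantitative coercivity. In particular, your observation that pairing the Euler--Lagrange equation against $Q$ kills the $Q^3$-multiplier (because $\la \mathcal{L}f_\infty,Q\ra=-2\la f_\infty,Q^3\ra=0$ while $\la Q^3,Q\ra=\|Q\|_{L^4}^4\neq 0$) is precisely the non-degeneracy condition $\la\mathcal{L}^{-1}Q^3,Q^3\ra=-\tfrac12\|Q\|_{L^4}^4<0$ that makes $Q^3$ an admissible orthogonality direction; this is the crux of the cited lemma.

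One intermediate step of your write-up is shaky and should be either repaired or dropped: the decomposition $f=\beta\,\Lambda Q+h$ with $h\perp Q$ and no cross term requires $\la f,Q\ra=0$, which is not among the hypotheses \eqref{Ort-Cond2} (you flag this yourself with the parenthetical hedge). The clean version is to quote the classical fact that $\la\mathcal{L}h,h\ra\geq 0$ for all $h$ with $\la h,Q\ra=0$ (valid in the $L^2$-critical case because $\la\mathcal{L}^{-1}Q,Q\ra=-\tfrac12\la\Lambda Q,Q\ra=0$), or to work directly with the $Q^3$-orthogonality via the criterion above, and then run your compactness argument; that final argument is correct as sketched and does not depend on the faulty intermediate decomposition.
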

\begin{proof}
See Lemma 3.5 in \cite{FHR3}.
\end{proof}

\section{Blow-up conclusion fails implies there exists\\ renormalized $u_n$ sequence and time sequence $t_{n,m}$}

We assume that the statement of Theorem \ref{T:main} is false, and hence, there is a sequence of solutions $\{ \bar u_n(t) \}$ of the ZK equation \eqref{ZK} such that for each given $n \in N$, the solutions $\bar u_n(t)$ are defined for all times $t \geq 0$ with $E(\bar u_n) < 0$ and $\alpha_n = \alpha(\bar u_n) \to 0$ as $n \to \infty$, and moreover, for each $n$
$$
\ell_n = \liminf_{t\to +\infty} \| \nabla \bar u_n(t)\|_{L_{xy}^2}  < +\infty.
$$
We note that for each $n$ and for each $t$, by Gagliardo-Nirenberg inequality
$$
0< -4E(\bar u_n) \leq \|\bar u_n(t)\|_{L_{xy}^4}^4 \lesssim \| \nabla \bar u_n(t) \|_{L_{xy}^2}^2 \| \bar u_n \|_{L_{xy}^2}^2,
$$
and hence, $\ell_n>0$.  By definition of $\ell_n$, there exists $\bar t_n \geq 0$ such that 
$$
\text{for all } t\geq \bar t_n \,, \quad \ell_n(1-\tfrac1n)  \leq \|\nabla \bar u_n(t) \|_{L_{xy}^2}
$$ 
and
$$
\| \nabla \bar u_n (\bar t_n ) \|_{L_{xy}^2} \leq \ell_n(1+\tfrac1n).
$$
Let us renormalize as
$$
u_n(x,y,t) \defeq \frac{ \|\nabla Q \|_{L^2}}{\ell_n} \,\,  \bar u_n\bigg( \frac{ \|\nabla Q \|_{L^2}}{\ell_n}x, \frac{ \|\nabla Q \|_{L^2}}{\ell_n} y, \frac{ \|\nabla Q \|_{L^2}^3}{\ell_n^3} t+\bar t_n \bigg)
$$
so that
\begin{equation}\label{E:upper}
\text{for all } n \text{ and } t\geq 0 \,, \qquad (1-\tfrac1n)\| \nabla Q\|_{L_{xy}^2}  \leq \|\nabla  u_n(t) \|_{L_{xy}^2}
\end{equation}
and
$$
\forall \; n\,, \quad \| \nabla u_n (0) \|_{L_{xy}^2} \leq (1+\tfrac1n) \|\nabla Q\|_{L_{xy}^2}
$$
as well as
\begin{equation}
\label{E:liminf}
\forall \; n\,, \quad  \liminf_{t\nearrow +\infty} \| \nabla u_n(t) \|_{L_{xy}^2} = \| \nabla Q\|_{L_{xy}^2}.
\end{equation}
We in addition have $E(u_n)<0$ for all $n$, and moreover, 
$$
\alpha_n \defeq \alpha(u_n)  = \alpha(\bar u_n) \to 0 \text{ as }n\to \infty
$$
as before.  By \eqref{E:liminf}, for each $n$, there exists a sequence $t_{n,m} \to +\infty$ as $m\to +\infty$ such that
\begin{equation}
\label{E:grad-conv}
\lim_{m\to +\infty} \| \nabla u_n(t_{n,m}) \|_{L^2_{xy}} = \| \nabla Q\|_{L_{xy}^2}.
\end{equation}
By passing to a subsequence, we can assume without loss that, for each $n$, the gaps are expanding:
$$
\lim_{m\to \infty} (t_{n,m+1}-t_{n,m}) = +\infty.
$$

\begin{lemma}[variational characterization and uniqueness of the ground state]
\label{L:varchar}
For each $\eta>0$ there exists $\alpha_1>0$ such that the following holds.  For each $\phi\in H^1$ with $E(\phi)<0$ and $\alpha(\phi) \leq \alpha_1$, there exists $x_0\in \mathbb{R}$, $y_0\in \mathbb{R}$, $\lambda>0$, $\mu \in \{-1,1\}$ such that 
$$
\| \mu \lambda \phi( \lambda x + x_0, \lambda y + y_0) -Q(x,y)\|_{H_x^1} \leq \eta.
$$
\end{lemma}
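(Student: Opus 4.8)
The plan is to argue by contradiction via concentration-compactness, in the spirit of Weinstein's identification of the extremals of the sharp Gagliardo--Nirenberg inequality, combined with Kwong's uniqueness theorem for the ground state; throughout, $H^1$ denotes $H^1(\mathbb{R}^2)$. Suppose the statement fails: for some $\eta_0>0$ there is a sequence $\phi_n\in H^1$ with $E(\phi_n)<0$ and $\alpha(\phi_n)\to0$, yet $\|\mu\lambda\,\phi_n(\lambda x+x_0,\lambda y+y_0)-Q\|_{H^1}>\eta_0$ for all $x_0,y_0\in\mathbb{R}$, $\lambda>0$, $\mu\in\{-1,1\}$. Under $\phi\mapsto\mu\lambda\,\phi(\lambda x+x_0,\lambda y+y_0)$ the quantity $\|\cdot\|_{L^2}$ is unchanged, $\|\nabla\cdot\|_{L^2}^2$ and $\|\cdot\|_{L^4}^4$ are each multiplied by $\lambda^2$, and $E$ is multiplied by $\lambda^2$; in particular negativity of the energy and the value of $\alpha$ are preserved. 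Put $\lambda_n=\|\nabla Q\|_{L^2}/\|\nabla\phi_n\|_{L^2}$ and $\psi_n=\lambda_n\phi_n(\lambda_n x,\lambda_n y)$, so that $\|\nabla\psi_n\|_{L^2}=\|\nabla Q\|_{L^2}$ and $\|\psi_n\|_{L^2}^2=\|Q\|_{L^2}^2+\alpha(\phi_n)\to\|Q\|_{L^2}^2$. From $E(\psi_n)<0$, see \eqref{EC}, we get $\|\psi_n\|_{L^4}^4>2\|\nabla\psi_n\|_{L^2}^2=2\|\nabla Q\|_{L^2}^2$, while the sharp Gagliardo--Nirenberg inequality $\|f\|_{L^4}^4\le\frac{2}{\|Q\|_{L^2}^2}\|\nabla f\|_{L^2}^2\|f\|_{L^2}^2$ (together with $E(Q)=0$, i.e.\ $\|Q\|_{L^4}^4=2\|\nabla Q\|_{L^2}^2$) gives $\|\psi_n\|_{L^4}^4\le\frac{2}{\|Q\|_{L^2}^2}\|\nabla Q\|_{L^2}^2\|\psi_n\|_{L^2}^2\to2\|\nabla Q\|_{L^2}^2$. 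Squeezing, $\|\psi_n\|_{L^4}^4\to\|Q\|_{L^4}^4$, so $\psi_n$ is a maximizing sequence for the Gagliardo--Nirenberg quotient $\|f\|_{L^4}^4/(\|\nabla f\|_{L^2}^2\|f\|_{L^2}^2)$, normalized by $\|\nabla\psi_n\|_{L^2}=\|\nabla Q\|_{L^2}$.

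Next I would extract a strong limit. Since $\{\psi_n\}$ is bounded in $H^1$ and $\liminf\|\psi_n\|_{L^4}>0$, vanishing cannot occur, so after a translation by some $(x_{0,n},y_{0,n})$ and passing to a subsequence, $\psi_n(x+x_{0,n},y+y_{0,n})\rightharpoonup g$ weakly in $H^1$ with $g\not\equiv0$. With $r_n$ the corresponding difference, the Brezis--Lieb lemma decouples $\|\cdot\|_{L^2}^2$, $\|\nabla\cdot\|_{L^2}^2$, and $\|\cdot\|_{L^4}^4$ along the sequence; applying Gagliardo--Nirenberg separately to $g$ and to $r_n$ and comparing with the asymptotically attained equality for $\psi_n$ forces the cross terms to vanish, whence $\|\nabla r_n\|_{L^2}\to0$, then $\|r_n\|_{L^4}\to0$, then $\|\nabla g\|_{L^2}=\|\nabla Q\|_{L^2}$ and $\|g\|_{L^4}^4=\|Q\|_{L^4}^4$, and the equality case of Gagliardo--Nirenberg for $g$ pins $\|g\|_{L^2}=\|Q\|_{L^2}$; hence $r_n\to0$ in $H^1$. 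Thus $g$ is an $H^1$ extremal of the sharp Gagliardo--Nirenberg inequality with $\|\nabla g\|_{L^2}=\|\nabla Q\|_{L^2}$ and $\|g\|_{L^2}=\|Q\|_{L^2}$. Its Euler--Lagrange equation has the form $-\Delta g+\omega_1 g=\omega_2 g^3$ with $\omega_1,\omega_2>0$; rescaling amplitude and spatial variable reduces it to \eqref{Q-def} up to sign, so---using that extremals have constant sign and Kwong's uniqueness theorem \cite{K89}, which covers dimension two---$g$ is a multiple of a dilated translate of $Q$, and the two normalizations force $g=\mu\,Q(x-z_1,y-z_2)$ with $\mu\in\{-1,1\}$. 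Absorbing $(z_1,z_2)$ and $\mu$ into the position and sign parameters gives $\|\mu\lambda_n\phi_n(\lambda_n x+x_0',\lambda_n y+y_0')-Q\|_{H^1}\to0$, contradicting the standing hypothesis for $n$ large.

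The main obstacle is the compactness step: ruling out vanishing and dichotomy for the maximizing sequence $\psi_n$ along the non-compact translation group of $\mathbb{R}^2$. Vanishing is excluded by $\liminf\|\psi_n\|_{L^4}>0$; dichotomy is excluded by the rigidity that $\|\psi_n\|_{L^2}^2$ and $\|\nabla\psi_n\|_{L^2}^2$ converge exactly to the critical values $\|Q\|_{L^2}^2$ and $\|\nabla Q\|_{L^2}^2$, so the Brezis--Lieb bookkeeping above leaves no mass for a second bubble. Everything else---the scaling bookkeeping, the elementary conclusion from $H^1$-convergence, and the identification of the extremal via Weinstein \cite{W85}-type arguments and Kwong \cite{K89}---is routine; alternatively one may invoke directly the known statement that every maximizing sequence for the $2$D quartic Gagliardo--Nirenberg inequality converges, modulo the scaling, translation, and sign symmetries, strongly in $H^1$ to $Q$, which subsumes the middle paragraph.
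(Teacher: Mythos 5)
Your proposal is correct and follows essentially the same route as the paper, whose proof is only a two-sentence sketch citing Lemma 1 of Merle \cite{M-blowup}: it rests on exactly the ingredients you develop, namely that the rescaled functions form a minimizing sequence for the Weinstein functional \eqref{E:Weinstein}, that concentration--compactness yields strong $H^1$ convergence modulo translations, and that uniqueness of the ground state identifies the limit. Your write-up simply fills in the details (the scaling bookkeeping, the Brezis--Lieb decoupling that rules out dichotomy, and the Weinstein--Kwong identification of the extremal) that the paper leaves to the reference.
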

\begin{proof}
The proof is similar to that sketched in Lemma 1 of Merle  \cite{M-blowup}, p. 563.  It uses that any minimizer of the functional
\begin{equation}
\label{E:Weinstein}
u \mapsto \frac{\| u\|_{L^2}^2 \| \nabla u \|_{L^2}^2}{ \|u\|_{L^4}^4}
\end{equation}
is a minimal mass solution to the ground state equation, and moreover, one has uniqueness of solutions to the ground state equation (of minimal mass) up to translation and phase.  The lemma follows from these considerations plus concentration--compactness (or profile decomposition) lemmas applied to a minimizing sequence of the functional in \eqref{E:Weinstein}.
\end{proof}

Hence, we know that for each $n$ and $t\geq 0$, there exists $x_n(t)\in \mathbb{R}$, $y_n(t)\in \mathbb{R}$, $\mu_n(t) \in \{-1,1\}$ and $\lambda_n(t)>0$ such that  for all $t\geq 0$
$$
\| \mu_n(t) \lambda_n(t)\, u_n( \lambda_n(t) x + x_n(t), \lambda_n(t) y + y_n(t),t) -Q(x,y)\|_{H_x^1} \leq \delta_n \defeq \delta(\alpha_n),
$$
where $\delta(\alpha)>0$ is some function such that $\delta(\alpha) \searrow 0$ as $\alpha \searrow 0$. By continuity of the ZK flow in $t$, we know that each $\mu_n(t) \in \{-1,1\}$ is constant (independent of $t$).  Thus, we might as well redefine $u_n(t)$ as $\mu_n u_n(t)$ so that we can drop the $\mu_n$ parameter entirely.   

\begin{lemma}[geometrical decomposition]
\label{L:geom}
There exists\footnote{We require that $0<\alpha_2\leq \alpha_1$ so that Lemma \ref{L:varchar} applies.} $\alpha_2>0$ such that if $E(u)<0$ and $\alpha(u) \leq \alpha_2$, then there exist functions $\lambda(t)>0$, $x(t) \in \mathbb{R}$, $y(t) \in \mathbb{R}$ such that the remainder function
$$
\epsilon(x,y,t) \defeq \lambda(t)\, u(\lambda(t)x+x(t), \lambda(t) y + y(t), t) - Q(x,y)
$$
satisfies the orthogonality conditions
$$
\la \epsilon(t), \nabla Q \ra =0 \quad \mbox{and} \quad \la \epsilon(t), Q^3 \ra =0.
$$
\end{lemma}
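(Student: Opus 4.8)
The plan is to run a standard modulation argument: sharpen the coarse decomposition of Lemma~\ref{L:varchar} to the exact orthogonality conditions by means of the implicit function theorem. Note that the conclusion amounts to three scalar conditions, $\langle\epsilon,Q_x\rangle=\langle\epsilon,Q_y\rangle=\langle\epsilon,Q^3\rangle=0$, matched in number by the three modulation parameters $\lambda$, $x$, $y$, so one expects the system to be solvable once a nondegeneracy condition is verified.

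For $\lambda>0$, $(a,b)\in\mathbb{R}^2$ and $v\in H^1(\mathbb{R}^2)$, put $v_{\lambda,a,b}(x,y)\defeq\lambda\,v(\lambda x+a,\lambda y+b)$, and define $\Phi:(0,\infty)\times\mathbb{R}^2\times H^1\to\mathbb{R}^3$ by
$$\Phi(\lambda,a,b;v)=\big(\langle v_{\lambda,a,b}-Q,\,Q_x\rangle,\ \langle v_{\lambda,a,b}-Q,\,Q_y\rangle,\ \langle v_{\lambda,a,b}-Q,\,Q^3\rangle\big).$$
Since the pairing functions $Q_x$, $Q_y$, $Q^3$ are smooth with exponential decay by \eqref{prop-Q}, $\Phi$ is $C^1$ in $(\lambda,a,b)$ (the $(\lambda,a,b)$-derivatives can be moved onto these Schwartz-type functions after a change of variables, so only $v\in L^2$ is used) and affine in $v$, with $\Phi(1,0,0;Q)=0$.

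Next I would compute the partial Jacobian $D_{(\lambda,a,b)}\Phi$ at the base point $(1,0,0;Q)$. Differentiating $(\lambda,a,b)\mapsto v_{\lambda,a,b}$ at $v=Q$, $(\lambda,a,b)=(1,0,0)$ yields the directions $\partial_\lambda\mapsto\Lambda Q$, $\partial_a\mapsto Q_x$, $\partial_b\mapsto Q_y$, with $\Lambda$ the scaling generator \eqref{Lambda}; hence $D_{(\lambda,a,b)}\Phi$ is the $3\times3$ matrix whose $(i,j)$ entry is the pairing of the $j$-th direction $(\Lambda Q,Q_x,Q_y)$ with the $i$-th pairing function $(Q_x,Q_y,Q^3)$. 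Because $Q$ is radial, $\Lambda Q$ and $Q^3$ are radial while $Q_x$ is odd in $x$ and $Q_y$ is odd in $y$, so every entry built from a single odd factor against radial factors vanishes, and the matrix collapses to
$$\begin{pmatrix}0 & \|Q_x\|_{L^2}^2 & 0\\ 0 & 0 & \|Q_y\|_{L^2}^2\\ \langle\Lambda Q,Q^3\rangle & 0 & 0\end{pmatrix},$$
with $\|Q_x\|_{L^2}^2=\|Q_y\|_{L^2}^2=\tfrac12\|\nabla Q\|_{L^2}^2>0$ and $\langle\Lambda Q,Q^3\rangle=-\tfrac12\langle\Lambda Q,\mathcal{L}Q\rangle=-\tfrac12\langle\mathcal{L}\Lambda Q,Q\rangle=\|Q\|_{L^2}^2>0$ by Lemma~\ref{L-prop2}. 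Each row and column has exactly one nonzero entry, so the determinant is nonzero and $D_{(\lambda,a,b)}\Phi$ is invertible at the base point.

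By the implicit function theorem there are $\rho_0>0$ and $C^1$ maps $v\mapsto(\lambda(v),a(v),b(v))$ on $\{\|v-Q\|_{H^1}<\rho_0\}$ with $\Phi(\lambda(v),a(v),b(v);v)=0$ and $(\lambda,a,b)(Q)=(1,0,0)$. I then choose $\alpha_2\le\alpha_1$ small enough that $\delta(\alpha_2)<\rho_0$; for $u$ with $E(u)<0$ and $\alpha(u)\le\alpha_2$, Lemma~\ref{L:varchar} applied at each fixed time $t$ (after possibly replacing $u$ by $-u$, which leaves $E$ and $\alpha$ unchanged, to normalize the sign to $\mu=1$) places a rescaling $\lambda_*(t)\,u(\lambda_*(t)\,\cdot+x_*(t),\lambda_*(t)\,\cdot+y_*(t),t)$ inside the $\rho_0$-ball about $Q$; composing this coarse rescaling with the fine one furnished by the IFT — multiplying the two scales and composing the two translations — gives $\lambda(t)>0$ and $(x(t),y(t))\in\mathbb{R}^2$ for which $\epsilon(t)=\lambda(t)u(\lambda(t)\,\cdot+x(t),\lambda(t)\,\cdot+y(t),t)-Q$ obeys the two stated orthogonality relations. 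The only step requiring genuine care is the nondegeneracy of the Jacobian; as indicated, it reduces to the parity of $Q$ together with the identity $\langle\Lambda Q,Q^3\rangle=\|Q\|_{L^2}^2$ from Lemma~\ref{L-prop2}, and everything else is routine.
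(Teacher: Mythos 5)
Your proposal is correct and follows exactly the route the paper takes: its entire proof of this lemma is the one-line instruction ``apply the implicit function theorem after invoking Lemma~\ref{L:varchar},'' and you have supplied the details, with the key nondegeneracy computation (parity of $Q$ plus $\la \Lambda Q, Q^3\ra = \|Q\|_{L^2}^2$ via Lemma~\ref{L-prop2}) carried out correctly. The only point handled loosely is the sign $\mu$, which could a priori depend on $t$ when Lemma~\ref{L:varchar} is applied timewise; the paper disposes of this in the paragraph preceding the lemma by noting that continuity of the flow forces $\mu(t)$ to be constant.
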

\begin{proof}
Apply the implicit function theorem after invoking Lemma \ref{L:varchar}. (For a similar proof see Proposition 5.1 in \cite{FHR3}.) 
\end{proof}

For the solutions $u_n(t)$ discussed above, we apply Lemma \ref{L:geom} to modify the parameters $x_n(t)$, $y_n(t)$, $\lambda_n(t)$ so that the remainder function
$$
\epsilon_n(x,y,t) \defeq \lambda_n(t)\, u_n(\lambda_n(t) x + x_n(t), \lambda_n(t) y + y_n(t),t)-Q(x,y)
$$
satisfies the orthogonality conditions for all $t\geq 0$
$$
\la \epsilon_n(t), \nabla Q \ra =0 \quad \mbox{and} \quad \la \epsilon_n(t), Q^3 \ra =0.
$$ 

\begin{lemma}[properties of remainder function and parameters]
\label{L:epparam}
Let $u(t)$ solve ZK such that  $E(u)<0$ and $\alpha(u) \leq \alpha_2$ as in Lemma \ref{L:geom}, and let $\lambda(t)$, $x(t)$, and $y(t)$ be the parameters given by Lemma \ref{L:geom} on the maximal time interval of existence of $u(t)$. Then
$\lambda(t)$, $x(t)$, and $y(t)$ are $C^1$ functions and 
\begin{equation}
\label{E:param-dynam}
| \lambda^2 \lambda_t | +| \lambda^2 x_t - 1| + |\lambda^2 y_t| \lesssim \| \epsilon(t) \|_{L^2},
\end{equation}
and moreover,
there exists\footnote{We require that $0<\alpha_3\leq \alpha_2$ so that Lemmas \ref{L:varchar}, \ref{L:geom} apply as well.} $\alpha_3>0$ such that if $\alpha(u) \leq \alpha_3$, then 
\begin{equation}
\label{E:ep-control}
\| \epsilon(t) \|_{H^1} \lesssim \sqrt{\alpha(u)}.
\end{equation}
\end{lemma}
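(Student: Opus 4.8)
The plan is to treat the two conclusions separately: the dynamical bound \eqref{E:param-dynam} (and the $C^1$ regularity) comes from a standard modulation computation, while the size estimate \eqref{E:ep-control} comes from the mass and energy conservation laws combined with the coercivity of $\mathcal{L}$ from Lemma \ref{Lemma-ort2}. For \eqref{E:param-dynam} I would introduce the rescaled time $s$ by $\frac{ds}{dt}=\lambda(t)^{-3}$, so that $u(\lambda(t)x+x(t),\lambda(t)y+y(t),t)=\lambda(t)^{-1}(Q+\epsilon)(x,y,s)$. Using the elliptic equation for $Q$ and the definition of $\mathcal{L}$ to rewrite $\Delta(Q+\epsilon)+(Q+\epsilon)^3=Q+\epsilon-\mathcal{L}\epsilon+3Q\epsilon^2+\epsilon^3$, the equation (ZK) in this moving rescaled frame becomes
$$
\epsilon_s=\partial_x(\mathcal{L}\epsilon)+\frac{\lambda_s}{\lambda}\,\Lambda(Q+\epsilon)+\Big(\frac{x_s}{\lambda}-1\Big)\partial_x(Q+\epsilon)+\frac{y_s}{\lambda}\,\partial_y(Q+\epsilon)-\partial_x\big(3Q\epsilon^2+\epsilon^3\big),
$$
and, since $\frac{\lambda_s}{\lambda}=\lambda^2\lambda_t$, $\frac{x_s}{\lambda}=\lambda^2x_t$, $\frac{y_s}{\lambda}=\lambda^2y_t$, it suffices to control the three ratios on the right.

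To do so I would differentiate in $s$ the three scalar orthogonality relations contained in $\la\epsilon,\nabla Q\ra=0$ and $\la\epsilon,Q^3\ra=0$ and substitute the equation above. After integrating by parts to move all $x$-derivatives off $\epsilon$ and onto the exponentially decaying functions $Q^3,Q_x,Q_y$, one gets a $3\times3$ linear system for $\big(\frac{\lambda_s}{\lambda},\frac{x_s}{\lambda}-1,\frac{y_s}{\lambda}\big)$ whose coefficient matrix is $\mathrm{diag}\big(\la\Lambda Q,Q^3\ra,\|Q_x\|_{L^2}^2,\|Q_y\|_{L^2}^2\big)+O(\|\epsilon\|_{L^2})$: the leading off-diagonal entries vanish by the radial parity of $Q$, and $\la\Lambda Q,Q^3\ra=\|Q\|_{L^2}^2\neq0$ by Lemma \ref{L-prop2} (write $Q^3=-\tfrac12\mathcal{L}Q$ and use $\mathcal{L}\Lambda Q=-2Q$). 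The right-hand side of the system consists of the terms $\la\partial_x(\mathcal{L}\epsilon),\cdot\ra=-\la\epsilon,\mathcal{L}\partial_x(\cdot)\ra=O(\|\epsilon\|_{L^2})$ together with the genuinely nonlinear contributions, which are $O(\|\epsilon\|_{L^2}^2)$ after the two-dimensional Gagliardo--Nirenberg inequality. Since $\|\epsilon\|_{H^1}$ is already small (bounded by $\delta(\alpha)$ from Lemmas \ref{L:varchar}--\ref{L:geom}), the matrix is invertible, and inverting gives $\big|\frac{\lambda_s}{\lambda}\big|+\big|\frac{x_s}{\lambda}-1\big|+\big|\frac{y_s}{\lambda}\big|\lesssim\|\epsilon\|_{L^2}$, i.e.\ \eqref{E:param-dynam}. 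The $C^1$ regularity then follows: the implicit function theorem in Lemma \ref{L:geom} shows $\lambda,x,y$ are continuous in $t$, and the modulation ODE just derived has right-hand side continuous in $t$, hence $\lambda,x,y\in C^1$; to make the differentiation of $\epsilon$ rigorous one first runs the computation for regularized data, obtaining \eqref{E:param-dynam} with constants independent of the regularization, and passes to the limit.

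For \eqref{E:ep-control} I would use the change-of-variables identities $\|u(t)\|_{L^2}^2=\|Q+\epsilon(t)\|_{L^2}^2$ and $E(u)=\lambda(t)^{-2}E(Q+\epsilon(t))$. Mass conservation then reads $2\la Q,\epsilon\ra+\|\epsilon\|_{L^2}^2=\alpha(u)$, and expanding $E$ around $Q$ with $E(Q)=0$, $E'(Q)=-Q$, $E''(Q)=\mathcal{L}-1$ gives
$$
E(Q+\epsilon)=-\la Q,\epsilon\ra+\tfrac12\la\mathcal{L}\epsilon,\epsilon\ra-\tfrac12\|\epsilon\|_{L^2}^2+O\big(\|\epsilon\|_{H^1}^3\big).
$$
Since $E(u)<0$ forces $E(Q+\epsilon)<0$, using the mass relation to cancel $\la Q,\epsilon\ra$ and $\|\epsilon\|_{L^2}^2$ yields $\la\mathcal{L}\epsilon,\epsilon\ra<\alpha(u)+O(\|\epsilon\|_{H^1}^3)$. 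The conditions $\la\epsilon,Q^3\ra=\la\epsilon,Q_{x_j}\ra=0$ are precisely those of Lemma \ref{Lemma-ort2}, which gives $\|\epsilon\|_{L^2}^2\lesssim\la\mathcal{L}\epsilon,\epsilon\ra$; combined with $\|\nabla\epsilon\|_{L^2}^2=\la\mathcal{L}\epsilon,\epsilon\ra-\|\epsilon\|_{L^2}^2+3\int Q^2\epsilon^2$ this upgrades to $\|\epsilon\|_{H^1}^2\lesssim\la\mathcal{L}\epsilon,\epsilon\ra$, whence $\|\epsilon\|_{H^1}^2\lesssim\alpha(u)+\|\epsilon\|_{H^1}^3$. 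Taking $\alpha_3$ small enough that $\delta(\alpha_3)$ makes the cubic term absorbable into the left side gives $\|\epsilon\|_{H^1}\lesssim\sqrt{\alpha(u)}$.

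I expect the main obstacle to be the bookkeeping in the modulation step, specifically verifying that every error term in the $3\times3$ system has size $O(\|\epsilon\|_{L^2})$ rather than merely $O(\|\epsilon\|_{H^1})$ — it is this sharpness that produces the $L^2$ (not $H^1$) norm on the right of \eqref{E:param-dynam}, which is essential later since there is no upper control on the scale $\lambda$. This is handled by systematically integrating by parts to move every spatial derivative from $\epsilon$ onto the Schwartz-type functions $Q^3,Q_x,Q_y$ (and their images under $\mathcal{L}$), and by estimating the nonlinear terms with the 2D Gagliardo--Nirenberg inequality.
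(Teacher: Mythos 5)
Your argument is correct and follows essentially the same route as the paper: the bound \eqref{E:param-dynam} is obtained by differentiating the orthogonality conditions in rescaled time, substituting the $\epsilon$-equation, and inverting the resulting near-diagonal $3\times 3$ modulation system, while \eqref{E:ep-control} comes from Taylor-expanding the conserved quantity $\tfrac12 M + E$ about $Q$ (using $E(u)<0$) together with the coercivity of $\mathcal{L}$ from Lemma \ref{Lemma-ort2} and the elliptic upgrade to $H^1$. Your writeup is in fact more detailed than the paper's sketch, and your coefficient matrix $\mathrm{diag}\big(\la \Lambda Q, Q^3\ra, \|Q_x\|_{L^2}^2,\|Q_y\|_{L^2}^2\big)$ and sign conventions agree with the paper's subsequent, more precise modulation lemma (note $\la \Lambda Q, Q^3\ra = \tfrac12\|Q\|_{L^4}^4 = \|Q\|_{L^2}^2$).
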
  
\begin{proof}
The equation for $\epsilon$ is deduced by expressing $u$ in terms of $\epsilon$ and $Q$ and substituting into the equation for $u$ (the ZK equation), to obtain
$$
\lambda^3 \partial_t \epsilon = 
\begin{aligned}[t]
&\partial_x(\mathcal{L}\epsilon) + \lambda^2 \lambda_t \Lambda Q + (\lambda^2 x_t -1, \lambda^2y_t) \cdot \nabla Q \\
&+ \lambda^2 \lambda_t \Lambda \epsilon +  (\lambda^2 x_t -1, \lambda^2y_t) \cdot \nabla \epsilon - 3(Q \epsilon^2)_x - (\epsilon^3)_x. 
\end{aligned}
$$
The estimates \eqref{E:param-dynam} follow from computing $\partial_t$ of the orthogonality conditions and substituting the above equation for $\epsilon$.

The equation \eqref{E:ep-control} is a consequence of the following consideration.    Let 
$$
Z(u) = \frac12 M(u) +E(u),
$$ 
and observe that $Z'(Q)=0$ and $Z''(Q) = \mathcal{L} = 1- \Delta - 3Q^2$, the linearized operator. 
Note that Taylor's expansion
$$
Z(u_\lambda) = Z(Q+ \epsilon) = Z(Q) + \la Z'(Q),\epsilon\ra + \frac12 \la Z''(Q)\epsilon, \epsilon\ra + O(\epsilon^3),
$$
where $u_\lambda(x,y,t) = \lambda u(\lambda x, \lambda y, t)$, yields
$$
\frac12 \la Z''(Q)\epsilon, \epsilon\ra = Z(u_\lambda) - Z(Q) + O(\epsilon^3) = \alpha(u) + \lambda E(u) + O(\epsilon^3) \lesssim \alpha(u) + O(\epsilon^3),
$$
where we have used that $E(u)<0$.  Spectral considerations imply that the orthogonality conditions yield positivity of $\mathcal{L}$, which together with elliptic regularity (upgrading from $L^2$ to $H^1$) implies \eqref{E:ep-control}.
\end{proof}

Note that as a consequence of \eqref{E:ep-control}, we have the following.  By scaling,
$$
\lambda(t)^2 \| \nabla u(t)\|_{L^2}^2  = \| \nabla (\epsilon+ Q) \|_{L^2}^2 = \| \nabla \epsilon(t) \|_{L^2}^2 + 2 \la \nabla \epsilon, \nabla Q \ra + \| \nabla Q\|_{L^2}^2,
$$
from which we obtain
$$
\left|  \lambda(t)^2 \frac{\| \nabla u(t) \|_{L^2}^2}{ \| \nabla Q\|_{L^2}^2} - 1 \right| \lesssim \| \epsilon(t) \|_{H^1} \lesssim \sqrt{\alpha(u)}.
$$
This gives us conversion formulas
\begin{equation}
\label{E:gradient-lambda1}
 \frac{ \|\nabla Q\|_{L^2}}{\| \nabla u(t) \|_{L^2}} (1-C\sqrt{\alpha}) \leq \lambda(t) \leq \frac{ \|\nabla Q\|_{L^2}}{ \|\nabla u(t) \|_{L^2}} (1+ C\sqrt{\alpha})
 \end{equation}
and
\begin{equation}
\label{E:gradient-lambda2}
\lambda(t)^{-1} \| \nabla Q\|_{L^2} (1- C\sqrt{\alpha})  \leq  \|\nabla u(t) \|_{L^2} \leq \lambda(t)^{-1} \| \nabla Q\|_{L^2} (1+C\sqrt{\alpha})
\end{equation}
for some absolute constant $C>0$.

In later parts of the paper, we will need a more precise statement of the remainder equation and parameter dynamics.

\begin{lemma}
Suppose that $u(t)$ solves the ZK equation \eqref{ZK}, $\alpha(u) \ll 1$ and $E(u)<0$, so that the geometrical decomposition applies with orthogonality conditions $\la Q^3, \epsilon \ra =0$ and $\la \nabla Q, \epsilon \ra =0$. Rescaling time to $s(t)$, where $\frac{ds}{dt} = \lambda^{-3}$, we have that the remainder function satisfies the equation
\begin{equation}
\label{E:ep-eqn}
\partial_s \epsilon =
\begin{aligned}[t]
&\partial_x \mathcal{L} \epsilon + \frac{\lambda_s}{\lambda} \Lambda Q + (\frac{x_s}{\lambda}-1) Q_x  + \frac{y_s}{\lambda} Q_y +  \frac{\lambda_s}{\lambda} \Lambda \epsilon + (\frac{x_s}{\lambda}-1) \epsilon_x  + \frac{y_s}{\lambda} \epsilon_y \\
& - \partial_x ( 3Q\epsilon^2 + \epsilon^3),
\end{aligned}
\end{equation}
recalling that $\Lambda = 1+ x\partial_x + y \partial_y$ is the generator of scaling as in \eqref{Lambda}, 
and $\mathcal{L} = 1 - \Delta - 3Q^2$ is the linearized operator as defined in \eqref{L-def}.\footnote{We note, for the purposes of computation, that $\Lambda$ is skew-adjoint and $\mathcal{L}$ is self-adjoint.}  
 
Moreover, if we let $b \defeq \|\epsilon\|_{L_s^\infty L_{xy}^2}$, then 
the parameters satisfy the equations
\begin{equation}
\label{E:param-bds}
| \frac{\lambda_s}{\lambda} - \la f_1, \epsilon \ra | \lesssim b^2 \,, \qquad |( \frac{x_s}{\lambda} -1 ) - \la f_2, \epsilon \ra | \lesssim b^2 \,, \qquad |\frac{y_s}{\lambda} - \la f_3, \epsilon \ra | \lesssim b^2,
\end{equation}
where $f_j$ are the smooth, rapidly decaying spatial functions
$$
f_1 =  \frac{2}{\|Q\|_{L^4}^4} \mathcal{L}(Q^3)_x \,, \quad f_2 = \frac{1}{\|Q_x\|_{L^2}^2} \mathcal{L}Q_{xx} \,, \quad f_3 = \frac{1}{\|Q_y\|_{L^2}^2} \mathcal{L}Q_{xy}.
$$
\end{lemma}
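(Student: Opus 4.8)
The plan is to derive the $\epsilon$-equation \eqref{E:ep-eqn} by substitution, then obtain the parameter equations \eqref{E:param-bds} by pairing \eqref{E:ep-eqn} against fixed test functions and exploiting the orthogonality conditions. First I would start from the relation $u(x,y,t) = \lambda^{-1}(Q+\epsilon)(\lambda^{-1}(x-x(t)),\lambda^{-1}(y-y(t)),t)$, compute $\partial_t u$ and $\partial_x(\Delta u + u^3)$ in terms of $\epsilon$, $Q$, and the time derivatives $\lambda_t,x_t,y_t$, and insert into (ZK). The scaling $\Lambda Q$ arises from $\partial_t$ hitting the $\lambda^{-1}$ prefactor and the rescaled argument, while the $Q_x,Q_y$ terms come from $\partial_t$ hitting $x(t),y(t)$; the $-\Delta Q + Q$ combination from the linear part combines with the $-3Q^2\epsilon$ piece of $\partial_x(u^3)$ expansion to produce exactly $\partial_x(\mathcal{L}\epsilon)$, using the ground state equation \eqref{Q-def}. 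Changing the time variable via $ds/dt = \lambda^{-3}$ replaces $\lambda^2\lambda_t$ by $\lambda_s/\lambda$ and $\lambda^2 x_t$ by $x_s/\lambda$, and clears the $\lambda^{-3}$ on the left, yielding \eqref{E:ep-eqn}. This part is essentially the same computation already sketched in the proof of Lemma \ref{L:epparam}, just recorded more precisely and in the rescaled time variable, so I expect it to be routine.

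For the parameter estimates, the idea is to differentiate each orthogonality condition in $s$. Since $\la \epsilon, \nabla Q\ra = 0$ and $\la \epsilon, Q^3\ra = 0$ hold for all $s$, we get $\la \partial_s\epsilon, Q_x\ra = \la \partial_s\epsilon, Q_y\ra = \la \partial_s\epsilon, Q^3\ra = 0$. Substituting \eqref{E:ep-eqn} into each of these three identities produces a $3\times 3$ linear system for the unknowns $\frac{\lambda_s}{\lambda}$, $\frac{x_s}{\lambda}-1$, $\frac{y_s}{\lambda}$, whose coefficient matrix has entries like $\la \Lambda Q, Q_x\ra$, $\la Q_x, Q^3\ra$, etc. Using the parities of $Q$ (radial), $\Lambda Q$ (radial), $Q_x$ (odd in $x$), $Q_y$ (odd in $y$), together with the identity $\int Q\,\Lambda Q = 0$ from Lemma \ref{L-prop2}, this matrix is diagonal (up to negligible coupling), and the diagonal entries are, after an integration by parts moving $\partial_x$ onto the test function and using self-adjointness of $\mathcal{L}$, exactly $\frac12\|Q\|_{L^4}^4$, $\|Q_x\|_{L^2}^2$, $\|Q_y\|_{L^2}^2$ up to the constants appearing in the definitions of $f_1,f_2,f_3$. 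Inverting, the leading term of each parameter is $\la f_j,\epsilon\ra$ with the stated $f_j$; the key computation is that $\la \partial_x\mathcal{L}\epsilon, Q^3\ra = -\la \epsilon, \mathcal{L}(Q^3)_x\ra$ and similarly for the others, which is why $f_1$ involves $\mathcal{L}(Q^3)_x$, and $f_2,f_3$ involve $\mathcal{L}Q_{xx}$, $\mathcal{L}Q_{xy}$.

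The remaining (quadratic) error terms are $\frac{\lambda_s}{\lambda}\la\Lambda\epsilon, \cdot\ra$, $(\frac{x_s}{\lambda}-1)\la\epsilon_x,\cdot\ra$, $\frac{y_s}{\lambda}\la\epsilon_y,\cdot\ra$, and $\la\partial_x(3Q\epsilon^2+\epsilon^3),\cdot\ra$. The nonlinear pairings are bounded by $\|\epsilon\|_{L^2}^2$ (pulling $\partial_x$ onto the rapidly decaying test function and using the exponential decay \eqref{prop-Q} plus $H^1\hookrightarrow L^4$ in 2D and \eqref{E:ep-control} to control higher powers), hence by $b^2$ since $b = \|\epsilon\|_{L_s^\infty L^2}$. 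The terms involving $\Lambda\epsilon,\epsilon_x,\epsilon_y$ are products of a parameter and an $O(\|\epsilon\|_{L^2})$ quantity; feeding back the first-order bound $|\frac{\lambda_s}{\lambda}|,|\frac{x_s}{\lambda}-1|,|\frac{y_s}{\lambda}| \lesssim \|\epsilon\|_{L^2} \lesssim b$ (which is just \eqref{E:param-dynam} rewritten) closes the bootstrap and gives the $b^2$ bound. The main obstacle I anticipate is purely bookkeeping: verifying that the off-diagonal entries of the $3\times 3$ matrix genuinely vanish or are harmless, and tracking that the integration-by-parts signs and the normalizing constants line up to reproduce exactly the stated $f_1,f_2,f_3$; there is no conceptual difficulty beyond what is already present in Lemma \ref{L:epparam}.
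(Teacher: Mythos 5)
Your proposal is correct and follows essentially the same route as the paper: substitute the decomposition into (ZK) to get \eqref{E:ep-eqn}, differentiate the orthogonality conditions to obtain a $3\times 3$ system whose leading coefficient matrix is exactly diagonal with entries $\tfrac12\|Q\|_{L^4}^4$, $\|Q_x\|_{L^2}^2$, $\|Q_y\|_{L^2}^2$, and whose right-hand side produces the $f_j$ after moving $\partial_x$ and $\mathcal{L}$ onto the test functions. The only (inessential) difference is that the paper inverts the perturbed matrix $A+B(\epsilon)$ by a Neumann series, whereas you absorb the $O(\|\epsilon\|_{L^2})$ coupling terms by feeding back the first-order bound \eqref{E:param-dynam}; these are equivalent to the order needed.
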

\begin{proof}
Taking $\partial_s$ of the orthogonality conditions, we obtain three equations, which are collectively expressed as
$$
(A + B(\epsilon)) \begin{bmatrix} \frac{\lambda_s}{\lambda} \\ \frac{x_s}{\lambda}-1 \\ \frac{y_s}{\lambda} \end{bmatrix} =  \begin{bmatrix} \la \mathcal{L} (Q^3)_x, \epsilon \ra \\ \la \mathcal{L} Q_{xx} , \epsilon \ra \\ \la \mathcal{L} Q_{xy}, \epsilon  \ra \end{bmatrix} + \begin{bmatrix} 3\la (Q^3)_x Q, \epsilon^2 \ra + \la (Q^3)_x, \epsilon^3 \ra \\ 3 \la Q_{xx}Q, \epsilon^2 \ra + \la Q_{xx}, \epsilon^3 \ra \\ 3\la Q_{xy}Q, \epsilon^2 \ra + \la Q_{xy}, \epsilon^3 \ra \end{bmatrix},
$$
where 
$$
A = \begin{bmatrix} \frac12 \|Q\|_{L^4}^4 & & \\ & \|Q_x\|_{L^2}^2 & \\ & & \| Q_y \|_{L^2}^2 \end{bmatrix} \text{ and } B(\epsilon) = \begin{bmatrix} \la \Lambda (Q^3), \epsilon \ra & \la (Q^3)_x, \epsilon\ra  &  \la (Q^3)_y, \epsilon \ra \\ \la \Lambda Q_x, \epsilon \ra & \la Q_{xx}, \epsilon \ra & \la Q_{xy}, \epsilon \ra \\ \la \Lambda Q_y, \epsilon \ra & \la Q_{xy}, \epsilon \ra & \la Q_{yy}, \epsilon \ra \end{bmatrix}.
$$
Note that every entry $b_{ij}(\epsilon)$ of the matrix $B(\epsilon)$ satisfies $| b_{ij}(\epsilon)| \lesssim \|\epsilon \|_{L^2}$.  Using that 
$$
(A+B(\epsilon))^{-1} = [A(I+A^{-1}B(\epsilon))]^{-1} = (I+A^{-1}B(\epsilon))^{-1} A^{-1}
$$
and Neumann expansion of $(I+A^{-1}B(\epsilon))^{-1}$  that, if $b= \| \epsilon \|_{L_s^\infty L_{xy}^2} \ll 1$, then \eqref{E:param-bds} holds.
\end{proof}

Since $\frac{ds}{dt} = \frac{1}{\lambda^3}$, we have the conversions
$$ 
\frac{\lambda_s}{\lambda} = \lambda^2 \lambda_t \,, \qquad \frac{x_s}{\lambda} - 1 = \lambda^2( x_t - \lambda^{-2}) \,, \qquad \frac{y_s}{\lambda} = \lambda^2 y_t.
$$

As a side remark, for use in subsequent sections, define 
$$
\eta(x,y,t) = \lambda^{-1} \epsilon( \lambda^{-1}x, \lambda^{-1}y,t)
$$
and
$$
\tilde f_j(x,y) = \lambda^{-1} f_j(\lambda^{-1}x, \lambda^{-1}y).
$$
Then change of variables gives
$$
| \lambda^2 \lambda_t - \la \tilde f_1, \eta \ra | \lesssim b^2 \,, \quad |( \lambda^2 x_t -1 ) - \la \tilde f_2, \eta \ra | \lesssim b^2 \,, \quad | \lambda^2 y_t  - \la \tilde f_3, \eta\ra| \lesssim b^2,
$$
which is used in sections \ref{section-M} and \ref{S-10}.

Also, let
$$
\zeta(x,y,t) = b^{-1} \lambda^{-1} \epsilon( \lambda^{-1}(x-x(t)), \lambda^{-1}(y-y(t)),t),
$$
where $b = \|\epsilon\|_{L_t^\infty L_{xy}^2}$ and 
$$
\bar f_j(x,y) = \lambda^{-1} f_j(\lambda^{-1}(x-x(t)), \lambda^{-1}(y-y(t))).
$$
Then change of variables gives
$$
| b^{-1} \lambda^2 \lambda_t - \la \bar f_1, \zeta \ra | \lesssim b \,, \quad |b^{-1}( \lambda^2 x_t -1 ) - \la \bar f_2, \zeta \ra | \lesssim b \,, \quad | b^{-1}\lambda^2 y_t  - \la \bar f_3, \zeta \ra| \lesssim b,
$$
which is used in Sections \ref{section-C1} and \ref{section-C}.

\section{Extraction of ``future state'' weak limit $\tilde u_n(0)$, introduction of time frame $(-t_1(n),t_2(n))$, stability of the weak limit and applications}

By passing to a subsequence, we can assume that, for each $n$, 
\begin{equation}
\label{E:futurestate3}
u_n(\bullet + x(t_{n,m}), \bullet+ y(t_{n,m}), t_{n,m}) \rightharpoonup \tilde u_n(\bullet, \bullet, 0) \text{ weakly in } H_{xy}^1 \text{ as }m\to\infty
\end{equation}
for some $\tilde u_n(0) \in H^1_{xy}$.   

\begin{lemma}[energy constraints on $\tilde u_n$]
\label{L:energyconstraints} \qquad
\begin{enumerate}
\item For all $n$, we have $E(\tilde u_n(0)) \leq E(u_n(0))$ and $0< \alpha(\tilde u_n(0))\leq \alpha(u_n(0))$,
\item $\tilde u_n(0) \to Q$ in $H^1$ strongly as $n\to \infty$.
\end{enumerate}
\end{lemma}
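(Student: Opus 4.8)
The plan is to get the mass bound from weak lower semicontinuity, the energy bound from a Brezis--Lieb splitting combined with the Weinstein inequality, and the strong convergence in part~(2) from the modulational characterization of the very sequence whose weak limit defines $\tilde u_n(0)$.

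Fix $n$ and let $v_m$ denote the translated solutions appearing in \eqref{E:futurestate3}, so $v_m\rightharpoonup\tilde u_n(0)$ weakly in $H_{xy}^1$. Since mass and energy are translation invariant and conserved, $\|v_m\|_{L^2}^2=M(u_n(0))$ and $E(v_m)=E(u_n(0))$ for every $m$, while $\|\nabla v_m\|_{L^2}^2\to\|\nabla Q\|_{L^2}^2$ by \eqref{E:grad-conv}; in particular $\{v_m\}$ is bounded in $H_{xy}^1$. Weak lower semicontinuity of $\|\cdot\|_{L^2}$ gives $\|\tilde u_n(0)\|_{L^2}^2\le\liminf_m\|v_m\|_{L^2}^2=M(u_n(0))$, which is exactly $\alpha(\tilde u_n(0))\le\alpha(u_n(0))$.

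For the energy bound I would set $g_m\defeq v_m-\tilde u_n(0)$, so $g_m\rightharpoonup 0$ in $H_{xy}^1$ and, passing to a further subsequence, $g_m\to0$ a.e. The Brezis--Lieb lemma yields $\|v_m\|_{L^4}^4=\|\tilde u_n(0)\|_{L^4}^4+\|g_m\|_{L^4}^4+o(1)$, and weak convergence of $\nabla g_m$ in $L^2$ yields $\|\nabla v_m\|_{L^2}^2=\|\nabla\tilde u_n(0)\|_{L^2}^2+\|\nabla g_m\|_{L^2}^2+o(1)$; hence $E(u_n(0))=E(v_m)=E(\tilde u_n(0))+E(g_m)+o(1)$, so $E(g_m)\to E(u_n(0))-E(\tilde u_n(0))$. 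It therefore suffices to check $E(g_m)\ge0$ for $m$ large, and here the Weinstein inequality enters: $E(g_m)\ge\tfrac12\|\nabla g_m\|_{L^2}^2\bigl(1-\|g_m\|_{L^2}^2/\|Q\|_{L^2}^2\bigr)$, so it is enough that $\limsup_m\|g_m\|_{L^2}^2<\|Q\|_{L^2}^2$. Since $\|g_m\|_{L^2}^2\to M(u_n(0))-\|\tilde u_n(0)\|_{L^2}^2$, this reduces to a lower bound on $\|\tilde u_n(0)\|_{L^2}^2$, which I extract from the geometrical decomposition (Lemma~\ref{L:geom}): $v_m=\lambda_{n,m}^{-1}Q(\lambda_{n,m}^{-1}\cdot)+r_m$ with $\lambda_{n,m}\defeq\lambda_n(t_{n,m})$ and $\|r_m\|_{H^1}\lesssim\sqrt{\alpha_n}$ by \eqref{E:ep-control} and scaling; as $\lambda_{n,m}$ is bounded away from $0$ and $\infty$ (by \eqref{E:gradient-lambda1} and \eqref{E:grad-conv}), the profile converges strongly in $L^2$ along a subsequence, so the weak limit inherits a bump of mass $\|Q\|_{L^2}^2+O(\sqrt{\alpha_n})$ and $\|\tilde u_n(0)\|_{L^2}^2\ge\|Q\|_{L^2}^2-C\sqrt{\alpha_n}$. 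Then $M(u_n(0))-\|\tilde u_n(0)\|_{L^2}^2\le\alpha_n+C\sqrt{\alpha_n}<\|Q\|_{L^2}^2$ for $n$ large, which closes the argument: $E(\tilde u_n(0))\le E(u_n(0))<0$, and then $\alpha(\tilde u_n(0))>0$ by the Weinstein inequality, as recorded after Theorem~\ref{T:main}.

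For part~(2) I reuse the decomposition $v_m=\lambda_{n,m}^{-1}Q(\lambda_{n,m}^{-1}\cdot)+r_m$. By \eqref{E:gradient-lambda1} and \eqref{E:grad-conv}, $\lambda_{n,m}\to\lambda_{n,\infty}$ along a subsequence with $|\lambda_{n,\infty}-1|\lesssim\sqrt{\alpha_n}$; passing to the limit in $m$ (strongly in $H^1$ for the profile, weakly for $r_m$) gives $\tilde u_n(0)=\lambda_{n,\infty}^{-1}Q(\lambda_{n,\infty}^{-1}\cdot)+\rho_n$ with $\|\rho_n\|_{H^1}\le\liminf_m\|r_m\|_{H^1}\lesssim\sqrt{\alpha_n}$. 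Since $\alpha_n\to0$ forces $\lambda_{n,\infty}\to1$ and $\|\rho_n\|_{H^1}\to0$, and scaling is continuous on $H^1$, we conclude $\|\tilde u_n(0)-Q\|_{H^1}\le\|\lambda_{n,\infty}^{-1}Q(\lambda_{n,\infty}^{-1}\cdot)-Q\|_{H^1}+\|\rho_n\|_{H^1}\to0$. The main obstacle is the energy inequality: a priori the weak limit could shed $L^4$-mass to infinity without losing energy, so the Brezis--Lieb step is needed to isolate the lost piece $g_m$, and the key point is that $g_m$ carries \emph{non-negative} energy --- this is precisely where the near-threshold hypothesis $\alpha_n\to0$ is used, namely to force $\|g_m\|_{L^2}^2<\|Q\|_{L^2}^2$ so that Weinstein applies with the favorable sign.
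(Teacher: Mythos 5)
Your proof is correct and is essentially the argument the paper relies on: the paper's proof of this lemma simply cites Merle's Lemma 7, whose content is precisely your combination of weak lower semicontinuity for the mass, the Brezis--Lieb/orthogonal splitting $E(v_m)=E(\tilde u_n(0))+E(g_m)+o(1)$ together with $E(g_m)\ge 0$ from the Weinstein inequality (valid because the radiation $g_m$ has subcritical mass, which you correctly secure via the modulational decomposition's lower bound on $\|\tilde u_n(0)\|_{L^2}$), and the persistence of the $Q$-profile under the weak limit for part (2). I see no gaps, beyond the harmless caveat that the conclusion holds for $n$ large enough that $\alpha_n$ lies below the modulation threshold, which is all the paper ever uses.
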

\begin{proof}
This follows the proof of Lemma 7 in Merle \cite{M-blowup} given on p. 571-572 of that paper.
\end{proof}

Let $\tilde u_n(t)$ be the $H_{xy}^1$ evolution of initial data $\tilde u_n(0)$ by ZK.  Let $\tilde x_n(t)$, $\tilde y_n(t)$, and $\tilde \lambda_n(t)$ be the geometrical parameters associated with $\tilde u_n(t)$ on its maximal time interval of existence, as given by  Lemma \ref{L:geom}, noting that the corresponding remainder $\tilde \epsilon_n$ and these parameters satisfy the properties delineated in Lemma \ref{L:epparam}.

Let $(-t_1(n),t_2(n))$ be the \emph{maximal} time interval, on which 
\begin{equation}
\label{E:timeframe}
\frac12 \leq \tilde \lambda_n(t) \leq 2  \quad \text{and} \quad \frac12 \leq \liminf_{m\to \infty} \inf_t \lambda_n(t_{n,m}+t) \leq \limsup_{m\to \infty} \sup_t \lambda_n(t_{n,m}+t) \leq 2 
\end{equation}
hold.  By \eqref{E:gradient-lambda1} and \eqref{E:gradient-lambda2}, these can equivalently be viewed as upper and lower bounds on $\|\nabla \tilde u_n(t) \|_{L^2}$ and $\liminf_{m\to +\infty} \| \nabla u(t_{n,m}+t) \|_{L^2}$, $\limsup_{m\to +\infty} \|\nabla u(t_{n,m}+t)\|_{L^2}$.
We will ultimately show that for $n$ sufficiently large, $t_1(n)=t_2(n) = \infty$.  But in the meantime, we can argue that the time interval $(-t_1(n),t_2(n))$ is nontrivial (Lemma \ref{L:nontrivial}), which is not \emph{a priori} obvious due to the limits that appear in the definition \eqref{E:timeframe}.

Let
\begin{equation}
\label{E:futurestate}
v_{n,m}(\bullet, \bullet, 0) = u_n(\bullet + x(t_{n,m}), \bullet+ y(t_{n,m}), t_{n,m}).
\end{equation}
Let $v_{n,m}(t)$ denote the nonlinear evolution by ZK of initial condition $v_{n,m}(0)$.  Then by spatial and time translation invariance of the ZK flow,
\begin{equation}
\label{E:futurestate2}
v_{n,m}(x,y,t) = u_n(x+x(t_{n,m}), y+y(t_{n,m}), t_{n,m}+t).
\end{equation}

We will drop the $n$ subscript for a little while, since we consider fixed $n$, and state and prove two general lemmas (Lemma \ref{L:weakchar} and Lemma \ref{L:weakstab}) before applying Lemma \ref{L:weakstab} to $v_{n,m}$ as defined in \eqref{E:futurestate2} in Corollary \ref{C:weakstab} on the time frame $(-t_1(n),t_2(n))$ defined in \eqref{E:timeframe}.

\begin{lemma}[nontriviality of the time frame]
\label{L:nontrivial}
For each $n$, we have 
$$t_1(n), t_2(n) \gtrsim 1.$$
\end{lemma}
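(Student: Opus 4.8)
The plan is a continuity argument. Recalling \eqref{E:timeframe}, I will show that at $t=0$ the three quantities $\tilde\lambda_n(0)$, $\liminf_{m}\lambda_n(t_{n,m})$ and $\limsup_{m}\lambda_n(t_{n,m})$ all lie strictly inside $(1/2,2)$, and that each of them moves only at speed $O(\sqrt{\alpha_n})$; since $\alpha_n\to0$, none of them can reach the boundary $\{1/2,2\}$ before some absolute time $c>0$, which forces $(-c,c)\subset(-t_1(n),t_2(n))$. Throughout, ``$n$ large'' means large enough that $\alpha_n\le\alpha_3$, so that Lemmas~\ref{L:geom} and~\ref{L:epparam} apply to $u_n$ and, via Lemma~\ref{L:energyconstraints}, to $\tilde u_n$ as well.

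First, the values at $t=0$. By Lemma~\ref{L:energyconstraints}, $\alpha(\tilde u_n(0))\le\alpha_n\to0$ and $\tilde u_n(0)\to Q$ strongly in $H^1$, hence $\|\nabla\tilde u_n(0)\|_{L^2}\to\|\nabla Q\|_{L^2}$; the conversion formula \eqref{E:gradient-lambda1} applied to $\tilde u_n(0)$ then yields $\tilde\lambda_n(0)\to1$. For the sequence $u_n$, \eqref{E:grad-conv} gives $\|\nabla u_n(t_{n,m})\|_{L^2}\to\|\nabla Q\|_{L^2}$ as $m\to\infty$, so \eqref{E:gradient-lambda1} forces $1-C\sqrt{\alpha_n}\le\liminf_m\lambda_n(t_{n,m})\le\limsup_m\lambda_n(t_{n,m})\le1+C\sqrt{\alpha_n}$. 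Thus for $n$ large each of the three quantities lies in $(4/5,5/4)$.

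Next, the speed of the scale parameter. For any (ZK) solution $v$ with $E(v)<0$ and $\alpha(v)\le\alpha_3$, writing $\lambda(t)$ and $\epsilon(t)$ for its geometric decomposition, the bounds \eqref{E:param-dynam} and \eqref{E:ep-control} give, on the whole maximal existence interval of $v$,
\[
\left|\frac{d}{dt}\big(\lambda(t)^3\big)\right|=3\,\lambda^2|\partial_t\lambda|\lesssim\|\epsilon(t)\|_{L^2}\lesssim\sqrt{\alpha(v)},
\]
with an absolute implicit constant; note this uses \emph{no} lower bound on $\lambda$, so the absence of an a priori upper bound on $\|\nabla u_n\|$ is irrelevant here. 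Applying this with $v=\tilde u_n$: since $\|\tilde u_n(0)\|_{H^1}$ is bounded uniformly in $n$, $H^1$ local well-posedness (\cite{LP}) together with the time-reversal symmetry of (ZK) gives existence of $\tilde u_n$ on an absolute interval $[-T_0,T_0]$, and there $|\tilde\lambda_n(t)^3-\tilde\lambda_n(0)^3|\le C\sqrt{\alpha_n}|t|$. Applying it with $v=u_n$ (global for $t\ge0$, with $\alpha(u_n)=\alpha_n$ conserved): for $|t|\le T_0$ and every $m$ with $t_{n,m}\ge T_0$,
\[
\big|\lambda_n(t_{n,m}+t)^3-\lambda_n(t_{n,m})^3\big|\le C\sqrt{\alpha_n}\,|t|
\]
with the same $C$ for all such $m$, so taking $\liminf_m$ and $\limsup_m$ (the right-hand side being independent of $m$) gives the corresponding bounds for $\liminf_m\lambda_n(t_{n,m}+t)^3$ and $\limsup_m\lambda_n(t_{n,m}+t)^3$; note also that $\lambda_n(t_{n,m}+t)$ is well defined for these $t$ and $m$, since $t_{n,m}+t\ge0$ and Lemma~\ref{L:geom} applies to $u_n$ for all $t\ge0$.

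Finally, conclude. Fix an absolute $c$ with $0<c\le T_0$, then take $n$ large enough that $C\sqrt{\alpha_n}\,c\le1/10$. For such $n$ and $|t|\le c$, each of $\tilde\lambda_n(t)^3$, $\liminf_m\lambda_n(t_{n,m}+t)^3$, $\limsup_m\lambda_n(t_{n,m}+t)^3$ lies within $1/10$ of a point of $((4/5)^3,(5/4)^3)$, hence inside $(1/8,8)$, i.e.\ the corresponding $\lambda$ lies in $(1/2,2)$. Therefore $[-c,c]\subset(-t_1(n),t_2(n))$, so $t_1(n),t_2(n)\ge c\gtrsim1$. I expect the only mildly delicate point to be the passage to the $m$-limits in \eqref{E:timeframe} in the third step: it works precisely because the scale-speed bound $\lesssim\sqrt{\alpha_n}$ is \emph{uniform in $m$} --- a consequence of $\alpha(u_n)$ being conserved in time and of $\|\epsilon_n(t)\|_{L^2}\lesssim\sqrt{\alpha_n}$ holding for every $t\ge0$ --- and requires no control whatsoever on the size of $\lambda_n$.
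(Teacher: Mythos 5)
Your proof is correct, but it takes a genuinely different route from the paper's. The paper proves the lemma perturbatively: it sets $q(x,t)=Q(x-t)$, writes $\eta_m=v_m-q$ with $\|\eta_m(0)\|_{H^1}\lesssim\sqrt{\alpha(u_n)}$, and runs the Duhamel/local-theory machinery of Lemmas \ref{L:linhom}--\ref{L:lininhom} to keep $\|\eta_m(t)\|_{H^1}\lesssim\sqrt{\alpha(u_n)}$ on a unit time interval uniformly in $m$, whence $|\lambda_n(t_{n,m}+t)-1|\lesssim\sqrt{\alpha(u_n)}$ there (and likewise for $\tilde u_n$). You instead integrate the modulation ODE bound: $|\partial_t(\lambda^3)|=3|\lambda^2\lambda_t|\lesssim\|\epsilon\|_{L^2}\lesssim\sqrt{\alpha}$ from \eqref{E:param-dynam} and \eqref{E:ep-control}, which holds on the whole existence interval with an absolute constant and, as you rightly stress, with no lower bound on $\lambda$; the uniformity in $m$ then lets you pass to the $\liminf/\limsup$ in \eqref{E:timeframe} exactly as the paper does. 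Your argument is more economical given that Lemma \ref{L:epparam} is already in place (no second fixed-point argument), and the one spot where you still need the local theory --- existence of $\tilde u_n$ on an absolute interval $[-T_0,T_0]$ --- is correctly supplied by the uniform $H^1$ bound on $\tilde u_n(0)$ from Lemma \ref{L:energyconstraints} plus time-reversal symmetry. What the paper's route buys in exchange is the sharper conclusion $|\lambda-1|\lesssim\sqrt{\alpha_n}$ (rather than merely $\lambda\in[\tfrac12,2]$) and a template that is reused verbatim later to upgrade the bootstrap hypothesis \eqref{E:timeframe}; your route yields essentially the same sharp statement anyway, since $\lambda^3$ stays within $O(\sqrt{\alpha_n})\,|t|$ of its value at the reference time. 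Both proofs apply only for $n$ large enough that $\alpha_n$ is below the relevant threshold, which is all that is needed downstream.
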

\begin{proof}
Let $q(x,t) = Q(x-t)$  and $\eta_m(t) = v_m(t) - q(t)$.  Note that \eqref{E:grad-conv} and \eqref{E:gradient-lambda1} imply that $|\lambda_n(t_{n,m}) - 1| \lesssim \sqrt{\alpha(u_n)}$ and hence $\|\eta_m(0) \|_{H^1} \lesssim \alpha(u_n)^{1/2}$.  

By substituting into the ZK equation for $v_m$, we obtain
\begin{equation}
\label{E-nu}
\partial_t \eta_m + \partial_x \Delta \eta_m + \partial_x ( (\eta_m+q)^3 - q^3 ) =0.
\end{equation}
Expanding the nonlinear term, we get 
$$
(\eta_m+q)^3 - q^3 = 3\eta_m q^3 + 3 \eta_m^2 q + \eta_m^3.
$$
We set up \eqref{E-nu} in Duhamel form and apply estimates in Lemmas \ref{L:linhom}, \ref{L:lininhom}, using that $\| \eta_m(0)\|_{H^1}\lesssim \sqrt{\alpha(u_n)}$, which is small, to obtain a solution $\eta_m(t)$ on an $O(1)$ time frame with $\|\eta_m(t)\|_{H^1} \lesssim \sqrt{\alpha(u_n)}$.  This implies that, on this unit time frame, $|\lambda_n(t_{n,m}+t) - 1| \lesssim \sqrt{\alpha(u_n)}$, and hence, the second part of \eqref{E:timeframe} is satisfied (note the estimate is uniform in $m$).  

The first part of \eqref{E:timeframe} is proved in the same way, using $v(t)$ as the reference solution in place of $v_m(t)$.
\end{proof}

For fixed smooth function $\chi(x,y)$ on $\mathbb{R}^2$ with $\chi(x,y) =1$ for $|(x,y)| \leq 1$ and $\chi(x,y)=0$ for $|(x,y)| \geq 2$, set
$$
\mathbf{1}_{\leq k}(x,y) = \chi \left( \frac{x}k, \frac{y}{k} \right) \,, \qquad \mathbf{1}_{\geq k}(x,y) = 1- \chi \left( \frac{x}k, \frac{y}{k} \right)
$$
for $k\in \mathbb{N}$.

The next statement is known in some variance, we state and prove it for completeness and convenience of the reader as well as adapting it to our notation. 
\begin{lemma}[characterization of weak limits] 
\label{L:weakchar}
The following are equivalent for a sequence $\{ v_m \} \subset H^1_{xy}$ bounded in $H^1_{xy}$, and $v\in H^1_{xy}$.  
\begin{enumerate}
\item $v_m \rightharpoonup v$ weakly in $H^1_{xy}$
\item for each $k\in \mathbb{N}$, $v_{m} \mathbf{1}_{\leq k} \to v \mathbf{1}_{\leq k}$ strongly in $L^2_{xy}$.    Equivalently, we state $v_m \to v$ strongly in $L^2_{\textnormal{loc}}$. 
\item For each subsequence $v_{m'}$ of $v_m$, there exists a subsequence $v_{m''}$ of $v_{m'}$ such that the following holds:  for each $k\in \mathbb{N}$, $v_{m''}\mathbf{1}_{\leq k} \to v \mathbf{1}_{\leq k}$ strongly in $L^2_{xy}$. 
\item For each subsequence $v_{m'}$ of $v_m$, there exists a subsequence $v_{m''}$  of $v_{m'}$ and radii $\rho_{m''} \to \infty$ such that the following holds:  $v_{m''} \mathbf{1}_{\leq \rho_{m''}} \to v$ strongly in $L^2_{xy}$.
\end{enumerate}
\end{lemma}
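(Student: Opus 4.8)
The plan is to prove the cyclic chain of implications $(1)\Rightarrow(2)\Rightarrow(3)\Rightarrow(4)\Rightarrow(1)$. The equivalence of the two formulations of $(2)$ is immediate: every compact $K\subset\mathbb{R}^2$ lies in some $B(0,k)$ on which $\mathbf{1}_{\leq k}\equiv1$, while conversely $\mathbf{1}_{\leq k}$ is supported in $\overline{B(0,2k)}$.

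For $(1)\Rightarrow(2)$ I would fix $k$ and note that $v_m\mathbf{1}_{\leq k}$ is supported in the fixed ball $\overline{B(0,2k)}$ and, because $\mathbf{1}_{\leq k}$ is smooth with bounded derivatives, is bounded in $H^1_{xy}$; hence by the Rellich--Kondrachov theorem it is precompact in $L^2_{xy}$. Since $v_m\rightharpoonup v$ weakly in $H^1_{xy}$ forces $v_m\mathbf{1}_{\leq k}\rightharpoonup v\mathbf{1}_{\leq k}$ weakly in $L^2_{xy}$ (pair with $\psi\in L^2_{xy}$ and move $\mathbf{1}_{\leq k}$ onto $\psi$), and a sequence that is simultaneously weakly convergent and norm-precompact converges in norm to its weak limit, we get $v_m\mathbf{1}_{\leq k}\to v\mathbf{1}_{\leq k}$ in $L^2_{xy}$. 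The implication $(2)\Rightarrow(3)$ is trivial: given a subsequence $v_{m'}$, take $v_{m''}=v_{m'}$ and invoke $(2)$. For $(3)\Rightarrow(4)$ I would run a diagonal argument: given $v_{m'}$, take the subsequence $v_{m''}$ provided by $(3)$; for each $j$ pick an index $N_j$ (with $N_1<N_2<\cdots$) so that $\|v_{m''}\mathbf{1}_{\leq j}-v\mathbf{1}_{\leq j}\|_{L^2_{xy}}\le 1/j$ for $m''\ge N_j$, and set $\rho_{m''}:=j$ for $N_j\le m''<N_{j+1}$. Then $\rho_{m''}\to\infty$, $\|v_{m''}\mathbf{1}_{\leq\rho_{m''}}-v\mathbf{1}_{\leq\rho_{m''}}\|_{L^2_{xy}}\le 1/\rho_{m''}\to0$, and $\|v\mathbf{1}_{\leq\rho_{m''}}-v\|_{L^2_{xy}}\to0$ by dominated convergence, so $v_{m''}\mathbf{1}_{\leq\rho_{m''}}\to v$ in $L^2_{xy}$.

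The implication $(4)\Rightarrow(1)$ is where the real content lies. Since $\{v_m\}$ is bounded in the Hilbert space $H^1_{xy}$, it suffices to show that every weakly convergent subsequence has weak limit $v$; so suppose $v_{m'}\rightharpoonup w$ in $H^1_{xy}$. Applying $(4)$ to $v_{m'}$ yields a further subsequence $v_{m''}$ and radii $\rho_{m''}\to\infty$ with $v_{m''}\mathbf{1}_{\leq\rho_{m''}}\to v$ in $L^2_{xy}$, while applying the already-established implication $(1)\Rightarrow(2)$ to $v_{m''}$ (which still converges weakly to $w$) gives $v_{m''}\mathbf{1}_{\leq k}\to w\mathbf{1}_{\leq k}$ in $L^2_{xy}$ for every $k$. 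Fixing $k_0$ and using that $\mathbf{1}_{\leq k_0}\mathbf{1}_{\leq\rho_{m''}}=\mathbf{1}_{\leq k_0}$ once $\rho_{m''}\ge 2k_0$, one obtains $v_{m''}\mathbf{1}_{\leq k_0}=(v_{m''}\mathbf{1}_{\leq\rho_{m''}})\mathbf{1}_{\leq k_0}\to v\mathbf{1}_{\leq k_0}$ in $L^2_{xy}$, which forces $w\mathbf{1}_{\leq k_0}=v\mathbf{1}_{\leq k_0}$; as $k_0$ is arbitrary, $w=v$. I expect the only delicate points to be the bookkeeping in this last step — keeping the fixed cutoffs $\mathbf{1}_{\leq k}$ distinct from the moving radii $\rho_{m''}$, and correctly using the reduction ``unique weak subsequential limit $\Rightarrow$ weak convergence'', which relies on boundedness in $H^1_{xy}$ — together with the routine diagonal extraction in $(3)\Rightarrow(4)$; everything else is soft functional analysis.
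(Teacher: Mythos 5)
Your proof is correct. It differs from the paper's in its logical organization and in one substantive step. The paper establishes the equivalence through the web $(2)\iff(3)$ (by negation), $(1)\Rightarrow(3)$ (Rellich--Kondrachov plus a diagonal subsequence extraction), $(2)\Rightarrow(4)$, $(4)\Rightarrow(3)$, and closes the loop with $(2)\Rightarrow(1)$, which it proves by a short density/duality argument: reduce to test functions $\phi\in C_c^\infty$, write $\la v_m-v,\phi\ra_{H^1}=\la (v_m-v)\mathbf{1}_{\leq k},(1-\Delta)\phi\ra_{L^2}$ for $k$ larger than the support radius, and apply Cauchy--Schwarz. You instead prove the single cycle $(1)\Rightarrow(2)\Rightarrow(3)\Rightarrow(4)\Rightarrow(1)$ and close the loop from $(4)$, via weak sequential compactness of the bounded sequence in $H^1$ together with uniqueness of the weak subsequential limit, identifying that limit by multiplying the moving cutoff $\mathbf{1}_{\leq\rho_{m''}}$ against a fixed $\mathbf{1}_{\leq k_0}$. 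Your $(1)\Rightarrow(2)$ is also slightly slicker than the paper's corresponding step: by observing that $v_m\mathbf{1}_{\leq k}$ is both norm-precompact (Rellich--Kondrachov on a fixed ball) and weakly convergent to $v\mathbf{1}_{\leq k}$, you get strong convergence of the full sequence without the diagonal extraction the paper uses for $(1)\Rightarrow(3)$. The trade-off is that the paper's $(2)\Rightarrow(1)$ is more elementary and self-contained, whereas your $(4)\Rightarrow(1)$ shows the implication can be closed directly from the weakest-looking hypothesis; both routes rely on the same two tools (Rellich--Kondrachov and a diagonal choice of the radii $\rho_{m''}$), so the difference is organizational rather than conceptual.
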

\begin{proof}
First, we note that $(2) \iff (3)$, which is a standard analytic fact proved by showing the negations are equivalent.

$(1) \implies (3)$.  Our argument will involve successive passive to subsequences starting with $v_{m'}$ ultimately yielding $v_{m''}$, although for convenience we will use the notation $v_m$ to refer to each such subsequence.  By the Rellich-Kondrachov compactness theorem, for $k=1$, there exists a subsequence in $m$ of $v_m \mathbf{1}_{\leq k}$ such that $v_m \mathbf{1}_{\leq k}$ converges strongly in $L^2_{xy}$.  Passing to a further subsequence, we can arrange that $v_m \mathbf{1}_{\leq k}$ converges strongly in $L^2_{xy}$ for $k=2$, and so forth.  By taking the diagonal subsequence, we now have a subsequence such that  for each $k\in \mathbb{N}$, the sequence $v_m \mathbf{1}_{\leq k}$ converges strongly in $L^2_{xy}$ as $m\to \infty$.  By the definition of weak convergence, it follows that, for fixed $k$, the value of this limit is $v \mathbf{1}_{\leq k}$.

$(2) \implies (4)$.  Let $v_{m'}$ be a given subsequence of $v_m$.   For convenience we relabel $v_{m'}$ as $v_m$.  By (2) we know that for each $k\in \mathbb{N}$, $v_{m} \mathbf{1}_{\leq k}$ converges strongly in $L^2_{xy}$.  Now we will pass to a subsequence as follows.  For each $\ell \in \mathbb{N}$, we will show that there exists $k_\ell$ and $m_{\ell}$ such that
\begin{equation}
\label{E:stretchingl2}
\| v_{m_\ell} \mathbf{1}_{\leq k_\ell} - v \|_{L^2_{xy}} \leq \frac{1}{\ell}
\end{equation}
and 
$$
m_\ell \to +\infty \text{ and } k_{\ell } \to +\infty \text{ as } \ell \to +\infty.
$$
Indeed, for a given $\ell$, first take $k_\ell$ sufficiently large (also requiring $k_\ell \geq \ell$) so that 
$$
\| v \mathbf{1}_{\leq k_\ell} - v \|_{L^2_{xy}} \leq \frac{1}{2\ell}.
$$
Then for this $k_\ell$, find $m_{\ell}$ sufficiently large (also requiring $m_\ell \geq \ell$) so that
$$
\| v_{m_\ell} \mathbf{1}_{\leq k_\ell} - v \mathbf{1}_{\leq k_{\ell}} \|_{L^2_{xy}} \leq \frac{1}{2\ell}.
$$
Combining the two gives \eqref{E:stretchingl2}.  Now it is convenient to relabel \eqref{E:stretchingl2} as the statement 
\begin{equation}
\label{E:stretchingl2b}
\| v_m \mathbf{1}_{\leq \rho_m} - v \|_{L^2_{xy}} \leq \frac{1}{m},
\end{equation}
which is achieved by replacing the original $v_m$ sequence with the subsequence constructed, and where now $\rho_m \to +\infty$ as $m\to +\infty$.

$(4) \implies (3)$.  Straightforward, since for fixed $k$, for sufficiently large $m''$, we have
$$
(v_{m''}-v) \mathbf{1}_{\leq k} = (v_{m''}-v) \mathbf{1}_{\leq \rho_{m''}} \mathbf{1}_{\leq k}
$$
and the right side $\to 0$ in $L^2$ by (4).

$(2) \implies (1)$.  Since we have assumed that $\{ v_m \} \subset H^1$ is bounded, we can apply the density of $C_c^\infty (\mathbb{R}^2)$ in $H^1(\mathbb{R}^2)$ to reduce to a test function in $\phi \in C_c^\infty(\mathbb{R}^2)$.  Let $k$ be any fixed integer larger than the support radius of $\phi$.  Then 
$$
|\la v_m -v , \phi\ra_{H^1}| =|\la v_m -v , (1-\Delta) \phi\ra_{L^2}| = |\la (v_m-v)\mathbf{1}_{\leq k}, (1-\Delta) \phi \ra_{L^2}|  
$$
$$
\leq \|  (v_m-v)\mathbf{1}_{\leq k}\|_{L^2} \| (1-\Delta) \phi\|_{L^2} \to 0
$$
by Cauchy-Schwarz and (2).
\end{proof}

\begin{lemma}[stability of weak limits under ZK flow]
\label{L:weakstab}
Suppose that $v_m(0) \rightharpoonup v(0)$ weakly in $H^1_{xy}$ and that there exists $k\in \mathbb{N}$ such  that for all $m$, $\|v_m(0) \mathbf{1}_{\geq k} \|_{L_x^2} \leq \frac12 \|Q\|_{L_x^2}$.  Letting $v_m(t)$ and $v(t)$ denote evolution of initial data $v_m(0)$ and $v(0)$, respectively, under the ZK flow, suppose that moreover 
$$
\| v(t) \|_{L_{[-T_-,T_+]}^\infty H_{xy}^1} < +\infty \qquad \text{ and } \qquad \limsup_{m\to \infty} \|v_m(t)\|_{L_{[-T_-,T_+]}^\infty H_{xy}^1} <+\infty,
$$ 
where $0\leq T_\pm <\infty$ (that is, $[-T_-,T_+]$ is a \emph{finite} time interval).   Then on $[-T_-,T_+]$,  $v_m(t) \rightharpoonup v(t)$ weakly in $H^1_{xy}$ and for each $k\in \mathbb{N}$, $v_m(t) \mathbf{1}_{\leq k}\to v(t) \mathbf{1}_{\leq k}$ strongly in $C([-T_-,T_+]; L^2)$.  
\end{lemma}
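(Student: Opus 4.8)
The plan is to route the argument through Lemma~\ref{L:weakchar}. Since $\{v_m(t)\}_m$ is bounded in $H^1_{xy}$ for each $t\in[-T_-,T_+]$ by hypothesis, the equivalence of (1) and (2) there reduces the claim to showing that, for every $k\in\mathbb{N}$,
$$
v_m(t)\,\mathbf{1}_{\leq k}\to v(t)\,\mathbf{1}_{\leq k}\quad\text{in }C([-T_-,T_+];L^2_{xy}),
$$
and the weak convergence $v_m(t)\rightharpoonup v(t)$ then follows automatically. Because the eventual limit is fixed (it will always be $v$), it is enough to show that an arbitrary subsequence of $\{v_m\}$ admits a further subsequence along which this strong local convergence holds.

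The first main step is a compactness argument to produce such a subsequential limit. Fix a subsequence, still denoted $\{v_m\}$. From $\partial_t v_m=-\partial_x\Delta v_m-\partial_x(v_m^3)$, the uniform bound $\|v_m\|_{L^\infty_tH^1_{xy}}\lesssim1$, and the two-dimensional Sobolev embeddings $H^1\hookrightarrow L^p$ ($p<\infty$), the sequence $\{\partial_t v_m\}$ is bounded in $L^\infty([-T_-,T_+];H^{-2}_{xy})$. Since $H^1(\Omega)$ embeds compactly into $L^2(\Omega)\hookrightarrow H^{-2}(\Omega)$ for every bounded $\Omega\subset\mathbb{R}^2$, the Aubin--Lions--Simon lemma, together with a diagonal exhaustion of $\mathbb{R}^2$ by balls, yields a further subsequence and a function $\tilde v\in L^\infty_tH^1_{xy}$ with $v_m\to\tilde v$ in $C([-T_-,T_+];L^2_{\mathrm{loc}})$. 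Interpolating this strong local convergence against the uniform $L^p_{\mathrm{loc}}$ bounds gives $v_m\to\tilde v$ in $C_tL^p_{\mathrm{loc}}$ for all $p<\infty$, hence $v_m^3\to\tilde v^3$ in $C_tL^2_{\mathrm{loc}}$; testing the weak ZK formulation against $C_c^\infty\big((-T_-,T_+)\times\mathbb{R}^2\big)$ and letting $m\to\infty$ then shows that $\tilde v$ is a distributional solution of (ZK). Its initial value is $v(0)$: the hypothesis $v_m(0)\rightharpoonup v(0)$ in $H^1$ forces $v_m(0)\to v(0)$ in $L^2_{\mathrm{loc}}$ by Lemma~\ref{L:weakchar}, while $v_m(0)\to\tilde v(0)$ in $L^2_{\mathrm{loc}}$ as well, so $\tilde v(0)=v(0)$.

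The remaining and, I expect, hardest step is to identify $\tilde v$ with $v$; this is where the absence of an $L^2$ local theory for (ZK) --- in contrast to the $L^2$-critical gKdV case of Martel \& Merle --- makes the a priori bound $\limsup_m\|v_m(t)\|_{L^\infty_tH^1}<\infty$ indispensable, since $\tilde v$ must be placed into a class in which uniqueness is known. The quantitative $H^1$ local well-posedness (the contraction built on Lemmas~\ref{L:linhom}--\ref{L:lininhom}) gives, on each subinterval $[t_0,t_0+\tau]$ with $\tau$ depending only on the uniform $H^1$ bound, an $m$-uniform bound for $\{v_m\}$ in $L^4_xL^\infty_{y,[t_0,t_0+\tau]}$; passing to a weak-$\ast$ limit there (consistent with the $L^2_{\mathrm{loc}}$ limit) places $\tilde v$ in that space too. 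Feeding $\tilde v\in L^\infty_tH^1_{xy}\cap L^4_xL^\infty_{yT}$ back into the Duhamel formula and Lemma~\ref{L:lininhom}(1) upgrades $\tilde v$ to $C([t_0,t_0+\tau];H^1_{xy})$, so $\tilde v$ lies in the class $C_tH^1_{xy}\cap L^4_xL^\infty_{yT}$ in which the fixed point of the local scheme is unique; since $v$ is such a solution with $v(t_0)=\tilde v(t_0)$, we conclude $\tilde v=v$ on $[t_0,t_0+\tau]$, and iterating over a finite partition of $[-T_-,T_+]$ into subintervals of common length $\tau$ gives $\tilde v=v$ throughout. Since every subsequence produces the same limit $v$, the full sequence converges in $C_tL^2_{\mathrm{loc}}$, which by Lemma~\ref{L:weakchar} is equivalent to the asserted weak convergence $v_m(t)\rightharpoonup v(t)$ in $H^1_{xy}$ on $[-T_-,T_+]$. (I do not expect the exterior smallness hypothesis $\|v_m(0)\mathbf{1}_{\geq k}\|_{L^2_x}\leq\frac12\|Q\|_{L^2_x}$ to enter the compactness argument itself; it is presumably recorded because it is what feeds, via the monotonicity estimate Lemma~\ref{L:mon1}, into the later application of this lemma.)
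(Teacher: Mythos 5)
Your argument is correct in outline, but it takes a genuinely different route from the paper. The paper's proof is a direct perturbative decomposition: writing $v_m(t)=v(t)+z_m(t)+q_m(t)+r_m(t)$, where $z_m$ is the (nonlinearly evolved) small-$L^2$ interior discrepancy, $q_m$ is the (nonlinearly evolved) exterior piece $v_m(0)\mathbf{1}_{\geq\rho_m}$, and $r_m$ is an error controlled by the $L^2$-level local theory estimates of Lemmas \ref{L:linhom}--\ref{L:lininhom}, with the cross terms $vq_m^2$, $v^2q_m$ handled by an in/out spatial splitting and a localized mass (finite-speed-of-mass-propagation) estimate showing $q_m$ cannot re-enter the ball $B(0,\rho_m/2)$ in finite time. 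This is where the hypothesis $\|v_m(0)\mathbf{1}_{\geq k}\|_{L^2}\leq\tfrac12\|Q\|_{L^2}$ is actually used: it guarantees, via the Weinstein inequality, that the auxiliary flows $q_m(t)$ exist globally with uniform $H^1$ bounds. Your proof replaces all of this with soft compactness (Aubin--Lions--Simon on an exhaustion by balls) plus identification of the limit by uniqueness in the class $C_tH^1_{xy}\cap L_x^4L_{yT}^\infty$; you correctly isolate the role of the a priori $H^1$ bound (it substitutes for the missing $L^2$ theory by placing the limit in a uniqueness class), and you are right that the exterior-mass hypothesis is then not needed for your argument, though your guess about why it appears is off --- it feeds the paper's own proof, not Lemma \ref{L:mon1}. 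The one step you should not gloss over is the assertion that the distributional limit $\tilde v$ "feeds back into the Duhamel formula": a priori $\tilde v$ is only a weak solution, and to invoke the contraction's uniqueness you must show it satisfies the integral equation. This is standard --- either pass to the limit directly in the integral equation satisfied by each $v_m$ (the Duhamel term converges using the uniform $L_x^1L_{yT}^2$ bounds on $\partial_x(v_m^3)$, $\partial_y(v_m^3)$), or note that $\tilde v$ and the Duhamel expression both solve the same linear inhomogeneous problem in $L_t^\infty L^2_{xy}$ with the same data and appeal to linear uniqueness --- but it deserves a sentence. Your approach buys brevity and avoids a hypothesis; the paper's buys an explicit, quantitative decomposition whose ingredients (Claims 1--4) are reused elsewhere in the argument.
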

\begin{proof}
Let 
$$
M = \max( \limsup_{m\to \infty} \|v_m(t)\|_{L_{[-T_-,T_+]}^\infty H_{xy}^1}, \|v(t) \|_{L_{[-T_-,T_+]}^\infty H_{xy}^1}).
$$

Let $v_{m'}(0)$ be any subsequence of $v_m(0)$, and invoke Lemma  \ref{L:weakchar}  $(1)\implies (4)$ to obtain a subsequence $v_{m''}(0)$ and $\rho_{m''}\to \infty$ such that $v_{m''}(0)\mathbf{1}_{\leq \rho_{m''}} \to v(0)$ strongly in $L^2$.  We will ultimately show that $v_{m''}(t) \mathbf{1}_{\leq \rho_{m''}/2} \to v(t)$ strongly in $C([-T_-,T_+];L^2)$, and thus, can invoke Lemma \ref{L:weakchar} $(4)\implies (1)$ to conclude that $v_m(t) \rightharpoonup v(t)$ weakly in $H^1$ for each $t\in [-T_-,T_+]$.   Moreover, since $[-T_-,T_+]$ is compact, a statement similar to Lemma \ref{L:weakchar} holds, in which $L^2$ is replaced by $C([-T_-,T_+]; L^2)$ in (2), (3), (4), with the same proof of equivalence as given there. 

Replace $m''$ by $m$ for notational convenience, so that we have
$$
v_m(0) = v(0) + z_m(0) + q_m(0),
$$
where 
$$
z_m(0) = v_m(0) \mathbf{1}_{\leq \rho_m} - v(0) \quad  \text{ and } \quad q_m(0) = v_m(0) \mathbf{1}_{\geq \rho_m}.
$$
Let $z_m(t)$ and $q_m(t)$ be the ZK (nonlinear) evolution of $z_m(0)$ and $q_m(0)$, respectively.  Since $\|z_m(0) \|_{L^2} \to 0$ as $m\to \infty$, we can restrict to $m$ sufficiently large so that $\|z_m(0) \|_{L^2} \leq \frac12 \|Q\|_{L^2}$, and thus, energy estimates imply that $z_m(t)$ is globally bounded in $H^1$.  Also, our assumptions imply that $\|q_m(0) \|_{L^2} \leq \frac12 \|Q\|_{L^2}$, so that $q_m(t)$ is globally bounded in $H^1$ by energy estimates (see Claim 1-2 below). Let $r_m(t)$ be defined by
$$
v_m(t) = v(t) + z_m(t) + q_m(t) + r_m(t).
$$
We will prove that $z_m \to 0$ and $r_m \to 0$ strongly in $C([-T_-,T_+];L^2)$ and 
$$
\| q_m(t) \mathbf{1}_{\leq \rho_m/2}\|_{L^2}^2 \lesssim  M^2\max(T_-,T_+) / \rho_m,
$$
and hence,
$$
\lim_{m\to \infty}\| q_m(t) \mathbf{1}_{\leq \rho_m/2}\|_{C([-T_-,T_+];L^2)}  =0.
$$
Thus,
$$
\lim_{m\to \infty}\| (v_m(t)-v(t)) \mathbf{1}_{\leq \rho_m/2}\|_{C([-T_-,T_+];L^2)}=0,
$$
which will complete the proof.  Now we present some of the details.

\bigskip

\noindent\emph{Claim 1}.  $z_m(t)$, $q_m(t)$ are global in $H^1$ with 
$$
\| \nabla z_m(t) \|_{L^2} \leq 4M \,, \qquad \| \nabla q_m(t) \|_{L^2} \leq 4M,
$$
where 
$$
M = \max( \limsup_{m\to \infty} \|v_m(t)\|_{L_{[-T_-,T_+]}^\infty H_{xy}^1}, \|v(t) \|_{L_{[-T_-,T_+]}^\infty H_{xy}^1}).
$$

\bigskip

\noindent\emph{Proof of Claim 1}.  This is just use of the Weinstein inequality \eqref{E:Weinstein}.

\bigskip

\noindent\emph{Claim 2}.  On time intervals $I$ of length $|I| \lesssim M^{-4}$, we have
$$ 
\|z_m \|_{L_x^4L_{yI}^\infty}, \|q_m \|_{L_x^4L_{yI}^\infty}, \|v_m \|_{L_x^4L_{yI}^\infty}, \|v \|_{L_x^4L_{yI}^\infty} \lesssim  M.
$$

\bigskip

\noindent\emph{Proof of Claim 2}.  These estimates are inherited from the assumption of global $H^1$ bound of $M$ and the local theory estimates in Lemmas \ref{L:linhom}, \ref{L:lininhom}.

\bigskip

\noindent\emph{Claim 3}.  $\|q_m(t) \mathbf{1}_{<\rho_{m/2}} \|_{L^2_{xy}}^2 \lesssim t \rho_m^{-1}M^2$.

\bigskip

\noindent\emph{Proof of Claim 3}.  This is a localized mass estimate, which is obtained as follows:  let $\chi_m(x,y) = \chi( \frac{(x,y)}{\rho_m})$.  Then
$$
\partial_t \int \chi_m v_m(t)^2 \, dxdy = -2 \int \chi_m v_m \big( (v_m)_{xxx} + (v_m)_{xyy} + (v_m)^3_x \big) \, dxdy
$$
$$ 
=  \int \big[-3(\chi_m)_x (v_m)_x^2 - (\chi_m)_x (v_m)_y^2 - 2(\chi_m)_y (v_m)_x (v_m)_y + (\chi_m)_{xyy}v^2 + \tfrac12 (\chi_m)_x v_m^4 \big] \, dxdy.
$$
Hence,
$$
\left| \partial_t \int \chi_m v_m(t)^2 \, dxdy \right| \lesssim \frac{1}{\rho_m} (\| \nabla v_m(t)\|_{L^2}^2 + \|v_m(t)\|_{L^2}^2+ \|v_m(t) \|_{L^4}^4).
$$
By Gagliardo-Nirenberg, we have
$$
\left| \partial_t \int \chi_m v_m(t)^2 \, dxdy \right| \lesssim \frac{M^2}{\rho_m}.
$$
Integration in time yields the claim, using that $\chi_m q_m(0)=0$.  

\bigskip

\noindent\emph{Claim 4}.  On time interval $I=[t_\ell,t_r]$ of length $|I| \lesssim M^{-4}$, we have
$$
\|r_m(t) \|_{L^2} \leq 2 \|r_m(t_\ell )\|_{L^2} + \omega(m) M^2,
$$
where $\omega(m) \to 0$ as $m\to \infty$, uniformly in $t$ over $[T_-,T_+]$.  Specifically,
$$
\omega(m) \sim M^{-2} \|q_m\|_{L_I^\infty L_{xy}^2} + \| \mathbf{1}_{>\rho_m/2} v\|_{L_{xy}^2 L_I^2} + \| z_m \|_{L_I^\infty L_{xy}^2}.
$$
Before proceeding with the proof, let us note why $\lim_{m\to \infty} \| \mathbf{1}_{>\rho_m/2} v\|_{L_{xy}^2 L_I^2}=0$.  Since
$$
\|v\|_{L_{xy}^2 L_I^2} = \| v\|_{L_I^2 L_{xy}^2} \leq |I|^{1/2} \|v\|_{L_I^\infty L_{xy}^2} <\infty,
$$
it follows (by dominated convergence) that
$$
\lim_{k\to +\infty} \| \mathbf{1}_{> k} v\|_{L_{xy}^2 L_I^2} = 0.
$$

\bigskip

\noindent\emph{Proof of Claim 4}.  By plugging the decomposition into the equation for $v_m$, we obtain an equation for $r_m(t)$ 
$$
\partial_t r_m + \partial_x \Delta r_m + \partial_x F = 0,
$$
where
$$
F=(r_m+v+z_m+q_m)^3 - v^3 - z_m^3 - q_m^3.
$$
In the expansion of the nonlinearity 
$$
F = \sum_j F_j,
$$
there are no pure cubic terms, except for $r_m^3$, otherwise, there are only cross terms.   From Claim 1-2, we know that 
$$
\|r_m(t) \|_{H_{xy}^1}, \|r_m \|_{L_x^4 L_{yI}^\infty}  \lesssim M.
$$
We set up the $r_m$ equation and apply the estimates in Lemmas \ref{L:linhom}, \ref{L:lininhom} at the $L^2$ level (instead of the $H^1$ level) and obtain 
$$
\| r_m(t) \|_{L^2} \leq \|r_m(t_\ell)\|_{L^2} + C \|F \|_{L_x^1 L_{yI}^2}
$$
for some absolute constant $C>0$.  For each term $F_j = h_1h_2h_3$ in the expansion of $F$, we estimate as
$$
\| F_j \|_{L_x^1 L_{yI}^2} \leq \|h_1\|_{L_x^2L_{yI}^2} \| h_2 \|_{L_x^4 L_{yI}^\infty} \| h_3 \|_{L_x^4 L_{yI}^\infty}.
$$
For $h_1$, we could estimate as
$$
\|h_1\|_{L_x^2L_{yI}^2} \leq |I|^{1/2} \|h_1 \|_{L_I^\infty L_{xy}^2},
$$
although for terms in Case C below, we do it differently.

We consider the following three cases:

Case A.  At least one $r_m$ is present.  Put $h_1=r_m$ and absorb on the left.

Case B. No $r_m$ is present, and at least one $z_m$ is present.  Put $h_1 = z_m$.

Case C. No $r_m$ is present and no $z_m$ is present.  This consists of $vq_m^2$ and $v^2q_m$ and we do an in/out spatial decomposition, as follows:  for example, for $vq_m^2$, decompose as 
$$
vq_m^2 = (\mathbf{1}_{<\rho_m/2} q_m) v q_m + (\mathbf{1}_{>\rho_m/2} v) q_m^2,
$$
so
\begin{align*}
\| vq_m^2 \|_{L_x^1L_{yI}^2} &\leq \| (\mathbf{1}_{<\rho_m/2} q_m) v q_m \|_{L_x^1L_{yI}^2} + \| (\mathbf{1}_{>\rho_m/2} v) q_m^2 \|_{L_x^1L_{yI}^2} \\
&\leq |I|^{1/2} \| \mathbf{1}_{<\rho_m/2} q_m \|_{L_I^\infty L_{xy}^2} \| v \|_{L_x^4 L_{yI}^\infty}  \| q_m \|_{L_x^4 L_{yI}^\infty}  +  \| \mathbf{1}_{>\rho_m/2} v \|_{L_x^2 L_{yI}^2} \| q_m \|_{L_x^4 L_{yI}^\infty}^2 \\
&\leq ( |I|^{1/2} \| \mathbf{1}_{<\rho_m/2} q_m \|_{L_I^\infty L_{xy}^2}   +  \| \mathbf{1}_{>\rho_m/2} v \|_{L_x^2 L_{yI}^2}) M^2,
\end{align*}
which completes the proof of Claim 4.

\bigskip

Now we apply Claim 4 as follows.  Decompose $[0,T_+]$ into intervals 
$$
I_1=[t_0,t_1] \,, \; I_2=[t_1,t_2]\,,  \ldots , I_J=[t_{J-1},t_J]
$$ 
of length $\sim M^{-4}$ (thus, $J \sim M^4T_+$) so that Claim 4 applies on each subinterval. We have 
$$ \|r_m(t_1)\|_{L^2} \leq \omega_1(m) M^2,$$
$$\| r_m(t_2) \|_{L^2} \leq 2\|r_m(t_1)\|_{L^2} + \omega_2(m)M^2,$$
$$\| r_m(t_3) \|_{L^2} \leq 2\|r_m(t_2)\|_{L^2} + \omega_3(m) M^2,$$
and thus, combining, we obtain
$$
\|r_m(t_3)\|_{L^2} \leq (4 \omega_1(m)+ 2\omega_2(m) + \omega_3(m))M^2.
$$
Finally, after reaching $I_J$, we have
$$
\|r_m(T_+) \|_{L^2} \leq (2^{J-1} \omega_1(m) + \cdots + 2 \omega_{J-1}(m) + \omega_J(m))M^2.
$$
Hence, 
$$
\lim_{m\to \infty} \|r_m(t) \|_{L^2} = 0
$$
uniformly on $0\leq t \leq T_+$.  A similar argument applies to $[-T_-,0]$.  

\end{proof}

\begin{corollary}[application of stability of weak limits] 
\label{C:weakstab} 
For any fixed $n$, take any $t_1$, $t_2< \infty$ such that $(-t_1,t_2) \subset (-t_1(n),t_2(n))$.\footnote{In other words, if $t_2(n)<\infty$, then we can take $t_2=t_2(n)$, but if $t_2(n)=+\infty$, then we can take $t_2$ to be any \emph{finite} positive number.  Similarly, for $t_1$ in relation to $t_1(n)$.}  Then for $t\in [-t_1,t_2]$, we have
$$
u_n(t_{n,m}+t,x_n(t_{n,m})+\bullet, y_n(t_{n,m})+\bullet) \rightharpoonup \tilde u_n(t, \bullet, \bullet) \text{ weakly in }H^1\text{  as }m\to \infty,
$$
and for each $n$, for each $k\in \mathbb{N}$, 
$$
u_n(x_n(t_{n,m})+\bullet,y_n(t_{n,m})+ \bullet, t_{n,m}+t)  \mathbf{1}_{\leq k} \to \tilde u_n(\bullet,\bullet, t) \mathbf{1}_{\leq k}
$$
strongly in  $C([-t_1,t_2]; L^2)$  as $m\to \infty$. 
\end{corollary}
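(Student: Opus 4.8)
The plan is to apply Lemma~\ref{L:weakstab} directly, with $v_m := v_{n,m}$ and $v := \tilde u_n$ (for the fixed index $n$) on the finite interval $[-T_-,T_+] := [-t_1,t_2]$, and then read off the conclusion by unwinding the identity \eqref{E:futurestate2}. Three hypotheses must be checked. The weak-convergence hypothesis $v_{n,m}(0)\rightharpoonup\tilde u_n(0)$ in $H^1_{xy}$ is exactly \eqref{E:futurestate3}. For the uniform-in-$m$ tail bound, I would use that the shift appearing in \eqref{E:futurestate} is the modulation parameter of Lemma~\ref{L:geom}, so that $v_{n,m}(0) = \lambda_n(t_{n,m})^{-1}\,(Q+\epsilon_n(t_{n,m}))(\lambda_n(t_{n,m})^{-1}x,\lambda_n(t_{n,m})^{-1}y)$; since $\|\nabla u_n(t)\|_{L^2}\ge(1-\tfrac1n)\|\nabla Q\|_{L^2}$ for all $t\ge0$, the conversion formula \eqref{E:gradient-lambda1} gives the \emph{a priori} bound $\lambda_n(t)\le(1+C\sqrt{\alpha_n})/(1-\tfrac1n)\le 2$ (for $n$ large), and a change of variables together with the exponential decay \eqref{prop-Q} of $Q$ and the estimate $\|\epsilon_n(t)\|_{L^2}\lesssim\sqrt{\alpha_n}$ from \eqref{E:ep-control} yields $\|v_{n,m}(0)\mathbf{1}_{\geq k}\|_{L^2}^2\lesssim \int_{|(x,y)|\ge k/2}Q^2\,dxdy+\alpha_n$, which is $\le\tfrac14\|Q\|_{L^2}^2$ once $k$ and $n$ are large (recall $\alpha_n\to0$). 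Here the fact that $\lambda_n(t_{n,m})$ is bounded \emph{above} is what makes the tail bound uniform in $m$.

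It remains to verify the finite-time $H^1$ bounds. For $\tilde u_n$ this is immediate: since $[-t_1,t_2]$ sits inside the interval on which \eqref{E:timeframe} holds, we have $\tfrac12\le\tilde\lambda_n(t)\le2$, whence $\|\nabla\tilde u_n(t)\|_{L^2}\lesssim\|\nabla Q\|_{L^2}$ by \eqref{E:gradient-lambda2}, and since $\|\tilde u_n(t)\|_{L^2}$ is conserved, $\|\tilde u_n\|_{L^\infty_{[-t_1,t_2]}H^1}<\infty$ (in particular $\tilde u_n$ does not blow up on this interval). For $v_{n,m}$, translation invariance of the ZK flow gives $\|\nabla v_{n,m}(t)\|_{L^2}=\|\nabla u_n(t_{n,m}+t)\|_{L^2}$, so by \eqref{E:gradient-lambda2} it suffices to bound $\lambda_n(t_{n,m}+t)$ from below, \emph{uniformly} in $t\in[-t_1,t_2]$ and for $m$ large (for such $m$, $t_{n,m}-t_1\ge0$ and $v_{n,m}$ is defined on $[-t_1,t_2]$). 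This is the one delicate point, because \eqref{E:timeframe} only controls $\liminf_m\lambda_n(t_{n,m}+t)$ and $\limsup_m\lambda_n(t_{n,m}+t)$ \emph{pointwise} in $t$. I would promote it to a uniform statement by an equicontinuity argument: \eqref{E:param-dynam} combined with $\|\epsilon_n(s)\|_{L^2}\lesssim\sqrt{\alpha_n}$ (valid for all $s\ge0$ by \eqref{E:ep-control}) gives $|\partial_t\lambda_n(s)|\lesssim\sqrt{\alpha_n}\,\lambda_n(s)^{-2}$; starting from the pointwise bound $\liminf_m\lambda_n(t_{n,m}+t)\ge\tfrac12$ at each point and continuing on short subintervals, one obtains $\lambda_n(t_{n,m}+\cdot)\ge\tfrac18$ on all of $[-t_1,t_2]$ for $m$ large, so that the family $\{\lambda_n(t_{n,m}+\cdot)\}_m$ is uniformly Lipschitz there. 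Arzel\`a--Ascoli then forces every subsequential uniform limit to lie in $[\tfrac12,2]$ at every point, which gives $\liminf_m\inf_t\lambda_n(t_{n,m}+t)\ge\tfrac12$ and $\limsup_m\sup_t\lambda_n(t_{n,m}+t)\le2$; together with mass conservation this yields $\limsup_m\|v_{n,m}\|_{L^\infty_{[-t_1,t_2]}H^1}<\infty$.

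With the three hypotheses in hand, Lemma~\ref{L:weakstab} gives $v_{n,m}(t)\rightharpoonup\tilde u_n(t)$ weakly in $H^1_{xy}$ for $t\in[-t_1,t_2]$ and $v_{n,m}(t)\mathbf{1}_{\leq k}\to\tilde u_n(t)\mathbf{1}_{\leq k}$ strongly in $C([-t_1,t_2];L^2)$ for each $k\in\mathbb{N}$. Substituting $v_{n,m}(x,y,t)=u_n(x+x(t_{n,m}),y+y(t_{n,m}),t_{n,m}+t)$ from \eqref{E:futurestate2} is precisely the assertion of the corollary. The main obstacle is the uniform-in-$t$ lower bound on $\lambda_n(t_{n,m}+\cdot)$ extracted from the $\liminf_m/\limsup_m$ clauses of \eqref{E:timeframe}; everything else is bookkeeping with the conversion formulas \eqref{E:gradient-lambda1}--\eqref{E:gradient-lambda2} and the exponential localization of $Q$.
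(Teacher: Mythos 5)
Your proof is correct and takes the same route as the paper, whose entire argument for this corollary is a one-line citation of Lemma \ref{L:weakstab} together with the remark that the time frame \eqref{E:timeframe} was designed so that its hypotheses hold on $[-t_1,t_2]$. The one place where you add genuine content is the equicontinuity/Arzel\`a--Ascoli step converting the pointwise-in-$t$ $\liminf_m/\limsup_m$ bounds of \eqref{E:timeframe} into the uniform-in-$t$ bound $\limsup_m\|v_{n,m}\|_{L^\infty_{[-t_1,t_2]}H^1_{xy}}<\infty$ required by Lemma \ref{L:weakstab}; this detail is left implicit in the paper but is indeed needed, and your derivation of it from the Lipschitz bound supplied by \eqref{E:param-dynam} and \eqref{E:ep-control} is sound.
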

\begin{proof}
This is just Lemma \ref{L:weakstab} applied to $v_{n,m}(t)$ as defined in \eqref{E:futurestate2}, noting the definition of $\tilde u_n(0)$ in \eqref{E:futurestate3}, and $\tilde u_n(t)$ defined as the ZK evolution of initial data $\tilde u_n(0)$.  Recall that the time frame $(-t_1(n),t_2(n))$ has been defined so that the hypotheses of Lemma \ref{L:weakstab} are satisfied for the time interval $[-t_1,t_2]$.
\end{proof}

\begin{lemma}[convergence of geometric parameters]  
\label{L:conv-params} 
For any fixed $n$, take any $t_1$, $t_2< \infty$ such that $(-t_1,t_2) \subset (-t_1(n),t_2(n))$.  Then
$$\lambda_n(t_{n,m} + t) \to \tilde \lambda_n(t),$$
$$x_n(t_{n,m}+t)-x_n(t_{n,m}) \to \tilde x_n(t),$$ 
$$y_n(t_{n,m}+t)-y_n(t_{n,m}) \to \tilde y_n(t)$$
in  $C([-t_1,t_2]; \mathbb{R})$ as $m\to \infty$.
\end{lemma}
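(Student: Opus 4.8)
The plan is to push the geometric decomposition through the weak limit: I will show that the geometric parameters of $v_{n,m}(t)$ (defined in \eqref{E:futurestate2}) converge to those of $\tilde u_n(t)$, first pointwise in $t$ and then uniformly on $[-t_1,t_2]$. Fix $n$ (in the regime where Lemmas~\ref{L:geom} and \ref{L:epparam} apply to $u_n$, and, via Lemma~\ref{L:energyconstraints}, to $\tilde u_n$, so that $\|\epsilon_n\|_{L^\infty_tH^1_{xy}}+\|\tilde\epsilon_n\|_{L^\infty_tH^1_{xy}}\lesssim\sqrt{\alpha_n}$). Since $v_{n,m}(x,y,t)=u_n(x+x_n(t_{n,m}),y+y_n(t_{n,m}),t_{n,m}+t)$ and $\epsilon_n$ is the geometric decomposition of $u_n$ (unique, by the implicit function theorem in Lemma~\ref{L:geom}), unwinding the definitions shows the geometric parameters of $v_{n,m}(t)$ are exactly
\[
\big(\lambda_m(t),a_m(t),b_m(t)\big)\defeq\big(\lambda_n(t_{n,m}+t),\;x_n(t_{n,m}+t)-x_n(t_{n,m}),\;y_n(t_{n,m}+t)-y_n(t_{n,m})\big),
\]
so the claim is that $(\lambda_m,a_m,b_m)\to(\tilde\lambda_n,\tilde x_n,\tilde y_n)$ in $C([-t_1,t_2];\R^3)$. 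The only inputs about the sequence I will use are those supplied by Corollary~\ref{C:weakstab}: $v_{n,m}(t)\rightharpoonup\tilde u_n(t)$ weakly in $H^1_{xy}$ and $v_{n,m}(t)\mathbf 1_{\leq k}\to\tilde u_n(t)\mathbf 1_{\leq k}$ in $C([-t_1,t_2];L^2_{xy})$ for each $k$.

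\emph{Step 1: pointwise convergence.} Fix $t\in(-t_1(n),t_2(n))$. By \eqref{E:timeframe}, $\lambda_m(t)$ lies in a fixed compact subset of $(0,\infty)$ for $m$ large. To see that $(a_m(t),b_m(t))$ is bounded I would use a concentration argument: since $\|\tilde\epsilon_n(t)\|_{H^1}\lesssim\sqrt{\alpha_n}$ and $\tilde\lambda_n(t)\leq2$, the solution $\tilde u_n(t)$ carries $L^2$-mass $\geq\frac12\|Q\|_{L^2}$ inside a ball $B\big((\tilde x_n(t),\tilde y_n(t)),R\big)$, hence so does $v_{n,m}(t)$ for $m$ large by the $L^2_{\mathrm{loc}}$ convergence at time $t$; but $v_{n,m}(t)=\lambda_m^{-1}(Q+\epsilon_n(t_{n,m}+t))\big(\lambda_m^{-1}(\cdot-a_m),\lambda_m^{-1}(\cdot-b_m)\big)$, whose $Q$-part has almost all its $L^2$-mass within $O(\lambda_m)$ of $(a_m,b_m)$ and whose $\epsilon_n$-part has total $L^2$-mass $\lesssim\sqrt{\alpha_n}\ll\|Q\|_{L^2}$, which forces $(a_m,b_m)$ to stay within $O(1)$ of $(\tilde x_n(t),\tilde y_n(t))$. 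Passing to a subsequence, $(\lambda_m(t),a_m(t),b_m(t))\to(\lambda_\infty,a_\infty,b_\infty)$ with $\lambda_\infty\in(0,\infty)$. Using continuity of dilation and translation on $L^2$, the functions $\lambda_m^{-1}Q_x\big(\lambda_m^{-1}(\cdot-a_m),\lambda_m^{-1}(\cdot-b_m)\big)$ converge strongly in $L^2$ to the analogous $\infty$-limit (and likewise with $Q_x$ replaced by $Q_y$ or $Q^3$); pairing against $v_{n,m}(t)\rightharpoonup\tilde u_n(t)$ (weak$\,\times\,$strong) and changing variables then lets the three orthogonality conditions $\la\lambda_m v_{n,m}(t)(\lambda_m\cdot+a_m,\lambda_m\cdot+b_m)-Q,\,Q_x\ra=\la\cdots,Q_y\ra=\la\cdots,Q^3\ra=0$ pass to the limit. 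Hence $\lambda_\infty\tilde u_n(t)(\lambda_\infty\cdot+a_\infty,\lambda_\infty\cdot+b_\infty)-Q$ satisfies the orthogonality conditions, and it has $H^1$-norm $\lesssim\sqrt{\alpha_n}$ by weak lower semicontinuity (it is the weak $H^1$-limit of $\epsilon_n(t_{n,m}+t)$, which is bounded by $\lesssim\sqrt{\alpha_n}$). By the uniqueness of the geometric decomposition of the near-threshold negative-energy solution $\tilde u_n(t)$ (Lemma~\ref{L:geom}), $(\lambda_\infty,a_\infty,b_\infty)=(\tilde\lambda_n(t),\tilde x_n(t),\tilde y_n(t))$; since every subsequence yields this same limit, $(\lambda_m(t),a_m(t),b_m(t))\to(\tilde\lambda_n(t),\tilde x_n(t),\tilde y_n(t))$.

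\emph{Step 2: upgrading to uniform convergence.} This is the main obstacle: there is no a priori uniform-in-$m$ lower bound for $\lambda_m(\cdot)$ on $[-t_1,t_2]$ (equivalently, no a priori upper bound on $\|\nabla u_n(t_{n,m}+\cdot)\|_{L^2}$), so the parameter-derivative bounds $|\lambda_{m,t}|\lesssim\sqrt{\alpha_n}\,\lambda_m^{-2}$, $|a_{m,t}|\leq2\lambda_m^{-2}$, $|b_{m,t}|\lesssim\sqrt{\alpha_n}\,\lambda_m^{-2}$ from \eqref{E:param-dynam} are not uniform, which obstructs a direct Arzel\`a--Ascoli argument. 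I would instead run a local bootstrap seeded by Step~1 together with the bound $\tilde\lambda_n\geq\frac12$ on $[-t_1,t_2]$: if uniform convergence failed, there would be $\varepsilon_0>0$, a subsequence $m_j$, and $t_j\to t_*\in[-t_1,t_2]$ with, say, $|\lambda_{m_j}(t_j)-\tilde\lambda_n(t_j)|\geq\varepsilon_0$; Step~1 at $t_*$ gives $\lambda_{m_j}(t_*)\to\tilde\lambda_n(t_*)\geq\frac12$, hence $\lambda_{m_j}(t_*)\geq\frac13$ for large $j$, and since $|\lambda_{m,t}|\leq C\sqrt{\alpha_n}\,\lambda_m^{-2}$ as long as $\lambda_m\geq\frac14$, a continuity argument confines $\lambda_{m_j}\geq\frac14$ and $|\lambda_{m_j,t}|\leq16C\sqrt{\alpha_n}$, $|a_{m_j,t}|,|b_{m_j,t}|\leq32$ on a fixed window $|t-t_*|\leq h_0$ with $h_0\sim(C\sqrt{\alpha_n})^{-1}$ independent of $j$. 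For $j$ large, $t_j$ lies in this window, so $|\lambda_{m_j}(t_j)-\lambda_{m_j}(t_*)|\lesssim\sqrt{\alpha_n}\,|t_j-t_*|\to0$ while $|\tilde\lambda_n(t_j)-\tilde\lambda_n(t_*)|\to0$ by continuity of $\tilde\lambda_n$, contradicting $|\lambda_{m_j}(t_j)-\tilde\lambda_n(t_j)|\geq\varepsilon_0$. The same argument, now using the equi-Lipschitz bounds on $a_{m_j},b_{m_j}$ available once $\lambda_{m_j}\geq\frac14$, handles $a_m$ and $b_m$. This yields uniform convergence of $(\lambda_m,a_m,b_m)$ to $(\tilde\lambda_n,\tilde x_n,\tilde y_n)$ on $[-t_1,t_2]$, finishing the proof.
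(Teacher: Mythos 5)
Your proof is correct and follows the route the paper intends: the paper's own proof is the one-line remark that the lemma ``is a consequence of Lemma \ref{L:epparam} and Corollary \ref{C:weakstab},'' and your argument is precisely a careful expansion of that, using Corollary \ref{C:weakstab} to pass the orthogonality conditions to the limit and identify the limiting parameters via the uniqueness in Lemma \ref{L:geom}, and then the parameter bounds \eqref{E:param-dynam} to upgrade pointwise to uniform convergence. Your Step 2 in particular supplies a detail the paper glosses over (the lack of a uniform-in-$m$ lower bound on $\lambda_m$), and handles it correctly with the local bootstrap seeded by the pointwise limit.
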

\begin{proof}
This is a consequence of Lemma \ref{L:epparam} and Corollary \ref{C:weakstab}.
\end{proof}

\begin{corollary}
\label{C:adjusted-conv}
For any fixed $n$, take any $t_1$, $t_2< \infty$ such that $(-t_1,t_2) \subset (-t_1(n),t_2(n))$.  Then for $t\in [-t_1,t_2]$, we have
$$
u_n(t_{n,m}+t,x_n(t_{n,m} +t )+\bullet, y_n(t_{n,m}+t)+\bullet) \rightharpoonup \tilde u_n(t, \bullet+\tilde x_n(t), \bullet+\tilde y_n(t))
$$ 
weakly in $H^1$ as $m\to \infty$
and for each $n$, for each $k\in \mathbb{N}$, 
$$
u_n(x_n(t_{n,m}+t)+\bullet,y_n(t_{n,m}+t)+ \bullet, t_{n,m}+t)  \mathbf{1}_{\leq k} \to \tilde u_n(\bullet+\tilde x_n(t),\bullet+\tilde y_n(t), t) \mathbf{1}_{\leq k}
$$
strongly in  $C([-t_1,t_2]; L^2)$  as $m\to \infty$. 
\end{corollary}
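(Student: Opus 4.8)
The plan is to deduce Corollary~\ref{C:adjusted-conv} from Corollary~\ref{C:weakstab} by composing with the $t$-dependent translations that convert the fixed center $(x_n(t_{n,m}),y_n(t_{n,m}))$ there into the $t$-dependent center $(x_n(t_{n,m}+t),y_n(t_{n,m}+t))$ here. Write
$$
w_m(x,y,t) \defeq u_n\big(x_n(t_{n,m})+x,\ y_n(t_{n,m})+y,\ t_{n,m}+t\big),
$$
and put $a_m(t)=x_n(t_{n,m}+t)-x_n(t_{n,m})$, $b_m(t)=y_n(t_{n,m}+t)-y_n(t_{n,m})$; then the function appearing in the statement is exactly $w_m(\bullet+a_m(t),\bullet+b_m(t),t)$. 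Corollary~\ref{C:weakstab} provides $w_m(\bullet,\bullet,t)\rightharpoonup\tilde u_n(\bullet,\bullet,t)$ weakly in $H^1$ for $t\in[-t_1,t_2]$ and $w_m(\bullet,\bullet,t)\mathbf{1}_{\leq k}\to\tilde u_n(\bullet,\bullet,t)\mathbf{1}_{\leq k}$ strongly in $C([-t_1,t_2];L^2)$ for each $k$, and Lemma~\ref{L:conv-params} provides $a_m\to\tilde x_n$ and $b_m\to\tilde y_n$ in $C([-t_1,t_2];\mathbb{R})$. I would also record two uniform-in-$t$ facts for use below: a bound $\sup_{t\in[-t_1,t_2]}\|w_m(t)\|_{H^1_{xy}}\le M$ valid for all large $m$ (precisely the $L^\infty_tH^1$ hypothesis of Lemma~\ref{L:weakstab} that is in force on this time frame), and a bound $|(\tilde x_n(t),\tilde y_n(t))|\le R$ on $[-t_1,t_2]$, which follows from the $C^1$-regularity of the parameters (Lemma~\ref{L:epparam}) on a compact interval.

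For the weak $H^1$-convergence I would fix $t$, test against $\phi\in C_c^\infty(\mathbb{R}^2)$ (enough by density together with the $H^1$-boundedness of $\{w_m(t)\}$), and use the change of variables $\la w_m(\bullet+a_m(t),\bullet+b_m(t),t),\phi\ra_{H^1}=\la w_m(t),\phi_{a_m(t),b_m(t)}\ra_{H^1}$, where $\phi_{a,b}(x,y)\defeq\phi(x-a,y-b)$. Splitting $\phi_{a_m(t),b_m(t)}=\phi_{\tilde x_n(t),\tilde y_n(t)}+\big(\phi_{a_m(t),b_m(t)}-\phi_{\tilde x_n(t),\tilde y_n(t)}\big)$, the pairing of $w_m(t)$ with the first piece converges, by Corollary~\ref{C:weakstab}, to $\la\tilde u_n(t),\phi_{\tilde x_n(t),\tilde y_n(t)}\ra_{H^1}=\la\tilde u_n(\bullet+\tilde x_n(t),\bullet+\tilde y_n(t),t),\phi\ra_{H^1}$, while the pairing with the second piece is bounded by $M\,\|\phi_{a_m(t),b_m(t)}-\phi_{\tilde x_n(t),\tilde y_n(t)}\|_{H^1}$, which tends to $0$ by continuity of translation on $H^1$ and Lemma~\ref{L:conv-params}. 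Since the translated sequence has the same $H^1$ bound, this gives the claimed weak $H^1$-convergence.

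For the strong localized $L^2$-convergence I would insert the intermediate term $w_m(\bullet+\tilde x_n(t),\bullet+\tilde y_n(t),t)$ and apply the triangle inequality in $C([-t_1,t_2];L^2)$, writing $\big(w_m(\bullet+a_m(t),\bullet+b_m(t),t)-\tilde u_n(\bullet+\tilde x_n(t),\bullet+\tilde y_n(t),t)\big)\mathbf{1}_{\leq k}$ as the sum of $\big(w_m(\bullet+a_m(t),\bullet+b_m(t),t)-w_m(\bullet+\tilde x_n(t),\bullet+\tilde y_n(t),t)\big)\mathbf{1}_{\leq k}$ and $\big(w_m(\bullet+\tilde x_n(t),\bullet+\tilde y_n(t),t)-\tilde u_n(\bullet+\tilde x_n(t),\bullet+\tilde y_n(t),t)\big)\mathbf{1}_{\leq k}$. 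For the first summand, discard $\mathbf{1}_{\leq k}$ and translate both terms by $-(\tilde x_n(t),\tilde y_n(t))$ to reduce to $\|w_m(\bullet+h,t)-w_m(\bullet,t)\|_{L^2}\le|h|\,\|\nabla w_m(t)\|_{L^2}$ with $h=(a_m(t)-\tilde x_n(t),b_m(t)-\tilde y_n(t))$, which is $\le\big(\|a_m-\tilde x_n\|_{C([-t_1,t_2])}+\|b_m-\tilde y_n\|_{C([-t_1,t_2])}\big)M\to0$. For the second summand, change variables to move the $(\tilde x_n(t),\tilde y_n(t))$-shift off $w_m(t)-\tilde u_n(t)$ and onto the cutoff: since $\mathbf{1}_{\leq k}(\bullet-\tilde x_n(t),\bullet-\tilde y_n(t))$ is bounded by $1$ and supported in $\{|z|\le 2k+R\}$, the summand is $\le\|(w_m(t)-\tilde u_n(t))\mathbf{1}_{\leq k'}\|_{L^2}$ for any integer $k'\ge 2k+R$, which $\to0$ in $C([-t_1,t_2];L^2)$ by Corollary~\ref{C:weakstab}. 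Combining the two summands finishes the argument.

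I do not expect a genuine obstacle: the statement merely transports an already-established convergence by an asymptotically vanishing correction to the translation parameter. The only two points that require care are (i) keeping all estimates uniform over the compact interval $t\in[-t_1,t_2]$ — which is why one works with the $C([-t_1,t_2])$-norms of $a_m-\tilde x_n$, $b_m-\tilde y_n$ and with the uniform-in-$t$ $H^1$ bound $M$ on $w_m(t)$ — and (ii) the bookkeeping of how a spatial translation interacts with the origin-centered cutoffs $\mathbf{1}_{\leq k}$, handled by enlarging the cutoff radius using the uniform bound $R$ on $(\tilde x_n(t),\tilde y_n(t))$.
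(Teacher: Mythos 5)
Your proposal is correct and takes essentially the same route as the paper, which simply notes that the corollary follows by combining Corollary~\ref{C:weakstab} with the parameter convergence of Lemma~\ref{L:conv-params}; you have merely supplied the (routine but correct) details of transporting the convergence through the asymptotically vanishing translation correction.
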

\begin{proof}
This follows from Corollary \ref{C:weakstab} and Lemma \ref{L:conv-params}.
\end{proof}

\section{$\tilde u_n$ has exponential decay in space, uniformly in time, via monotonicity}\label{section-M}

Recall that $\tilde u_n(x,y,0)$ is defined in \eqref{E:futurestate3}.  Denoting its time-evolution by $\tilde u_n(x,y,t)$, we observed Corollary \ref{C:weakstab} giving
$$
u_n(t_{n,m}+t,x_n(t_{n,m})+\bullet, y_n(t_{n,m})+\bullet) \rightharpoonup \tilde u_n(t, \bullet, \bullet) \text{ weakly in }H^1\text{  as }m\to \infty.
$$
In light of Lemma \ref{L:conv-params}, we have
$$
u_n(t_{n,m}+t,x_n(t_{n,m}+t)+\bullet, y_n(t_{n,m}+t)+\bullet) \rightharpoonup \tilde u_n(t, \tilde x_n(t)+ \bullet, \tilde y_n(t)+\bullet) \text{ weakly in }H^1\text{  as }m\to \infty.
$$

Before estimating $\tilde u_n$, we introduce a new weighted 2D Gagliardo-Nirenberg estimate.

\begin{lemma}[2D Gagliardo-Nirenberg with weight]
\label{L:GN-weight}
Suppose that $\psi(x,y)\geq 0$ is differentiable with the pointwise bound $|\nabla \psi(x,y)| \lesssim \psi(x,y)$, and for any $R_0\geq 0$, let
$$
B = \{ \, (x,y) \in \cR^2 : \, |x|>R_0 \text{ or }|y|>R_0 \, \}.
$$
Then
\begin{equation}
\label{E:weightedGN}
 \iint_B \psi(x) u(x,y)^4 \,dx \, dy \lesssim \| u \|_{L_B^2}^2 \iint_B \psi(x) (|\nabla u(x,y)|^2+ u(x,y)^2) \, dx \, dy
\end{equation}
with implicit constant independent of $R_0$ and $\psi$ (except for the implicit constant in the pointwise bound $|\nabla \psi(x,y)| \lesssim \psi(x,y)$).
\end{lemma}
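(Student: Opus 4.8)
The plan is to establish \eqref{E:weightedGN} as a weighted Ladyzhenskaya (2D Gagliardo--Nirenberg) inequality on the two product-type pieces of $B$, by iterating the one-dimensional estimate $\|g\|_{L^\infty(\mathbb{R})}^2\le\|g\|_{L^2}\|g'\|_{L^2}$ in the $x$- and $y$-directions and carrying the weight only through the $x$-slices. First I would reduce to $u\in C_c^\infty(\mathbb{R}^2)$: if the right side of \eqref{E:weightedGN} is infinite there is nothing to prove, and otherwise $\psi^{1/2}u,\psi^{1/2}\nabla u\in L^2(B)$, so a truncation and mollification kills the boundary terms at spatial infinity in the integrations by parts below. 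Then I would split $B=B_1\cup B_2$ with $B_1=\{|x|>R_0\}=(\mathbb{R}\setminus[-R_0,R_0])\times\mathbb{R}$ and $B_2=\mathbb{R}\times(\mathbb{R}\setminus[-R_0,R_0])$: each is a Cartesian product of a one-dimensional set with $\mathbb{R}$, and since $\|u\|_{L^2_{B_i}}\le\|u\|_{L^2_B}$ and $\iint_{B_i}\psi(|\nabla u|^2+u^2)\le\iint_B\psi(|\nabla u|^2+u^2)$, it suffices to prove \eqref{E:weightedGN} on $B_1$ and on $B_2$. I describe $B_1$; the bound on $B_2$ is the same argument with $x$ and $y$ interchanged, with the weight then constant in the unbounded direction, which is only easier.

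The one-dimensional input is: for $h\in C^1$ on $\{|s|>R_0\}$ decaying at $\pm\infty$ and $\phi\ge0$ with $|\phi'|\lesssim\phi$,
$$\phi(s_0)h(s_0)^2=-\int_{s_0}^{\pm\infty}\partial_s(\phi h^2)\,ds=-\int(\phi'h^2+2\phi\,h\,h')\,ds,$$
so, by $|\phi'|\lesssim\phi$ and Cauchy--Schwarz with the weight split as $\phi^{1/2}$ on $h$ and $\phi^{1/2}$ on $h'$,
$$\sup_{|s|>R_0}\phi(s)h(s)^2\lesssim\int_{|s|>R_0}\phi\,h^2\,ds+\Bigl(\int_{|s|>R_0}\phi\,h^2\,ds\Bigr)^{1/2}\Bigl(\int_{|s|>R_0}\phi\,(h')^2\,ds\Bigr)^{1/2}$$
(with $\phi\equiv1$ this is $\sup_s h(s)^2\lesssim\|h\|_{L^2}\|h'\|_{L^2}$). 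On $B_1$ one factors $\psi u^4=(\psi u^2)(u^2)$ and bounds the first factor pointwise in $x$ by the weighted inequality (with $h=u(\cdot,y)$, $\phi=\psi$), the second factor pointwise in $y$ by the unweighted inequality (with $h=u(x,\cdot)$); integrating the product over the Cartesian region $B_1$ decouples the two one-variable suprema. Using $|\nabla\psi|\lesssim\psi$, the $x$-factor integrated in $y$ is controlled by $W:=\iint_B\psi(|\nabla u|^2+u^2)$ without ever moving the weight onto $u$, and the auxiliary bound $\sup_y\int_{|x|>R_0}\psi\,u^2\,dx\lesssim W$ (needed for the bookkeeping) uses only a single $y$-derivative.

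The main obstacle is the final assembly. Carried out crudely, the iteration gives $\iint_{B_1}\psi u^4\lesssim W\,\|u\|_{L^2_{B_1}}\|\nabla u\|_{L^2_{B_1}}$, not $W\,\|u\|_{L^2_{B_1}}^2$; the former would make the threshold $\alpha_j$ depend on an $H^1$ upper bound for $u$, which is unacceptable in the blow-up context, where $\|\nabla u\|$ has no upper bound (equivalently, the scale has no lower bound). Upgrading to $\|u\|_{L^2_{B_1}}^2$ with an absolute constant requires keeping one copy of $u^2$ unintegrated, estimating the resulting $y$-slice mass $\int_{|x|>R_0}u(\cdot,y)^2\,dx$ and the weighted slice energy against $W$ and $\|u\|_{L^2_{B_1}}^2$ separately, and carefully apportioning the weight so that every $\nabla u$ ends up inside $W$ while the bare $L^2(B)$ norm appears squared — this is the content of ``distributing the weight between the two terms'' and positioning it ``fully on the gradient term'', and is precisely what is forfeited if one instead routes the estimate through intermediate $H^1$ bounds as in \cite{CMPS}. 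Summing the $B_1$ and $B_2$ estimates then yields \eqref{E:weightedGN} with constant depending only on the implicit constant in $|\nabla\psi|\lesssim\psi$.
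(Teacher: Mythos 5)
Your setup is sound and your diagnosis of the difficulty is exactly right, but the proof has a genuine gap at precisely the point you flag as ``the main obstacle'': you describe what the final assembly must accomplish (``carefully apportioning the weight so that every $\nabla u$ ends up inside $W$ while the bare $L^2(B)$ norm appears squared'') without actually exhibiting an apportionment that works, and this is the entire content of the lemma. The danger is that the natural candidates fail. Your stated factorization $\psi u^4=(\psi u^2)(u^2)$, with the full weight on the $x$-sup and none on the $y$-sup, produces $W\,\|u\|_{L^2}\|u_y\|_{L^2}$ with an \emph{unweighted} gradient, which is the dead end you acknowledge. The obvious repair --- the symmetric split $\psi u^4\le\bigl(\sup_{x'}\psi^{1/2}u^2\bigr)\bigl(\sup_{y'}\psi^{1/2}u^2\bigr)$, matching the symmetric weight split in your 1D lemma --- also fails: each sup, after the fundamental-theorem-of-calculus step, contributes a term $\iint\psi^{1/2}u^2$, and the product then contains $\bigl(\iint\psi^{1/2}u^2\bigr)^2$, which is not controlled by $\|u\|_{L^2}^2\iint\psi(|\nabla u|^2+u^2)$ unless $\psi\lesssim 1$ (not assumed). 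So neither of the two weight distributions suggested by your building blocks closes the argument, and ``keeping one copy of $u^2$ unintegrated'' is not by itself a mechanism.

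The paper's proof resolves this with an asymmetric, three-way multiplicative split $\psi=\psi_{11}\psi_{12}\psi_{2}$ (ultimately $\psi_{11}=\psi^{1/2}$, $\psi_2=\psi^{1/3}$, $\psi_{12}=\psi_2^{1/2}$), in which each 1D Cauchy--Schwarz pairs a weighted factor against another weighted factor so that no stray half-power of $\psi$ survives; the $y$-direction is then handled not by a sup bound but by the interpolation $\|\psi_2^{1/2}u\|_{L_y^6}\lesssim\|\partial_y(\psi_2^{3/2}u^2)\|_{L_y^1}^{1/3}\|u\|_{L_y^2}^{2/3}$, followed by Minkowski to pass from $L_y^6L_x^2$ to $L_x^2L_y^6$ and H\"older with $\tfrac12=\tfrac16+\tfrac13$. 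It is exactly the $\|u\|_{L_y^2}^{2/3}$ factors generated by this $L^6$ step that assemble into the unweighted $\|u\|_{L^2_{xy}}^2$, while both $\|\psi^{1/2}u_x\|$ and $\|\psi^{1/2}u_y\|$ land inside the weighted energy. None of this machinery appears in your proposal, so the proof is incomplete as written. (Your reduction of the localization to the two product regions $B_1=\{|x|>R_0\}\times\mathbb{R}$ and $B_2=\mathbb{R}\times\{|y|>R_0\}$ is a nice touch --- it makes precise the paper's one-line remark that the argument ``clearly localizes'' --- and would be worth keeping once the assembly step is supplied.)
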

\begin{proof}
We will apply the 1D inequality
$$
|v(x_0)|^2 \leq 2\|v\|_{L^2_{x>x_0}} \|v_x\|_{L^2_x}
$$
or
$$
|v(x_0)|^2 \leq 2\|v\|_{L^2_{x<x_0}} \|v_x\|_{L^2_x}
$$
in both $x$ (with $y$ fixed) and in $y$ (with $x$ fixed).  Because the argument relies only on these estimates, it clearly localizes to the spatial region $|x|\geq R_0$ or $|y|\geq R_0$.   For expository convenience we will ignore this spatial localization.  

For fixed $y$, we have, where $\psi = \psi_1\psi_2$ and $\psi_1$ and $\psi_2$ are yet to be chosen, by ``sup-ing out''
\begin{equation}
\label{E:GN1}
\int_x \psi u^4 \, dx \lesssim  \| \psi_1 u^2 \|_{L^\infty_x} \| \psi_2^{1/2} u \|_{L_x^2}^2.
\end{equation}
Apply the 1D estimate in $x$,
$$
\| \psi_1 u^2 \|_{L_x^\infty} \lesssim \int_x |(\psi_1)_x| u^2 \, dx + \int_x \psi_1 |u_x| |u| \, dx.
$$
In the first term, use $|(\psi_1)_x| \lesssim \psi_1$.  Split $\psi_1 = \psi_{11}\psi_{12}$ with $\psi_{11}$ and $\psi_{22}$ yet to be determined satisfying $|(\psi_{11})_x| \lesssim \psi_{11}$, and apply Cauchy-Schwarz to get
$$
\| \psi_1 u^2 \|_{L_x^\infty}  \lesssim (\| \psi_{11} u_x \|_{L_x^2}+ \| \psi_{11} u \|_{L_x^2}) \| \psi_{12} u \|_{L_x^2}.
$$
Plug this into \eqref{E:GN1} to obtain 
$$
\int_x \psi u^4 \, dx \lesssim  (\| \psi_{11} u_x \|_{L_x^2}+ \|\psi_{11} u \|_{L_x^2}) \| \psi_{12} u \|_{L_x^2} \| \psi_2^{1/2} u \|_{L_x^2}^2.
$$
Take $\psi_{12} = \psi_2^{1/2}$ to equalize the weights,
$$
\int_x \psi u^4 \, dx \lesssim  (\| \psi_{11} u_x \|_{L_x^2}+\| \psi_{11} u\|_{L_x^2}) \| \psi_2^{1/2} u \|_{L_x^2}^3.
$$
Apply the $y$ integral, and on the right-side Cauchy-Schwarz in $y$
$$
\int_x\int_y \psi u^4 \, dx \, dy \lesssim (\| \psi_{11} u_x \|_{L^2_{xy}}+\| \psi_{11} u \|_{L^2_{xy}}) \| \psi_2^{1/2} u \|_{L_y^6L_x^2}^3.
$$
Apply Minkowski's integral inequality to switch the norms in the last term
\begin{equation}
\label{E:GN2}
\int_x\int_y \psi u^4 \, dx \, dy \lesssim (\| \psi_{11} u_x \|_{L^2_{xy}}+\| \psi_{11} u \|_{L^2_{xy}} ) \| \psi_2^{1/2} u \|_{L_x^2L_y^6}^3.
\end{equation}
Now we focus on the ``inside'' of the last term for fixed $x$,  
$$
\| \psi_2^{1/2} u \|_{L_y^6}^6 = \int \psi_2^3 u^6 \, dy \lesssim \|\psi_2^3 u^4 \|_{L_y^\infty} \int  u^2  \, dy \lesssim  \|\psi_2^{3/2} u^2 \|_{L_y^\infty}^2 \|  u \|_{L_y^2}^2.
$$ 
Passing to the $1/6$ power, we get
$$ 
\| \psi_2^{1/2} u \|_{L_y^6} \lesssim \|\psi_2^{3/2} u^2 \|_{L_y^\infty}^{1/3} \|  u \|_{L_y^2}^{1/3}.
$$ 
For the first term on the right side of this estimate, we apply the 1D fundamental theorem of calculus estimate 
$$
\| \psi_2^{1/2} u \|_{L_y^6} \lesssim \| (\psi_2^{3/2} u^2)_y \|_{L_y^1}^{1/3} \|  u \|_{L_y^2}^{1/3}.
$$ 
Distributing the $y$ derivative, we obtain 
$$
\| \psi_2^{1/2} u \|_{L_y^6} \lesssim (\| \psi_2^{3/2} u_y u \|_{L_y^1} + \| \psi_2^{1/2}(\psi_2)_y u^2 \|_{L_y^1} )^{1/3} \|  u \|_{L_y^2}^{1/3}.
$$ 
Using that $|(\psi_2)_y| \lesssim \psi_2$ and Cauchy-Schwarz, we continue
$$
\| \psi_2^{1/2} u \|_{L_y^6} \lesssim (\| \psi_2^{3/2} u_y \|_{L_y^2}+\| \psi_2^{3/2} u \|_{L_y^2})^{1/3} \|u \|_{L_y^2}^{2/3}.
$$ 
Now apply the $L_x^2$ norm, and use H\"older with partition $\frac12=\frac16+\frac13$ to obtain
$$
\| \psi_2^{1/2} u \|_{L_x^2 L_y^6} \lesssim (\| \psi_2^{3/2} u_y \|_{L_{xy}^2}+\| \psi_2^{3/2} u \|_{L_{xy}^2})^{1/3} \|u \|_{L_{xy}^2}^{2/3}.
$$ 
Insert this into \eqref{E:GN2} to obtain
$$
\int_x\int_y \psi u^4 \, dx \, dy \lesssim (\| \psi_{11} u_x \|_{L_{xy}^2}+\| \psi_{11} u \|_{L_{xy}^2})( \| \psi_2^{3/2} u_y \|_{L_{xy}^2}+\| \psi_2^{3/2} u \|_{L_{xy}^2}) \| u \|_{L_{xy}^2}^2.
$$
Now let us review the weight partitions.  We took $\psi = \psi_1\psi_2$, $\psi_1 = \psi_{11}\psi_{12}$, so that $\psi= \psi_{11}\psi_{12}\psi_2$ but we required that $\psi_{12}=\psi_2^{1/2}$, so that in fact $\psi= \psi_{11}\psi_2^{3/2}$, with $\psi_{12}$ and $\psi_2$ yet to be determined.  From the last inequality, we see that we would like to have $\psi_{11} = \psi^{1/2}$ and $\psi_2^{3/2} = \psi^{1/2}$, and this in fact does meet the condition $\psi= \psi_{11}\psi_2^{3/2}$.  Note that since $\psi_{11} = \psi^{1/2}$, $\psi_{12} = \psi^{1/6}$,  $\psi_2 = \psi^{1/3}$, these weights inherit the property that absolute value of the derivative is bounded by a constant times the function.

Thus, we have
$$
\int_x\int_y \psi u^4 \, dx \, dy \lesssim (\| \psi^{1/2} u_x \|_{L_{xy}^2}+\| \psi^{1/2} u \|_{L_{xy}^2})( \| \psi^{1/2} u_y \|_{L_{xy}^2}+\| \psi^{1/2} u \|_{L_{xy}^2} ) \| u \|_{L_{xy}^2}^2.
$$
\end{proof}

\begin{lemma}[$I_\pm$ estimates]
\label{L:mon1}
Let $t_{-1}<t_0<t_1$ and suppose that $u(t)$ is an $H^1$ solution to ZK on $[t_{-1},t_1]$ with $E(u)<0$ and
$$
\forall \, t\in [t_{-1},t_1]  \qquad \| \nabla u(t) \|_{L_{xy}^2} \geq 0.9 \|\nabla Q\|_{L_{xy}^2}.
$$
There exists an absolute constant $\alpha_4>0$ such that if 
$$
\alpha(u) \defeq \|u\|_{L^2}^2 - \|Q\|_{L^2}^2  \leq \alpha_4,
$$  
we then have the following.  Let
$$
I_{\pm,x_0,t_0}(t) = \iint u^2(x+x(t_0),y, t)\, \phi_\pm(x-x_0-\tfrac12(x(t)-x(t_0))) \, dx \, dy,
$$
where $\phi_-(x)=\phi_+(-x)$ and
$$
\phi_+(x) = \frac{2}{\pi} \arctan(e^{x/K}),
$$
so that $\phi_+(x)$ is increasing with $\lim_{x\to -\infty} \phi_+(x) = 0$ and $\lim_{x\to +\infty} \phi_+(x) = 1$, and $\phi_-(x)$ is decreasing with $\lim_{x\to -\infty} \phi_-(x) = 1$ and $\lim_{x\to +\infty} \phi_-(x) = 0$.  Let $x_0>0$ and $K \geq 4$.  For the increasing weight, we have two estimates that bound the \emph{future} in terms of the \emph{past}
\begin{equation}
\label{E:mo1}
\text{for } t_{-1}<t_0\,, \quad I_{+,x_0,t_0}(t_0) \leq I_{+,x_0,t_0}(t_{-1}) + \vartheta_0 e^{-x_0/K},
\end{equation}
\begin{equation}
\label{E:mo2}
\text{for } t_0<t_1\,, \quad I_{+,-x_0,t_0}(t_1) \leq I_{+,-x_0,t_0}(t_0) +\vartheta_0  e^{-x_0/K}.
\end{equation}
For the decreasing weight, we have two estimates that bound the \emph{past} in terms of the \emph{future}
\begin{equation}
\label{E:mo3}
\text{for }t_{-1}<t_0\,, \quad I_{-,x_0,t_0}(t_{-1}) \leq I_{-,x_0,t_0}(t_0) +\vartheta_0 e^{-x_0/K},
\end{equation}
\begin{equation}
\label{E:mo4}
\text{for }t_0<t_1\,, \quad I_{-,-x_0,t_0}(t_0) \leq I_{-,-x_0,t_0}(t_1) +\vartheta_0 e^{-x_0/K}.
\end{equation}
Here, $\vartheta_0$ is some absolute constant.
These are depicted in Figure \ref{F:mon-paths}, which shows the path of the ``center'' of transition of the weight $\phi_\pm$ through time.

Importantly, both $\alpha_4>0$ and $\vartheta_0>0$ are absolute constants, in particular, independent	of the \emph{upper} bound on $\|\nabla u(t) \|_{L_{xy}^2}$ over the time interval $[t_{-1},t_1]$. 
\end{lemma}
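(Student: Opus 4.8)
The plan is to establish the pointwise-in-time differential inequality $\partial_t I_{+,x_0,t_0}(t)\le(\text{small})$ and integrate it; the estimates \eqref{E:mo2}--\eqref{E:mo4} then follow by reflecting the weight ($\phi_+\mapsto\phi_-$) and/or reversing the time direction, so I focus on \eqref{E:mo1}. By a routine approximation argument — regularize the $H^1$ data, use persistence of regularity and spatial decay for the (ZK) flow afforded by Lemmas \ref{L:linhom}--\ref{L:lininhom}, pass to the limit in the integrated identity, and note that the geometric parameters of Lemma \ref{L:geom} converge — it suffices to treat smooth solutions decaying rapidly at spatial infinity, so that all the integrations by parts below are legitimate.

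Write $v(x,y,t)=u(x+x(t_0),y,t)$, which is again a solution of (ZK), and $\psi(x,t)=\phi_+(x-x_0-\tfrac12(x(t)-x(t_0)))$, so that $I_{+,x_0,t_0}(t)=\iint v^2\psi\,dx\,dy$ and $\psi_t=-\tfrac12 x_t\psi_x$. Differentiating, substituting $v_t=-v_{xxx}-v_{xyy}-(v^3)_x$, and integrating by parts in $x$ and $y$ (the weight depends only on $x$ and $t$) yields
$$
\partial_t I = -\iint(3v_x^2+v_y^2)\psi_x + \iint v^2\psi_{xxx} + \tfrac32\iint v^4\psi_x - \tfrac12 x_t\iint v^2\psi_x.
$$
The weight obeys $\psi_x\ge0$, $|\psi_{xx}|\le K^{-1}\psi_x$, $|\psi_{xxx}|\le K^{-2}\psi_x$, and the localization $\psi_x(z)\lesssim K^{-1}e^{-|z|/K}$. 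From Lemma \ref{L:epparam}, \eqref{E:gradient-lambda1}, and the hypothesis $\|\nabla u(t)\|\ge0.9\|\nabla Q\|$, one has for $\alpha$ small that $\|\epsilon(t)\|_{H^1}\lesssim\sqrt\alpha$, that $\lambda(t)\lesssim1$ (with no lower bound available), and that $x_t=\lambda^{-2}(1+O(\sqrt\alpha))$, so $x_t$ is bounded below by an absolute positive constant. Thus the first and last terms are favorable; integrating the $\psi_{xxx}$ term by parts once and applying Cauchy--Schwarz with weight $\psi_x$ gives $|\iint v^2\psi_{xxx}|\le\iint v_x^2\psi_x+K^{-2}\iint v^2\psi_x$, which (since $K\ge4$) is absorbed by the two favorable terms. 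The one genuinely dangerous term is $\tfrac32\iint v^4\psi_x$; and precisely because $\alpha_4,\theta_0$ must be independent of any upper bound on $\|\nabla u\|$, there is no control on $\lambda^{-1}$, so the crude bound $\iint v^4\lesssim\|\nabla u\|^2\|u\|^2\lesssim\lambda^{-2}$ is not, by itself, summable in time.

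To dispose of $\tfrac32\iint v^4\psi_x$, split $\R^2=\mathcal N\cup\mathcal G$, where $\mathcal N=\{|x-(x(t)-x(t_0))|\le R\text{ and }|y-y(t)|\le R\}$ is a box of large absolute radius $R$ about the soliton center and $\mathcal G$ its complement. On $\mathcal G$: the geometric decomposition, $\|\epsilon\|_{H^1}\lesssim\sqrt\alpha$, and the exponential decay \eqref{prop-Q} of $Q$ give $\|v\|_{L^2(\mathcal G)}^2\lesssim e^{-R}+\alpha$, which is below any prescribed threshold once $R$ is fixed large and $\alpha_4$ small; the weighted Gagliardo--Nirenberg inequality (Lemma \ref{L:GN-weight}, applied on $\mathcal G$, a translate of the region there, with weight $\psi_x$, for which $|\nabla\psi_x|=|\psi_{xx}|\lesssim\psi_x$) then bounds $\tfrac32\iint_{\mathcal G}v^4\psi_x$ by a small multiple of $\iint(3v_x^2+v_y^2)\psi_x+\iint v^2\psi_x$, again absorbed. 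On $\mathcal N$: since the weight transition $x_0+\tfrac12(x(t)-x(t_0))$ stays at distance $d(t)=x_0+\tfrac12|x(t_0)-x(t)|\ge x_0$ to the right of the soliton for $t\le t_0$ (the weight moving at half the soliton speed $\tfrac12 x_t$), one has $\|\psi_x\|_{L^\infty(\mathcal N)}\lesssim K^{-1}e^{-(d(t)-R)/K}$, whence $\tfrac32\iint_{\mathcal N}v^4\psi_x\lesssim K^{-1}\lambda^{-2}e^{-(d(t)-R)/K}$. Collecting, $\partial_t I\le CK^{-1}\lambda(t)^{-2}e^{-(d(t)-R)/K}$; integrating over $[t_{-1},t_0]$ with the change of variable $\sigma=x(t_0)-x(t)$, for which $dt=x_t^{-1}d\sigma$ and $\lambda^{-2}x_t^{-1}=1+O(\sqrt\alpha)$, so that the Jacobian exactly cancels the factor $\lambda^{-2}$, gives $I_{+,x_0,t_0}(t_0)-I_{+,x_0,t_0}(t_{-1})\lesssim e^{R/K}e^{-x_0/K}\int_0^\infty e^{-\sigma/(2K)}\,d\sigma\lesssim e^{-x_0/K}$, which is \eqref{E:mo1}. (For $x_0$ below a fixed absolute threshold $e^{-x_0/K}$ is bounded below, and \eqref{E:mo1}--\eqref{E:mo4} hold trivially from $0\le I_{\pm,x_0,t_0}\le\|u\|_{L^2}^2\le2\|Q\|_{L^2}^2$ by enlarging $\theta_0$.)

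The principal obstacle is exactly the absence of a lower bound on $\lambda(t)$: near the soliton core the quartic term has uncontrolled size $\sim\lambda^{-2}$, and the only mechanism making it summable is that the weight's center advances at rate $\tfrac12 x_t\sim\tfrac12\lambda^{-2}$, so the time change of variables pays precisely for that bad factor while the exponential gain $e^{-|x(t_0)-x(t)|/(2K)}$ keeps the resulting $t$-integral uniformly bounded. The distinctly two-dimensional ingredient is the weighted Gagliardo--Nirenberg inequality of Lemma \ref{L:GN-weight}, which localizes the quartic term to the exterior region where $v$ carries small $L^2$-mass without reintroducing any dependence on an $H^1$ upper bound.
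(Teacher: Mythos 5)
Your proof is correct and follows essentially the same route as the paper's: the same derivative identity with the favorable $-3\iint v_x^2\psi_x-\iint v_y^2\psi_x-\tfrac12x_t\iint v^2\psi_x$ terms, absorption of the $\psi_{xxx}$ contribution via $K\ge 4$, a near/far splitting with the weighted Gagliardo--Nirenberg inequality (Lemma \ref{L:GN-weight}) in the exterior region, and the sup-out-plus-time-reparametrization mechanism near the soliton that trades the uncontrolled $\lambda^{-2}$ for $x_t$ against the exponential separation of the weight center from the soliton trajectory. One bookkeeping remark: \eqref{E:mo2} does not follow from \eqref{E:mo1} by ``reflecting the weight and/or reversing time'' (reflection in $x$ alone is not a symmetry of ZK; the reflection $u(-x,-y,-t)$ sends \eqref{E:mo1}, \eqref{E:mo2} to \eqref{E:mo4}, \eqref{E:mo3} respectively, which is how the paper organizes the reduction), but this is harmless since your argument for \eqref{E:mo1} uses only that the weight center recedes from the soliton at rate $\tfrac12x_t$, and hence applies verbatim to \eqref{E:mo2} with $x_0\mapsto -x_0$ and the integration run forward from $t_0$ to $t_1$.
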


\begin{figure}
\includegraphics[scale=0.75]{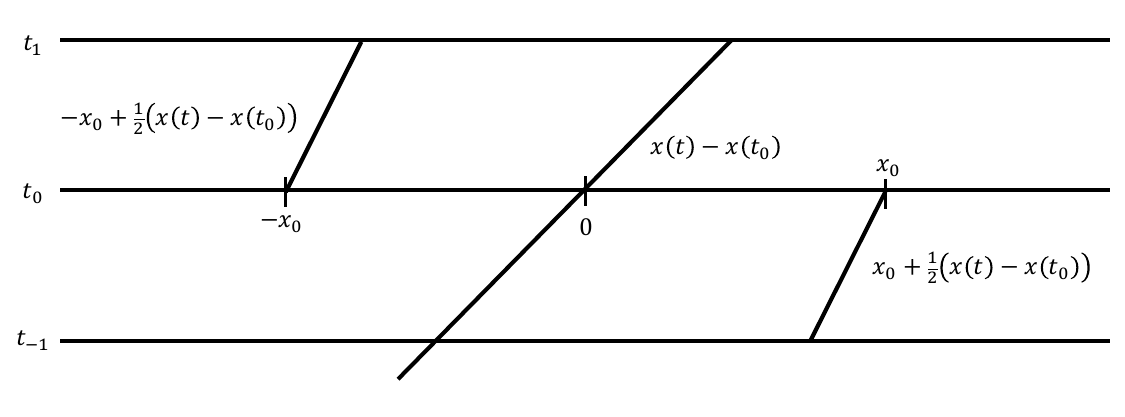}\quad 
\caption{\footnotesize Here, $x_0>0$. The function $u(x+x(t_0),y,t)$ has the soliton component centered at position $(x(t)-x(t_0),y(t))$. In this figure, only the $x$ spatial direction depicted horizontally, and time vertically. At time $t=t_0$ the soliton is centered in this frame of reference at position $x(t)-x(t_0)=0$, and the soliton trajectory is the line $x(t)-x(t_0)$ with slope approximately $1$. For the functional $I_+$, with increasing weight $\phi_+$, we can estimate \emph{forward} in time from $t_{-1}$ to $t_0$ or from $t_0$ to $t_1$.  In the case of $t_{-1}$ to $t_0$, the weight $\phi_+$ has transition centered along the right half-slope line $x_0+ \frac12(x(t)-x(t_0))$ and in the case of $t_0$ to $t_1$, the weight $\phi_+$ has transition centered along the left half-slope line $-x_0+\frac12(x(t)-x(t_0))$.  In either case, the essential aspect is that the soliton and weight center trajectories are separating in time as we move forward or backward. For $I_-$, with decreasing weight $\phi_-$, the trajectories are the same except the estimates are backwards in time from $t_1$ to $t_0$ or from $t_0$ to $t_{-1}$.
\label{F:mon-paths}}
\end{figure}

\begin{proof}
By time translation, it suffices to assume that $t_0=0$, and recall that $x_0>0$.  For ease of reference, let us rewrite them in this case:
For the increasing weight, we have two estimates that bound the \emph{future} in terms of the \emph{past}
\begin{equation}
\label{E:mo10}
\text{for }t_{-1}<0\,, \quad I_{+,x_0,0}(0) \leq I_{+,x_0,0}(t_{-1}) + \vartheta_0 e^{-x_0/K},
\end{equation}
\begin{equation}
\label{E:mo20}
\text{for }0<t_1\,, \quad I_{+,-x_0,0}(t_1) \leq I_{+,-x_0,0}(0) +\vartheta_0 e^{-x_0/K}.
\end{equation}
For the decreasing weight, we have two estimates that bound the \emph{past} in terms of the \emph{future}
\begin{equation}
\label{E:mo30}
\text{for } t_{-1}<0\,, \quad I_{-,x_0,0}(t_{-1}) \leq I_{-,x_0,0}(0) +\vartheta_0 e^{-x_0/K},
\end{equation}
\begin{equation}
\label{E:mo40}
\text{for }0<t_1\,, \quad I_{-,-x_0,0}(0) \leq I_{-,-x_0,0}(t_1) +\vartheta_0 e^{-x_0/K}.
\end{equation}

To compare the two estimates \eqref{E:mo10}, \eqref{E:mo20} with the two estimates \eqref{E:mo30}, \eqref{E:mo40}, we temporarily augment the notation to include reference to the solution $u$:  $I_{\pm,x_0,t_0,u}(t)$.  
Now if $\tilde u(x,t) = u(-x,-y,-t)$, then the corresponding soliton path parameters are $\tilde x(t) = -x(-t)$ and $\tilde y(t) = -y(-t)$. We obtain
$$
I_{+,x_0,0,\tilde u}(t) = \iint \tilde u^2(x+\tilde x(0),y,t) \phi_+(x-x_0-\frac12(\tilde x(t)-\tilde x(0))) \, dx \,dy.
$$
Change variable $x\to -x$ and $y \to -y$ to get
\begin{align*}
&= \iint u^2(x+x(0),y,-t) \phi_+(-x+x_0+\frac12(x(-t)-x(0))) \, dx \, dy \\
& = \iint u^2(x+x(0),y,-t) \phi_-(x-x_0-\frac12(x(-t)-x(0))) \, dx \, dy \\
& = I_{-,-x_0,0,u}(-t).
\end{align*}
In summary, we have established the identity
$$
I_{+,x_0,0,\tilde u}(t) = I_{-,-x_0,0,u}(-t).
$$
Suppose that we have proved \eqref{E:mo10}, \eqref{E:mo20}.  Now applying \eqref{E:mo10} to $\tilde u$, we get
$$
I_{+,x_0,0,\tilde u}(0) \leq I_{+,x_0,0,\tilde u}(t_{-1}) + \vartheta_0 e^{-x_0/K}.
$$
Applying the identity, this converts to
$$
I_{-,-x_0,0,u}(0) \leq I_{-,-x_0,0,u}(-t_{-1}) + \vartheta_0 e^{-x_0/K}.
$$
Taking $t_1 = -t_{-1}$ and replacing $-x_0$ by $x_0$, we obtain \eqref{E:mo40}.  

Applying \eqref{E:mo20} to $\tilde u$, we get
$$
I_{+,x_0,0,\tilde u}(t_1) \leq I_{+,x_0,0,\tilde u}(0) + \vartheta_0 e^{x_0/K}.
$$
Applying the identity,
$$
I_{-,-x_0,0,u}(-t_1) \leq I_{-,-x_0,0,u}(0) + \vartheta_0 e^{x_0/K}.
$$
Taking $t_{-1} = t_1$ and replacing $x_0$ by $-x_0$, we obtain \eqref{E:mo30}.  

Thus, we have  shown that \eqref{E:mo10} and \eqref{E:mo20} imply \eqref{E:mo40} and \eqref{E:mo30}, and it suffices for us to establish \eqref{E:mo10} and \eqref{E:mo20}.  For the remainder of the proof, we will drop the $+$ subscript.  
\smallskip

Both \eqref{E:mo10} and \eqref{E:mo20} are proved by the same principle, which is to take the time derivative of $I_{x_0,0}(t)$ and then note that it suffices to estimate the nonlinear term.  The integration region is divided into two parts -- near the soliton trajectory, which is estimated using the smallness of $\phi'$, and away from the soliton trajectory, which is estimated using the smallness of $u$.
\smallskip

We start by noting that $0.9 \leq \lambda^2 x_t \leq 1.1$  and $\lambda \leq 1.1$ implies that
$$
0.7 < \frac{0.9}{1.1^2} \leq \frac{0.9}{\lambda^2} \leq x_t.
$$

We have
$$
\phi'(x) = \frac{1}{\pi K} \sech(x/K) \,, \quad \phi''(x) = -\frac{1}{\pi K^2}\sech(x/K)\tanh(x/K) \,,
$$
$$
\phi'''(x) = \frac{1}{\pi K^3} (\sech(x/K)\tanh^2(x/K) - \sech^3(x/K)). 
$$
In particular, we have that
\begin{equation}
\label{E:mon10}
|\phi'''(x)| \leq \frac{1}{K^2} \phi'(x).
\end{equation}
Let
$$
I_{x_0,t_0}(t) = \iint_{xy} u^2(x+x(t_0),y,t) \, \phi(x-x_0- \tfrac12(x(t)-x(t_0))) \, dx \, dy.
$$
We compute
\begin{align*}
\tfrac12\partial_t I_{x_0,t_0}(t) &= \iint_{xy} u u_t \phi \, dx \, dy - \tfrac12x'(t) \iint_{xy} u^2 \phi' \, dx \, dy \\
& = \iint_{xy} u(-u_{xxx}-u_{yyx}-3u^2u_x) \phi \, dx \, dy -  \tfrac12x'(t) \iint_{xy} u^2 \phi' \, dx \, dy.
\end{align*}
After several applications of integration by parts, we arrive at
\begin{equation}
\label{E:mon14}
\tfrac12\partial_t I_{x_0,t_0}(t) = 
\begin{aligned}[t]
&-\tfrac32 \iint u_x^2 \phi' -\tfrac12 \iint u_y^2 \phi' - \tfrac12x'(t) \iint u^2 \phi'\\
&  + \tfrac12 \iint u^2 \phi''' +\tfrac34 \iint u^4 \phi' .
\end{aligned}
\end{equation}
Since $0.7< x_t$, the term 
$$ - \tfrac12x'(t) \iint_{xy} u^2 \phi' \leq -\tfrac14  \iint u^2 \phi'$$ 
as well as the first two terms in the first line of \eqref{E:mon14} can be used to absorb error terms.  

By \eqref{E:mon10}, we have, provided we fix $K \geq 4$, 
\begin{equation}
\label{E:mon12}
\tfrac12 \iint u^2 \phi''' \leq \tfrac1{32} \iint u^2 \phi'.
\end{equation}
We will go ahead and fix $K=4$ at the end, since it seems there is no need to take it larger.

Now define the following sets 
$$
B_1 = \{ \, (x,y) \in \mathbb{R}^2 : \, |x-(x(t)-x(t_0))|> R_0  \text{ or } |y-y(t)|>R_0 \, \} \quad \mbox{and}
$$
$$
B_2 = \{ \, (x,y) \in \mathbb{R}^2 : \, |x-(x(t)-x(t_0))|<R_0 \text{ and } |y-y(t)|<R_0 \, \}. \qquad
$$
Then $\mathbb{R}^2 = B_1\cup B_2$, and hence,  
$$
\iint u^4 \phi' = \iint_{(x,y)\in B_1}  u^4\phi' \, dy \, dx + \iint_{(x,y)\in B_2} u^4\phi' \, dy \, dx.
$$
In $B_1$, we apply the weighted Gagliardo-Nirenberg inequality \eqref{E:weightedGN}, for any $R_0>0$, to obtain
\begin{equation}
\label{E:mon11}
\iint u^4 \phi' \leq 
\begin{aligned}[t]
& c\, \| u\|_{L^2_{\substack{|x-x(t)|>R_0 \text{ or }\\ |y-y(t)|>R_0}}}^2 \iint (u^2 + u_x^2 +u_y^2) \, \phi' \, dx \, dy \\
&
+ \iint_{(x,y)\in B_2} u^4\phi' \, dy \, dx. 
\end{aligned}
\end{equation}
Note that the coefficient of the first term is made small using that $\lambda u(\lambda x+x(t),\lambda y+y(t),t)-Q(x,y)$ is small in $L^2$, and $Q$ is small in that region.  This just requires that we use the upper bound on $\lambda$ \emph{and not the lower bound on $\lambda$} and that $\|\epsilon\|_{H^1} \lesssim \sqrt{\alpha}$, and make $\alpha$ smaller than some absolute constant.  The $R_0$ just needs to be taken large  to ensure that $e^{-R_0/\lambda} \leq e^{-R_0/1.1}$ is sufficiently smaller than an absolute constant, so $R_0$ is an absolute constant.

Plug \eqref{E:mon11}, \eqref{E:mon12} into \eqref{E:mon14} to obtain
$$
\partial_t I_{x_0,t_0}(t) \lesssim -\iint (|\nabla u|^2 + |u|^2) \phi'   \, dx\,dy  +  \iint_{(x,y)\in B_2} u^4\phi' \, dx\, dy.
$$
Integrating in time, we have 
$$
I_{x_0,t_0}(t_0) \leq I_{x_0,t_0}(t_{-1}) + \int_{t_{-1}}^{t_0} \iint_{(x,y)\in B_2} u^4\phi' \, dx\, dy 
$$
and
$$
I_{-x_0,t_0}(t_1) \leq I_{-x_0,t_0}(t_0) + \int_{t_0}^{t_1} \iint_{(x,y)\in B_2} u^4\phi' \, dx\, dy. 
$$

For $B_2$, we keep the spatial restriction on the weight $\phi'$, the rest estimating using the (standard) Gagliardo-Nirenberg inequality.  Let $\tilde x =  x- x_0 - \frac12(x(t)-x(t_0))$ for the $[t_{-1},t_0]$ interval and  $\tilde x =  x+x_0 - \frac12(x(t)-x(t_0))$ for the $[t_0,t_1]$ interval. Then
$$
\iint_{(x,y)\in B_2} u^4\phi' \, dy \, dx \lesssim \Big( \sup_{(x,y)\in B_2} \phi'(\tilde x) \Big) \| u\|_{L_{xy}^2}^2 \| \nabla u\|_{L_{xy}^2}^2.
$$
We have $|x-(x(t)-x(t_0))|<R_0$, and thus, writing $\tilde x = \big(x-(x(t)-x(t_0)) \big) - x_0 - \frac12 x(t_0) + \frac12 x(t)$, we see that
$$
|\tilde x| \geq -x_0 + \frac12(x(t)-x(t_0)) - R_0.
$$
If $t<t_0$, then $x(t)<x(t_0)$ and 
$$
|\tilde x| \geq x_0 + \frac12(x(t_0)-x(t)) - R_0.
$$
If $t>t_0$, then $x(t)>x(t_0)$, and replacing $x_0$ by $-x_0$, we have
$$
|\tilde x| \geq x_0 + \frac12(x(t)-x(t_0)) - R_0.
$$
Since
$$
\phi'(\tilde x) \leq  \frac{1}{\pi K} \sech(\tilde x/K) \leq \frac{2}{\pi K} e^{-|\tilde x|/K},
$$ 
we obtain
$$
\phi'(\tilde x) \leq \frac{1}{K} \exp(- \frac{x_0}{K}) \exp (\frac{R_0}{K}) \exp ( - \frac{|x(t)-x(t_0)|}{2K} ),
$$
which is a bound \emph{independent of }$x$.  Plugging in, we obtain
$$
\iint_{(x,y)\in B_2} u^4\phi' \, dy \, dx \lesssim  \| u\|_{L_{xy}^2}^2 \| \nabla u\|_{L_{xy}^2}^2 \frac{1}{K} \exp(- \frac{x_0}{K}) \exp (\frac{R_0}{K}) \exp ( - \frac{|x(t_0)-x(t)|}{2K} ).
$$
Using that $\|u \|_{L^2}^2 \lesssim 1$ and $\|\nabla u(t) \|_{L_{xy}^2}^2 \sim \lambda^{-2} \lesssim x_t$, we obtain
$$
\iint_{(x,y)\in B_2} u^4\phi' \, dy \, dx \lesssim   \frac{1}{K} \exp(- \frac{x_0}{K}) \exp (\frac{R_0}{K}) \, x_t \, \exp ( - \frac{|x(t_0)-x(t)|}{2K} ).
$$
Upon integrating in time, separately considering the cases $t<t_0$ and $t>t_0$, we get
$$
\int_{t_{-1}}^{t_0} \iint_{(x,y)\in B_2} u^4\phi' \, dy \, dx \lesssim \exp(- \frac{x_0}{K}) \exp (\frac{R_0}{K})  \big( 1- \exp( - \frac{x(t_0)-x(t_{-1})}{2K}) \big)
$$
and
$$
\int_{t_0}^{t_1} \iint_{(x,y)\in B_2} u^4\phi' \, dy \, dx \lesssim \exp(- \frac{x_0}{K}) \exp (\frac{R_0}{K}) \big( 1- \exp( - \frac{ x(t_1)-x(t_0)}{2K} )\big).
$$
\end{proof}

Now let us give some details on how to obtain the monotonicity estimate for the rotated function.  Fix an angle $\theta$: $-\frac{\pi}{3}+\delta< \theta< \frac{\pi}{3}-\delta$.   Given $u$, let $u^\theta$ be defined in terms of $u$ by
\begin{equation}
\label{E:rotated-coords}
u(x,y,t) = u^\theta(\bar x, \bar y, t) \,, \qquad \begin{bmatrix} \bar x \\ \bar y \end{bmatrix} = \begin{bmatrix} \cos \theta & - \sin \theta \\ \sin \theta & \cos \theta \end{bmatrix} \begin{bmatrix} x \\ y \end{bmatrix}.
\end{equation}
Note that $u^\theta$ solves
$$
(ZK_\theta) \qquad \qquad 0 = \partial_t u^\theta + (\cos \theta \partial_{\bar x} - \sin \theta \partial_{\bar y})[\Delta u^\theta + (u^\theta)^3]. \qquad \qquad 
$$
Note that rotation does not affect mass or energy, thus,
$$
E(u^\theta)=E(u) ~~ \text{ and } ~~ M(u^\theta)=M(u).
$$
Moreover, note that $u=u^0$ (the case $\theta=0$).

Since 
$$u_n(x+x_n(t_{n,m}+t), y+y_n(t_{n,m}+t), t_{n,m}+t) \rightharpoonup \tilde u(x+\tilde x_n(t), y+\tilde y_n(t), t)$$
it follows that
$$u_n^\theta (\bar x+\bar x_n(t_{n,m}+t), \bar y+\bar y_n(t_{n,m}+t), t_{n,m}+t) \rightharpoonup \tilde u^\theta(\bar x+ \bar{\tilde x}_n(t), y+\bar{\tilde y}_n(t), t)$$
The relation between the soliton center coordinates is
$$
\begin{bmatrix} \bar x(t) \\ \bar y(t) \end{bmatrix} = \begin{bmatrix} \cos \theta & - \sin \theta \\ \sin \theta & \cos \theta \end{bmatrix} \begin{bmatrix} x(t) \\ y(t)\end{bmatrix}.
$$
The trajectory estimates
$$
|x_t - \lambda^{-2}| \lesssim \|\epsilon \|_{L_{xy}^2} \,, \qquad |y_t| \lesssim \| \epsilon \|_{L_{xy}^2},
$$
imply
$$
|\bar x_t - (\cos \theta) \lambda^{-2}| \lesssim \|\epsilon \|_{L_{xy}^2} \,, \qquad |\bar y_t - (\sin \theta) \lambda^{-2} | \lesssim \|\epsilon \|_{L_{xy}^2}.
$$

The following shows that the above monotonicity estimate (Lemma \ref{L:mon1}) generalizes to $u^\theta$ for all $|\theta|\leq \frac{\pi}{3}-\delta $ for any $0<\delta<\frac{\pi}{3}$.  For ease of exposition in the lemma, we drop the bar notation -- that is, we write $(x, y)$ instead of $(\bar x,\bar y)$.

\begin{lemma}[generalized $I^\theta_\pm$ estimates]
\label{L:mon1gen}
Let $t_{-1}<t_0<t_1$ and suppose that $|\theta| \leq \frac{\pi}{3}-\delta$  and $u^\theta(t)$ is an $H^1$ solution to $(\text{ZK}_\theta)$ on $[t_{-1},t_1]$ with $E(u^\theta)<0$ and
$$
\forall \, t\in [t_{-1},t_1] \,, \qquad \| \nabla u^\theta(t) \|_{L_{xy}^2} \geq 0.9 \, \|\nabla Q\|_{L_{xy}^2}.
$$
There exists an absolute constant $\alpha_4>0$ such that if 
$$
\alpha(u^\theta) \defeq \|u^\theta\|_{L^2}^2 - \|Q\|_{L^2}^2  \leq \alpha_4,
$$  
we then we have the following.  Let
$$
I^\theta_{\pm,x_0,t_0}(t) = \iint (u_\theta)(u^\theta)^2(x+x(t_0),y, t) \phi_\pm(x-x_0-\tfrac12(x(t)-x(t_0))) \, dx \, dy,
$$
where $\phi_-(x)=\phi_+(-x)$ and
$$
\phi_+(x) = \frac{2}{\pi} \arctan(e^{x/K})
$$
so that $\phi_+(x)$ is increasing with $\lim_{x\to -\infty} \phi_+(x) = 0$ and $\lim_{x\to +\infty} \phi_+(x) = 1$, and $\phi_-(x)$ is decreasing with $\lim_{x\to -\infty} \phi_-(x) = 1$ and $\lim_{x\to +\infty} \phi_-(x) = 0$.  Let $x_0>0$ and $K \geq 4$.  For the increasing weight, we have two estimates that bound the \emph{future} in terms of the \emph{past}
\begin{equation}
\label{E:mo1b}
\text{for } t_{-1}<t_0\,, \quad I^\theta_{+,x_0,t_0}(t_0) \leq I^\theta_{+,x_0,t_0}(t_{-1}) + \rho_0 e^{-x_0/K},
\end{equation}
\begin{equation}
\label{E:mo2b}
\text{for } t_0<t_1\,, \quad I^\theta_{+,-x_0,t_0}(t_1) \leq I^\theta_{+,-x_0,t_0}(t_0) +\rho_0  e^{-x_0/K}.
\end{equation}
For the decreasing weight, we have two estimates that bound the \emph{past} in terms of the \emph{future}
\begin{equation}
\label{E:mo3b}
\text{for }t_{-1}<t_0\,, \quad I^\theta_{-,x_0,t_0}(t_{-1}) \leq I^\theta_{-,x_0,t_0}(t_0) +\rho_0 e^{-x_0/K},
\end{equation}
\begin{equation}
\label{E:mo4b}
\text{for }t_0<t_1\,, \quad I^\theta_{-,-x_0,t_0}(t_0) \leq I^\theta_{-,-x_0,t_0}(t_1) +\rho_0 e^{-x_0/K}.
\end{equation}
Here, $\, \theta$, $\alpha_4>0$ and $\rho_0>0$ are absolute constants, in particular, independent	of the \emph{upper} bound on $\|\nabla u(t) \|_{L_{xy}^2}$ over the time interval $[t_{-1},t_1]$. 
\end{lemma}
\begin{proof}
The proof follows that of the earlier monotonicity lemma (Lemma \ref{L:mon1}) with minimal modification.    Indeed, if $\phi$ depends only\footnote{Which actually means $\bar x$ in the notation preceding the lemma.} on $x$, then to calculate $\partial_t I^\theta$, we need the following terms
$$
\iint \phi \, u^\theta \, u^\theta_{x x x} \, dx\, dy = \frac32 \iint \phi_{x} \, (u^\theta_{x})^2 \, dx \, d y -\frac12 \iint \phi_{x x x} \, (u^\theta)^2  \, dx \, dy
~~\mbox{and}
$$
$$
\iint \phi \, u^\theta \, u^\theta_{x y y} \, dx \, dy = \frac12 \iint \phi_{x} (u^\theta_{y})^2 \, dx \, d y,
$$
which are the same as before, but now we also have
$$
\iint \phi \, u^\theta \, u^\theta_{y x x} \, dx \, dy = \iint \phi_{x} u^\theta_{y} u^\theta_{x}  \,d x \, d y ~~\mbox{and}
$$
$$
\iint \phi \, u^\theta \, u^\theta_{y y y} \, dx \, d y =0.
$$
We have (we drop the $\theta$ superscript for $u$)
\begin{align*}
\tfrac12\partial_t I^\theta_{x_0,t_0}(t) &= \iint_{xy} u u_t \phi \, dx \, dy - \tfrac12x'(t) \iint_{xy} u^2 \phi' \, dx \, dy \\
& = \cos \theta \iint_{xy} u(-u_{xxx}-u_{yyx}-3u^2u_x) \phi \, dx \, dy  \\
& \qquad - \sin\theta  \iint_{xy} u(-u_{xxy}-u_{yyy}-3u^2u_y) \phi \, dx \, dy -  \tfrac12x'(t) \iint_{xy} u^2 \phi' \, dx \, dy .
\end{align*}
After the indicated applications of integration by parts (recalling that $\phi$ is independent of $y$), we obtain
$$
\tfrac12\partial_t I^\theta_{x_0,t_0}(t) = 
\begin{aligned}[t]
&-\tfrac32\cos\theta \iint u_x^2 \phi' -\tfrac12 \cos\theta \iint u_y^2 \phi' - \tfrac12 x'(t) \cos\theta \iint u^2 \phi'\\
&  + \tfrac12 \cos\theta \iint u^2 \phi''' +\tfrac34 \cos\theta \iint u^4 \phi'  - \sin\theta \iint  u_{y} u_{x} \phi'  \,d x \, d y.
\end{aligned}
$$
We can proceed exactly as in the proof of the earlier monotonicity lemma corresponding to $\theta=0$  (Lemma \ref{L:mon1}) except that now we need to control the extra term 
$$
\left| - \sin\theta \iint  u_{y} u_{x} \phi'  \,d x \, d y \right| \leq (1-\delta)\left(\tfrac32\cos\theta \iint u_x^2 \phi' +\tfrac12 \cos\theta \iint u_y^2 \phi' \right)
$$
for some $\delta>0$.  Recalling the classical inequality $u_xu_y \leq \frac12\omega u_x^2 + \frac12\omega^{-1} u_y^2$ for any $\omega >0$,  we  select $\omega>0$ so that 
$$
\frac12\omega \leq \frac32 \cot \theta (1-\delta) \,, \qquad  \frac12\omega^{-1}\leq \frac12 \cot \theta (1-\delta).
$$
This has a solution only if (multiplying the two equations) $\frac13 \leq (1-\delta)^2 \cot^2\theta$, or equivalently, $|\tan \theta| \leq \sqrt 3 (1-\delta)$.  Thus,  we require $|\theta| < \frac{\pi}{3}$, and hence, setting $\delta = 1-\frac{|\tan \theta|}{\sqrt 3}>0$ and $\omega = \sqrt 3$.  
\end{proof}

\begin{lemma}[applying $I_\pm$ estimates to obtain exponential decay of $\tilde u_n$]
\label{L:mon2}
For $x_0>0$, for all $-t_1(n)< t < t_2(n)$, we have
$$
\| \tilde u_n(x +\tilde x_n(t), y,t) \|_{L^2_{|x|>x_0}L^2_y}^2 \leq 24\, \vartheta_0 \, e^{-x_0/8},
$$
where  $\vartheta_0>0$ is an absolute constant (as in Lemma \ref{L:mon1}).
\end{lemma}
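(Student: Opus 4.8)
The plan is to obtain the bound for the weak limit $\tilde u_n$ by applying the monotonicity identities of Lemma~\ref{L:mon1} to the exact solutions $u_n$ over long time windows and passing the conclusion through the weak convergence recorded in Corollary~\ref{C:adjusted-conv}. Fix $n$ large enough that $\alpha(u_n)\le\alpha_4$ and $1-\tfrac1n\ge 0.9$, fix $t\in(-t_1(n),t_2(n))$ and $x_0>0$, and set $s_m=t_{n,m}+t$, so that $s_m\to+\infty$ and, by Corollary~\ref{C:adjusted-conv}, $u_n(\,\cdot+x_n(s_m),\,\cdot+y_n(s_m),\,s_m)\rightharpoonup\tilde u_n(\,\cdot+\tilde x_n(t),\,\cdot+\tilde y_n(t),\,t)$ weakly in $H^1_{xy}$. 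Multiplying by the indicator of the fixed set $\{|x|>x_0\}$ preserves weak $L^2$ convergence, so weak lower semicontinuity of the norm (the $y$-translation being irrelevant for the $L^2_y$ norm) gives
$$
\|\tilde u_n(\,\cdot+\tilde x_n(t),\,\cdot,\,t)\|_{L^2_{|x|>x_0}L^2_y}^2\le\liminf_{m\to\infty}\iint_{|x|>x_0}u_n^2(x+x_n(s_m),y,s_m)\,dx\,dy,
$$
and it suffices to bound the right-hand side; I would treat the regions $x>x_0$ and $x<-x_0$ separately.

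\textbf{The region $x>x_0$.} Since $\phi_+$ is increasing with $\phi_+(0)=\tfrac2\pi\arctan 1=\tfrac12$, one has $\mathbf 1_{x>x_0}\le 2\phi_+(x-x_0)$, so $\iint_{x>x_0}u_n^2(x+x_n(s_m),y,s_m)\,dx\,dy\le 2I_{+,x_0,s_m}(s_m)$ with $t_0=s_m$ in the notation of Lemma~\ref{L:mon1}. The hypotheses of that lemma hold for $u_n$ on $[0,s_m]$ (global existence, $E(u_n)<0$, $\alpha(u_n)\le\alpha_4$, and $\|\nabla u_n(\tau)\|_{L^2}\ge(1-\tfrac1n)\|\nabla Q\|_{L^2}\ge0.9\|\nabla Q\|_{L^2}$), so \eqref{E:mo1} with reference time $t_{-1}=0$ and $K=4$ gives $I_{+,x_0,s_m}(s_m)\le I_{+,x_0,s_m}(0)+\theta_0 e^{-x_0/4}$. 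The key point is that, translating the integration variable $x\mapsto x-x_n(s_m)$,
$$
I_{+,x_0,s_m}(0)=\iint u_n^2(x,y,0)\,\phi_+\!\Big(x-\tfrac12 x_n(s_m)-x_0-\tfrac12 x_n(0)\Big)\,dx\,dy,
$$
and $x_n(s_m)\to+\infty$: indeed by \eqref{E:param-dynam} and \eqref{E:ep-control}, $|\lambda_n^2(x_n)_t-1|\lesssim\sqrt{\alpha_n}$, while \eqref{E:gradient-lambda1} together with $\|\nabla u_n\|_{L^2}\ge(1-\tfrac1n)\|\nabla Q\|_{L^2}$ forces $\lambda_n\le 2$, hence $(x_n)_t\ge\tfrac18$ and $x_n(s_m)\ge x_n(0)+\tfrac18 s_m\to\infty$. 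Thus the argument of $\phi_+$ tends to $-\infty$ pointwise in $x$; since $u_n(\cdot,\cdot,0)\in L^2_{xy}$, dominated convergence yields $I_{+,x_0,s_m}(0)\to0$. Therefore $\limsup_m I_{+,x_0,s_m}(s_m)\le\theta_0 e^{-x_0/4}$, and $\|\tilde u_n(\cdot+\tilde x_n(t),\cdot,t)\|_{L^2_{x>x_0}L^2_y}^2\le 2\theta_0 e^{-x_0/4}\le 12\,\theta_0 e^{-x_0/8}$.

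\textbf{The region $x<-x_0$, and the main obstacle.} Using $\phi_-(x+x_0)\ge\phi_-(0)=\tfrac12$ on $\{x<-x_0\}$, one reduces to bounding $I_{-,-x_0,s_m}(s_m)$, and \eqref{E:mo4} gives $I_{-,-x_0,s_m}(s_m)\le I_{-,-x_0,s_m}(t_1)+\theta_0 e^{-x_0/4}$ for any $t_1>s_m$. The natural choice is a future good time $t_1=t_{n,m+1}$, which is $>s_m$ for $m$ large because the gaps $t_{n,m+1}-t_{n,m}$ diverge; in $I_{-,-x_0,s_m}(t_{n,m+1})$ the center of the weight transition then lies a distance $\ge x_0+\tfrac12\big(x_n(t_{n,m+1})-x_n(s_m)\big)$ to the left of the soliton of $u_n(t_{n,m+1})$, and—using that $u_n(\cdot+x_n(t_{n,m+1}),\cdot,t_{n,m+1})$ converges along a subsequence weakly in $H^1$ and strongly in $L^2_{\mathrm{loc}}$ to a profile close to $Q$ (Lemma~\ref{L:weakchar}), while this distance diverges—one aims to show $I_{-,-x_0,s_m}(t_{n,m+1})\to0$, the soliton tail being $O\big(e^{-c(x_n(t_{n,m+1})-x_n(s_m))}\big)$. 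Summing the two regions and absorbing constants then gives $24\,\theta_0 e^{-x_0/8}$. The delicate part is precisely this last step: unlike the $x>x_0$ case, the only available monotonicity bounds the quantity at $s_m$ by the same quantity at a \emph{later} time, so there is no fixed $L^2$ function to slide off to spatial infinity, and one must control the mass that the remainder component of $u_n$ carries into a region a growing distance $\sim t_{n,m+1}-t_{n,m}$ behind the soliton at the future good times. I expect this to require chaining the decreasing-weight estimates \eqref{E:mo3}--\eqref{E:mo4} along the sequence $t_{n,m}$, the divergence of the gaps rendering the accumulated errors $\theta_0 e^{-(\mathrm{distance})/4}$ negligible, together with the strong $L^2_{\mathrm{loc}}$ convergence of the translated solutions; this is the step I would expect to demand the most care.
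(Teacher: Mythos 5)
Your treatment of the region $x>x_0$ is correct, and it is a clean variant of the paper's argument: where you bound $\mathbf{1}_{x>x_0}\le 2\phi_+(x-x_0)$ and run the increasing-weight estimate \eqref{E:mo1} backward to $t_{-1}=0$, where $I_{+,x_0,s_m}(0)\to 0$ because the weight slides off the fixed datum $u_n(0)$, the paper instead runs the decreasing-weight estimate \eqref{E:mo3} and compares against $M(u_n)$, which the sliding weight recovers in the limit; both are valid and rest on the same fact $x_n(s_m)\to+\infty$. The constants also work out.

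The left region, however, contains a genuine gap, and it occurs already at your very first reduction. Passing to the weak limit and invoking lower semicontinuity reduces you to bounding $\liminf_m\iint_{x<-x_0}u_n^2(x+x_n(s_m),y,s_m)\,dx\,dy$, but this quantity is \emph{not} $O(e^{-x_0/8})$ in general: the mass $M(u_n)-M(\tilde u_n)$ that the weak limit strips away (the radiation) sits at a distance $\sim x_n(s_m)\to\infty$ \emph{behind} the soliton, so for any fixed $x_0$ it lies entirely in $\{x<-x_0\}$ for $m$ large. The best a priori bound there is $\lesssim \alpha(u_n)+e^{-cx_0}$, and the $\alpha(u_n)$ term does not decay in $x_0$. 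The same defect sinks the proposed chaining: the future value $I_{-,-x_0,s_m}(t_{n,m+1})$ has its weight transition at $-x_0-\tfrac12(x_n(t_{n,m+1})-x_n(s_m))$ relative to the soliton at time $t_{n,m+1}$, which is exactly where the shed radiation lives, so it does not tend to $0$; weak $H^1$ convergence of $u_n(\cdot+x_n(t_{n,m+1}),\cdot,t_{n,m+1})$ to a profile near $Q$ gives no control on mass at spatial infinity. The statement you are proving is genuinely asymmetric: the right-tail bound holds for $u_n$ itself, but the left-tail bound holds only for the weak limit $\tilde u_n$.

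The missing idea is to run the argument at the level of $M(\tilde u_n)$ rather than $M(u_n)$, and to use the \emph{increasing}-weight estimate \eqref{E:mo2} forward to a later good time. Concretely: if $\|\tilde u_n(\cdot+\tilde x_n(t_*),\cdot,t_*)\|^2_{L^2_{x<-x_0}L^2_y}$ were large, then combining the already-proved right-tail bound for $u_n$ with the strong $L^2_{\mathrm{loc}}$ convergence of Corollary~\ref{C:adjusted-conv} on $\{-x_0<x<x_0\}$ shows that $\|u_n(\cdot+x_n(s_m),\cdot,s_m)\|^2_{L^2_{x>-x_0}L^2_y}\le M(\tilde u_n)-c\,\theta_0e^{-x_0/8}$, i.e.\ $u_n$ has a mass deficit relative to $M(\tilde u_n)$ (not $M(u_n)$) to the right of $-x_0$. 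Taking $t_0=s_m$ and $t_1=t_{n,m'}$ a much later good time, $I_{+,-x_0/2,t_0}(t_0)$ inherits this deficit (up to the harmless $\phi_+(-x_0/2)M(u_n)\le e^{-x_0/8}M(u_n)$ contribution from $x<-x_0$), while $I_{+,-x_0/2,t_0}(t_1)\ge M(\tilde u_n)-o(1)$ by weak lower semicontinuity, since the weight tends to $1$ pointwise and the translated solutions at good times converge weakly to $\tilde u_n(0)$. The forward-in-time bound \eqref{E:mo2} then yields $M(\tilde u_n)\le M(\tilde u_n)-c\,\theta_0e^{-x_0/8}+\theta_0e^{-x_0/4}$, a contradiction. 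The crucial point you were missing is that the lower bound at the future time comes from the weak limit, not from trying to make the functional vanish there.
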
 
\begin{proof}
Recall that $n$ is fixed in the proof.  It suffices to prove the claim for any finite length interval $(-t_1,t_2) \subset (-t_1(n),t_2(n))$ (that is, for which $t_1,t_2<\infty$).  

First, we prove the decay on the right.  We will in fact prove the following stronger statement:  For $m$ sufficiently large, for all $-t_1\leq t \leq t_2$,
\begin{equation}
\label{E:mon-30}
\| u_n(x +x_n(t_{n,m}+t) , y+y_n(t_{n,m}+t) ,t_{n,m}+t) \|_{L^2_{x>r_0}L^2_y}^2 \leq 6\, \vartheta_0 e^{-r_0/4}.
\end{equation}

Since by Corollary \ref{C:adjusted-conv}
$$
u_n(\bullet + x_n(t_{n,m}+t), \bullet + y_n(t_{n,m}+t), t_{n,m}+t) \rightharpoonup \tilde u_n(\bullet+\tilde x_n(t), \bullet+\tilde y_n(t), t)
$$
weakly as $m\to \infty$, this will imply that
\begin{equation}
\label{E:right-decay}
\| \tilde u_n(x +\tilde x_n(t) , y ,t) \|_{L^2_{x>x_0}L^2_y}^2 \leq 6 \,\vartheta_0 e^{-x_0/4}.
\end{equation}

Arguing by contradiction, if \eqref{E:mon-30} fails, there exists a subsequence $m'$ and a corresponding sequence of times $t_{m'}$ in $-t_1 \leq t_{m'} \leq t_2$ such that
\begin{equation}
\label{E:mon-31}
\| u_n(x +x_n(t_{n,m'}+t_{m'}) , y ,t_{n,m'}+t_{m'}) \|_{L^2_{x>x_0}L^2_y}^2 \geq 6 \,\vartheta_0 e^{-x_0/4}.
\end{equation}
Passing to another subsequence so that $t_{m''} \to t_*$, and using the uniform continuity\footnote{This can be proved using the local theory estimates in Lemmas \ref{L:linhom} and \ref{L:lininhom} together with the bootstrap assumption of \eqref{E:timeframe}.} of $u_n(t)$ over $[t_{n,m}-t_1,t_{n,m}+t_2]$ for every $m$, and the fact that\footnote{This follows from Lemma \ref{L:epparam}, which gives the bound $|\lambda_n(t)^2 x_n'(t) -1 | \lesssim \alpha(u_n)^{1/2}$, which implies a uniform upper bound on $x_n'(t)$ on $[t_{n,m}-t_1,t_{n,m}+t_2]$ for all $m$.} $x_n(t_{n,m''}+t_{m''}) - x_n(t_{n,m''}+t_*) \to 0$, 
\begin{equation}
\label{E:mon-32}
\| u_n(x +x_n(t_{n,m''}+t_*) , y ,t_{n,m''}+ t_*) \|_{L^2_{x>x_0}L^2_y}^2 \geq 4 \vartheta_0 e^{-x_0/4}.
\end{equation}
Restricting attention to this subsequence, relabeling it as $m$, we can declare that there exists $t_*$ with $-t_1\leq t_* \leq t_2$ such that for $m$ sufficiently large,
$$
\| u_n(x +x_n(t_{n,m}+t_*), y ,t_{n,m}+t_*) \|_{L^2_{x>x_0}L^2_y} ^2 \geq 4 \,\vartheta_0  e^{-x_0/4}.
$$
We apply the $I_-$ estimate with $t_{-1}=0$ and $t_0 = t_{n,m}+t_*$ (where the weight transition occurs on the right of the soliton trajectory).  Now
\begin{align*}
I_{-,x_0,t_0}(t_0) &= \iint u_n(x+x(t_0),y,t_0)^2 \phi_-(x-x_0) \, dx \, dy \\
&= \iint u_n(x+x(t_0),y,t_0)^2\, dx \, dy - \iint u_n(x+x(t_0),y,t_0)^2 (1-\phi_-)(x-x_0) \, dx \, dy.
\end{align*}
Using that $\frac12 \leq (1-\phi_-)(x-x_0)$ for $x>x_0$, we have
\begin{align*}
I_{-,x_0,t_0}(t_0) &\leq M(u_n) - \frac12 \iint_{x>x_0, \; y\in \mathbb{R}} u_n(x+x(t_0),y,t_0)^2 \, dxdy \\
&=M(u_n) - \frac12 \iint_{x>x_0, \; y\in \mathbb{R}} u_n(x+x(t_{n ,m}+t_*),y,t_{n, m}+t_*)^2 \, dxdy, 
\end{align*}
and hence,
\begin{equation}
\label{E:mon17}
I_{-,x_0,t_0}(t_0)  \leq M(u_n) - 2 \vartheta_0  e^{-x_0/4}.
\end{equation}
On the other hand,
\begin{align*}
I_{-,x_0,t_0}(t_{-1}) &= \iint u_n(x+x(t_0),y,t_{-1})^2 \phi_-(x-x_0-\frac12(x(t_{-1})-x(t_0)) \, dx\,dy \\
& = \iint u_n(x+x(t_{n, m}+t_*),y, 0)^2 \phi_-(x-x_0-\frac12(x(0)-x(t_{n ,m}+t_*)) \, dx\,dy .
\end{align*}
Therefore,
$$
I_{-,x_0,t_0}(t_{-1}) = \iint u_n(x,y, 0)^2 \phi_-(x-x_0-\frac12(x(0)+x(t_{n ,m}+t_*)) \, dx\,dy.
$$
Since $u_n(\bullet, \bullet, 0)$ is a fixed function, and $x(t_{n,m}+t_*) \to +\infty$ as $m\to +\infty$, we have
\begin{equation}
\label{E:mon18}
\lim_{m\to \infty} I_{-,x_0,t_0}(t_{-1}) = \iint u_n(x,y,0)^2 \, dx \, dy = M(u_n).
\end{equation}
By Lemma \ref{L:mon1} with $K=4$,
$$
I_{-,x_0,t_0}(t_{-1}) \leq I_{-,x_0,t_0}(t_0) + \vartheta_0 e^{-x_0/4}.
$$
By \eqref{E:mon17}, \eqref{E:mon18} and sending $m\to \infty$, we get
$$
M(u_n) \leq M(u_n)  - \vartheta_0 e^{-x_0/4}
$$
a contradiction.  This completes the proof of \eqref{E:mon-30}.
\smallskip

Following the proof of the inequality \eqref{E:mon-30} above, we can apply Lemma \ref{L:mon1gen} to obtain, for $-\frac{\pi}{3}+\delta < \theta< \frac{\pi}{3} -\delta$, 
\begin{equation}
\label{E:mon-30-gen}
\| u_n^\theta(\bar x +\bar x_n(t_{n,m}+t) , \bar y + \bar y_n(t_{n,m}+t),t_{n,m}+t) \|_{L^2_{\bar x>r_0}L^2_{\bar y}}^2 \leq 6\, \vartheta_0 e^{-r_0/4}.
\end{equation}
The estimate \eqref{E:mon-30-gen} is thus a generalization of \eqref{E:mon-30} that reduces to \eqref{E:mon-30} when $\theta=0$.  
\smallskip

Applying \eqref{E:mon-30-gen} for $\theta=\frac{\pi}{4}$, and $\theta=-\frac{\pi}{4}$, we obtain 
\begin{equation}
\label{E:mon-30-gen-1}
\| u_n( x +x_n(t_{n,m}+t) , y+y_n(t_{n,m}+t) ,t_{n,m}+t) \|_{L^2_{\Omega_0\cup\Omega_-\cup \Omega_+}}^2 \leq 6\, \vartheta_0 e^{-r_0/4},
\end{equation}
where $\Omega_0$, $\Omega_\pm$ are as depicted in Figures \ref{F:1}-\ref{F:2}.

\begin{figure}[ht]
\includegraphics[width=0.6\textwidth,height=0.5\textwidth]
{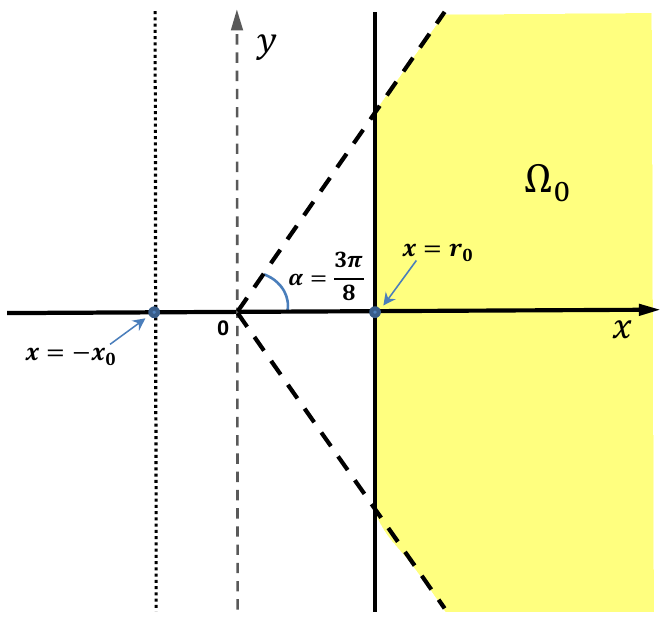}
\caption{ {\footnotesize Region $\Omega_0$, see \eqref{E:mon-30-gen-1}, the set of all $(x,y)$ for which $x>r_0$ and $|\alpha|<\frac{3\pi}{8}$ when polar coordinates $(x,y)=(r\cos\alpha, r\sin\alpha)$ are used.  }}
\label{F:1} 
\end{figure}

\begin{figure}[ht]
\includegraphics[scale=0.72]{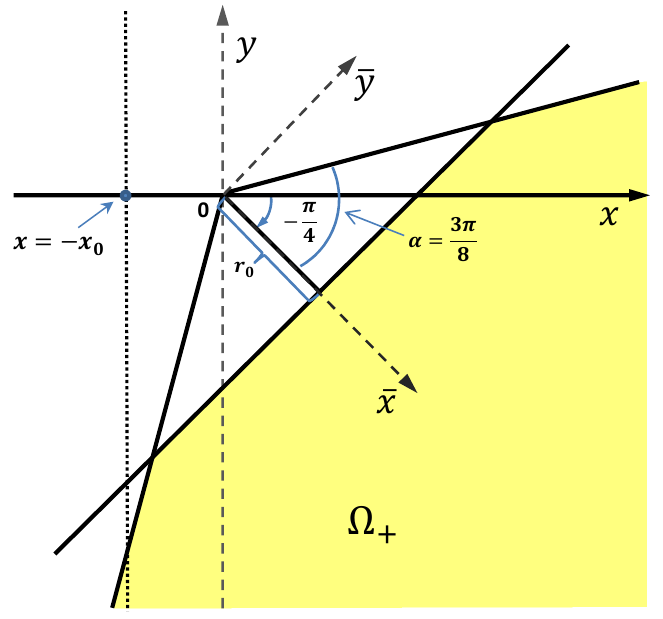} 
\includegraphics[scale=0.72]{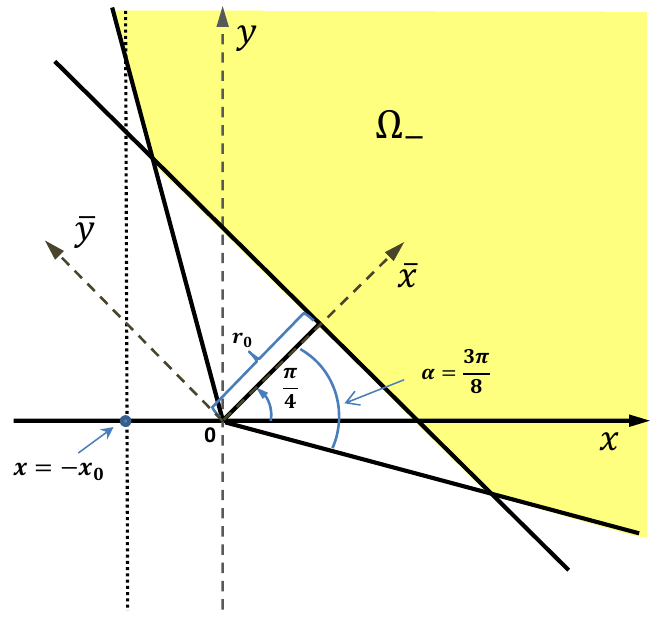} 
\caption{\footnotesize Regions $\Omega_\pm$, see \eqref{E:mon-30-gen-1}, defined analogously to $\Omega^0$ but in the $(\bar x^\theta, \bar y^\theta)$ coordinates introduced in \eqref{E:rotated-coords} for $\theta=\pm\frac{\pi}{4}$ respectively.  
Specifically $\Omega_\theta$ is the set of all $(\bar x^\theta,\bar y^\theta)$, for which $\bar x^\theta>r_0$ and $|\alpha|<\frac{3\pi}{8}$ when polar coordinates $(\bar x^\theta ,\bar y^\theta)=(r\cos\alpha, r\sin\alpha)$ are used.   }
\label{F:2}
\end{figure}

\newpage
Note that \eqref{E:mon-30-gen-1} yields 
\begin{equation}
\label{E:mon-30-gen-2}
\| u_n( x +x_n(t_{n,m}+t) , y+y_n(t_{n,m}+t) ,t_{n,m}+t) \|_{L^2_{\Omega}}^2 \leq 6\, \vartheta_0 e^{-r_0/4},
\end{equation}
where $\Omega$ is as depicted in Figure \ref{F:3}.

\begin{figure}[ht]
\includegraphics[width=0.57\textwidth,height=0.46\textwidth]%[scale=0.65]
{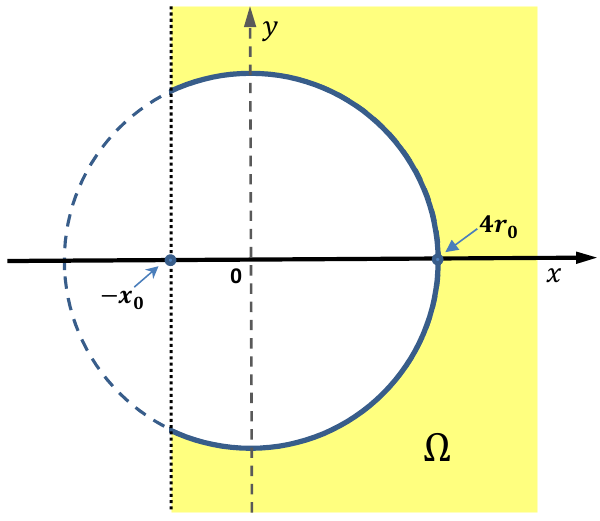} 
\caption{\small Region $\Omega$, see \eqref{E:mon-30-gen-2}, which is the set of all $(x,y)$ such that $x^2+y^2>16r_0^2$ and $x>-x_0$.}
\label{F:3}
\end{figure}

Before proving the decay on the left, let us note a consequence of \eqref{E:mon-30-gen-2}.  Since 
$$
u_n(x+x_n(t_{n,m}+t), y+y_n(t_{n,m}+t), t_{n,m}+t) \rightharpoonup \tilde u_n(x+\tilde x_n(t), y+\tilde y_n(t), t)
$$
weakly in $H^1$, by Rellich-Kondrachov compactness and a diagonal argument, there exists a subsequence (still denoted with $m$), so that
\begin{align*}
& \mathbf{1}_{x>-x_0}(x,y) \; u_n(x+x_n(t_{n,m}+t), y+y_n(t_{n,m}+t), t_{n,m}+t) \\
& \qquad \to \mathbf{1}_{x>-x_0}(x,y) \; \tilde u_n(x+\tilde x_n(t), y+\tilde y_n(t), t) \qquad \text{ as }m \to \infty
\end{align*}
in $L^2_{xy}$. Hence, for $m$ sufficiently large, 
\begin{equation}
\label{E:mon-35}
\begin{aligned}
\indentalign \| u_n(x+x_n(t_{n,m}+t), y+y_n(t_{n,m}+t), t_{n,m}+t) \|^2_{L^2_{x>-x_0}L^2_y} \\
&\leq \| \tilde u_n(x+\tilde x_n(t), y+\tilde y_n(t), t)\|_{L^2_{x>-x_0}L^2_y}^2 + \vartheta_0 e^{-x_0/4}.
\end{aligned}
\end{equation}
It follows that
\begin{equation}
\label{E:mon-37}
\begin{aligned}
\indentalign \| u_n(x+x_n(t_{n,m}+t), y+y_n(t_{n,m}+t), t_{n,m}+t) \|_{L^2_{x>-x_0}L^2_{y}}^2 \\
&\leq  M(\tilde u_n) - \| \tilde u_n(x+\tilde x_n(t), y+\tilde y_n(t), t)\|_{L^2_{x<-x_0} L^2_y}^2 + \vartheta_0 e^{-x_0/4}.
\end{aligned}
\end{equation}

Next, we prove the decay on the left\footnote{Note that the bound is $e^{-x_0/8}$ instead of $e^{-x_0/4}$ as provided by Lemma \ref{L:mon1} with $K=4$.  This is because we use a shift $x_0/2$ instead of $x_0$ -- see below.}, i.e., for all $-t_1\leq t \leq t_2$, 
\begin{equation}
\label{E:left-decay}
\| \tilde u_n(x +\tilde x_n(t), y,t) \|_{L^2_{x<-x_0}L^2_y}^2 \leq  24 \, \vartheta_0 \, e^{-x_0/8}.
\end{equation}
Arguing by contradiction, we assume that there exists $t_*>0$ and such that 
\begin{equation}\label{E:decay3}
\| \tilde u_n(x +\tilde x_n(t_*), y+ \tilde y_n(t_*),t_*) \|_{L^2_{x<-x_0}L^2_y}^2 \geq  22 \, \vartheta_0 \, e^{-x_0/8}.
\end{equation}
For $m$ sufficiently large, we  combine \eqref{E:decay3} with \eqref{E:mon-37}, yielding
\begin{align}
\label{E:mon-38}
\| u_n(x+x_n(t_{n,m}+t_*), y+y_n(t_{n,m}+t_*), t_{n,m}+t_*) & \|_{L^2_{x>-x_0} L^2_y}^2 \\
\leq  M(\tilde u_n)  -21\, \vartheta_0 e^{-x_0/8}. \notag
\end{align}

We apply the $I_+$ estimate for $K=4$ with weight transition on the left, from $t_0=t_{n,m(x_0)}+t_*$ to $t_1 = t_{n,m}$ for $m$ sufficiently large so that $t_{n,m} \geq t_{n,m(x_0)}+t_*$.  

We have
\begin{align*}
I_{+,-x_0/2,t_0}(t_0) &= \iint u_n(x+x_n(t_0),y, t_0)^2 \phi_+(x+\frac12 x_0) \, dx \, dy \\
& = \iint u_n(x+x_n(t_{n,m(x_0)}+t_*),y, t_{n,m(x_0)}+t_*)^2 \phi_+(x+\frac12 x_0) \, dx \, dy .
\end{align*}
Since $\phi_+(x+\frac12 x_0) \leq \phi_+(-\frac12 x_0)$ for $x<-x_0$,
$$
I_{+,-x_0/2,t_0}(t_0) \leq \phi_+(-\frac12x_0)M(u_n) + \iint_{x>-x_0} u_n(x+x_n(t_{n,m(x_0)}+t_*),y, t_{n,m(x_0)}+t_*)^2  \, dx \, dy.
$$
By  \eqref{E:mon-38},
\begin{equation}
\label{E:mon20}
I_{+,-x_0/2,t_0}(t_0) \leq M(\tilde u_n) - 15\vartheta_0  e^{-x_0/8}+\phi_+(-\frac12x_0)M(u_n).
\end{equation}
Since $\phi_+(x) = \frac2{\pi}\arctan(e^{x/K})$, we have, for $x\to -\infty$, $\phi_+(x) \leq e^{x/K}$, and hence\footnote{This is why it is $e^{-x_0/8}$ instead of $e^{-x_0/4}$.},
$$
\phi_+(-\frac12x_0) \leq e^{-x_0/2K} = e^{-x_0/8}.
$$
Moreover, we can assume, without loss, that $\vartheta_0>0$ in Lemma \ref{L:mon1} was taken large enough so that $\vartheta_0 \geq 2\|Q\|_{L^2}^2$.  Then from \eqref{E:mon20}, we obtain
\begin{equation}
\label{E:mon20b}
I_{+,-x_0/2,t_0}(t_0) \leq M(\tilde u_n) - 12\vartheta_0  e^{-x_0/8}.
\end{equation}
On the other hand,
\begin{align*}
I_{+,-x_0/2,t_0}(t_1) &= \iint u_n(x+x_n(t_0),y,t_1)^2 \phi_+(x+\frac12 x_0-\frac12(x_n(t_1)-x(t_0))) \, dx \, dy \\
&= \iint u_n(x+x_n(t_1),y+y_n(t_1),t_1)^2 \\
& \qquad \qquad \phi_+(x+\frac12x_0+\frac12(x_n(t_1)-x_n(t_0))) \, dx \, dy \\
& = \iint u_n(x+x_n(t_{n,m}),y + y_n(t_{n,m}) ,t_{n,m})^2 \\
& \qquad \qquad \phi_+(x+\frac12x_0+\frac12(x_n(t_{n, m}) - x(t_{n,m(x_0)}+t_*))) \, dx \, dy.
\end{align*}
Note that $\phi_+(x+\frac12x_0+\frac12(x_n(t_{n, m}) - x_n(t_{n,m(x_0)}+t_*))) \to 1$ pointwise as $m\to \infty$.  Since 
$$
u_n(x+x(t_{n,m}), y+y(t_{n,m}), t_{n,m}) \rightharpoonup \tilde u_n(x+\tilde x_n(0), y+\tilde y_n(0), 0)
$$
as $m\to \infty$, it follows that also
$$
u_n(x+x_n(t_{n,m}), y+y_n(t_{n,m}), t_{n,m})\phi_+(x+\frac12x_0+\frac12(x_n(t_{n, m}) - x_n(t_{n,m(x_0)}+t_*)))^{1/2} 
$$
$$
\rightharpoonup \tilde u_n(x+\tilde x_n(0), y+\tilde y_n(0), 0)
$$
as $m\to \infty$ (consider that the $\phi_+$ term times the test function converges to the test function).  By the fact that the norm of the weak limit is less than or equal to the limit of the norms,
\begin{equation}
\label{E:mon21}
M(\tilde u_n) = \| \tilde u_n(x+\tilde x_n(0), y+\tilde y_n(0), 0)\|_{L_x^2}^2 \leq \lim_{m\to \infty} I_{+,-x_0/2,t_0}(t_1).
\end{equation}

The $I_+$ estimate from Lemma \ref{L:mon1} states
$$
I_{+,-x_0/2,t_0}(t_1) \leq I_{+,-x_0/2,t_0}(t_0) + \vartheta_0  e^{-x_0/4}.
$$
By \eqref{E:mon20b}, \eqref{E:mon21}, taking $m\to \infty$, we obtain
$$
M(\tilde u_n) \leq M(\tilde u_n) - 11\vartheta_0  e^{-x_0/8},
$$
which is a contradiction. This completes the proof of \eqref{E:left-decay}.  Combining \eqref{E:right-decay} and \eqref{E:left-decay}  completes the proof of lemma.
\end{proof}

\begin{lemma}
\label{L:all-directions}
There exists $\omega>0$ such that 
\begin{equation}
\label{E:improved-decay}
\|  \tilde u_n(x+\tilde x_n(t),y+\tilde y_n(t), t) \|_{L^2_{B(0,r)^c}} \lesssim e^{-\omega r},
\end{equation}
where $B(0,r)$ is the ball with center $0$ and radius $r>0$ in $\mathbb{R}^2$, and $B(0,r)^c$ denotes the complement.   
\end{lemma}
\begin{proof}
Recall the rotated solutions $u_n^\theta$ and corresponding coordinates $(\bar x, \bar y)$.  The proof of Lemma \ref{L:mon2} carries over for $\tilde u_n^\theta$ provided $|\theta|< \frac{\pi}{3}-\frac{\pi}{12} = \frac{\pi}{12}$, and the statement of Lemma \ref{L:mon2} is the case $\theta=0$. 
The combination of the cases $\theta=0$ and $\theta=\frac{\pi}{24}$, that is,
\begin{equation}
\label{E:tilde-decay}
\| \tilde u_n^\theta (\bar x +\bar{\tilde x}_n(t), \bar y+ \bar{\tilde y}_n(t),t) \|_{L^2_{|\bar x|>x_0}L^2_{\bar y}}^2 \leq 24\, \vartheta_0 \, e^{-x_0/8},
\end{equation}
for both $\theta=0$ and $\theta=\frac{\pi}{24}$ gives \eqref{E:improved-decay},  see Figure \ref{F:4}.
\begin{figure}[ht!]
\includegraphics[scale=0.8]{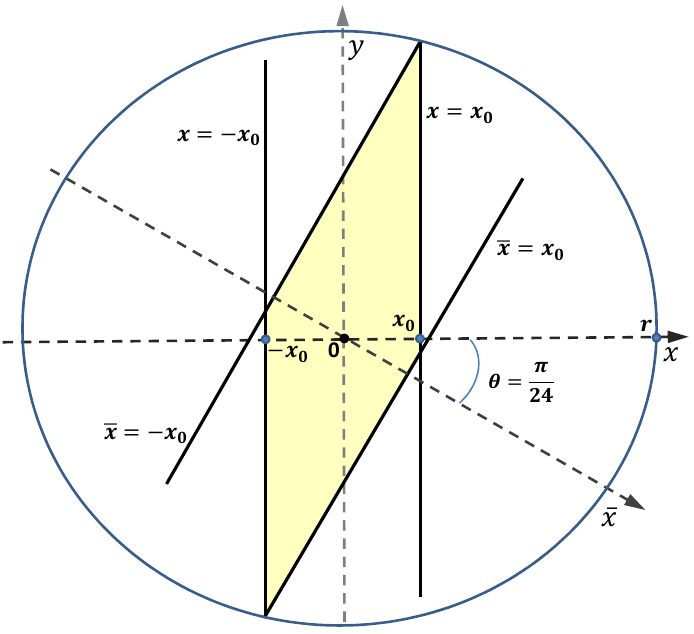}
\caption{\footnotesize Estimate \eqref{E:tilde-decay} for $\theta=0$ gives  a bound by $e^{-x_0/4}$ outside $|x|<x_0$, while estimate \eqref{E:tilde-decay} for $\theta=\frac{\pi}{24}$ gives a bound of $e^{-x_0/4}$ outside $|\bar x|<x_0$, where $\bar x = x \cos \theta  - y\sin \theta$.  The combination gives decay $e^{-x_0/4}$ outside radius $r=x_0/\sin(|\theta|/2)$.
\label{F:4}}
\end{figure}
\end{proof}

Let
\begin{equation}\label{E:epsilon-tilde}
\tilde \epsilon_n(x,y,t) \defeq \tilde \lambda_n(t) \, \tilde u_n(\tilde\lambda_n(t) x + \tilde x_n(t), \tilde \lambda_n(t) y + \tilde y_n(t), t)-Q(x,y).
\end{equation}
\begin{lemma}[pointwise-in-$x$ estimates of $\tilde u_n$ and $\tilde \epsilon_n$]
\label{L:pointwise-in-x}
For $n$ sufficiently large, we have, uniformly in $t$, 
$$
\| \tilde u_n(x+\tilde x_n(t),y,t) \|_{L_y^2} \lesssim \tilde \lambda_n(t)^{-1/2}  e^{-|x|/32}
$$
and
$$
\| \tilde \epsilon_n (x,y,t) \|_{L_y^2} \lesssim  \alpha(\tilde u_n)^{1/4} e^{-\tilde \lambda_n(t) |x|/32}.
$$
\end{lemma}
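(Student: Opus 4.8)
The plan is to upgrade the exterior-region $L^2$ decay furnished by Lemma \ref{L:mon2} to the claimed pointwise-in-$x$ ($L_y^2$-valued) decay by a one-dimensional argument in the $x$ variable. Fix $t\in(-t_1(n),t_2(n))$; all bounds below will be uniform in $t$ because Lemma \ref{L:mon2}, \eqref{E:gradient-lambda2} and \eqref{E:ep-control} are. Set $\rho(x)=\|\tilde u_n(x+\tilde x_n(t),\cdot,t)\|_{L_y^2}^2$ and $\rho_\epsilon(x)=\|\tilde\epsilon_n(x,\cdot,t)\|_{L_y^2}^2$. Since $\tilde u_n(t),\tilde\epsilon_n(t)\in H_{xy}^1$, both functions are nonnegative, lie in $W^{1,1}(\mathbb{R})$ with, e.g., $\rho'(x)=2\la\tilde u_n(x+\tilde x_n(t),\cdot,t),(\tilde u_n)_x(x+\tilde x_n(t),\cdot,t)\ra_{L_y^2}$, and tend to $0$ as $x\to\pm\infty$ (they are integrable, being equal to $M(\tilde u_n)$, resp. $\|\tilde\epsilon_n(t)\|_{L_{xy}^2}^2$). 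Hence, by Cauchy--Schwarz in $y$ and then in $x$, for $x_0>0$,
$$
\rho(x_0)=-\int_{x_0}^\infty\rho'(x)\,dx\le 2\Big(\int_{x_0}^\infty\rho(x)\,dx\Big)^{1/2}\|\nabla\tilde u_n(t)\|_{L_{xy}^2},
$$
and, symmetrically, $\rho(-x_0)\le 2\big(\int_{-\infty}^{-x_0}\rho\big)^{1/2}\|\nabla\tilde u_n(t)\|_{L_{xy}^2}$; the identical inequalities hold for $\rho_\epsilon$ with $\|\nabla\tilde\epsilon_n(t)\|_{L_{xy}^2}$ replacing $\|\nabla\tilde u_n(t)\|_{L_{xy}^2}$.

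For the first estimate I would insert into the above the bounds $\int_{x>x_0}\rho\le 24\theta_0 e^{-x_0/8}$ and $\int_{x<-x_0}\rho\le 24\theta_0 e^{-x_0/8}$ ($x_0>0$) from Lemma \ref{L:mon2}, together with $\|\nabla\tilde u_n(t)\|_{L_{xy}^2}\lesssim\tilde\lambda_n(t)^{-1}$, which follows from \eqref{E:gradient-lambda2} and $\alpha(\tilde u_n)\ll1$. This gives $\rho(\pm x_0)\lesssim\tilde\lambda_n(t)^{-1}e^{-x_0/16}$, and taking square roots yields the first estimate, $\|\tilde u_n(x+\tilde x_n(t),\cdot,t)\|_{L_y^2}\lesssim\tilde\lambda_n(t)^{-1/2}e^{-|x|/32}$.

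For the second estimate the same scheme applies to $\rho_\epsilon$, and the extra factor $\alpha(\tilde u_n)^{1/4}$ arises precisely because the gradient factor for $\tilde\epsilon_n$ is small: $\|\nabla\tilde\epsilon_n(t)\|_{L_{xy}^2}\lesssim\alpha(\tilde u_n)^{1/2}$ by \eqref{E:ep-control} (valid for $n$ large, so that $\alpha(\tilde u_n)\le\alpha_3$). It remains to produce the exterior $L^2$ bound for $\tilde\epsilon_n$. Writing $\tilde\epsilon_n(x,y,t)=\tilde\lambda_n(t)\tilde u_n(\tilde\lambda_n(t)x+\tilde x_n(t),\tilde\lambda_n(t)y+\tilde y_n(t),t)-Q(x,y)$, and applying the triangle inequality on $\{x>x_0\}\times\mathbb{R}_y$, a rescaling change of variables (the prefactor $\tilde\lambda_n(t)^2$ cancelled by the Jacobian), Lemma \ref{L:mon2} with $x_0$ replaced by $\tilde\lambda_n(t)x_0>0$, and the bound $\|Q\|_{L_{x>x_0}^2L_y^2}\lesssim e^{-x_0/2}$ (from \eqref{prop-Q} and $\sqrt{x^2+y^2}\ge\tfrac1{\sqrt2}(|x|+|y|)$), I obtain $\int_{x>x_0}\rho_\epsilon\lesssim e^{-\tilde\lambda_n(t)x_0/8}$ for $x_0>0$, using $\tilde\lambda_n(t)\le2$; the left analogue is identical. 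Combining with the displayed Cauchy--Schwarz inequality for $\rho_\epsilon$ gives $\rho_\epsilon(\pm x_0)\lesssim e^{-\tilde\lambda_n(t)x_0/16}\,\alpha(\tilde u_n)^{1/2}$, and taking square roots gives the second estimate.

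The heart of the matter is just the elementary one-dimensional inequality $|f(x_0)|^2\le 2\|f\|_{L_{x>x_0}^2}\|f'\|_{L_x^2}$ applied in the $x$-slice, with the gradient factor contributing either $O(1)$ (for $\tilde u_n$, whose $H^1$ norm is merely bounded, giving only $\tilde\lambda_n^{-1/2}$) or the gain $O(\alpha(\tilde u_n)^{1/2})$ (for $\tilde\epsilon_n$). The only points demanding care, and the main potential pitfall, are the propagation of the exponential rates through the two square-root steps and through the scaling change of variables — in particular checking that the ground-state tail is dominated by $e^{-\tilde\lambda_n(t)x_0/8}$ on the range $\tilde\lambda_n(t)\in[\tfrac12,2]$ — while the verification that $\rho,\rho_\epsilon$ are absolutely continuous in $x$ with the stated derivative and vanish at infinity is routine for $H^1$ functions.
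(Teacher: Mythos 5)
Your proof is correct and follows essentially the same route as the paper's: both rest on the one-dimensional inequality $|f(x_0)|^2\le 2\|f\|_{L^2_{x>x_0}}\|f'\|_{L^2_x}$ in the $x$-slice, combined with Cauchy--Schwarz in $y$, the exterior $L^2$ decay of Lemma \ref{L:mon2} (rescaled for $\tilde\epsilon_n$ and supplemented by the $Q$-tail), and the gradient bounds $\|\nabla\tilde u_n\|_{L^2}\sim\tilde\lambda_n^{-1}$, $\|\nabla\tilde\epsilon_n\|_{L^2}\lesssim\alpha(\tilde u_n)^{1/2}$. Your formulation via the absolutely continuous slice function $\rho(x)=\|\cdot\|_{L_y^2}^2$ is just a tidier packaging of the same computation, and the exponential rates $e^{-|x|/32}$ and $e^{-\tilde\lambda_n|x|/32}$ come out exactly as in the paper.
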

\begin{proof}
For the proof let us instead write $u$ for $\tilde u_n$, $x(t)$ for $\tilde x_n(t)$, $\lambda(t)$ for $\tilde \lambda_n(t)$, and similarly, we just write $\epsilon$ in place of $\tilde \epsilon_n$.  We have, for $x_0>0$,
$$
\| u(x_0+x(t),y,t) \|_{L_y^2}^2 \lesssim \left\| \, \|u(x+x(t),y,t)\|_{L_{x>x_0}^2} \|u_x(x,y,t) \|_{L_x^2} \right\|_{{L_y^1}}.
$$
By Cauchy-Schwarz in $y$ on the outside, we get
$$
\| u(x_0+x(t),y,t) \|_{L_y^2}^2\lesssim \|u(x+x(t),y,t)\|_{L_y^2L_{x>x_0}^2} \|u_x(x,y,t) \|_{L_y^2L_x^2}.
$$
By Lemma \ref{L:mon2}, 
$$
\| u(x_0+x(t),y,t) \|_{L_y^2}^2 \lesssim \lambda(t)^{-1} e^{-x_0/16},
$$
where we have used that $\lambda(t)^{-1} \sim \| \nabla u(t) \|_{L^2}$ from \eqref{E:gradient-lambda1}.  A similar argument works for $x_0<0$.  

Similar to the above, for $x_0\in \mathbb{R}$, we have
\begin{equation}
\label{E:ep-est1}
\| \epsilon(x_0,y,t) \|_{L_y^2}^2 \lesssim \| \epsilon(x,y,t) \|_{L_{|x|>|x_0|}^2 L_y^2} \| \epsilon_x(x,y,t) \|_{L_{xy}^2}.
\end{equation}
The second term is bounded by $\alpha(u)^{1/2}$.  For the first term, we use the definition of $\epsilon$ in terms of $u$ and $Q$ to estimate
$$
\| \epsilon(x,y,t) \|_{L^2_{|x|>|x_0|}L^2_y} \lesssim \|u(x+x(t),y,t) \|_{L^2_{|x|>\lambda |x_0|}L^2_y} + \| Q(x,y) \|_{L^2_{|x|>|x_0|}L^2_y}.
$$
By Lemma \ref{L:mon2}, we get
$$
\| \epsilon(x,y,t) \|_{L^2_{|x|>|x_0|}L^2_y}\lesssim  e^{-\lambda |x_0|/16} + e^{-|x_0|} \lesssim  e^{-\lambda |x_0|/16}.
$$
Plug this into \eqref{E:ep-est1}, to obtain
$$
\| \epsilon(x,y,t) \|_{L^2_y}^2 \lesssim \alpha(u)^{1/2} e^{-\lambda |x|/16}.
$$
\end{proof}

\section{Control of $\tilde \lambda_n(t)$ via the $L^1$-type invariance}

From \eqref{E:upper} and \eqref{E:gradient-lambda1}, we have $\lambda(t) \leq 1.1$ for all $t\geq 0$.

\begin{lemma}[integral conservation yields control on scale]
\label{L:scale-control}
For any solution $u(t)$ with $\alpha(u) \defeq M(u)-M(Q)$ and $E(u) < 0$, let 
$$\epsilon(x,y,t) = \lambda(t) u(\lambda(t) x+x(t), \lambda(t) y + y(t), t) - Q(x,y)$$
with parameters $\lambda(t)$, $x(t)$, $y(t)$ as given by Lemma \ref{L:geom} and $\| \epsilon(t) \|_{H_{xy}^1} \lesssim \alpha(u)^{1/2}$ with $\alpha(u) \ll 1$.
Suppose that 
$$
0.9 \leq \lambda(0) \leq 1.1
$$
and for all $-T_*^- <t<T_*^+$,  we have both 
$$
0<\lambda(t) \leq 1.1
$$
and the $x$-pointwise estimate uniformly in $t$
\begin{equation}
\label{E:assume-ep-decay}
\| \epsilon(x,y,t) \|_{L^2_y} \lesssim \alpha(u)^{1/4} e^{-\lambda |x|/32}.
\end{equation}
Let $(-T^-,T^+)$ be the maximal time interval around $0$ contained in $(-T_*^-,T_*^+)$ such that for all  $-T^- < t < T^+$, we have 
$$
\lambda(t) \geq \frac34.
$$  
Then there exists an absolute $\alpha_5>0$ such that for $\alpha(u) \leq \alpha_5$, we have $(-T_*^-,T_*^+) = (-T^-,T^+)$.
\end{lemma}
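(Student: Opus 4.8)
The plan is to exploit the $L^1$-type invariance \eqref{L1-inv}. The decay hypothesis \eqref{E:assume-ep-decay}, transported to the original frame through the identity $u(\lambda(t) x + x(t), \lambda(t) y + y(t), t) = \lambda(t)^{-1}\big(Q(x,y) + \epsilon(x,y,t)\big)$ together with \eqref{prop-Q}, shows that $u(t) \in L^2_y L^1_x$ uniformly in $t$ on $(-T_*^-,T_*^+)$. Hence, approximating by regular solutions (as in the discussion of the local theory in the introduction), the quantity $\big\| \int_{\mathbb{R}} u(\cdot,y,t)\,dx \big\|_{L^2_y}$ is finite and constant in $t$ on $(-T_*^-,T_*^+)$; call its (time-independent) value $c_0$.

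First I would rewrite $c_0$ in terms of the modulation parameters. Changing variables $X = \lambda(t)x + x(t)$, $Y = \lambda(t) y + y(t)$ inside the integral and invoking the geometric decomposition gives
\begin{equation*}
c_0 = \Big\| \int_{\mathbb{R}} u(X,Y,t)\,dX \Big\|_{L^2_Y}^2 = \lambda(t)\, \Big\| \int_{\mathbb{R}} \big(Q(x,y) + \epsilon(x,y,t)\big)\,dx \Big\|_{L^2_y}^2 .
\end{equation*}
Set $c_Q = \big\| \int_{\mathbb{R}} Q(x,\cdot)\,dx \big\|_{L^2_y}$, a fixed positive constant (finite by \eqref{prop-Q}). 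By Minkowski's integral inequality and \eqref{E:assume-ep-decay},
\begin{equation*}
\Big\| \int_{\mathbb{R}} \epsilon(x,\cdot,t)\,dx \Big\|_{L^2_y} \leq \int_{\mathbb{R}} \|\epsilon(x,\cdot,t)\|_{L^2_y}\,dx \lesssim \alpha(u)^{1/4} \int_{\mathbb{R}} e^{-\lambda(t)|x|/32}\,dx \lesssim \frac{\alpha(u)^{1/4}}{\lambda(t)} ,
\end{equation*}
so that $c_0 = \lambda(t)\,\big(c_Q + O(\alpha(u)^{1/4}/\lambda(t))\big)^2$ for every $t \in (-T_*^-,T_*^+)$.

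Next I would evaluate this at $t = 0$, where $0.9 \leq \lambda(0) \leq 1.1$: since $\alpha(u)^{1/4}/\lambda(0) \lesssim \alpha(u)^{1/4}$, one gets $c_0 = \lambda(0)\big(c_Q^2 + O(\alpha(u)^{1/4})\big) \geq 0.9\, c_Q^2 - C\alpha(u)^{1/4}$. On the other hand, for $t \in (-T^-,T^+)$ we have $\lambda(t) \geq \tfrac34$, hence $\alpha(u)^{1/4}/\lambda(t) \lesssim \alpha(u)^{1/4}$ there as well, and expanding the square yields $\lambda(t)\,c_Q^2 = c_0 + O(\alpha(u)^{1/4})$, i.e.
\begin{equation*}
\lambda(t) = \frac{c_0}{c_Q^2} + O\big(\alpha(u)^{1/4}\big) \geq 0.9 - C'\alpha(u)^{1/4} .
\end{equation*}
Choosing $\alpha_5 > 0$ small enough (and $\leq \alpha_4$) that $C'\alpha_5^{1/4} < \tfrac1{10}$, we obtain $\lambda(t) \geq \tfrac45$ for all $t \in (-T^-,T^+)$, strictly better than the defining bound $\lambda(t) \geq \tfrac34$.

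The proof then closes by a continuity argument. Suppose for contradiction that $(-T^-,T^+) \subsetneq (-T_*^-,T_*^+)$, say $T^+ < T_*^+$ (the other endpoint being symmetric). Since $\lambda$ is $C^1$ on $(-T_*^-,T_*^+)$ by Lemma \ref{L:epparam}, continuity at $T^+$ gives $\lambda(T^+) = \lim_{t \uparrow T^+}\lambda(t) \geq \tfrac45 > \tfrac34$, hence $\lambda > \tfrac34$ on a neighbourhood of $T^+$; combined with $\lambda \geq \tfrac45 > \tfrac34$ on $(-T^-,T^+)$, this forces $\lambda > \tfrac34$ on some $(-T^-,T^+ + \delta) \subset (-T_*^-,T_*^+)$, contradicting the maximality of $(-T^-,T^+)$. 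Therefore $T^+ = T_*^+$, and likewise $T^- = T_*^-$, which is the claim. The one genuinely delicate point is the justification of the $L^1$-type conservation at the low regularity of $u(t)$: one must first upgrade \eqref{E:assume-ep-decay} to a uniform-in-time $L^2_yL^1_x$ bound on $u$, and then pass to the limit in an approximation of $u$ by regular ZK solutions; everything else is bookkeeping with the scaling identity and the smallness of $\alpha(u)$.
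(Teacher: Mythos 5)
Your proposal is correct and follows essentially the same route as the paper: both arguments integrate the decay hypothesis \eqref{E:assume-ep-decay} to control $\|\epsilon\|_{L_y^2L_x^1}$, invoke the conservation of $\|\int_x u\,dx\|_{L_y^2}$ via approximation by regular solutions, use the scaling identity to express this conserved quantity as $\lambda(t)$ times a perturbation of $\|\int_x Q\,dx\|_{L_y^2}^2$, and close with the continuity/maximality argument. The only difference is cosmetic — you solve directly for $\lambda(t)=c_0/c_Q^2+O(\alpha^{1/4})$ whereas the paper packages the same information as $|(\lambda(t)/\lambda(0))^{1/2}-(\lambda(0)/\lambda(t))^{1/2}|\lesssim\alpha(u)^{1/4}$ — and the minor notational slip of defining $c_0$ first as a norm and then as its square is harmless.
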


\begin{proof}
First we note that by integrating \eqref{E:assume-ep-decay} in $x$, for each $t$ such that $-T^- < t < T^+$, we have
\begin{equation}
\label{E:epL1}
\| \epsilon(t,x,y) \|_{L_y^2L_x^1} \leq \| \epsilon(t,x,y) \|_{L_x^1L_y^2} \lesssim \alpha(u)^{1/4}. 
\end{equation}
Let
$$
F(t) \defeq \left\| \int_x (Q(x,y) + \epsilon(x,y,t)) \, dx \right\|_{L^2_y}^2 - \left\| \int_x Q(x,y) \, dx \right\|_{L^2_y}^2.
$$
By  expanding the square and using Cauchy-Schwarz (in $y$), we get
\begin{equation}
\label{E:scale1}
F(t) \leq 2\|Q\|_{L_y^2L_x^1} \|\epsilon(t)\|_{L_y^2L_x^1} + \|\epsilon(t)\|_{L_y^2L_x^1}^2 \lesssim \alpha(u)^{1/4}.
\end{equation}
Substituting the definition of $\epsilon$, we obtain
$$ 
F(t) = \left\| \int_x \lambda(t) u(\lambda(t)x+x(t),\lambda(t)y+y(t),t) \, dx \right\|_{L^2_y}^2 - \left\| \int_x Q(x,y) \, dx \right\|_{L^2_y}^2.
$$
Scaling and translating in $x$ and $y$, we write
\begin{equation}
\label{E:scale2}
F(t) = \lambda(t)^{-1} \left\| \int_x u(x,y,t) \, dx \right\|_{L^2_y}^2 - \left\| \int_x Q(x,y) \, dx \right\|_{L^2_y}^2.
\end{equation}
Recalling that
$$ 
\left\| \int_x u(x,y,t) \, dx \right\|_{L^2_y} =  \left\| \int_x u(x,y,0) \, dx \right\|_{L^2_y},
$$
and hence,
\begin{equation}
\label{E:scale3}
F(t) - F(0) = (\lambda(t)^{-1} - \lambda(0)^{-1}) \left\| \int_x u(x,y,t) \, dx \right\|_{L^2_y}  \left\| \int_x u(x,y,0) \, dx \right\|_{L^2_y}.
\end{equation}
Solving \eqref{E:scale2} for $\left\| \int_x u(x,y,t) \, dx \right\|_{L^2_y}$, we obtain
$$
\left\| \int_x u(x,y,t) \, dx \right\|_{L^2_y} = \lambda(t)^{1/2}( \|Q\|_{L_y^2L_x^1}^2 + F(t))^{1/2}.
$$
Substituting this equation at time $t$ and at time $0$ into \eqref{E:scale3}, we obtain
$$
F(t) - F(0) = (\lambda(t)^{-1} - \lambda(0)^{-1})\lambda(t)^{1/2} \lambda(0)^{1/2} ( \|Q\|_{L_y^2L_x^1}^2 + F(0))^{1/2} ( \|Q\|_{L_y^2L_x^1}^2 + F(t))^{1/2}.
$$
By \eqref{E:scale1}
$$
\left| \left( \frac{\lambda(t)}{\lambda(0)} \right)^{1/2} -  \left( \frac{\lambda(0)}{\lambda(t)} \right)^{1/2} \right| \lesssim \alpha(u)^{1/4}.
$$
Thus, provided $\alpha(u)>0$ is sufficiently small, then $\alpha(u)^{1/4}$ is also sufficiently small, and it follows that on $(-T_-,T_+)$, we have $\lambda(t) \geq \frac78> \frac34$.  By continuity, since $(-T_-,T_+)$ is maximal within $(-T_-^*,T_+^*)$, it follows that $(-T_-,T_+)=(-T_-^*,T_+^*)$ as claimed.
\end{proof}

\section{Completion of part (1) of the proof of Proposition \ref{P:reduction}}
For $n$ sufficiently large, $\alpha(u_n)\leq \alpha_5$, and thus, Lemmas \ref{L:pointwise-in-x} and \ref{L:scale-control} apply.  Recall that the bootstrap time frame $(-t_1(n),t_2(n))$ is defined by \eqref{E:timeframe}, and also recall that Lemma \ref{L:conv-params} applies yielding the convergence of the parameters, in particular, that $\lambda_{n,m}(t) \to \tilde \lambda_n(t)$ as $m\to \infty$.  By Lemma \ref{L:scale-control}, \eqref{E:timeframe} is reinforced so that in fact 
$$
\tfrac34 \leq \tilde \lambda_n(t) \leq \tfrac54
$$
on $(-t_1(n),t_2(n))$.  By Lemma \ref{L:conv-params}, 
$$
\tfrac34 \leq \tilde \lambda_n(t) =\liminf_{m\to \infty} \lambda_{n, m}(t) \leq \limsup_{m\to \infty} \lambda_{n,m}(t) =\tilde \lambda_n(t) \leq \tfrac54.
$$
By the method of proof of Lemma \ref{L:nontrivial}, if either $t_1(n)<\infty$ or $t_2(n)<\infty$, then a contradiction to the maximality in the definition of $(-t_1(n),t_2(n))$ is achieved, so we must have $t_1(n)=t_2(n)=\infty$ as claimed.  
 
\section{Spatial localization with sharp coefficient,\\ completion of part (2) of proof of Proposition \ref{P:reduction}}\label{S-10}

In this section, we substantially strengthen the decay estimate on $\tilde u_n$ by proving a monotonicity	estimate directly on (a scaled version of) $\tilde \epsilon_n$.  We state the result for general $u$ and $\epsilon$, although we will invoke it for $\tilde u_n$ and $\tilde \epsilon_n$ later in the compactness argument.

\begin{lemma}
\label{L:ep-strong-decay}
Suppose that $u(t)$ solves the ZK equation \eqref{ZK}, $\alpha(u) \ll 1$ and $E(u)<0$ (so that the geometrical decomposition applies), and furthermore, $\frac12 \leq \lambda(t) \leq 2$ for all $t\in \mathbb{R}$.  Assume, moreover, that $u(t)$ satisfies a weak decay estimate 
\begin{equation}
\label{E:etamon3}
\lim_{t\to \pm \infty} \| u(x+x(t),y, t) \|_{L^2_y L^2_{|x| \geq \frac14 |t|}} = 0.
\end{equation}
Then  for all $t\in \mathbb{R}$ and each $x_0>0$,
\begin{equation}
\label{E:betterdecay}
\| \epsilon(x,y,t) \|_{L^2_y L^2_{|x|\geq x_0}} \lesssim  \| \epsilon\|_{L_t^\infty L_{xy}^2} e^{-x_0/8}.
\end{equation}
\end{lemma}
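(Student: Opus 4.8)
The plan is to run the monotonicity machinery of Lemma~\ref{L:mon1} directly on the \emph{unscaled} remainder $\eta(x,y,t)=\lambda^{-1}\epsilon(\lambda^{-1}x,\lambda^{-1}y,t)$, which sits in the translated frame $w:=u(\cdot+x(t),\cdot+y(t),t)=Q_\lambda+\eta$, where $Q_\lambda(x,y):=\lambda^{-1}Q(\lambda^{-1}x,\lambda^{-1}y)$ solves $-\Delta Q_\lambda+\lambda^{-2}Q_\lambda-Q_\lambda^3=0$. First I would record the $\eta$-equation obtained by substituting $w=Q_\lambda+\eta$ into (ZK),
\begin{equation*}
\eta_t=-\partial_x\big(\Delta\eta+3Q_\lambda^2\eta\big)+x_t\,\eta_x+y_t\,\eta_y+S-\partial_x\big(3Q_\lambda\eta^2+\eta^3\big),
\end{equation*}
whose source $S=(x_t-\lambda^{-2})(Q_\lambda)_x+y_t(Q_\lambda)_y-\lambda_t\,\partial_\lambda Q_\lambda$ is exponentially localized at the origin, $|S(x,y,t)|\lesssim\|\eta(t)\|_{L^2}e^{-c_0(|x|+|y|)}$, by the parameter estimates~\eqref{E:param-dynam} and the decay of $Q$. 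I would also record $\|\eta(t)\|_{L^2}=\|\epsilon(t)\|_{L^2}$, $\|\nabla\eta(t)\|_{L^2}\sim\|\nabla\epsilon(t)\|_{L^2}\lesssim\alpha(u)^{1/2}$, $x_t\approx\lambda^{-2}>\tfrac14$, and, restating~\eqref{E:etamon3} (since $Q_\lambda$ contributes negligibly far from the origin), $\lim_{t\to\pm\infty}\|\eta(t)\|_{L^2_yL^2_{|x|\ge\frac14|t|}}=0$. Write $b:=\|\epsilon\|_{L^\infty_tL^2_{xy}}$.

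Next, fixing $t_*$ and $x_0>0$, I would study
\begin{equation*}
J(t):=\iint\eta(x,y,t)^2\,\phi_+\big(x-p(t)\big)\,dx\,dy,\qquad \phi_+(x)=\tfrac2\pi\arctan(e^{x/K}),\quad p(t)=x_0+v(t_*-t),
\end{equation*}
for $t\le t_*$, with $K\ge4$ and $v$ a fixed constant satisfying $\tfrac14<v<\inf_t x_t$ (legitimate since $x_t\approx\lambda^{-2}>\tfrac14$). The weight's transition stays at $p(t)\ge x_0>0$, always strictly to the right of the soliton core, and recedes linearly as $t\to-\infty$. Computing $\partial_t J$ along the flow, integrating by parts as in the proof of Lemma~\ref{L:mon1} (the $y_t\eta_y$ term disappears since $\phi_+$ is $y$-independent) and adding the weight-motion contribution, I expect $\tfrac12\partial_t J$ to be the sum of: the good negative terms $-\tfrac32\iint\phi_+'\eta_x^2-\tfrac12\iint\phi_+'\eta_y^2$; the term $(\tfrac v2-\tfrac{x_t}2)\iint\phi_+'\eta^2+\tfrac12\iint\phi_+'''\eta^2$, which is $\le-c\iint\phi_+'\eta^2$ because $v<x_t$ and $|\phi_+'''|\le K^{-2}\phi_+'$ (cf.~\eqref{E:mon10}); the quartic term $\tfrac34\iint\phi_+'\eta^4$, which by the weighted Gagliardo--Nirenberg inequality Lemma~\ref{L:GN-weight} (applied with $R_0=0$ and weight $\phi_+'$, using $|\nabla\phi_+'|\lesssim\phi_+'$) is $\lesssim\|\eta\|_{L^2}^2\iint\phi_+'(|\nabla\eta|^2+\eta^2)\le b^2\iint\phi_+'(|\nabla\eta|^2+\eta^2)$ and so, $b^2\lesssim\alpha(u)$ being small, is absorbed into the good terms; and finally the terms carrying a factor $Q_\lambda$ (from $3Q_\lambda^2\eta$ and $3Q_\lambda\eta^2$) together with the source term, all $\lesssim e^{-p(t)/K}b^2$ because $Q_\lambda$ is concentrated at distance $\ge p(t)$ from the weight's transition (for the $Q_\lambda\eta^2$ piece one additionally uses $\iint|\eta|^3\lesssim\|\eta\|_{L^2}^2\|\nabla\eta\|_{L^2}\lesssim b^2\alpha(u)^{1/2}$). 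The net conclusion is $\partial_t J(t)\le Ce^{-(x_0+v(t_*-t))/K}b^2$ on $(-\infty,t_*]$.

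Integrating from $T$ to $t_*$, the right-hand side contributes $\le\tfrac{CK}{v}e^{-x_0/K}b^2$; and $J(T)\to0$ as $T\to-\infty$, since on $\{x>|T|/4\}$ the hypothesis forces $\iint_{x>|T|/4}\eta(T)^2\phi_+\le\|\eta(T)\|_{L^2_{|x|>|T|/4}}^2\to0$, while on $\{x\le|T|/4\}$ one has $\phi_+(x-p(T))\le\phi_+\big(-(p(T)-|T|/4)\big)\to0$ (as $p(T)-|T|/4\sim(v-\tfrac14)|T|\to+\infty$) against a bounded mass $\le b^2$. Hence $J(t_*)\lesssim e^{-x_0/K}b^2$, and since $\phi_+(x-x_0)\ge\tfrac12$ on $\{x\ge x_0\}$ this gives $\|\eta(t_*)\|_{L^2_yL^2_{x>x_0}}\lesssim e^{-x_0/(2K)}b$. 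Running the identical argument for the reversed/reflected ZK solution $(x,y,t)\mapsto u(-x,-y,-t)$ — which has the reflected geometric decomposition, the same $E$ and $\alpha$, and still satisfies~\eqref{E:etamon3} — yields the matching bound on $\{x<-x_0\}$, exactly as the $x\mapsto-x$, $t\mapsto-t$ symmetry is exploited inside the proof of Lemma~\ref{L:mon1}. Combining and undoing the $\lambda$-rescaling (so that $|x|>x_0$ for $\eta$ corresponds to $|x|>x_0/\lambda$ for $\epsilon$, with $\lambda$ bounded) gives~\eqref{E:betterdecay}; the exponent $x_0/8$ is obtained by an appropriate choice of $K$, and any fixed absolute rate suffices for the later use of this lemma.

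The principal difficulty is making the error term \emph{time-integrable}: with a fixed weight it is only $\lesssim e^{-x_0/K}b^2$ uniformly in $t$, whose integral over $(-\infty,t_*]$ diverges, so $J$ would be merely bounded and not small. The fix is to let the weight's transition recede from the soliton linearly in time (giving $\int e^{-p(t)/K}\,dt<\infty$) while keeping the transport term $x_t\eta_x$ dominant over the weight-motion term — this forces $\tfrac14<v<x_t$, i.e. uses a lower bound on $x_t\approx\lambda^{-2}$ slightly above $\tfrac14$, harmless in the applications where $\lambda$ stays near $1$ — and then to close the loop with~\eqref{E:etamon3}, whose decay region $|x|\ge\tfrac14|t|$ is exactly what is needed to drive $J(t)\to0$ as $t\to-\infty$. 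A secondary subtlety is that the constant in~\eqref{E:betterdecay} must be proportional to $b^2=\|\epsilon\|_{L^\infty_tL^2}^2$ and not to $\|\epsilon\|_{L^\infty_tH^1}^2$; this dictates treating the quartic term through Lemma~\ref{L:GN-weight} with the gradient weight placed entirely on $\phi_+'$, the $H^1$-smallness of $\eta$ entering only as a small absorbing prefactor.
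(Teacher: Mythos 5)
Your proof is correct and follows essentially the same route as the paper's: pass to the unrescaled remainder $\eta$ in the translated frame, run a monotonicity argument on $\iint \eta^2\phi_+$ with a weight whose transition recedes linearly from the soliton (making the $Q$-localized and source errors time-integrable), control the quartic term with the weighted Gagliardo--Nirenberg lemma, kill the boundary term at $t=\mp\infty$ using \eqref{E:etamon3}, and handle the left side by the reflection $(x,y,t)\mapsto(-x,-y,-t)$. The only cosmetic difference is that you move the weight at a fixed speed $v\in(\tfrac14,\inf_t x_t)$ while the paper moves it at half the soliton speed $\tfrac12 x_t$; both choices serve the identical purpose, and you correctly flag the resulting (harmless) constraint on $\lambda$.
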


Note that the fact that the coefficient to the decay on the right side of \eqref{E:betterdecay} is $\|\epsilon \|_{L_t^\infty L_{xy}^2}$, as opposed to $\|\epsilon \|_{L_t^\infty L_{xy}^2}^\gamma$ for some $\gamma<1$, is crucial for the compactness argument that follows.
\smallskip

Before starting the proof,  we note that we have previously proved a monotonicity estimate on $u$ using the functional
$$
I_{\pm,x_0,t_0}(t) = \iint u^2(x+x(t_0),y+y(t_0),t)\,
\phi_\pm(x-x_0-\frac12(x(t)-x(t_0))) \, dx \, dy.
$$
By a change of variable, this is equivalent to an estimate on
$$
I_{\pm,x_0,t_0}(t) = \iint u^2(x+x(t),y+y(t),t)
\, \phi_\pm(x-x_0+\frac12(x(t)-x(t_0))) \, dx \, dy,
$$
where now the soliton is centered at $0$ for all times.  

\begin{proof}
 Define
$$
\eta(x,y,t) = \lambda^{-1} \epsilon(\lambda^{-1}x , \lambda^{-1}y, t),
$$
so that
$$
\eta(x,y,t) = u(x+x(t),y+y(t),t) -
\lambda^{-1}Q(\lambda^{-1}x,\lambda^{-1}y).
$$
Let $\tilde Q(x,y) = \lambda^{-1}Q(\lambda^{-1}x,\lambda^{-1}y)$.
Then we find that $\eta$ solves
$$
0 = \partial_t \eta - (x_t,y_t)\cdot \nabla \eta + \partial_x(\Delta \eta
+(\eta+\tilde Q)^3-\tilde Q^3)
$$
$$
+(\lambda^{-1})_t \partial_{\lambda^{-1}} \tilde Q - (x_t - \lambda^{-2},y_t)
\cdot \nabla \tilde Q.  
$$
Now we define the functional
$$
J_{\pm,x_0,t_0}(t) = \iint  \phi_\pm(x-x_0+\frac12(x(t)-x(t_0)))\, \eta^2(x,y,t) \, dx
\, dy.
$$

As before, for the increasing weight $\phi_+$, we will prove an estimate of the future in terms of the past, and for the decreasing weight $\phi_-$, we will prove an estimate of the past in terms of the future.  However, the difference is that this time, we need $\phi_\pm$ to be small near the origin, so we can only do $\phi_+$ estimates on the right and $\phi_-$ estimates on the left.  See Figure \ref{F:new-monotonicity} for the new configuration, where $x$-space has been shifted so that the soliton is positioned at the origin.

\begin{figure}[ht]
\includegraphics%[width=0.67\textwidth,height=0.4\textwidth]
[scale=0.6]
{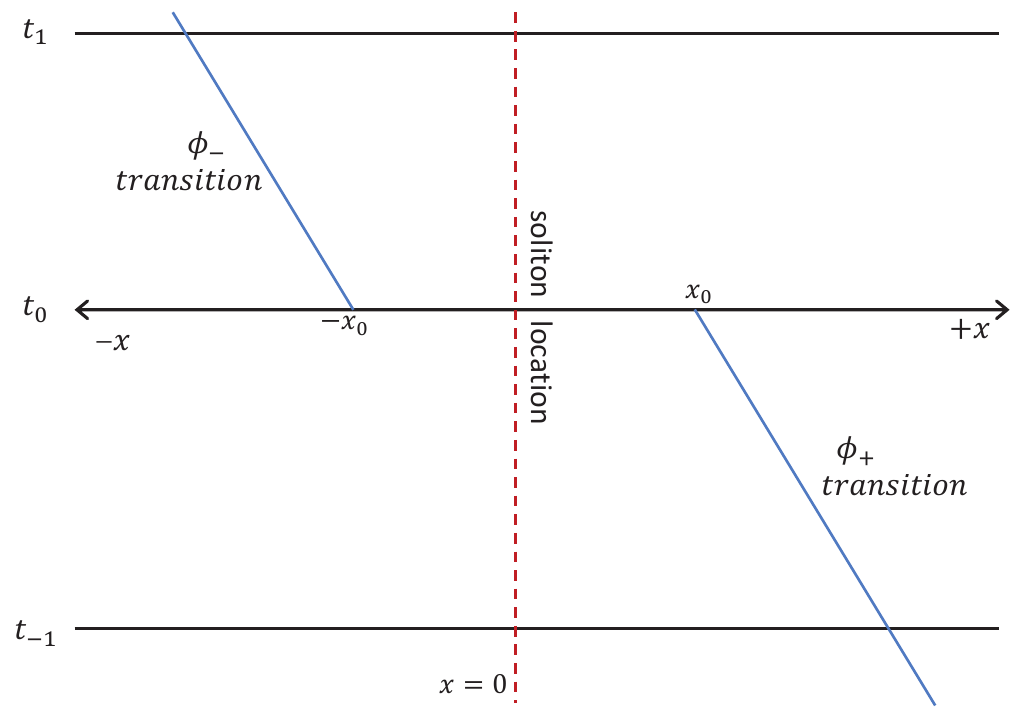}
\label{F:new-monotonicity}
\caption{\footnotesize The frame of reference in the proof of Lemma \ref{L:ep-strong-decay}, where the soliton is at position $x=0$, the $\phi_-$ transition occurs in the $x<0$ region and is only used when $t>t_0$, the $\phi_+$ transition occurs in the $x>0$ region and is only used for $t<t_0$.}
\end{figure}

Thus, we have, for some absolute constant $C>0$, 
\begin{equation}
\label{E:etamon1}
J_{-,-x_0,t_0}(t_0) \leq J_{-,-x_0,t_0}(t_1) + C e^{-x_0} \| \eta \|_{L_t^\infty L_{xy}^2}^2,
\end{equation}
\begin{equation}
\label{E:etamon2}
J_{+,x_0,t_0}(t_0) \leq J_{+,x_0,t_0}(t_{-1}) + C e^{-x_0} \| \eta \|_{L_t^\infty L_{xy}^2}^2.
\end{equation}
By reflection symmetry, it suffices to prove the second one, so we take $\phi= \phi_+$ from here
$$
J_{+,x_0,t_0}'(t) =  2\iint \eta \eta_t \phi_+ \, dx \, dy + \frac12 x_t \iint \eta^2 \phi_+'. 
$$
By several applications of integration by parts, we obtain
$$
J_{+,x_0,t_0}'(t)  = 
\begin{aligned}[t]
&- \frac12x_t \iint \phi_x \eta^2 - 3 \iint \phi_x \eta_x^2 - \iint \phi_x \eta_y^2 + \iint \phi_{xxx} \eta^2 \\
&+ \iint (3\phi_x \tilde Q^2 - 6 \phi \tilde Q \tilde Q_x) \eta^2 + \iint (4\phi_x \tilde Q - 2 \phi \tilde Q_x) \eta^3  + \frac32 \int \phi_x \eta^4\\
&  + 2\lambda^{-2} \lambda_t \iint \phi \partial_{\lambda^{-1}} \tilde Q \eta  + 2(x_t - \lambda^{-2}) \iint \phi  \tilde Q_x \eta + 2y_t \iint \phi   \tilde Q_y \eta.
\end{aligned}
$$
In the first line, the first three terms all have the good sign, and the last term, $\iint \phi_{xxx} \eta^2$ is smaller than the first by taking $K \geq 4$, as before.  In the second line, the last term $\iint \phi_x \eta^4$ is controlled by the weighted Gagliardo-Nirenberg estimate (without the need for spatial cutoff), as was done in the earlier monotonicity result. 

By considering the effective support properties of $\phi$ and $\tilde Q$, and using that $\frac12 \leq \lambda \leq 2$, we have
$$
\| 3 \phi_x \tilde Q^2 - 6\phi \tilde Q \tilde Q_x\|_{L^\infty}+  \|4\phi_x \tilde Q - 2 \phi \tilde Q_x \|_{L^\infty}  \lesssim e^{-x_0} e^{\frac12(x(t)-x(t_0))},
$$
and thus, the first two terms on the second line can be handled by ``suping out''
the weight and, in the case of the middle term, following up with the
Gagliardo-Nirenberg estimate $\iint \eta^3 \lesssim \|\eta\|_{L^2}^2 \|\nabla
\eta\|_{L^2}$.   For the three terms in the last line, use
$$ 
\| \phi \partial_{\lambda^{-1}} \tilde Q\|_{L^2} + \| \phi \tilde Q_x\|_{L^2} + \| \phi \tilde Q_y \|_{L^2}  \lesssim e^{-x_0} e^{ \frac12(x(t)-x(t_0))}
$$
and also the parameter bounds
$$
|\lambda_t | + |x_t-\lambda^{-2}| + |y_t| \lesssim  \| \eta\|_{L^2}.
$$
Thus, we have
$$
| J'_{+,x_0,t_0}(t)| \lesssim e^{-x_0} e^{\frac12 (x(t)-x(t_0))} \| \eta \|_{L_t^\infty L_{xy}^2}^2.
$$
Integrating from $t_{-1}$ to $t_0$, we obtain
$$
J_{+,x_0,t_0}(t_0) \leq J_{+,x_0,t_0}(t_{-1}) + c\, e^{-x_0} \| \eta\|_{L_t^\infty L_{xy}^2}^2 \int_{t_{-1}}^{t_0}  e^{\frac12 (x(t)-x(t_0))} \, dt.
$$
Using that $x(t)-x(t_0) \sim t-t_0$, we obtain \eqref{E:etamon2}. 

We complete the proof  by noting that, for fixed $t_0$, \eqref{E:etamon3} implies 
$$
\lim_{t_1 \to +\infty} J_{-,-x_0,t_0}(t_1) = 0
$$
in \eqref{E:etamon1} and
$$
\lim_{t_{-1} \to +\infty} J_{+,x_0,t_0}(t_{-1}) = 0
$$
in \eqref{E:etamon2}.  From the resulting limiting equations, we obtain 
$$
\| \eta(x,y,t) \|_{L_{y, x<-x_0}^2}^2 \lesssim e^{-x_0} \| \eta\|_{L_t^\infty L_{xy}^2}^2 \quad \mbox{and} \quad \| \eta(x,y,t) \|_{L_{y, x>x_0}^2}^2 \lesssim e^{-x_0} \| \eta\|_{L_t^\infty L_{xy}^2}^2,
$$
which yield \eqref{E:betterdecay}.
\end{proof}

Recall $\tilde \epsilon_n$ from \eqref{E:epsilon-tilde}. 

\begin{corollary}[exponential decay of $\tilde \epsilon_n$ with sharp coefficient]
\label{C:tilde-ep-decay}
For each $r>0$, we have
$$
\| \tilde \epsilon_n \|_{L_s^\infty L_{B(0,r)^c}^2} \lesssim e^{-\omega r} \| \tilde \epsilon_n \|_{L_s^\infty L_{xy}^2},
$$
where $B(0,r)$ is the ball centered at $0$ of radius $r$ in $\mathbb{R}^2$, and $B(0,r)^c$ denotes the complement.
\end{corollary}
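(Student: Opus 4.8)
The plan is to invoke Lemma \ref{L:ep-strong-decay} twice — once for $\tilde u_n$ itself and once for its rotation $\tilde v_n$ from \S\ref{S:rotation} — and then glue the two resulting one-directional decay estimates with the parallelogram geometry of Figure \ref{F:parallelogram}, in exact analogy with the way the plain $x$-decay of $\tilde u_n$ was upgraded to radial decay in \S\ref{S:rotation}.

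First, check that $\tilde u_n$ satisfies the hypotheses of Lemma \ref{L:ep-strong-decay}. From the argument completing part (1) of Proposition \ref{P:reduction} we have $t_1(n)=t_2(n)=\infty$ and $\tfrac34 \le \tilde\lambda_n(t) \le \tfrac54$ for all $t$, so in particular $\tfrac12 \le \tilde\lambda_n \le 2$; moreover $\alpha(\tilde u_n)\to 0$ and $E(\tilde u_n)<0$. The weak decay hypothesis \eqref{E:etamon3} is precisely Lemma \ref{L:mon2} evaluated at $x_0=\tfrac14|t|$, which gives $\|\tilde u_n(x+\tilde x_n(t),y,t)\|_{L^2_yL^2_{|x|\ge |t|/4}}^2 \lesssim e^{-|t|/32}\to 0$ as $t\to\pm\infty$. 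Hence Lemma \ref{L:ep-strong-decay} applies to $\tilde u_n$ and yields, uniformly in time and for every $x_0>0$,
$$
\|\tilde\epsilon_n(x,y,t)\|_{L^2_y L^2_{|x|\ge x_0}} \lesssim e^{-x_0/8}\,\|\tilde\epsilon_n\|_{L_s^\infty L_{xy}^2}.
$$

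Next, fix a constant $0<\theta\ll 1$ and let $\tilde v_n$ be the rotated solution \eqref{E:vn-def}, which solves $(\text{ZK}_\theta)$ and satisfies $M(\tilde v_n)=M(\tilde u_n)$, $E(\tilde v_n)=E(\tilde u_n)<0$, has the same scale parameter (so $\tfrac34 \le \lambda \le \tfrac54$), and obeys the weak decay \eqref{E:etamon3} by Lemma \ref{L:mon2gen}. Since $Q$ is radial, the geometric decomposition of $\tilde v_n$ is the rotation of that of $\tilde u_n$, and its remainder equals $\tilde\epsilon_n$ written in the rotated (measure-preserving) coordinates $(\bar x,\bar y)=(x\cos\theta - y\sin\theta,\ x\sin\theta+y\cos\theta)$. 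Re-running the proof of Lemma \ref{L:ep-strong-decay} for the remainder of $\tilde v_n$, the only new term in $J'_{+,x_0,t_0}(t)$ is the cross term $-\sin\theta\iint \eta_{\bar x}\eta_{\bar y}\phi'$, which is absorbed by the good terms $-\tfrac32\cos\theta\iint\eta_{\bar x}^2\phi' - \tfrac12\cos\theta\iint\eta_{\bar y}^2\phi'$ via the classical bound $\eta_{\bar x}\eta_{\bar y}\le \tfrac{\sqrt3}{2}\eta_{\bar x}^2 + \tfrac{1}{2\sqrt3}\eta_{\bar y}^2$ together with $|\theta|<\tfrac\pi3$ — in verbatim analogy with the estimate in Lemma \ref{L:mon1gen}. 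This gives the $(\text{ZK}_\theta)$-analogue of \eqref{E:betterdecay}, i.e.
$$
\|\tilde\epsilon_n(x,y,t)\|_{L^2_{\{|x\cos\theta - y\sin\theta|\ge x_0\}}} \lesssim e^{-x_0/8}\,\|\tilde\epsilon_n\|_{L_s^\infty L_{xy}^2}.
$$

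Finally, combine. By the geometry of Figure \ref{F:parallelogram}, if $(x,y)\notin B(0,r)$ with $r=x_0/\sin\tfrac\theta2$, then $|x|\ge x_0$ or $|x\cos\theta - y\sin\theta|\ge x_0$, so
$$
\|\tilde\epsilon_n(t)\|_{L^2_{B(0,r)^c}} \le \|\tilde\epsilon_n(t)\|_{L^2_{\{|x|\ge x_0\}}} + \|\tilde\epsilon_n(t)\|_{L^2_{\{|x\cos\theta - y\sin\theta|\ge x_0\}}} \lesssim e^{-x_0/8}\,\|\tilde\epsilon_n\|_{L_s^\infty L_{xy}^2},
$$
and taking $x_0=r\sin\tfrac\theta2$ gives the claim with $\omega=\tfrac18\sin\tfrac\theta2$. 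The only step that is not immediate is verifying that Lemma \ref{L:ep-strong-decay} carries over to $(\text{ZK}_\theta)$, i.e.\ controlling the mixed-derivative term in the monotonicity identity for $\eta^2$; but this is the same computation already performed for $u^2$ in Lemma \ref{L:mon1gen}, so it is routine. One should note, however, that it is essential here to use Lemma \ref{L:ep-strong-decay} rather than the weaker $\tilde u_n$-type bound: both directional estimates already carry the \emph{sharp} coefficient $\|\tilde\epsilon_n\|_{L_s^\infty L_{xy}^2}$ (linear, not sublinear), and the combination merely adds them, which is exactly what the compactness argument in \S\ref{S-10} requires.
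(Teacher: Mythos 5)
Your proposal is correct and is essentially the paper's own argument: the paper's proof of this corollary is precisely ``apply the rotation method of \S\ref{S:rotation} to the result of Lemma \ref{L:ep-strong-decay},'' and you have simply supplied the details (verification of the weak-decay hypothesis via Lemma \ref{L:mon2}, absorption of the cross term $\eta_{\bar x}\eta_{\bar y}$ as in Lemma \ref{L:mon1gen}, and the parallelogram gluing with $\omega=\tfrac18\sin\tfrac\theta2$). No changes needed.
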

\begin{proof}
This follows from the rotation method of Lemmas \ref{L:mon1gen}, \ref{L:mon2} applied to the result of Lemma \ref{L:ep-strong-decay}.  
\end{proof}

Note that Corollary \ref{C:tilde-ep-decay} completes part (2) of the proof of Prop. \ref{P:reduction}.

\section{Comparability of remainder norms} \label{section-C1}

\begin{proposition}
\label{P:normcomp}
Suppose that $E(u)<0$ and $\alpha(u) \ll 1$ so that the geometrical decomposition applies, and $\frac12 \leq \lambda \leq 2$.    Let $a=\| \epsilon \|_{L_s^\infty H_{xy}^1}$ and $b = \| \epsilon \|_{L_s^\infty L_{xy}^2}$.  Moreover, suppose that 
the {\rm $x$-decay property} holds:
$$
\| \epsilon \|_{L_s^\infty L_y^2L_{|x|>x_0}^2} \lesssim \la x_0\ra^{-1} a^\gamma b^{1-\gamma}
$$
for some $0\leq \gamma <1$.  
Then $a\sim b$.
\end{proposition}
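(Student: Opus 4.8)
The easy direction, $b\le a$, is immediate since $\|\epsilon(s)\|_{L^2}\le\|\epsilon(s)\|_{H^1}$ for every $s$. For the reverse inequality $a\lesssim b$ the plan is to combine the energy and mass identities (which reduce matters to the bound $\alpha(u)\lesssim b^2$) with a local-in-space virial estimate, using the hypothesis $\gamma<1$ crucially to absorb error terms.

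First I would record the algebraic consequences of the orthogonality conditions $\la\epsilon,Q^3\ra=\la\epsilon,\nabla Q\ra=0$ together with the conservation laws. Writing $Z=\tfrac12M+E$, so that $Z'(Q)=0$ and $Z''(Q)=\mathcal L$, a Taylor expansion (exact, since $Z$ is a quartic) gives
$\tfrac12\la\mathcal L\epsilon(s),\epsilon(s)\ra=\tfrac12\alpha(u)+\lambda(s)^2E(u)+\la Q,\epsilon(s)^3\ra+\tfrac14\|\epsilon(s)\|_{L^4}^4$,
where I used $E(Q+\epsilon(s))=\lambda(s)^2E(u)$, $E(Q)=0$, and $\|Q+\epsilon(s)\|_{L^2}^2=M(u)$. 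Since $E(u)<0$, and the cubic/quartic terms are $\lesssim b^2a$ by the 2D Gagliardo--Nirenberg inequality, this yields $\la\mathcal L\epsilon(s),\epsilon(s)\ra\le\alpha(u)+Cb^2a$; the coercivity Lemma \ref{Lemma-ort2} (applicable because $\epsilon$ satisfies exactly the orthogonality \eqref{Ort-Cond2}) upgrades $\la\mathcal L\epsilon,\epsilon\ra$ to be comparable to $\|\epsilon\|_{H^1}^2$, so that on the one hand $\|\epsilon(s)\|_{H^1}^2\lesssim\alpha(u)+b^2$, hence $a^2\lesssim\alpha(u)+b^2$, and on the other hand $b^2\lesssim\alpha(u)$. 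I would also differentiate $\la Q,\epsilon\ra$ in $s$ using \eqref{E:ep-eqn}: because $\mathcal LQ_x=\mathcal LQ_y=0$, $\int Q\,\Lambda Q=0$, $\la Q,Q_x\ra=\la Q,Q_y\ra=0$, and the orthogonality relations kill all the leading terms, every surviving term is quadratic (or higher) in $\epsilon$ times a bounded factor, giving $|\tfrac{d}{ds}\la Q,\epsilon(s)\ra|\lesssim b^2$ (equivalently, using $\alpha(u)=2\la Q,\epsilon(s)\ra+\|\epsilon(s)\|_{L^2}^2$, the bound $|\tfrac{d}{ds}\|\epsilon(s)\|_{L^2}^2|\lesssim b^2$). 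Thus matters reduce to proving $\alpha(u)\lesssim b^2$.

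Next, the virial estimate. Using \eqref{E:ep-eqn} I would compute $\tfrac{d}{ds}\iint \psi\,\epsilon^2$ with $\psi$ a fixed increasing weight equal to $x$ up to a bounded, exponentially localized correction (the correction arranges the right sign for the localized term $-3\iint \psi(Q^2)_x\epsilon^2$ that the linear part produces; it costs only an additive $C\|\epsilon(s)\|_{L^2}^2$ in the boundary term). After integrating by parts, the linear part contributes $-\la\mathcal L\epsilon(s),\epsilon(s)\ra-2\|\epsilon_x(s)\|_{L^2}^2$ plus absorbable localized terms; the quartic term $\iint\psi'\epsilon^4$ is controlled by the weighted Gagliardo--Nirenberg inequality Lemma \ref{L:GN-weight}. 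For the modulation and cubic error terms the key input is the $x$-decay hypothesis: since $\la x_0\ra^{-2}$ is integrable, it gives $\iint|x|\,\epsilon(s)^2\lesssim a^{2\gamma}b^{2-2\gamma}$, and interpolating this tail against the bulk $\|\epsilon(s)\|_{L^2}^2$ even gives $\iint|x|\,\epsilon(s)^2\lesssim a^\gamma b^{1-\gamma}\|\epsilon(s)\|_{L^2}$. Because $a^\gamma b^{1-\gamma}\le a$ is an absolute small quantity (recall $a\lesssim\alpha(u)^{1/2}$ from Lemma \ref{L:epparam}), the worst modulation term $\tfrac{\lambda_s}{\lambda}\iint x\,\epsilon\Lambda\epsilon=-\tfrac12\tfrac{\lambda_s}{\lambda}\iint x\,\epsilon^2$ is then $\lesssim a^\gamma b^{1-\gamma}\|\epsilon(s)\|_{H^1}^2+Cb^2$, and the first piece is absorbed into $-\la\mathcal L\epsilon,\epsilon\ra$; the remaining errors are $\lesssim b^2$. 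Hence $\tfrac{d}{ds}\iint\psi\,\epsilon^2\le -c\,\|\epsilon(s)\|_{H^1}^2+Cb^2$. Integrating over $[s_1,s_1+T]$, dividing by $T$, and letting $T\to\infty$ (using that $\iint\psi\,\epsilon^2$ is bounded uniformly in $s$ by $a^{2\gamma}b^{2-2\gamma}+Cb^2$) yields $\liminf_{s}\|\epsilon(s)\|_{H^1}^2\lesssim b^2$.

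Finally, I would upgrade this to $\alpha(u)\lesssim b^2$ and conclude. The virial produces times $s_*$ (on arbitrarily long intervals) at which $\|\epsilon(s_*)\|_{H^1}^2\lesssim b^2$; evaluating the identity $\la\mathcal L\epsilon(s_*),\epsilon(s_*)\ra=\alpha(u)+2\lambda(s_*)^2E(u)+O(b^2a)$ at such a time pins $\lambda(s_*)^2E(u)$ to within $O(b^2)$ of $-\tfrac12\alpha(u)$, and combining this with the identity at a generic time, the slow variation $|\tfrac{d}{ds}\la Q,\epsilon\ra|\lesssim b^2$ (which prevents the bounded quantity $\la Q,\epsilon(s)\ra$, of size $O(b)$, from sustaining an order-$b^2$ drift over these long intervals), and the mass identity $\alpha(u)=2\la Q,\epsilon(s)\ra+\|\epsilon(s)\|_{L^2}^2$, forces $\alpha(u)\lesssim b^2$. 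Feeding this back into $a^2\lesssim\alpha(u)+b^2$ gives $a\lesssim b$, hence $a\sim b$. I expect the last step to be the main obstacle: the hypothesis $E(u)<0$ together with coercivity only delivers $b^2\lesssim\alpha(u)$ and $a^2\lesssim\alpha(u)$, which by themselves are compatible with $\alpha(u)\sim b$ and hence only with $a\lesssim b^{1/2}$; closing the gap genuinely requires the time dynamics (the virial-generated near-minima of $\|\epsilon(s)\|_{H^1}$ interacting with the precise identity for $\la\mathcal L\epsilon,\epsilon\ra$ and the drift bound for $\la Q,\epsilon\ra$), and getting all the $O(b^2)$-versus-$\text{(small)}\cdot a^2$ bookkeeping to close is exactly where the sub-linearity $\gamma<1$ in the decay hypothesis is indispensable.
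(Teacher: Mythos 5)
Your \textquotedblleft engine\textquotedblright\ is the same as the paper's: compute $\partial_s\iint x\,\epsilon^2$ from \eqref{E:ep-eqn}, observe that the gradient terms come out with a favorable sign while the localized and modulation terms are $O(b^2)$ (plus a $\|\epsilon\|_{L^3}^3$ term handled by Gagliardo--Nirenberg), and use the $x$-decay hypothesis to bound $\sup_s\iint|x|\,\epsilon^2\lesssim a^{2\gamma}b^{2-2\gamma}$. That part is sound. The genuine gap is in how you convert the resulting time-integrated bound into $a\lesssim b$, and you have correctly identified it as the main obstacle --- but the route you propose does not close. Averaging over $[s_1,s_1+T]$ and letting $T\to\infty$ only yields $\liminf_s\|\epsilon(s)\|_{H^1}^2\lesssim b^2$, i.e.\ smallness at some (uncontrolled) times; and your intermediate target $\alpha(u)\lesssim b^2$ is not a consequence of the hypotheses. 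At a good time $s_*$ the Taylor identity gives $\alpha(u)=2\lambda(s_*)^2|E(u)|+O(b^2)$, and the mass identity gives $\alpha(u)=2\la Q,\epsilon(s)\ra+O(b^2)$ at every $s$ with only the trivial bound $|\la Q,\epsilon(s)\ra|\le\|Q\|_{L^2}\,b$; nothing in the hypotheses forces $|E(u)|\lesssim b^2$ (one only gets $|E(u)|\lesssim b$), so the best this chain produces is $\alpha(u)\lesssim b$, hence $a\lesssim b^{1/2}$ --- exactly the barrier you flag. The drift bound $|\tfrac{d}{ds}\la Q,\epsilon\ra|\lesssim b^2$ cannot rescue this: over the long intervals needed to find the near-minima it permits a variation of order $b^2T\gg b$, which is weaker than the trivial pointwise bound.

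The missing ingredient is a \emph{local-in-time persistence} statement for the $H^1$ norm, which is what the paper proves as Lemma \ref{L:H1locallyconst} using the local Cauchy theory (Duhamel for the rescaled remainder $\zeta$ plus the $L^4_xL^\infty_{yT}$ and smoothing estimates of Lemmas \ref{L:linhom}--\ref{L:lininhom}): there is an absolute $\sigma>0$ such that if $\|\epsilon(s_0)\|_{H^1}\ge\tfrac12 a$, then $\|\epsilon(s)\|_{H^1}\ge\tfrac1{16}a$ for all $|s-s_0|\le\sigma$. One then integrates the virial identity over the \emph{fixed-length} window $[s_0-\sigma,s_0+\sigma]$ centered at a near-maximizer of the $H^1$ norm, obtaining
$$\sigma a^2\lesssim\int_{s_0-\sigma}^{s_0+\sigma}\|\epsilon(s)\|_{H^1}^2\,ds\lesssim\sigma b^2+\sup_s\Bigl|\iint x\,\epsilon^2\Bigr|\lesssim\sigma b^2+a^{2\gamma}b^{2-2\gamma},$$
and dividing by $a^2$ and using $b\le a$ and $\gamma<1$ gives $1\lesssim(b/a)^{2-2\gamma}$, i.e.\ $a\lesssim b$. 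In other words, the resolution is to localize in time where $\|\epsilon\|_{H^1}$ is large (so the dissipated quantity itself is $\gtrsim\sigma a^2$), not to average over long times and then try to propagate smallness via conservation laws; the conservation laws cannot propagate it because $\la\mathcal{L}\epsilon(s),\epsilon(s)\ra$ is only determined up to the $s$-dependent quantity $2\lambda(s)^2E(u)$, whose size is governed by $|E(u)|\sim\alpha(u)$ rather than $b^2$.
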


\begin{proof}
The proof will use Lemma \ref{L:H1locallyconst}, stated and proved below.   Note that
$$
\int_{y\in \mathbb{R}} \int_{x\in \mathbb{R}} |x| |\epsilon(x,y,s)|^2 \, dx \, dy = \int_{y\in \mathbb{R}} \int_{x\in \mathbb{R}} \int_{z=0}^{|x|} |\epsilon(x,y,s)|^2 \, dz \, dx \, dy
$$
$$ 
=   \int_{z=0}^{+\infty} \int_{y\in \mathbb{R}} \int_{|x|>z} |\epsilon(x,y,s)|^2 \, dx \, dy \, dz \lesssim a^{2\gamma} b^{2-2\gamma} \int_{z=0}^\infty \la z \ra^{-2} \, dz  \lesssim a^{2\gamma} b^{2-2\gamma}.
$$
That is,
\begin{equation}
\label{E:xepbd}
\int_{y\in \mathbb{R}} \int_{x\in \mathbb{R}} |x| |\epsilon(x,y,s)|^2 \, dx \, dy \lesssim a^{2\gamma} b^{2-2\gamma}. 
\end{equation}
Recalling the equation for $\epsilon$, \eqref{E:ep-eqn}, we now compute
\begin{align*}
\indentalign - \frac12 \partial_s \iint x \,\epsilon(x,y,s)^2 \, dx \, dy \\
& = \frac12 \|\epsilon \|_{L_{xy}^2}^2 + \frac32 \|\epsilon_x \|_{L_{xy}^2}^2 + \frac12 \|\epsilon_y \|_{L_{xy}^2}^2 + \frac32 \iint (2xQQ_x - Q^2) \epsilon^2 \, dx \, dy \\
&\qquad - \frac{\lambda_s}{\lambda} \la \epsilon, x \Lambda Q\ra - (\frac{x_s}{\lambda}-1) \la \epsilon, x Q_x \ra - \frac{y_s}{\lambda} \la \epsilon, xQ_y \ra - \frac{\lambda_s}{\lambda} \iint x \epsilon \, \Lambda \epsilon \\
&\qquad - (\frac{x_s}{\lambda}-1) \iint x \epsilon \epsilon_x - \frac{\lambda_s}{\lambda} \iint x \epsilon \epsilon_y - \iint x(3Q\epsilon^2 + \epsilon^3)_x. 
\end{align*}
Using that
$$
\iint x \epsilon \Lambda \epsilon = - \frac12 \iint x \epsilon^2 \,, 
\qquad \iint x\epsilon \epsilon_x =-\frac12 \iint \epsilon^2 \,, 
\qquad \iint x\epsilon \epsilon_y = 0,
$$
and the parameter bounds \eqref{E:param-bds}, we get
$$ 
- \frac12 \partial_s \iint x \epsilon(x,y,s)^2 \, dx \, dy =   \frac32 \|\epsilon_x \|_{L_{xy}^2}^2 + \frac12 \|\epsilon_y \|_{L_{xy}^2}^2 +E,
$$
where  
$$
|E| \lesssim b^2 + b \left| \iint x\epsilon^2 \right| + \| \epsilon\|_{L_{xy}^3}^3.
$$
Applying the Gagliardo-Nirenberg inequality 
$$
\| \epsilon\|_{L_{xy}^3}^3 \lesssim \| \nabla \epsilon \|_{L_{xy}^2} \| \epsilon \|_{L_{xy}^2}^2 \lesssim  b^2 \| \nabla \epsilon \|_{L_{xy}^2}^2 + b^2
$$
and integrating in $s$ over $s_0-\sigma \leq s \leq s_0+\sigma$, where $s_0$ and $\sigma>0$ are as given in Lemma \ref{L:H1locallyconst} below, we obtain
$$
\| \nabla \epsilon \|_{L^2_{s_0-\sigma,s_0+\sigma} L_{xy}^2}^2 \lesssim \sigma b^2 + \left\| \iint x \epsilon^2 \right\|_{L_{[s_0-\sigma,s_0+\sigma]}^\infty}.
$$
By \eqref{E:xepbd} and Lemma \ref{L:H1locallyconst}, we get
$$ 
\sigma a^2 \lesssim \| \epsilon \|_{L^2_{s_0-\sigma,s_0+\sigma} H_{xy}^1}^2 \lesssim \sigma b^2 + a^{2\gamma} b^{2-2\gamma}.
$$
Dividing through by $a^2$, using that $\sigma>0$ is an absolute constant, we now have
$$
1\lesssim (\tfrac{b}{a})^2 + (\tfrac{b}{a})^{2-2\gamma} \lesssim (\tfrac{b}{a})^{2-2\gamma},
$$
where, in the last inequality, we used that $\frac{b}{a} \leq 1$.   Since $\gamma<1$, this implies that $a\lesssim b$.  

\end{proof}

\begin{lemma}
  Let $Q$ be a dyadic decomposition of $x$-space -- specifically take $Q_{-1}= [-1,1]$ and $Q_k = [-2^{k+1},-2^k] \cup [2^k,2^{k+1}]$ for $k\geq 0$.  If $\frac12 \leq \lambda \leq 2$, $|x_t| \lesssim 1$, $|y_t| \lesssim 1$, and $t$ is restricted to a unit-size time interval, and
\begin{equation}
\label{E:bar-notation}
 \bar g(x,y) = \lambda^{-1} g\big(\lambda^{-1}(x-x(t)),\lambda^{-1}(y-y(t)) \big), 
 \end{equation}
then for $1\leq p \leq \infty$,
\begin{equation}
  \label{E:bar-bound}
  \| \bar g \|_{L_x^p L_{yt}^\infty} \lesssim \left\| \, 2^{j/p} \| g\|_{L_{x\in Q_j}^\infty L_y^\infty} \, \right\|_{\ell^1_j}.
  \end{equation}
\end{lemma}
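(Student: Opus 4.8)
The plan is to discard the $y$-variable and the time-dependence at the outset and then reduce everything to a one-dimensional dyadic bookkeeping. Fix a reference time $t_0$ in the unit time interval $I$, write $w=w(x)=x-x(t_0)$, set $h(z)=\|g(z,\cdot)\|_{L_y^\infty}$, and put $M_j=\|h\|_{L^\infty(Q_j)}=\|g\|_{L_{x\in Q_j}^\infty L_y^\infty}$, so that the right-hand side of \eqref{E:bar-bound} is exactly $\|2^{j/p}M_j\|_{\ell_j^1}$. Since the map $y\mapsto\lambda(t)^{-1}(y-y(t))$ is a bijection of $\mathbb{R}$, we have $\|\bar g(x,\cdot,t)\|_{L_y^\infty}=\lambda(t)^{-1}h(\lambda(t)^{-1}(x-x(t)))$, and using $\tfrac12\le\lambda\le2$ this gives, for each fixed $x$,
\[
\big\|\bar g(x,\cdot,\cdot)\big\|_{L_{yt}^\infty}\;=\;\sup_{t\in I}\|\bar g(x,\cdot,t)\|_{L_y^\infty}\;\le\;2\sup_{z\in S_x}h(z),\qquad S_x:=\{\lambda(t)^{-1}(x-x(t)):t\in I\}.
\]

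The heart of the matter is a geometric fact about the ``orbits'' $S_x$. Because $|x_t|\lesssim1$ and $|I|\lesssim1$, we have $|x(t)-x(t_0)|\le C_2$ for all $t\in I$ with $C_2$ an absolute constant, so $x-x(t)=w+O(C_2)$; combined with $\lambda(t)^{-1}\in[\tfrac12,2]$ this yields $|z|\le2(|w|+C_2)$ for every $z\in S_x$, and moreover, once $|w|\ge4C_2$, one has $\tfrac38|w|\le|z|\le\tfrac52|w|$ with $z$ of the (single) sign of $w$. In either regime $S_x$ meets at most an absolute number $C_0$ of the dyadic blocks $Q_j$; letting $J(x)$ denote the set of those indices, we get $\sup_{z\in S_x}h(z)\le\sum_{j\in J(x)}M_j$ with $\#J(x)\le C_0$. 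Conversely, if $j\in J(x)$ then $S_x\cap Q_j\neq\emptyset$, which forces $x\in x(t_0)+[-C_2,C_2]+\mu Q_j$ for some $\mu\in[\tfrac12,2]$, and the union of these sets over the admissible $\mu$ has total length $\lesssim2^j$; hence $\big|\{x:j\in J(x)\}\big|\le C_1\,2^j$ with $C_1$ absolute.

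Now I would combine the two facts. For $1\le p<\infty$, the bound $\#J(x)\le C_0$ and the power-mean inequality give $\big(\sum_{j\in J(x)}M_j\big)^p\le C_0^{p-1}\sum_{j\in J(x)}M_j^p$, so
\[
\|\bar g\|_{L_x^pL_{yt}^\infty}^p\;\le\;2^p\int_{\mathbb R}\Big(\sum_{j\in J(x)}M_j\Big)^p\,dx\;\le\;2^pC_0^{p-1}\sum_j M_j^p\,\big|\{x:j\in J(x)\}\big|\;\le\;2^pC_0^{p-1}C_1\sum_j 2^j M_j^p,
\]
and therefore $\|\bar g\|_{L_x^pL_{yt}^\infty}\lesssim\big(\sum_j2^jM_j^p\big)^{1/p}=\|2^{j/p}M_j\|_{\ell_j^p}\le\|2^{j/p}M_j\|_{\ell_j^1}$, the last step being the embedding $\ell^1\hookrightarrow\ell^p$. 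The case $p=\infty$ (where $2^{j/p}=1$) is immediate from $\|\bar g\|_{L_{xyt}^\infty}\le2\|g\|_{L_{xy}^\infty}=2\sup_j M_j\le2\sum_j M_j$.

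The only step that needs genuine care is the geometric fact of the second paragraph: one must check that the family of affine maps $x\mapsto\lambda(t)^{-1}(x-x(t))$, $t\in I$, all have slope comparable to $1$ with intercepts differing by at most an absolute constant, so that the orbit $S_x$ of a fixed point cannot spread over more than boundedly many dyadic blocks at scale $\la x\ra$. This is precisely where the hypotheses $\tfrac12\le\lambda\le2$, $|x_t|\lesssim1$, $|y_t|\lesssim1$, and the unit-size time interval are used; everything else is routine, and the write-up should be short.
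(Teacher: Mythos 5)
Your proof is correct and fills in precisely the argument the paper only sketches ("decompose the outer $x$-integration into the $Q_j$ regions"): the sup over $y$ and $t$ reduces matters to the one-dimensional orbit $S_x$, which by the bounds on $\lambda$ and on the drift of $x(t)$ meets only boundedly many dyadic blocks, and the set of $x$ attached to a given block has measure $\lesssim 2^j$, whence the weighted $\ell^1$ bound after H\"older and $\ell^1\hookrightarrow\ell^p$. The only cosmetic remark is that the hypothesis $|y_t|\lesssim 1$ is never actually needed, since the $L_y^\infty$ norm is invariant under the translation and dilation in $y$.
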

\begin{proof}
The proof is standard, starting with decomposing the outer $x$-integration into the $Q_j$ regions.
\end{proof}

\begin{lemma}
\label{L:H1locallyconst}
Suppose that $E(u)<0$ and $\alpha(u) \ll 1$ so that the geometrical
decomposition applies, and $\frac12 \leq \lambda \leq 2$.  Let $a = \| \epsilon\|_{L_s^\infty H_{xy}^1}$ and $s_0\in \mathbb{R}$ such that $\| \epsilon(s_0)\|_{H_{xy}^1} \geq \frac12a$.  There exists an absolute constant $\sigma>0$ such that for all $s_0-\sigma \leq s \leq s_0+\sigma$, we have $\| \epsilon(s)\|_{H^1_{xy}} \geq \frac1{16} a$.
\end{lemma}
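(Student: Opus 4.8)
The plan is to run the dispersive local theory directly on the remainder equation \eqref{E:ep-eqn} over a short $s$-interval, rather than attempt an energy estimate. A naive bound on $\partial_s\|\epsilon(s)\|_{H^1}^2$ would, after integration by parts of the pairing $\la -\Delta\epsilon,\partial_x(3Q\epsilon^2)\ra$, produce a term $\iint Q\,\epsilon_x|\nabla\epsilon|^2\,dx\,dy$, which is not bounded by any power of $\|\epsilon\|_{H^1}$ in two dimensions (it needs roughly $H^{4/3}$, and $H^1(\mathbb{R}^2)$ is not an algebra); so that route is closed and the Strichartz-type estimates of Lemmas \ref{L:linhom}--\ref{L:lininhom} are needed. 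The goal will be to prove $\bigl|\,\|\epsilon(s)\|_{H^1} - \|\epsilon(s_0)\|_{H^1}\,\bigr| \lesssim \sigma^{1/2}a$ for $|s-s_0| \le \sigma$ --- with the error proportional to $a$, not a fixed amount --- after which one chooses $\sigma>0$ small and absolute so that $\tfrac12 a - C\sigma^{1/2}a \ge \tfrac1{16}a$.

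First I would conjugate away the modulation drift. Since $\tfrac12 \le \lambda \le 2$, the rescaled time $s$ is comparable to physical time $t$, so an $s$-interval of length $2\sigma \le 1$ is within the scope of the local theory; absorbing the parts of \eqref{E:ep-eqn} that are linear with constant-in-$(x,y)$ coefficients --- the dispersion $-\partial_x\Delta\epsilon$, the $\partial_x\epsilon$ coming from the $1$ in $\mathcal{L}$, the transport $(\tfrac{x_s}\lambda-1)\epsilon_x + \tfrac{y_s}\lambda\epsilon_y$, and the dilation $\tfrac{\lambda_s}\lambda\Lambda\epsilon$ --- into a change of unknown $\epsilon \mapsto w$ (a bounded dilation by $e^{\nu(s)}$ with $|\nu(s)| = |\int_{s_0}^s \tfrac{\lambda_\tau}\lambda\,d\tau| \lesssim \sigma a$, composed with a translation, normalized so that $w(s_0) = \epsilon(s_0)$) turns the equation into the linear ZK equation at a time reparametrized by a factor $1+O(\sigma a)$, perturbed by the rapidly decaying potential $-\partial_x(3Q^2\cdot)$, the source $\tfrac{\lambda_s}\lambda\Lambda Q + (\tfrac{x_s}\lambda-1)Q_x + \tfrac{y_s}\lambda Q_y$, and the nonlinearity $\partial_x(3Q\epsilon^2 + \epsilon^3)$. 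The change of unknown satisfies $\|w(s)\|_{H^1} = (1+O(\sigma a))\|\epsilon(s)\|_{H^1}$, and the ZK flow at reparametrized time is an $H^1$-isometry; hence in Duhamel's formula for $w$ from $s_0$ to $s$ the homogeneous term contributes at least $(1-C\sigma)\|\epsilon(s_0)\|_{H^1} \ge (1-C\sigma)\tfrac12 a$, where I use $a \lesssim \sqrt{\alpha(u)} \lesssim 1$ from Lemma \ref{L:epparam}.

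For the inhomogeneous term I would estimate each forcing piece separately. The source term is a scalar function of $s$ of size $\lesssim \|\epsilon(s)\|_{L^2} \le a$ (by \eqref{E:param-bds}) times a fixed Schwartz profile, so Lemma \ref{L:lininhom}(2) gives $\lesssim \sigma a$. The potential $-\partial_x(3Q^2 w)$ carries the rapidly decaying weight $Q^2$, so its $L^1_x L^2_{y,[s_0,s]}$-norm converges, and distributing the $\partial_x$ and using $\|w\|_{L^\infty_x L^2_y} \lesssim \|w\|_{H^1}$ (one-dimensional Sobolev in $x$ with $y$ frozen) together with $\|w_x\|_{L^2_{xy,[s_0,s]}} \lesssim \sigma^{1/2}a$, Lemma \ref{L:lininhom}(1) gives $\lesssim \sigma^{1/2}a$. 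The nonlinearity $\partial_x(3Q\epsilon^2 + \epsilon^3)$ is handled exactly as in the local well-posedness contraction: distribute the $\partial_x$, apply H\"older in $x$ with $1 = \tfrac12+\tfrac14+\tfrac14$, and bound $\|\epsilon\|_{L^4_x L^\infty_{y,[s_0,s]}} \lesssim a$ (Lemma \ref{L:linhom}(3) and a short-time bootstrap), giving $\lesssim \sigma^{1/2}(a^2+a^3) \lesssim \sigma^{1/2}a^2$. Summing, $\|\epsilon(s)\|_{H^1} \ge \tfrac12 a - C\sigma^{1/2}a$ for all $s \in [s_0-\sigma,s_0+\sigma]$, and with $\sigma$ absolute and small enough that $C\sigma^{1/2}\le\tfrac7{16}$ this gives $\|\epsilon(s)\|_{H^1}\ge\tfrac1{16}a$.

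The main obstacle --- and the reason this short statement needs the full local theory rather than a one-line energy computation --- is the 2D failure of the algebra property noted at the outset; the ensuing technical care is in organizing \eqref{E:ep-eqn} so that every term left as forcing either carries a rapidly decaying $Q$-weight (so the non-local norm $L^1_x L^2_{yT}$ converges) or is genuinely super-linear in $\epsilon$ (so its $L^4_x L^\infty$-based estimate produces the gain $\sigma^{1/2}a^2$), while the bare derivatives of $\epsilon$ and the dilation generator stay inside the almost-isometric propagator where they are harmless. A secondary point is to check that this modulated-ZK propagator inherits the estimates of Lemma \ref{L:linhom}, which it does, being the ZK flow at a reparametrized time composed with a bounded dilation and a translation.
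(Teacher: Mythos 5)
Your proposal is correct and follows essentially the same route as the paper: both arguments abandon the energy estimate, absorb the dilation/transport modulation terms into a change of unknown, and then run Duhamel off the free ZK propagator using Lemmas \ref{L:linhom}--\ref{L:lininhom}, with every remaining forcing term either carrying a decaying $Q$-weight or being superlinear in the remainder, yielding $\bigl|\,\|\epsilon(s)\|_{H^1}-\|\epsilon(s_0)\|_{H^1}\bigr|\lesssim \sigma^{1/2}a$. The only presentational difference is that the paper realizes your conjugation concretely by returning to the physical frame, setting $\zeta = u-\bar Q$ with $\bar Q$ the modulated soliton, so the propagator is literally $U(t)$ and there is nothing to verify about a ``modulated'' flow inheriting the estimates of Lemma \ref{L:linhom}.
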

\begin{proof}
Let
$$
\zeta(x,y,t) = \lambda^{-1} \epsilon(\lambda^{-1}(x- x(t)), \lambda^{-1}(y- y(t)),t)
$$
and let $t_0$ correspond to $s_0$ in the time transformation.
Then $\zeta$ solves
\begin{align*}
 \partial_t \zeta &= - \partial_x \Delta \zeta - 3 \partial_x (\bar Q^2 \zeta)    + \la \bar f_1, \zeta \ra \lambda^{-3} \overline{\Lambda Q} + \la \bar f_2, \zeta \ra \lambda^{-3} \overline{ \partial_x Q} + \la \bar f_3, \zeta \ra \lambda^{-3} \overline{ \partial_y Q} \\
& \qquad +  \mu_1 \lambda^{-3} \overline{\Lambda Q} +  \mu_2 \lambda^{-3} \overline{ \partial_x Q} +  \mu_3 \lambda^{-3} \overline{ \partial_y Q}  - 3 \partial_x (\bar Q\zeta^2)  -   \partial_x ( \zeta^3 ),
\end{align*}
where each $|u_j|\lesssim \|\zeta\|_{L_t^\infty L_{xy}^2}^2$, and the bar quantities $\bar Q$, $\overline{\Lambda Q}$, etc. are defined as in \eqref{E:bar-notation}.
Writing this equation in Duhamel form, and applying the $H^1$ theory local estimates, where $t$ is restricted to $[t_0-\sigma, t_0+\sigma]$ on both sides of the equation ($\sigma$ to be determined), we have
$$ 
\| \zeta(t) - U(t) \zeta(t_0) \|_{L_t^\infty H_{xy}^1} \lesssim
\begin{aligned}[t]
 & \| \nabla (\bar Q^2 \zeta) \|_{L_x^1L_{yt}^2}  +|\la \bar f_1, \zeta \ra| \| \nabla \, \overline{\Lambda Q}\|_{L_x^1L_{yt}^2} \\
 &+|\la \bar f_2, \zeta \ra| \|  \nabla \, \overline{ \partial_x Q} \|_{L_x^1L_{yt}^2} +|\la \bar f_3, \zeta \ra| \| \nabla \, \overline{ \partial_y Q} \|_{L_x^1L_{yt}^2} \\
 & +|\mu_1| \| \nabla \, \overline{\Lambda Q} \|_{L_x^1L_{yt}^2} +|\mu_2| \|  \nabla \, \overline{ \partial_x Q} \|_{L_x^1L_{yt}^2} \\
 &+|\mu_3| \| \nabla \, \overline{ \partial_y Q} \|_{L_x^1L_{yt}^2}  + \| \nabla (\bar Q\zeta^2) \|_{L_x^1L_{yt}^2} \\
 &+ \| \nabla ( \zeta^3 ) \|_{L_x^1L_{yt}^2}.
\end{aligned}
$$
(On the right side there are also the (easier to estimate) copies of each term listed without the gradient.)  For the first term,
\begin{align*}
\| \nabla( \bar Q^2 \zeta) \|_{L_x^1 L_{yt}^2} &\lesssim \| \bar Q \, \overline{\nabla Q} \, \zeta \|_{L_x^1L_{yt}^2} + \| \bar Q^2 \, \nabla \zeta \|_{L_x^1L_{yt}^2}\\
&\lesssim \| \bar Q \|_{L_x^4 L_{yt}^\infty} \| \overline{\nabla Q} \|_{L_x^4 L_{yt}^\infty} \| \zeta \|_{L_{xyt}^2} + \| \bar Q \|_{L_x^4L_{yt}^\infty}^2 \| \nabla \zeta \|_{L_{xyt}^2} \\
&\lesssim \sigma^{1/2} \|\zeta \|_{L_t^\infty H_{xy}^1},
\end{align*}
where the $Q$ coefficient terms are controlled by \eqref{E:bar-bound}.

For the term $|\la \bar f_1, \zeta \ra| \| \nabla \, \overline{\Lambda Q}\|_{L_x^1L_{yt}^2}$, we use that
\begin{align*}
  \| \nabla \, \overline{\Lambda Q}\|_{L_x^1L_{yt}^2} = \lambda^{-1} \| \overline{ \nabla \Lambda Q}\|_{L_x^1L_{yt}^2} \lesssim \| \overline{ \nabla \Lambda Q}\|_{L_x^1L_{yt}^\infty}^{1/2} \| \overline{ \nabla \Lambda Q}\|_{L_{xyt}^1}^{1/2} \lesssim \sigma^{1/2}\| \overline{ \nabla \Lambda Q}\|_{L_x^1L_{yt}^\infty}^{1/2} \| \overline{ \nabla \Lambda Q}\|_{L_t^\infty L_{xy}^1}^{1/2},
\end{align*}
which is bounded by $\sigma^{1/2}$ by \eqref{E:bar-bound}.  Thus, we have
$$ 
|\la \bar f_1, \zeta \ra| \| \nabla \, \overline{\Lambda Q}\|_{L_x^1L_{yt}^2} \lesssim \|\bar f_1 \|_{L_t^\infty L_{xy}^2} \| \zeta \|_{L_t^\infty L_{xy}^2} \sigma^{1/2} \lesssim \sigma^{1/2} \| \zeta \|_{L_t^\infty L_{xy}^2}.
$$
The next five terms are treated similarly, which brings us to the next term $\| \nabla( \bar Q \zeta^2) \|_{L_x^1 L_{yt}^2}$.  Distributing the derivative and applying H\"older, we obtain
\begin{align*}
  \| \nabla( \bar Q \zeta^2) \|_{L_x^1 L_{yt}^2} &\lesssim \| \overline{\nabla Q} \|_{L_x^\infty L_{yt}^\infty} \| \zeta \|_{L_x^2 L_{yt}^2} \| \zeta \|_{L_x^2 L_{yt}^\infty} + \| Q \|_{L_x^\infty L_{yt}^\infty} \| \nabla \zeta \|_{L_x^2 L_{yt}^2} \| \zeta \|_{L_x^2 L_{yt}^\infty} \\
 &\lesssim \sigma^{1/2} \| \zeta \|_{L_t^\infty H_{xy}^1},
\end{align*}
using that $\|\zeta\|_{L_x^2L_{yt}^\infty} \lesssim 1$ and $\|\zeta\|_{L_t^\infty H_{xy}^1} \lesssim 1$.  Finally, for the last term, we have
\[ \| \nabla ( \zeta^3) \|_{L_x^1 L_{yt}^2} \lesssim \| \zeta \|_{L_x^4 L_{yt}^\infty}^2 \| \nabla \zeta \|_{L_x^2L_{yt}^2} \lesssim  \sigma^{1/2} \| \zeta \|_{L_x^4 L_{yt}^\infty}^2 \| \nabla \zeta \|_{L_t^\infty L_{xy}^2} \lesssim   \sigma^{1/2} \| \nabla \zeta \|_{L_t^\infty L_{xy}^2}, \]
where we have used that $ \| \zeta \|_{L_x^4 L_{yt}^\infty} \lesssim 1$.

In conclusion, we have obtained
\[\big| \| \zeta\|_{L_t^\infty H_{xy}^1} - \| \zeta(t_0) \|_{H_{xy}^1} \big| \lesssim  \| \zeta(t) - U(t) \zeta(t_0) \|_{L_t^\infty H_{xy}^1} \lesssim \sigma^{1/2} \|\zeta \|_{L_t^\infty H_{xy}^1}. \]
It follows that
\[ \|\zeta(t_0)\|_{H_{xy}^1} - C\sigma^{1/2} \|\zeta\|_{L_t^\infty H_{xy}^1} \leq \| \zeta \|_{L_t^\infty H_{xy}^1} \leq  \|\zeta(t_0)\|_{H_{xy}^1} + C\sigma^{1/2}\| \zeta \|_{L_t^\infty H_{xy}^1},\]
and hence,
\[ \frac{ \| \zeta(t_0) \|_{H_{xy}^1}}{ 1+ C \sigma^{1/2}} \leq \| \zeta(t) \|_{L_t^\infty H_{xy}^1} \leq \frac{ \| \zeta(t_0) \|_{H_{xy}^1}}{ 1- C \sigma^{1/2}}. \]

Taking $\sigma$ small enough so that $C\sigma^{1/2} \leq \frac12$, we obtain the claim.
\end{proof}

\section{Convergence of renormalized remainders}\label{section-C}

\begin{proposition}
\label{P:conv-linear}
If the statement ``for $n$ sufficiently large, $\tilde \epsilon_n \equiv 0$'' is false, then we can pass to a subsequence (still denoted $\tilde \epsilon_n$) such that $\tilde \epsilon_n \not\equiv 0$ for all $n$.   Let $b_n = \|\epsilon_n(t)\|_{L_t^\infty L_{xy}^2}$, so that, by our assumption, $b_n>0$ for all $n$, and there exists $s_n\in \mathbb{R}$ such that $\|\epsilon_n(s_n) \|_{L_{xy}^2} \geq \frac12 b_n$.  Let
$$
w_n(x,y,s) = \frac{\tilde \epsilon_n(x,y,s+s_n)}{b_n}.
$$
Then, passing to another subsequence, $w_n \to w_\infty$ in $C_{\text{loc}}(\mathbb{R}; L^2(\mathbb{R}^2))$, where $w_\infty$ satisfies 
\begin{enumerate}
\item $w_\infty \in L_s^\infty H_{xy}^1$ and solves
\begin{equation}
\label{E:w-inf-eqn}
\partial_s w_\infty = \partial_x \mathcal{L} w_\infty + \alpha(s) \Lambda Q + \beta(s) Q_x  + \gamma(s) Q_y, 
\end{equation}
\item $w_\infty$ satisfies the orthogonality conditions 
\begin{equation}\label{E:w-ortho}
\la Q^3, w_\infty\ra =0 \quad \mbox{and} \quad \la \nabla Q, w_\infty \ra =0,
\end{equation}
\item $w_\infty$ is nontrivial, in fact, $\|w_\infty(0)\|_{L^2_{xy}} \geq \frac12$,
\item $w_\infty$ satisfies a spatial localization, uniformly in time
$$
\| w_\infty(s) \|_{L^2_{B(0,r)^c}} \lesssim e^{-\omega r}
$$
for some $\omega>0$, and any radius $r>0$; here, $B(0,r)$ denotes the ball in $\mathbb{R}^2$ of center $0$ and radius $r$, and $B(0,r)^c$ denotes the complement of this ball.
\end{enumerate}
\end{proposition}

\begin{proof}

Let $t_n$ correspond to $s_n$ (in the time conversion).  Since $\frac12 \leq \tilde \lambda_n(t) \leq 2$ for all $n$ and all $t\in \mathbb{R}$, we can pass to a subsequence so that $\tilde \lambda_n(t_n) \to \lambda_\infty$. Let
$$
\zeta_n(x,y,t) = b_n^{-1} \tilde \lambda_n^{-1} \tilde \epsilon_n(\tilde \lambda_n^{-1}(x-\tilde x_n(t+t_n)+\tilde x_n(t_n)),\tilde \lambda_n^{-1}(y-\tilde y_n(t+t_n)+\tilde y_n(t_n)),t+t_n).
$$
Every instance of $\tilde \lambda_n$ is evaluated at time $t+t_n$, although this has been suppressed.  Then $\zeta_n$ satisfies
$$
b_n\zeta_n(x,y,t) = 
\begin{aligned}[t]
&\tilde u_n(x+\tilde x_n(t_n),y+\tilde y_n(t_n),t+t_n) \\
&- \tilde \lambda_n^{-1}Q\big(\tilde \lambda_n^{-1}(x-\tilde x_n(t+t_n)+\tilde x_n(t_n)),\tilde \lambda_n^{-1}(y-\tilde y_n(t+t_n)+\tilde y_n(t_n)) \big).
\end{aligned}
$$  

By Corollary \ref{C:tilde-ep-decay} (the exponential decay estimate on $\tilde \epsilon_n$),  and $\frac12\leq \tilde \lambda_n \leq 2$, we have, uniformly in $n$,
$$
\| \zeta_n \|_{L_t^\infty L^2_{B(0,R)^c}} \lesssim e^{-\omega R}
$$
for any ball of radius $R$.  Moreover, by Prop. \ref{P:normcomp} (giving the comparability between $L_{xy}^2$ and $H_{xy}^1$ norms of $\epsilon_n$), we have, uniformly in $n$,
$$
\| \zeta_n \|_{L_t^\infty H_{xy}^1} \lesssim 1.
$$
Hence, at time $t=0$, by the Rellich-Kondrachov compactness theorem, we can pass to a subsequence such that $\zeta_n(0) \to \zeta_\infty(0)$ strongly in $L_{xy}^2$, for some $H_{xy}^1$ function $\zeta_\infty(0)$. By relabeling, we assume that $\zeta_n$ follows this subsequence. We will now use local theory estimates to prove that this convergence holds at all times $t$.  In fact, we show that for each $T>0$,
\begin{equation}
\label{E:zeta-conv}
\| \zeta_n(t) - \zeta_\infty(t)\|_{L_{-T<t<T}^\infty L_{xy}^2} \to 0
\end{equation}
as $n\to \infty$, where $\zeta_\infty$ solves the equation \eqref{E:zeta-inf} with initial condition $\zeta_\infty(0)$.  Since $\frac12 \leq \tilde \lambda_n \leq 2$, this implies the same convergence for $w_n$ to $w_\infty$ in the proposition statement.

Let $\bar Q$ denote the second term, for convenience, in the definition of $\zeta_n$, which has implicit $t$ and $n$ dependence,
\begin{equation}
\label{E:bar-Q-def}
\bar Q \defeq \tilde \lambda_n^{-1}Q(\tilde \lambda_n^{-1}(x-\tilde x_n(t+t_n)+\tilde x_n(t_n)),\tilde \lambda_n^{-1}(y-\tilde y_n(t+t_n)+\tilde y_n(t_n))).
\end{equation}
Since the space and time shifts are by constants for each $n$, $\tilde u_n(x+\tilde x_n(t_n),y+\tilde y_n(t_n),t+t_n)$ still solves the ZK equation \eqref{ZK}, and we compute that $\zeta_n$ solves
$$
\begin{aligned}
\indentalign \partial_t \zeta_n + \partial_x \Delta \zeta_n + \partial_x [3\bar Q^2 \zeta_n + 3\bar Q\zeta_n^2 b_n + \zeta_n^3 b_n^2] \\
&= - b_n^{-1}(\tilde \lambda_n^{-1})_t \,\partial_{\lambda^{-1}} \bar Q + b_n^{-1}(\tilde y_n)_t \,\partial_y \bar Q + b_n^{-1}((\tilde x_n)_t - \tilde \lambda_n^{-2}) \partial_x \bar Q.
\end{aligned}
$$
We rewrite the last line as
$$
(b_n^{-1}\tilde \lambda_n^2 (\tilde \lambda_n)_t) \, \tilde \lambda_n^{-3} \, \overline{\Lambda Q} + (b_n^{-1}\tilde \lambda_n^2 (\tilde y_n)_t) \, \tilde \lambda_n^{-3} \, \overline{\partial_y Q} + (b_n^{-1}(\tilde \lambda_n^2 (\tilde x_n)_t - 1)) \, \tilde \lambda_n^{-3} \, \overline{\partial_x Q},
$$
where $\overline{\Lambda Q}$, $\overline{ \partial_x Q}$, and $\overline{\partial_y Q}$ are defined in terms of $\Lambda Q$, $\partial_x Q$, and $\partial_y Q$ in the same way as $\bar Q$ is defined in terms of $Q$.  We can now substitute the equations for the parameters at the expense of $O(b_n)$ terms.  Let us write
$$
b_n (\mu_n)_1 = b_n^{-1} \tilde \lambda_n^2 \tilde (\lambda_n)_t - \la \bar f_1, \zeta_n \ra, \qquad 
$$
$$
\qquad b_n (\mu_n)_2 = b_n^{-1}( \tilde \lambda_n^2 (x_n)_t -1 ) - \la \bar f_2, \zeta_n \ra, ~~\mbox{and}
$$
$$
b_n (\mu_n)_3  = b_n^{-1}\tilde\lambda_n^2 (y_n)_t  - \la \bar f_3, \zeta_n \ra,
$$
so that each $|(\mu_n)_j| \lesssim 1$.  Then
\begin{equation}
\label{E:zeta-n}
\begin{aligned}
\partial_t \zeta_n &= - \partial_x \Delta \zeta_n - 3 \partial_x (\bar Q^2 \zeta_n)    + \la \bar f_1, \zeta_n \ra \tilde \lambda_n^{-3}\, \overline{\Lambda Q} + \la \bar f_2, \zeta_n \ra \tilde \lambda_n^{-3}\, \overline{ \partial_x Q} + \la \bar f_3, \zeta_n \ra \tilde \lambda_n^{-3}\, \overline{ \partial_y Q} \\
& \qquad + b_n (\mu_n)_1 \tilde \lambda_n^{-3}\, \overline{\Lambda Q} +  b_n (\mu_n)_2 \tilde \lambda_n^{-3} \,\overline{ \partial_x Q} + b_n (\mu_n)_3 \tilde \lambda_n^{-3} \,\overline{ \partial_y Q}  - 3 b_n \partial_x (\bar Q\zeta_n^2)  -  b_n^2 \partial_x ( \zeta_n^3 ).
\end{aligned}
\end{equation}

On the time interval $-T\leq t \leq T$, the parameter bounds imply the following.  First, the fact that $|(\tilde \lambda_n)_t| \lesssim b_n$ implies 
\begin{equation}
\label{E:tilde-lambda-conv}
| \tilde \lambda_n(t+t_n) - \lambda_\infty | \lesssim b_n T + |\tilde \lambda_n(t_n) - \lambda_\infty|.
\end{equation}
Since $\frac12 \leq \tilde \lambda_n \leq 2$ and $\frac12 \leq \lambda_\infty \leq 2$, it follows that
\begin{equation}
\label{E:tilde-lambda-conv-2}
| \tilde \lambda_n(t+t_n)^{-2} - \lambda_\infty^{-2} | \lesssim b_n T + |\tilde \lambda_n(t_n) - \lambda_\infty|
\end{equation}
as well.  The fact that $|(\tilde x_n)_t - \tilde \lambda_n^{-2}| \lesssim b_n$ implies that 
$$
\left| \tilde x_n(t+t_n) - \tilde x_n(t_n) - \int_0^t \tilde \lambda_n(\sigma+t_n)^{-2} \, d\sigma \right| \lesssim b_n t.
$$
Using \eqref{E:tilde-lambda-conv-2}, we obtain
\begin{equation}
\label{E:tilde-x-conv}
| \tilde x_n(t+t_n) - \tilde x_n(t_n) - \lambda_\infty^{-2} t | \lesssim b_nT + (b_n T + |\tilde \lambda_n(t_n) - \lambda_\infty|) \,T .
\end{equation}
Finally, since $|(\tilde y_n)_t| \lesssim b_n$, it follows that
\begin{equation}
\label{E:tilde-y-conv}
|\tilde y_n(t+t_n) - \tilde y_n(t_n)| \lesssim b_n \,T .
\end{equation}
The convergence statements \eqref{E:tilde-lambda-conv}, \eqref{E:tilde-x-conv} and \eqref{E:tilde-y-conv} will be used below.
To deduce the equation for the expected limit $\zeta_\infty$, we replace $\bar Q$ with its limiting value
\begin{equation}
\label{E:hat-Q-def}
\hat Q(x,y,t) \defeq \lambda_\infty^{-1}Q(\lambda_\infty^{-1}(x-\lambda_\infty^{-2}t), \lambda_\infty^{-1} y),
\end{equation}
and similarly, for $\overline{\Lambda Q}$, $\overline{\partial_x Q}$,
$\overline{\partial_y Q}$, and drop all $O(b_n)$ terms.  Note that \eqref{E:tilde-lambda-conv}, \eqref{E:tilde-x-conv} and \eqref{E:tilde-y-conv} imply that for any $f$ (which could be $Q$, $\Lambda Q$, $\partial_x Q$, or $\partial_y Q$ in the analysis), we have
\begin{equation}
\label{E:bar-hat-conv}
\| \bar f - \hat f \|_{L_{[-T,T]}^\infty H_{xy}^1} \to 0
\end{equation}
as $n\to \infty$.  From this, we deduce the expected form for the equation for the limit $\zeta_\infty$ from the equation \eqref{E:zeta-n} for $\zeta_n$ and the above convergence assertions, thus, we have
\begin{equation}
\label{E:zeta-inf}\partial_t \zeta_\infty = - \partial_x \Delta \zeta_\infty - 3 \partial_x (\hat Q^2 \zeta_\infty) + \la \hat f_1, \zeta_\infty \ra \tilde \lambda_\infty^{-3}\, \widehat{\Lambda Q} + \la \hat f_2, \zeta_\infty \ra \tilde \lambda_\infty^{-3}\, \widehat{ \partial_x Q} + \la \hat f_3, \zeta_\infty \ra \tilde \lambda_\infty^{-3}\, \widehat{ \partial_y Q}. 
\end{equation}
Thus, let us take $\zeta_\infty$ to be the solution to this equation on $[-T,T]$ with the initial condition $\zeta_\infty(0)$.  We will now prove \eqref{E:zeta-conv}.  

Into the first line of \eqref{E:zeta-n}, we make the following replacements
\begin{itemize}
\item $\zeta_n = \rho_n + \zeta_\infty$
\item $\overline{Q} = \widehat{Q} + (\overline{Q} - \widehat{Q})$
\item $\overline{\Lambda Q} = \widehat{\Lambda Q} + (\overline{\Lambda Q} - \widehat{\Lambda Q})$
\item $\overline{\partial_x Q} = \widehat{\partial_x Q} + (\overline{\partial_x Q} - \widehat{\partial_x Q})$
\item $\overline{\partial_y Q} = \widehat{\partial_y Q} + (\overline{\partial_y Q} - \widehat{\partial_y Q})$
\item $\tilde \lambda_n = \tilde \lambda_\infty + (\tilde \lambda_n - \tilde \lambda_\infty)$
\end{itemize}
and then use \eqref{E:zeta-inf} to simplify the result.    This gives us a $\rho_n$ equation, that we estimate using the local theory estimates, \eqref{E:bar-hat-conv}, \eqref{E:tilde-lambda-conv}, and the fact that $\rho_n(0) \to 0$ strongly in $L^2_{xy}$, and $b_n \to 0$.  
\end{proof}

\section{The linearized virial estimate}\label{S:virial-1}

We now study the limit function $w_\infty$; for simplicity we drop the index and write $w$. We recall the equation \eqref{E:w-inf-eqn} that $w$ satisfies and the orthogonality conditions \eqref{E:w-ortho}.  It is not clear how to directly control a virial for $w$, so instead we consider an adjoint problem (formally, $v=\mathcal L w$) and obtain the virial estimate for $v$. In order to do so, we use numerics to obtain the discrete spectrum for a certain operator (see discussion in Lemma \ref{L:v-virial} below) and then use an angle lemma, to be precise Lemma \ref{L:angle}, to finish the proof of the virial estimate. In the following Section 
\ref{S:liouville} we deduce that the only possible $w$ that satisfies all the conditions is the trivial $w\equiv 0$, and hence, our assumption that Theorem \ref{T:main} does not hold is false, thus, concluding that the solution $u(t)$ as considered in the theorem does indeed blow up. 

\begin{lemma}[linearized virial  estimate for $w$]
\label{L:w-virial}
Suppose that $w \in C^0(\mathbb{R}_t; H_{xy}^1) \cap C^1(\mathbb{R}_t; H_{xy}^{-2})$  solves
$$
\partial_t w = \partial_x \mathcal{L} w + \alpha \Lambda Q + \beta Q_x + \gamma Q_y
$$
for time-dependent coefficients $\alpha$, $\beta$, and $\gamma$.  Suppose, moreover, that $w$ satisfies the orthogonality conditions 
$$
\la w, Q^3\ra =0, \quad \la w, Q_x \ra =0, \quad \la w, Q_y \ra =0.
$$  
Then
\begin{equation}
\label{E:w-virial-1}
\| w \|_{L_t^2 H_{xy}^1} \lesssim \| \la x \ra^{1/2} w \|_{L_t^\infty L_{xy}^2},
\end{equation}
where $t$ is carried out globally $-\infty<t< \infty$.  
\end{lemma}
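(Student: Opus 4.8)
The plan is to follow the ``dual variable'' approach of Martel \cite{Martel-dual}, as refined by Kenig \& Martel \cite{KM-BO}: rather than estimating $w$ directly, transform to $v = (1-\delta\Delta)^{-1}\mathcal{L}w$ for a small fixed $\delta>0$, prove the analogous virial estimate for $v$ (Lemma \ref{L:v-virial}), and then transfer the conclusion back to $w$ using the conversion estimates of \S\ref{S:conversion}. The reason for not working with $w$ directly is that computing $\partial_t \iint x\, w^2$ and integrating by parts produces a quadratic form $\langle \tilde{\mathcal{L}} w, w\rangle$ with $\tilde{\mathcal{L}}$ a Schr\"odinger-type operator whose required coercivity (modulo the directions killed by the orthogonality conditions) appears to fail for $2$D ZK, as the numerics in \S\ref{S:numerical} indicate; the regularized dual variable repairs this.

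First I would derive the evolution equation for $v$. Applying $(1-\delta\Delta)^{-1}\mathcal{L}$ to $\partial_t w = \partial_x \mathcal{L} w + \alpha \Lambda Q + \beta Q_x + \gamma Q_y$ and commuting, one obtains for $v$ a transport--dispersive leading part together with lower-order terms built from $Q$ and its derivatives, with coefficients $\alpha,\beta,\gamma$; the commutator identities between $\partial_x$, $\mathcal{L}$, $(1-\delta\Delta)^{-1}$, and multiplication by $Q^2$ that are needed for this are exactly those assembled in \S\ref{S:conversion}. Since $\mathcal{L}Q_x = 0$, the $\beta Q_x$ forcing drops out, and $\mathcal{L}\Lambda Q = -2Q$ turns the $\alpha$ term into a multiple of $(1-\delta\Delta)^{-1}Q$; all surviving forcing terms are smooth and exponentially localized near the origin, hence harmless.

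Next I would run the positive commutator computation on $v$. With $\varphi$ a smooth, truncated version of the weight $x$ — used so that all integrals are finite, the truncation then removed by a limiting argument powered by the $x$-localization hypothesis — compute $\partial_t \iint \varphi\, v^2$, integrate by parts, and organize the outcome as
$$
-\tfrac12\,\partial_t \iint \varphi\, v^2 \;=\; \langle \tilde{\tilde{\mathcal{L}}}\, v, v\rangle \;+\; (\text{error}),
$$
where $\tilde{\tilde{\mathcal{L}}}$ is a standard Schr\"odinger operator plus a \emph{rank two} perturbation arising from the $\alpha$- and $\gamma$-forcing, and the error is $O\big(\|\langle x\rangle^{1/2} v\|_{L_t^\infty L_{xy}^2}^2\big)$ after using the parameter bounds and exponential localization. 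The orthogonality conditions $\langle w, Q^3\rangle = \langle w, Q_x\rangle = \langle w, Q_y\rangle = 0$ translate — up to $\delta$-small and exponentially small corrections — into near-orthogonality of $v$ to an explicit finite-dimensional subspace; combining this with the numerically verified spectral coercivity of $\tilde{\tilde{\mathcal{L}}}$ on that subspace (\S\ref{S:numerical}) and the angle lemma, Lemma \ref{L:angle}, which quantifies the persistence of coercivity under a small tilt of the orthogonality directions, yields $\langle \tilde{\tilde{\mathcal{L}}}\, v, v\rangle \gtrsim \|v\|_{H_{xy}^1}^2$ minus controllable errors. Integrating in time over $\mathbb{R}$ and using that $\iint \varphi\, v^2$ is bounded uniformly in $t$ then gives Lemma \ref{L:v-virial}: $\|v\|_{L_t^2 H_{xy}^1} \lesssim \|\langle x\rangle^{1/2} v\|_{L_t^\infty L_{xy}^2}$.

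Finally I would transfer back to $w$ via \S\ref{S:conversion}: on one side, $\|\langle x\rangle^{1/2} v\|_{L_t^\infty L_{xy}^2} \lesssim \|\langle x\rangle^{1/2} w\|_{L_t^\infty L_{xy}^2}$ because $(1-\delta\Delta)^{-1}\mathcal{L}$ preserves the weighted $L^2$ space (the weight commutes with the operator modulo lower order, and the $Q^2$ part of $\mathcal{L}$ is localized); on the other side, $\|w\|_{L_t^2 H_{xy}^1} \lesssim \|v\|_{L_t^2 H_{xy}^1}$ up to a piece of $\|w\|_{L_t^2 L_{xy}^2}$ supported near $Q$, which is reabsorbed using the coercivity of $\mathcal{L}$ under the orthogonality conditions (Lemma \ref{Lemma-ort2}) and the ellipticity of $1-\delta\Delta$. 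The main obstacle, and the reason the proof is not purely soft, is exactly the coercivity of $\tilde{\tilde{\mathcal{L}}}$ with its rank-two perturbation: unlike in the $1$D gKdV setting there is no explicit spectral decomposition, so this is established numerically in \S\ref{S:numerical}; a secondary technical point is tracking the $\delta$-regularization commutators precisely enough that only the $H^1$ norm of $w$ — not a higher Sobolev norm — appears on the left-hand side.
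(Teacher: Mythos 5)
Your proposal is correct and follows essentially the same route as the paper: pass to the regularized dual variable $v=(1-\delta\Delta)^{-1}\mathcal{L}w$, derive its equation and orthogonality conditions via the commutator estimates of \S\ref{S:conversion}, prove the $v$-virial estimate by computing $\partial_t\iint x\,v^2$ to produce a Schr\"odinger operator plus a rank-two perturbation whose coercivity is checked numerically together with the angle lemma, and transfer back to $w$ via Lemma \ref{L:transfer} and the coercivity of $\mathcal{L}$. One small correction: the rank-two perturbation comes only from the $\alpha$-forcing (after $\mathcal{L}\Lambda Q=-2Q$ and symmetrization against the weight $x$), since the $\gamma Q_y$ term is annihilated by $\mathcal{L}$ just as $\beta Q_x$ is.
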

\begin{proof}
We will reduce this to Lemma \ref{L:v-virial} below.  For $\delta>0$ to be chosen small later, let
$$
v = (1-\delta \Delta)^{-1} \mathcal{L} w.
$$
Since $\mathcal{L}Q_x=0$ and $\mathcal{L}Q_y =0$ and $\mathcal{L}\Lambda Q = -2 Q$, we compute
\begin{align*}
\partial_t v &= (1-\delta \Delta)^{-1} \mathcal{L} \partial_t w \\
&= (1-\delta \Delta)^{-1}[\mathcal{L} \partial_x \mathcal{L} w + \alpha \mathcal{L} \Lambda Q] \\
&= (1-\delta \Delta)^{-1}[\mathcal{L}\partial_x (1-\delta\Delta) v -2 \alpha Q] \\
&= \mathcal{L}\partial_x v - 2\alpha Q + E_\delta v,
\end{align*}
where $E_\delta$ is a zero-order operator with the property that
\begin{equation}
\label{E:E-delta-bound}
|\la   \la x \ra  E_\delta v, v \ra | \lesssim \delta \|v\|_{H_{xy}^1}^2
\end{equation}
(sacrificing regularity one gains a power of $\delta$).   Specifically, \eqref{E:E-delta-bound} is derived as follows.  By definition, we have the formula
$$
E_\delta v \defeq [ (1-\delta \Delta)^{-1} \mathcal{L} (1-\delta \Delta) - \mathcal{L} ] \partial_x v - 2\alpha ( (1-\delta \Delta)^{-1} - 1) Q.
$$
Note that via the orthogonality conditions on $w$, the equation for $w$, and the definition of $v$ in terms of $w$, we have $|\alpha| \lesssim \|v\|_{L_{xy}^2}$.  Substituting $\mathcal{L} = 1-\Delta - 3 Q^2$, we get
$$
E_\delta v = - 3 [ (1-\delta\Delta)^{-1} Q^2 (1-\delta \Delta) - Q^2] \partial_x v - 2\alpha( ( 1-\delta \Delta)^{-1} - 1)Q.
$$
Using the commutator identity
$$
Q^2 (1-\delta \Delta) = (1-\delta \Delta) Q^2  + \delta (\Delta Q^2) + 2 \delta (\nabla Q) \nabla, 
$$
we reduce to
$$
E_\delta v = -3\delta (1-\delta \Delta)^{-1} [\Delta Q^2 + 2\nabla Q \cdot \nabla] \partial_x v - 2\alpha( ( 1-\delta \Delta)^{-1} - 1)Q.
$$
Finally, we use 
$$
(1 - \delta \Delta)^{-1} - 1 =  \delta (1-\delta \Delta)^{-1} \Delta 
$$ 
to simplify the second term:
$$
E_\delta v = -3\delta (1-\delta \Delta)^{-1} [\Delta Q^2 + 2\nabla Q \cdot \nabla] \partial_x v - 2 \delta \alpha ( 1-\delta \Delta)^{-1} \Delta Q.
$$
From this formula, we see that \eqref{E:E-delta-bound} follows from Lemma \ref{L:weight-transfer}. Each term has a $\delta$ factor, and each term has a $Q$-weight to absorb the $\la x \ra$ in \eqref{E:E-delta-bound}.

Furthermore, we have
$$
\la v, (1-\delta \Delta) Q_x \ra = \la \mathcal{L}w, Q_x \ra = \la w, \mathcal{L}Q_x \ra =0,
$$
$$
\la v, (1-\delta \Delta) Q_y \ra = \la \mathcal{L}w, Q_y \ra = \la w, \mathcal{L}Q_y \ra =0.
$$
By the orthogonality condition $\la w, Q^3 \ra =0$,
$$
\la v, (1-\delta \Delta) Q \ra = \la \mathcal{L} w, Q \ra = \la w, \mathcal{L}Q \ra = - 2\la w, Q^3 \ra =0.
$$
Thus, by spectral stability, the given estimate for the $E_\delta$ error, and the $v$-to-$w$ conversion estimates in Lemma \ref{L:transfer}, the estimate \eqref{E:w-virial-1} is reduced to Lemma \ref{L:v-virial} below.
\end{proof}

\begin{lemma}[linearized virial  estimate for $v$]
\label{L:v-virial}
Suppose that $v \in C^0(\mathbb{R}_t; H_{xy}^1) \cap C^1(\mathbb{R}_t; H_{xy}^{-2})$  solves
$$\partial_t v =  \mathcal{L}\partial_x v  -2 \alpha Q$$
for some time dependent coefficient $\alpha$, and moreover, $v$ satisfies the orthogonality conditions 
\begin{equation}\label{E:ortho-v}
\la v, Q \ra =0, \quad \la v, Q_x \ra =0, \quad \la v, Q_y\ra =0.
\end{equation}
Then 
\begin{equation}
\label{E:v-virial-1}
\| v\|_{L_t^2H_{xy}^1} \lesssim \| \la x \ra^{1/2} v \|_{L_t^\infty L_{xy}^2},
\end{equation}
where $t$ is carried out over all time $-\infty < t< \infty$.
\end{lemma}
\begin{proof}
Using the orthogonality condition $\la v,Q \ra =0$, we compute
$$
0= \partial_t \la v, Q \ra = \la \mathcal{L}\partial_x v, Q \ra -2 \alpha \la Q, Q \ra.
$$
This yields 
$$
\alpha = \frac{\la v, 3Q^2Q_x \ra}{\la Q, Q \ra}
$$
so that
$$
\partial_t v = \mathcal{L} \partial_x v - \frac{\la v, 6Q^2Q_x \ra}{\la Q, Q\ra} Q.
$$
Now compute
\begin{equation}
\label{E:v-virial-2}
-\frac12 \partial_t \int x v^2 = \la Bv,v\ra + \la P v,v\ra  =: \la A v,v\ra,
\end{equation}
where
\begin{equation}\label{E:B}
B := \frac12 - \frac32\partial_x^2 - \frac12\partial_y^2  - \frac32 Q^2 - 3 x QQ_x
\end{equation}
and $P$ can be taken as the rank $2$ self-adjoint operator
\begin{equation}\label{E:P}
Pv := \frac12 \frac{ 6\, Q^2Q_x}{\la Q, Q\ra} \la v, xQ\ra+ \frac12 \frac{xQ}{\la Q, Q\ra} \la v, 6Q^2Q_x\ra.
\end{equation}
From \eqref{E:B} it follows that the continuous spectrum of the operator $A 
 \, (\equiv B+P)$ is $[\frac12,+\infty)$. 

We would like to understand the discrete spectrum of the operator $A$, however, since $Q$ is not given explicitly and due to a non-trivial representation of the operator $P$, we study the discrete spectrum of $A$ numerically (see details in appendix).
In this paper we offer one possible numerical approach, and in \cite{HRY2025} we show several other numerically-assisted studies of the spectral properties of this operator. 
We point out that even in the 1D case of the critical gKdV equation, when $Q$ is given explicitly as a $\sech$ function, and where the spectral properties of certain classical Schr\"odinger-type operator are well-known, certain function (an inverse of the ground state $Q$ under a specific second order nonlinear operator) had to be computed numerically as well as signs of some inner products also had to be checked numerically, see  (177), Remark on p. 415 and the proof of Lemma 26 in \cite{MM-liouville}.

We find that the operator $A$ has two discrete eigenvalues below $\frac12$, that is, 
\begin{equation}\label{E:lambdas}
\lambda_o=-0.5368 \quad \mbox{and} \quad \lambda_e=-0.1075.
\end{equation}
For any simple eigenstate of $A$, the corresponding eigenfunction is either even or odd in $x$, and either even or odd in $y$.  This follows from parity preserving properties of the differential operator $B$ and the projection $P$.  Since $B$ is a standard Schr\"odinger operator, the ground state is simple and moreover has a nonnegative eigenfunction (so is even in $x$ and even in $y$), but this property need not hold for the perturbed operator $A=B+P$.
However, the numerics confirm the simplicity of all the eigenstates for $A$, so that we can assume eigenstates are either even or odd in $x$, and either even or odd in $y$.  For illustration we provide the graphs of the eigenfunctions in appendix, in Figure \ref{F:f} (see also cross-sections in Figure \ref{F:f2}). We obtain that the eigenfunction corresponding to $\lambda_e$, denoted by $f_e$, is even in $x$ and $y$ (the index `e' corresponds to `even'). The lowest eigenvalue $\lambda_o$ has the eigenfunction $f_o$, which is odd with respect to the $x$-axis (the index `o' corresponds to `odd').

Redefining the normalized eigenfunctions again by $f_o$ and $f_e$, and denoting by $g_1 = \frac{Q}{\|Q\|_{L^2}}$ and $g_2 = \frac{Q_x}{\|Q_x\|_{L^2}}$ (see Figure \ref{F:f2}), we compute the inner products (this uses the fact that $f_i$'s are computed numerically):
\begin{align}\label{D:geometry}
&\la f_o, g_1 \ra =0 \,, \qquad\qquad \la f_o,g_2 \ra = -0.8739,\\
&\la f_e, g_1 \ra = -0.9902  \,, \quad \la f_e, g_2 \ra = 0.
\end{align}

Next, we decompose $L^2$ into an orthogonal decomposition $L^2(\mathbb{R}^2) = H_o \oplus H_e$, where the closed subspace $H_o$ of $L^2(\mathbb{R}^2)$ is given by functions that are odd in $x$ (no constraint in $y$) and the closed subspace $H_e$ of $L^2(\mathbb{R}^2)$ is given by functions that are even in $x$ (no constraint in $y$).  Taking $P_o$ and $P_e$ to be the corresponding orthogonal projections and observing that $AP_o = P_oA$ and $AP_e = P_eA$, we have 
\begin{equation}\label{E:v}
\la v, v \ra = \la P_o v, P_o v \ra + \la P_e v, P_e v\ra,
\end{equation}
\begin{equation}\label{E:Av}
\la Av, v \ra = \la AP_o v, P_ov \ra + \la AP_e v, P_e v\ra.
\end{equation}
If we prove that for some positive constants $\mu_e$ and $\mu_o$ 
$$\la  A P_e v, P_e v \ra \geq \mu_e \la P_ev, P_e v \ra \quad \mbox{and} \quad \la A P_o e, P_o v \ra \geq \mu_o \la P_o v, P_o v \ra,$$
then together with \eqref{E:v} and \eqref{E:Av} yields 
\begin{equation}\label{E:A}
\la Av, v \ra \geq \min(\mu_e, \mu_o) \la v, v \ra \equiv c \, \|v\|^2_{L^2}.
\end{equation}

Observe that $f_o$ and $g_2$ (normalized $Q_x$) belong to $H_o$, while $f_e$ and $g_1$ (normalized ground state) belong to $H_e$. Thus, $A\big|_{H_o}$ has spectrum $\{ \lambda_o \} \cup [\frac12, +\infty)$. 
Applying the angle lemma (Lemma \ref{L:angle} below) with $H=H_o$ and $\lambda_\perp = \frac12$, we obtain 
$$
(\lambda_\perp - \lambda_o) \sin^2\beta = (0.5+0.5368)*(1-0.8739^2)=0.2450,
$$
and thus, 
\begin{equation}\label{E:Ao}
\la AP_o v, P_o v \ra \geq (0.5000-0.2450) \la P_ov, P_o v \ra.
\end{equation}
The operator $A\big|_{H_e}$ has spectrum $\{ \lambda_e \} \cup [\frac12, +\infty)$, thus,  
applying again the angle lemma (Lemma \ref{L:angle} below) with $H=H_e$, $\lambda_\perp = \frac12$, we obtain  
$$
(\lambda_\perp - \lambda_e) \sin^2\beta = (0.5+0.10755)*(1-0.9902^2)=0.0118,
$$ 
and hence, 
\begin{equation}\label{E:Ae}
\la AP_e v, P_e v \ra \geq (0.5000-0.0118) \la P_e v, P_e v\ra.
\end{equation}
Combining \eqref{E:Ao} and \eqref{E:Ae}, we obtain \eqref{E:A}, showing the positivity of $A$, provided  
$v$ satisfies the orthogonality conditions \eqref{E:ortho-v}. Integrating \eqref{E:v-virial-2} in time and using the energy estimates (or often referred to as elliptic regularity), we obtain \eqref{E:v-virial-1}.
\end{proof}

We provide below the angle lemma used in the previous proof, and mention that a version of this angle lemma was used in a similar way for spectral computations in \cite[Lemma 4.9]{HPZ}.

\begin{lemma}[angle lemma]
\label{L:angle}
Suppose that $A$ is a self-adjoint operator on a Hilbert space $H$ with an eigenvalue $\lambda_1$ and the corresponding eigenspace spanned by a  function $e_1$ with $\|e_1\|_{H}=1$.  Let $P_1f = \la f,e_1\ra e_1$ be the corresponding orthogonal projection.   Assume that $(I-P_1)A$ has spectrum bounded below by $\lambda_\perp$, with $\lambda_\perp>\lambda_1$.  Suppose that $f$ is some other function such that $\|f\|_{H}=1$ and $0 \leq \beta \leq \pi$ is defined by $\cos \beta = \la f, e_1\ra$.   Then if $v$ satisfies $\la v, f \ra =0$, we have
$$
\la Av,v \ra \geq (\lambda_\perp - (\lambda_\perp - \lambda_1)\sin^2\beta)\|v\|_{H}^2.
$$
\end{lemma}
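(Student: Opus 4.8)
The plan is to decompose $v$ along the eigendirection $e_1$ and its orthogonal complement, estimate each piece with the two spectral hypotheses, and then use the single linear constraint $\langle v,f\rangle=0$ to control the size of the $e_1$-component in terms of the angle $\beta$.

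First I would write $v = c\,e_1 + w$ with $c=\langle v,e_1\rangle$ and $w=(I-P_1)v\in e_1^\perp$, so that $\|v\|_H^2 = |c|^2 + \|w\|_H^2$. Since $e_1$ is an eigenfunction of the self-adjoint operator $A$, the subspace $e_1^\perp$ is $A$-invariant and the cross terms $\langle Ae_1,w\rangle = \lambda_1\langle e_1,w\rangle$ and $\langle Aw,e_1\rangle = \lambda_1\langle w,e_1\rangle$ vanish; hence $\langle Av,v\rangle = \lambda_1|c|^2 + \langle Aw,w\rangle$. The hypothesis that $(I-P_1)A$ has spectrum bounded below by $\lambda_\perp$ gives $\langle Aw,w\rangle \ge \lambda_\perp\|w\|_H^2$ for $w\in e_1^\perp$, so
$$\langle Av,v\rangle \ge \lambda_1|c|^2 + \lambda_\perp\|w\|_H^2 = \lambda_\perp\|v\|_H^2 - (\lambda_\perp-\lambda_1)|c|^2.$$

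Next I would exploit the constraint. Decompose $f = \cos\beta\,e_1 + \sin\beta\,g$, where $g = \sin^{-1}\beta\,(I-P_1)f$ is a unit vector in $e_1^\perp$ (the degenerate case $\sin\beta=0$, i.e.\ $f=\pm e_1$, is immediate: then $\langle v,f\rangle=0$ forces $c=0$ and one even gets $\langle Av,v\rangle\ge\lambda_\perp\|v\|_H^2$, consistent with $\sin^2\beta=1$... indeed with $\sin^2\beta=0$). From $0 = \langle v,f\rangle = \cos\beta\,c + \sin\beta\,\langle w,g\rangle$ and $|\langle w,g\rangle|\le\|w\|_H$ one obtains $\cos^2\beta\,|c|^2 \le \sin^2\beta\,\|w\|_H^2 = \sin^2\beta\,(\|v\|_H^2-|c|^2)$, which rearranges to $|c|^2 \le \sin^2\beta\,\|v\|_H^2$. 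Substituting into the previous display gives exactly
$$\langle Av,v\rangle \ge \big(\lambda_\perp - (\lambda_\perp-\lambda_1)\sin^2\beta\big)\|v\|_H^2.$$

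The argument is essentially elementary geometry once the splitting is in place, so there is no serious obstacle; the only point requiring a little care is that $A$ may be unbounded, so all the bilinear manipulations should be read as statements about the quadratic form $\langle A\cdot,\cdot\rangle$ on the form domain of $A$, with the $A$-invariance of $e_1^\perp$ (in the form sense) coming for free from the fact that $e_1$ is a genuine eigenvector. I would phrase the final line of the proof so that, after integrating the virial identity \eqref{E:v-virial-2} in time, this coercivity is applied with $H=H_o$ and $H=H_e$ and the numerically produced angles, exactly as indicated in the proof of Lemma \ref{L:v-virial}.
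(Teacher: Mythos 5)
Your proof is correct and follows essentially the same route as the paper's: decompose $v$ along $e_1$ and its orthocomplement, use $\langle v,f\rangle=0$ together with Cauchy--Schwarz to bound the $e_1$-component by $\sin\beta\,\|v\|_H$, and combine the eigenvalue $\lambda_1$ with the lower bound $\lambda_\perp$ on the orthocomplement. The only differences are cosmetic (the paper normalizes $\|v\|_H=1$ and writes the component as $\cos\alpha$), and your added remarks on the degenerate case $\sin\beta=0$ and on reading everything in the quadratic-form sense are sound.
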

\begin{proof}
It suffices to assume that $\|v\|_{H}=1$.  Decompose $v$ and $f$ into their orthogonal projection onto $e_1$ and its orthocomplement:
$$v =  (\cos \alpha) e_1 +  v_\perp \,, \qquad \|v_\perp\|_{H}=\sin \alpha,$$
$$f = (\cos \beta) e_1 + f_\perp \,, \qquad \|f_\perp\|_{H} = \sin \beta$$
for $0\leq \alpha, \beta \leq \pi$.  
Then 
$$ 
0 = \la v,f \ra = \cos\alpha \cos \beta + \la v_\perp, f_\perp \ra,
$$
from which it follows that
$$
|\cos \alpha \cos \beta| = |\la v_\perp, f_\perp \ra| \leq \|v_\perp\|_H \|f_\perp\|_{H} = \sin \alpha \sin \beta,
$$
and thus, it follows that $|\cos \alpha| \leq \sin \beta$.  Now
\begin{align*}
\la Av,v\ra &= \lambda_1 \cos^2\alpha + \la Av_\perp,v_\perp \ra \geq \lambda_1 \cos^2\alpha + \lambda_\perp\sin^2 \alpha \\
&= \lambda_\perp - (\lambda_\perp-\lambda_1) \cos^2\alpha \geq  \lambda_\perp - (\lambda_\perp-\lambda_1) \sin^2\beta.
\end{align*}
\end{proof}

\section{Reduction of linear Liouville to linearized virial}\label{S:liouville}

\begin{proposition}[linear Liouville theorem]
\label{P:linear-liouville}
Suppose that $w \in C^0(\mathbb{R}_t; H_{xy}^1) \cap C^1(\mathbb{R}_t; H_{xy}^{-2})$ solves
$$\partial_t w = \partial_x \mathcal{L} w + \alpha \Lambda Q + \beta Q_x + \gamma Q_y$$
for time-dependent coefficients $\alpha$, $\beta$, and $\gamma$.  Suppose, moreover, that $w$ satisfies the orthogonality conditions 
$$ 
\la w, Q^3\ra =0, \quad \la w, Q_x \ra =0, \quad \la w, Q_y \ra =0,
$$ 
and is globally-in-time uniformly $x$-spatially localized
\begin{equation}\label{E:loc1}
\| \la x \ra^{1/2+} w \|_{L_t^\infty L_{xy}^2} < \infty.
\end{equation}
Then $w\equiv 0$.  
\end{proposition}

\begin{proof}
We can argue that the assumption that $w$ is uniformly in time $x$-localized implies 
\begin{equation}
\label{E:extra-orth}
\la w, Q \ra =0.
\end{equation}
Indeed, let
$$
F(x,y) = \int_0^x \Lambda Q(x',y) \, dx',
$$
which does not decay as $x\to \pm \infty$.  Note that since $\Lambda Q$ is even in both $x$ and $y$, it follows that $F(x,y)$ is odd in $x$ and even in $y$.
Let
$$
J(t) = \la w, F\ra.
$$
Despite the lack of decay in $F$ as $x\to \pm \infty$, this quantity is finite due to the $x$ decay assumption for $w$.  Specifically,
\begin{equation}
\label{E:J-bounded}
| J(t) |_{L_t^\infty} \leq \| \la x \ra^{1/2+} w \|_{L_t^\infty L_{xy}^2} \|  \la x \ra^{-1/2-} F \|_{L_{xy}^2} < \infty,
\end{equation}
due to our assumption \eqref{E:loc1} and since
$$
\| \la x \ra^{-1/2-} F\|_{L_{xy}^2} \leq \| \Lambda Q \|_{L_x^1L_y^2}  < \infty.
$$
Since $F$ is odd in $x$ and $\Lambda Q$ is even in $x$,  $\la \Lambda Q, F \ra =0$.  By integration by parts in $x$, $\la Q_x, F\ra = - \la Q, \Lambda Q\ra =0$.  Since $Q_y$ is even in $x$ and $F$ is odd in $x$, $\la Q_y, F \ra =0$ (alternatively this holds since $Q_y$ is odd in $y$ and $F$ is even in $y$).  These orthogonality statements and the fact that $\mathcal{L}\Lambda Q = - 2Q$ imply
$$
J'(t) =  2 \la w, Q \ra.
$$
Since $\mathcal{L}Q_x =0$, $\la \Lambda Q, Q \ra=0$, $\la Q_x, Q\ra =0$, and $\la Q_y, Q\ra=0$, we have
$$
J''(t)=0.
$$
Thus $J(t) = a_0 + a_1 t$, but $J(t)$ is bounded by \eqref{E:J-bounded}, so we must have $a_1=0$, i.e., \eqref{E:extra-orth} holds.  

Now by \eqref{E:extra-orth} and $\mathcal{L}\Lambda Q = -2Q$, we deduce
$$
\partial_t \la \mathcal{L}w,w \ra = 0,
$$
i.e., $\la \mathcal{L}w, w \ra$ is constant in time.  

By the trivial estimate $\la \mathcal{L} w, w \ra \lesssim \|w\|_{H_{xy}^1}^2$ uniformly for all $t$ and the linearized virial  estimate Lemma \ref{L:w-virial},
$$
\int_{t=-\infty}^{+\infty} \la \mathcal{L}w,w \ra \, dt  \lesssim  \|  w \|_{L_t^2 H_{xy}^1}^2 \lesssim \| \la x \ra^{1/2} w \|_{L_t^\infty L_{xy}^2}^2 < \infty.
$$
Since $\la \mathcal{L}w,w\ra$ is constant in time, this implies that
$$
\la \mathcal{L}w,w \ra =0
$$
for all $t\in \mathbb{R}$.  
By the orthogonality conditions, $\mathcal{L}$ is strictly positive definite (Lemma \ref{Lemma-ort2}), and this implies $w\equiv 0$.
\end{proof}

\section{Conversion between $w$ and $v=(1-\delta\Delta)^{-1}\mathcal{L}w$}

\label{S:conversion}

In this section, we prove two lemmas (Lemma \ref{L:weight-transfer} and Lemma \ref{L:H1transfer}) that allow us to transfer the estimate \eqref{E:v-virial-1} to \eqref{E:w-virial-1} (from $v$ to $w$), as summarized in Lemma \ref{L:transfer} below.  The lemma provides spatial estimates (independent of, and hence, uniform) in time $t$.

\begin{lemma}
\label{L:transfer}
Suppose that $v= (1-\delta \Delta)^{-1} \mathcal{L}w$ and $0< \delta \leq 1$.   Then
\begin{equation}
\label{E:weight-transfer}
\| \la x \ra^{1/2} v \|_{L_{xy}^2} \lesssim \delta^{-1} \| \la x \ra^{1/2} w \|_{L_{xy}^2}.
\end{equation}
Suppose further that $\la w, \nabla Q \ra =0$.  Then there exists $\delta_0>0$ such that for all $0<\delta \leq \delta_0$, we have
\begin{equation}
\label{E:H1transfer}
\| w\|_{L_{xy}^2}^2 + \delta^{1/4} \|\nabla w\|_{L_{xy}^2}^2 \lesssim \| v\|_{L_{xy}^2}^2 + \delta \| \nabla v \|_{L_{xy}^2}^2.
\end{equation}
\end{lemma}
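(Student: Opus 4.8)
The plan is to prove the two inequalities \eqref{E:weight-transfer} and \eqref{E:H1transfer} separately, since they run in opposite directions: the first passes from $w$ to $v$, the second recovers $w$ from $v$.

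For \eqref{E:weight-transfer}, I would start by writing $\mathcal L = (1-\Delta) - 3Q^2$ and using the algebraic identity
$$(1-\delta\Delta)^{-1}(1-\Delta) = \delta^{-1}I + (1-\delta^{-1})(1-\delta\Delta)^{-1},$$
which comes from $1-\Delta = \delta^{-1}(1-\delta\Delta) + (1-\delta^{-1})$. This yields
$$v = \delta^{-1}w + (1-\delta^{-1})(1-\delta\Delta)^{-1}w - 3(1-\delta\Delta)^{-1}(Q^2w),$$
and the first term alone produces the asserted factor $\delta^{-1}$. Since $|1-\delta^{-1}|\le\delta^{-1}$ for $0<\delta\le1$ and $\la x\ra^{1/2}Q^2\in L^\infty$, the remaining two terms reduce to the uniform weighted resolvent bound $\|\la x\ra^{1/2}(1-\delta\Delta)^{-1}f\|_{L_{xy}^2} \lesssim \|\la x\ra^{1/2}f\|_{L_{xy}^2}$ for $0<\delta\le1$. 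I would obtain this from the commutator identity $[\la x\ra^{1/2},(1-\delta\Delta)^{-1}] = \delta(1-\delta\Delta)^{-1}[\la x\ra^{1/2},\Delta](1-\delta\Delta)^{-1}$ together with two standard facts: $\partial_x\la x\ra^{1/2}$ and $\partial_x^2\la x\ra^{1/2}$ are bounded, so $[\la x\ra^{1/2},\Delta]$ maps $H_{xy}^1\to L_{xy}^2$ with $O(1)$ norm, and $(1-\delta\Delta)^{-1}$ maps $L_{xy}^2\to H_{xy}^1$ with norm $O(\delta^{-1/2})$; hence the commutator has norm $O(\delta^{1/2})\le1$.

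For \eqref{E:H1transfer}, the idea is to read $v = (1-\delta\Delta)^{-1}\mathcal Lw$ as the elliptic relation $\mathcal Lw = (1-\delta\Delta)v$ and to recover $w$ using $\la w,\nabla Q\ra=0$ and the spectral structure of $\mathcal L$ from Theorem \ref{L-prop}. Pairing this relation with $w$ and integrating by parts gives $\la\mathcal Lw,w\ra = \la v,w\ra + \delta\la\nabla v,\nabla w\ra$. I would then write $w = c\,\chi_0 + w_+$ with $\chi_0$ the normalized radial negative eigenfunction of $\mathcal L$ and $w_+\perp\chi_0$; since $\chi_0$ is orthogonal to $Q_x$ and $Q_y$ by parity, $w_+$ is orthogonal to $\chi_0,Q_x,Q_y$, so $\la\mathcal Lw_+,w_+\ra\gtrsim\|w_+\|_{H_{xy}^1}^2$ by coercivity of $\mathcal L$ on that subspace. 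Pairing $\mathcal Lw = (1-\delta\Delta)v$ instead with $\chi_0$ gives $-\lambda_0 c = \la v,\chi_0\ra - \delta\la v,\Delta\chi_0\ra$, so $|c|\lesssim\|v\|_{L_{xy}^2}$. Because the cross terms vanish, $\la\mathcal Lw,w\ra = -\lambda_0 c^2 + \la\mathcal Lw_+,w_+\ra$, and combining the identities above leads to
$$\|w\|_{H_{xy}^1}^2 \lesssim \|v\|_{L_{xy}^2}^2 + \big(\|v\|_{L_{xy}^2} + \delta\|\nabla v\|_{L_{xy}^2}\big)\|w\|_{H_{xy}^1},$$
and the quadratic formula then gives $\|w\|_{H_{xy}^1}^2\lesssim\|v\|_{L_{xy}^2}^2+\delta^2\|\nabla v\|_{L_{xy}^2}^2$, which implies \eqref{E:H1transfer}. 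Equivalently, one can phrase this through the Fredholm theory of $\mathcal L\colon H_{xy}^1\to H_{xy}^{-1}$, a compact perturbation of an isomorphism with two-dimensional kernel spanned by $Q_x,Q_y$, giving $\|w\|_{H_{xy}^1}\lesssim\|\mathcal Lw\|_{H_{xy}^{-1}}$ for $w\perp\{Q_x,Q_y\}$, and then estimating $\|(1-\delta\Delta)v\|_{H_{xy}^{-1}}^2 = \int\frac{(1+\delta|\xi|^2)^2}{1+|\xi|^2}|\widehat v|^2\,d\xi \lesssim \|v\|_{L_{xy}^2}^2+\delta\|\nabla v\|_{L_{xy}^2}^2$ by a multiplier bound.

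I expect the main obstacle to be in \eqref{E:H1transfer}: one must genuinely use $\la w,\nabla Q\ra=0$ to remove $\ker\mathcal L$, separately handle the single negative direction $\chi_0$ by testing the equation against $\chi_0$, and verify that the inversion constant stays bounded as $\delta\to0$. It does, precisely because $\mathcal L$ carries no $\delta$-dependence while all the $\delta$'s sit on the $(1-\delta\Delta)$ factor multiplying $v$, which is also where the $\delta\|\nabla v\|^2$ term comes from. The weighted resolvent estimate in \eqref{E:weight-transfer} is a routine but slightly careful one-commutator argument.
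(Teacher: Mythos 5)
Your proof is correct, and while part one tracks the paper closely, part two takes a genuinely different and simpler route. For \eqref{E:weight-transfer} you use exactly the paper's splitting $v=\delta^{-1}w+(1-\delta^{-1})(1-\delta\Delta)^{-1}w-3(1-\delta\Delta)^{-1}(Q^2w)$, reducing everything to the $\delta$-uniform $L^2$ boundedness of $\la x\ra^{1/2}(1-\delta\Delta)^{-1}\la x\ra^{-1/2}$; the paper proves that bound (Lemma \ref{L:weight-transfer}) by writing the kernel explicitly and applying Schur's test, whereas you use the one-commutator identity together with $\|(1-\delta\Delta)^{-1}\|_{L^2\to H^1}\lesssim\delta^{-1/2}$ — equally valid for the weight $\la x\ra^{1/2}$, whose first two $x$-derivatives are bounded. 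For \eqref{E:H1transfer}, the paper (Lemma \ref{L:H1transfer}) recasts the claim as positivity of $(1-\delta^{1/4}\Delta)^{-1/2}\mathcal{L}(1-\delta\Delta)^{-1}\mathcal{L}(1-\delta^{1/4}\Delta)^{-1/2}$ and needs two commutator lemmas, coercivity of $\mathcal{L}^2$ on $(\nabla Q)^\perp$, and a near-orthogonality estimate; the exponent $\delta^{1/4}$ is an artifact of balancing the $\delta^{1/2}$ commutator gain against the $\delta^{-1/4}$ loss in that scheme. You instead read the definition as $\mathcal{L}w=(1-\delta\Delta)v$, pair with $w$ and with $\chi_0$, and invert $\mathcal{L}$ on $\{\chi_0,Q_x,Q_y\}^\perp$ using Theorem \ref{L-prop}. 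Each step checks out: $\chi_0\perp\nabla Q$ by parity, so $w_+=w-\la w,\chi_0\ra\chi_0$ lies in the coercivity subspace (and the $L^2$ coercivity upgrades to $H^1$ in the standard way); testing against $\chi_0$ gives $|c|\lesssim\|v\|_{L^2}$ since $(1-\delta\Delta)\chi_0$ is bounded in $L^2$ uniformly for $\delta\le 1$; and the final absorption is legitimate because $w\in H^1$. Your conclusion $\|w\|_{H^1}^2\lesssim\|v\|_{L^2}^2+\delta^2\|\nabla v\|_{L^2}^2$ (or, via the Fredholm/multiplier variant, $\|w\|_{H^1}^2\lesssim\|(1-\delta\Delta)v\|_{H^{-1}}^2\le\|v\|_{L^2}^2+\delta\|\nabla v\|_{L^2}^2$) is strictly stronger than \eqref{E:H1transfer} and suffices for the application in Lemma \ref{L:w-virial}, where $\delta$ is ultimately fixed; what the paper's heavier machinery buys is only the formulation as an inequality between regularized quadratic forms, which your direct energy/duality argument renders unnecessary here.
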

\begin{proof}
These estimates are a consequence of Lemmas \ref{L:weight-transfer} and \ref{L:H1transfer} below.  Note that
\begin{align*}
v &= (1-\delta \Delta)^{-1}\mathcal{L}w = (1-\delta \Delta)^{-1} (1-\Delta - 3Q^2) w \\
&= (1-\delta \Delta)^{-1} w - \Delta (1-\delta \Delta)^{-1}w - 3 (1-\delta \Delta)^{-1}Q^2 w.
\end{align*}
In the middle term, use $-\Delta = - \delta^{-1}\delta \Delta =  \delta^{-1}(1-\delta \Delta) -\delta^{-1}$ , which implies $-\Delta (1-\delta \Delta)^{-1} = \delta^{-1} - \delta^{-1}(1-\delta \Delta)^{-1}$ to get
$$
v = (1-\delta^{-1}) (1-\delta \Delta)^{-1} w + \delta^{-1} w  -3 (1-\delta \Delta)^{-1}Q^2 w.
$$
By Lemma \ref{L:weight-transfer}, we have
$$
\la x \ra^{1/2} (1-\delta \Delta)^{-1} w  = \underbrace{\la x \ra^{1/2} (1-\delta \Delta)^{-1} \la x\ra^{-1/2}}_{L^2_{xy}\to L^2_{xy} \text{ bounded }} \underbrace{\la x\ra^{1/2} w}_{\in L^2_{xy}}
$$
so $\la x \ra^{1/2}(1-\delta \Delta)^{-1}w \in L^2_{xy}$ and similarly for the last term in the expression for $v$ above.    This completes the proof of \eqref{E:weight-transfer}.

Also, \eqref{E:H1transfer} is just a rephrasing of \eqref{E:H1transfer-2} in Lemma \ref{L:H1transfer}.
\end{proof}

\begin{lemma}
\label{L:weight-transfer}
For any $\alpha \in \mathbb{R}$, $0<\delta\leq 1$, the 2D operator $\la x \ra^\alpha (1-\delta \Delta)^{-1} \la x \ra^{-\alpha}$ is $L^2_{xy}\to L^2_{xy}$ bounded with operator norm independent of $\delta$. Note the weight is only in $x$(not both $x$ and $y$).  
\end{lemma}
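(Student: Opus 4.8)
The plan is to reduce the two–dimensional estimate to a one–dimensional one by taking the partial Fourier transform in the $y$ variable. Because the weight $\la x\ra^{\alpha}$ involves only $x$, it commutes with $\mathcal F_y$, so conjugating $(1-\delta\Delta)^{-1}$ by $\mathcal F_y$ replaces it, on the frequency slice $\eta$ dual to $y$, by the one–dimensional operator $(1+\delta\eta^2-\delta\partial_x^2)^{-1}$, and the operator in the statement becomes, slicewise,
$$T_{\delta,\eta}:=\la x\ra^{\alpha}\,(1+\delta\eta^2-\delta\partial_x^2)^{-1}\,\la x\ra^{-\alpha}$$
acting in $x$ alone. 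By Plancherel in $y$ it then suffices to bound $\|T_{\delta,\eta}\|_{L^2_x\to L^2_x}$ by a constant depending only on $\alpha$ (independent of $\delta\in(0,1]$ and of $\eta\in\mathbb R$). This is exactly the point at which the hypothesis ``weight in $x$ only'' is used: a $y$-weight would not commute with $\mathcal F_y$, and this reduction would fail.

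For the one–dimensional bound I would set $c=1+\delta\eta^2\ge 1$ and $\sigma=\delta/c\in(0,1]$, so that $(1+\delta\eta^2-\delta\partial_x^2)^{-1}=c^{-1}(1-\sigma\partial_x^2)^{-1}$, and the latter is convolution against the explicit Green's function $\tfrac1{2\sqrt\sigma}e^{-|x|/\sqrt\sigma}$. Hence $T_{\delta,\eta}$ is the integral operator with kernel $\tilde K(x,x')=\tfrac1{2c\sqrt\sigma}\,e^{-|x-x'|/\sqrt\sigma}\bigl(\tfrac{\la x\ra}{\la x'\ra}\bigr)^{\alpha}$, and I would apply Schur's test. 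Using the elementary bound $\la x\ra\le\sqrt2\,\la x'\ra\la x-x'\ra$ together with its mirror image gives $\bigl(\tfrac{\la x\ra}{\la x'\ra}\bigr)^{\alpha}\le 2^{|\alpha|/2}\la x-x'\ra^{|\alpha|}$, so, after the substitution $z=\sqrt\sigma\,w$,
$$\int_{\mathbb R}|\tilde K(x,x')|\,dx'\ \le\ \frac{2^{|\alpha|/2}}{2c}\int_{\mathbb R}e^{-|w|}\la\sqrt\sigma\,w\ra^{|\alpha|}\,dw\ \le\ 2^{|\alpha|/2}\int_{\mathbb R}e^{-|w|}\la w\ra^{|\alpha|}\,dw\ =:\ C_\alpha<\infty,$$
where I used $\sigma\le1$ (so $\la\sqrt\sigma\,w\ra\le\la w\ra$) and $c\ge1$; the integral in $x$ obeys the same bound by symmetry. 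Thus $\|T_{\delta,\eta}\|_{L^2_x\to L^2_x}\le C_\alpha$ uniformly in $\delta$ and $\eta$.

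Reassembling via Plancherel in $y$,
$$\|\la x\ra^{\alpha}(1-\delta\Delta)^{-1}\la x\ra^{-\alpha}f\|_{L^2_{xy}}^2=\int_{\mathbb R}\|T_{\delta,\eta}\,\hat f(\cdot,\eta)\|_{L^2_x}^2\,d\eta\ \le\ C_\alpha^2\,\|f\|_{L^2_{xy}}^2,$$
which is the assertion with operator norm $\le C_\alpha$ independent of $\delta$. The argument is short, and the only step that requires genuine attention is the slicewise bookkeeping: one must observe that the effective second–order coefficient $\sigma=\delta/(1+\delta\eta^2)$ stays $\le1$ for every $\eta$ once $\delta\le1$, which is precisely what makes the rescaling in the Schur integral harmless — after rescaling the factor $e^{-|w|}$ dominates the polynomial growth $\la w\ra^{|\alpha|}$ with a constant uniform in $\delta$ and $\eta$. (If one prefers to avoid the explicit kernel, the same estimate can be obtained by a Fourier–multiplier/commutator argument on the $\eta$-slice, but the Schur computation above is the most direct route.)
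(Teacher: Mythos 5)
Your proof is correct, but it follows a genuinely different route from the paper's. The paper works directly with the two-dimensional Bessel-type kernel $k$ of $(1-\Delta)^{-1}$, quoting its logarithmic singularity at the origin and its $|(x,y)|^{-1/2}e^{-|(x,y)|}$ decay, rescales to get the kernel of $(1-\delta\Delta)^{-1}$, reduces to $\alpha>0$ by duality, absorbs the weight ratio via $\la x \ra^\alpha \lesssim \la x-x'\ra^\alpha + \la x'\ra^\alpha \leq \la \tfrac{x-x'}{\delta^{1/2}}\ra^\alpha + \la x'\ra^\alpha$ (this is where $\delta\leq 1$ enters for them), and applies Schur's test in 2D. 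You instead exploit the fact that the weight depends only on $x$ to conjugate by $\mathcal{F}_y$ and reduce to a family of one-dimensional resolvents $(1+\delta\eta^2-\delta\partial_x^2)^{-1}$ whose Green's function is completely explicit; the hypothesis $\delta\leq 1$ enters for you through $\sigma=\delta/(1+\delta\eta^2)\leq 1$, which makes the rescaled Schur integral uniform in both $\delta$ and $\eta$, and you handle both signs of $\alpha$ at once via $(\la x\ra/\la x'\ra)^{\alpha}\leq 2^{|\alpha|/2}\la x-x'\ra^{|\alpha|}$ without a duality step. Your reduction buys an elementary, fully explicit kernel and avoids any facts about the 2D Bessel potential, at the cost of being tied to the ``weight in $x$ only'' structure: the paper's 2D kernel argument would carry over essentially verbatim to a weight $\la (x,y)\ra^{\alpha}$ in both variables, whereas your Fourier-slicing in $y$ would not. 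Both proofs are complete and yield the same uniform-in-$\delta$ conclusion.
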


\begin{proof}
Let $K$ be such that $\hat k(\xi,\eta) = (1+ |(\xi,\eta)|^2)^{-1}$.  We know that $k(x,y)$ is radial, it behaves like $\ln|(x,y)|$ as $(x,y)\to 0$, and $|(x,y)|^{-1/2} e^{-|(x,y)|}$ as $|(x,y)| \to \infty$.   To prove the lemma, we just note that it suffices to prove it for $\alpha>0$ by duality.  Thus, $\la x \ra^\alpha (I-\delta\Delta)^{-1} \la x \ra^{-\alpha}$   is an operator with kernel
$$
K((x,y),(x',y')) = \frac{1}{\delta} \, k\left(\frac{x-x'}{\delta^{1/2}}, \frac{y-y'}{\delta^{1/2}}\right) \frac{ \la x \ra^\alpha}{\la x' \ra^{\alpha}}.
$$
For $\alpha>0$, we use $\la x \ra^\alpha \lesssim \la x-x'\ra^{\alpha}+ \la x' \ra^\alpha \leq \la \frac{x-x'}{\delta^{1/2}} \ra^\alpha + \la x' \ra^\alpha$ to obtain
$$
|K( (x,y),(x',y'))| \leq \frac{1}{\delta}\,  \left|k \left(\frac{x-x'}{\delta^{1/2}}, \frac{y-y'}{\delta^{1/2}} \right)\right| \left\la \frac{x-x'}{\delta^{1/2}} \right\ra^\alpha + \frac{1}{\delta} \left|k\left(\frac{x-x'}{\delta^{1/2}}, \frac{y-y'}{\delta^{1/2}} \right) \right|.
$$
Thus,
$$
\| K( (x,y), (x',y')) \|_{L_{x'y'}^\infty L_{xy}^1} + \| K( (x,y), (x',y')) \|_{L_{xy}^\infty L_{x'y'}^1} \lesssim_\alpha 1.
$$
The result then follows from Schur's test.
\end{proof}

\begin{lemma}
\label{L:H1transfer}
There exists $\delta_0>0$ such that if $\la w, \nabla Q \ra =0$ and $0<\delta \leq \delta_0$, then
\begin{equation}
\label{E:H1transfer-2}
\la (1-\delta^{1/4}\Delta) w, w \ra \lesssim \la (1-\delta \Delta) v, v \ra,
\end{equation}
where  $v= (1-\delta \Delta)^{-1} \mathcal{L} w$, and the implicit constant is independent of $\delta$. 
\end{lemma}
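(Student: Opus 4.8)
The plan is to reduce \eqref{E:H1transfer-2} to a static coercivity estimate for $\mathcal{L}$ measured in $H^{-1}$, combined with the elementary fact that the regularizer $(1-\delta\Delta)^{-1}$ only increases the relevant quadratic form. Since $v=(1-\delta\Delta)^{-1}\mathcal{L}w$ means $(1-\delta\Delta)v=\mathcal{L}w$, substitution gives
$$
\langle (1-\delta\Delta)v,v\rangle = \langle \mathcal{L}w, v\rangle = \langle \mathcal{L}w, (1-\delta\Delta)^{-1}\mathcal{L}w\rangle = \int_{\mathbb{R}^2} \frac{|\widehat{\mathcal{L}w}(\xi)|^2}{1+\delta|\xi|^2}\,d\xi ,
$$
where the pairing is the $H^{-1}_{xy}$--$H^1_{xy}$ duality (recall $w\in H^1_{xy}$, so $\mathcal{L}w\in H^{-1}_{xy}$ and $v\in H^1_{xy}$). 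For $0<\delta\le 1$ the multiplier $(1+\delta|\xi|^2)^{-1}$ dominates $(1+|\xi|^2)^{-1}$ pointwise in $\xi$, so $\langle (1-\delta\Delta)v,v\rangle \ge \|\mathcal{L}w\|_{H^{-1}}^2$; on the other hand $\langle (1-\delta^{1/4}\Delta)w,w\rangle = \|w\|_{L^2_{xy}}^2 + \delta^{1/4}\|\nabla w\|_{L^2_{xy}}^2 \le \|w\|_{H^1_{xy}}^2$ since $\delta^{1/4}\le 1$. Hence \eqref{E:H1transfer-2} will follow, in fact with $\delta_0=1$, once we establish the coercivity bound $\|\mathcal{L}w\|_{H^{-1}}\gtrsim \|w\|_{H^1_{xy}}$, valid for every $w\in H^1(\mathbb{R}^2)$ with $\langle w,\nabla Q\rangle=0$, with implicit constant independent of $w$ and $\delta$.

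This coercivity is proved by a routine compactness argument. If it failed, there would be a sequence $w_n$ with $\|w_n\|_{H^1}=1$, $\langle w_n,\nabla Q\rangle=0$, and $\|\mathcal{L}w_n\|_{H^{-1}}\to 0$. After passing to a subsequence, $w_n\rightharpoonup w$ weakly in $H^1$, and $\langle w,\nabla Q\rangle=0$ by the $L^2$-continuity of the two orthogonality functionals. Writing $\mathcal{L}w_n = (1-\Delta)w_n - 3Q^2 w_n$ and using that $w_n\to w$ strongly in $L^2_{\mathrm{loc}}$ together with the exponential decay \eqref{prop-Q} of $Q$, one gets $3Q^2 w_n\to 3Q^2 w$ strongly in $L^2_{xy}$ (split into a large ball, where Rellich--Kondrachov applies, and its complement, where $\sup Q^2$ is small and the uniform $L^2$-bound on $w_n$ is used — the same mechanism already invoked elsewhere in the paper). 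Hence $(1-\Delta)w_n = \mathcal{L}w_n + 3Q^2 w_n \to 3Q^2 w$ in $H^{-1}$, so $w_n\to (1-\Delta)^{-1}(3Q^2 w)$ strongly in $H^1$; therefore $w_n\to w$ in $H^1$, $\|w\|_{H^1}=1$, and $(1-\Delta)w=3Q^2 w$, i.e. $\mathcal{L}w=0$. By Theorem \ref{L-prop}, $w$ lies in the span of the translation derivatives $Q_x,Q_y$ of $Q$; writing $w=aQ_x+bQ_y$, the relation $\langle Q_x,Q_y\rangle=0$ (parity, since $Q$ is radial) and $\langle w,\nabla Q\rangle=0$ force $a=b=0$, so $w=0$, contradicting $\|w\|_{H^1}=1$.

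I expect the coercivity estimate to be the only substantive point, and it is entirely standard once one has $\ker\mathcal{L}=\mathrm{span}\{Q_x,Q_y\}$ from Theorem \ref{L-prop}; note that the single negative eigenfunction $\chi_0$ of $\mathcal{L}$ causes no trouble here because we measure $\mathcal{L}w$ in $H^{-1}$ rather than asking for positivity of $\langle \mathcal{L}w,w\rangle$, and only the kernel obstructs injectivity. All the $\delta$-dependent comparisons above are uniform over $\delta\in(0,1]$, so no separate small-$\delta$ analysis is needed beyond taking $\delta_0=1$. (By contrast, expanding $\mathcal{L}=1-\Delta-3Q^2$ directly inside $\langle(1-\delta\Delta)v,v\rangle$ produces a cross term $\langle(1-\Delta)w,(1-\delta\Delta)^{-1}(Q^2 w)\rangle$ carrying a factor $\delta^{-1}$ that cannot be discarded and must be absorbed by completing the square; this is precisely why routing the estimate through the $H^{-1}$ coercivity of $\mathcal{L}$ is the cleaner path.)
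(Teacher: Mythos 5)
Your proof is correct, and it takes a genuinely different --- and considerably shorter --- route than the paper's. The paper recasts \eqref{E:H1transfer-2} as positivity of the conjugated operator $(1-\delta^{1/4}\Delta)^{-1/2}\mathcal{L}(1-\delta\Delta)^{-1}\mathcal{L}(1-\delta^{1/4}\Delta)^{-1/2}$, peels off the smoothing factors using the quantitative commutator estimates of Lemmas \ref{L:com1} and \ref{L:com2}, and reduces matters to the $L^2$ spectral-gap bound $\la g,g\ra \lesssim \la \mathcal{L}^2 g,g\ra$; the smallness requirement $\delta\le\delta_0$ enters precisely to absorb the $O(\delta^{1/4})$ commutator errors and the $O(\delta^{1/4})$ failure of orthogonality of $g$ to $\nabla Q$ in \eqref{E:com8}. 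You instead observe that the right side of \eqref{E:H1transfer-2} equals $\int (1+\delta|\xi|^2)^{-1}|\widehat{\mathcal{L}w}(\xi)|^2\,d\xi$, which for $\delta\le 1$ dominates $\|\mathcal{L}w\|_{H^{-1}}^2$, and then invoke a single $\delta$-independent coercivity estimate $\|w\|_{H^1}\lesssim \|\mathcal{L}w\|_{H^{-1}}$ on $\{\la w,\nabla Q\ra=0\}$. That estimate is true and your compactness proof of it is sound; equivalently, $\mathcal{L}=(1-\Delta)-3Q^2\colon H^1\to H^{-1}$ is a compact perturbation of an isomorphism, hence Fredholm with kernel $\operatorname{span}\{Q_x,Q_y\}$ (elliptic regularity upgrades any $H^1$ distributional kernel element to $H^2$), hence bounded below on any closed complement of its kernel, such as the one cut out by the $L^2$-orthogonality conditions. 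You are also right that the negative eigenvalue of $\mathcal{L}$ is harmless here, since only injectivity, not positivity of the quadratic form, is at stake. Your route buys validity for all $0<\delta\le 1$ (so $\delta_0=1$), dispenses entirely with the commutator lemmas, and in fact proves the stronger bound $\|w\|_{H^1}^2\lesssim \la (1-\delta\Delta)v,v\ra$ with no $\delta^{1/4}$ loss on the gradient term; what it gives up is only an explicit, constructive constant, which is immaterial for the application.
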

\begin{proof}
Replacing $v$ by its definition and taking $f = (1-\delta^{1/4} \Delta)^{1/2}w$, this is
$$ 
\la f, f \ra \lesssim \la \mathcal{L}(1-\delta^{1/4} \Delta)^{-1/2} f, (1-\delta \Delta)^{-1} \mathcal{L} (1-\delta^{1/4} \Delta)^{-1/2} f \ra,
$$
which is operator inequality
$$
1 \lesssim (1-\delta^{1/4} \Delta)^{-1/2} \mathcal{L} (1-\delta \Delta)^{-1} \mathcal{L} (1-\delta^{1/4} \Delta)^{-1/2}
$$
on the subspace given by orthogonality condition.  The following identity can be checked by expanding out the commutators
\begin{align*}
\indentalign (1-\delta^{1/4} \Delta)^{-1/2} \mathcal{L} (1-\delta \Delta)^{-1} \mathcal{L} (1-\delta^{1/4} \Delta)^{-1/2} \\
&= (1-\delta^{1/4}\Delta)^{-1/2} (1-\delta \Delta)^{-1/2} \mathcal{L}^2 (1-\delta \Delta)^{-1/2} (1-\delta^{1/4}\Delta)^{-1/2} \\
&\qquad +(1-\delta^{1/4} \Delta)^{-1/2} [ \mathcal{L}, (1-\delta \Delta)^{-1/2}] (1-\delta \Delta)^{-1/2} \mathcal{L} (1-\delta^{1/4} \Delta)^{-1/2} \\
& \qquad - (1-\delta^{1/4} \Delta)^{-1/2} (1-\delta \Delta)^{-1/2} \mathcal{L} [ \mathcal{L}, (1-\delta \Delta)^{-1/2}] (1-\delta^{1/4}\Delta)^{-1/2}.
\end{align*}
Since 
$\ds (1-\delta^{1/4} \Delta)^{-1/2}\mathcal{L} (1-\delta^{1/4} \Delta)^{-1/2}$ 
is $L^2\to L^2$ bounded with operator norm $\leq \delta^{-1/4}$, by Lemma \ref{L:com1} below, the second and third lines are $L^2\to L^2$ bounded operators with norm $\leq \delta^{-1/4} \delta^{1/2} = \delta^{1/4}$.  Thus, for sufficiently small $\delta_0$, the positivity reduces to the question of positivity of the first line:
\begin{equation}
\label{E:com7c}
1 \lesssim (1-\delta^{1/4}\Delta)^{-1/2} (1-\delta \Delta)^{-1/2} \mathcal{L}^2 (1-\delta \Delta)^{-1/2} (1-\delta^{1/4}\Delta)^{-1/2}.
\end{equation}
Take 
$$
g = (1-\delta \Delta)^{-1/2} (1-\delta^{1/4}\Delta)^{-1/2} w
$$
so that \eqref{E:com7c} can be written as
\begin{equation}
\label{E:com7b}
\la w, w \ra \lesssim \la \mathcal{L}^2 g, g \ra.
\end{equation}
We claim that to prove \eqref{E:com7b}, it suffices to establish
\begin{equation}
\label{E:com7}
\la g, g \ra \lesssim \la \mathcal{L}^2 g, g \ra.
\end{equation}
To see that \eqref{E:com7} implies \eqref{E:com7b}, note that
\begin{equation}
\label{E:com10}
\| w\|_{L^2}^2 = \| (1-\delta \Delta)^{1/2} (1-\delta^{1/4} \Delta)^{1/2} g \|^2 \lesssim \| g \|_{L^2}^2 + \delta^{5/4} \| \Delta g \|_{L^2}^2.
\end{equation}
Standard elliptic regularity estimates give
\begin{equation}
\label{E:com9}
\| (1-\Delta) g \|_{L^2}^2 \lesssim \la \mathcal{L}^2 g, g \ra + \|g\|_{L^2}^2.
\end{equation}
Plugging \eqref{E:com9} into \eqref{E:com10} gives
\begin{equation}
\label{E:com11}
\| w\|_{L^2}^2 \lesssim \|g\|_{L^2}^2 + \delta^{5/4}\la \mathcal{L}^2 g, g\ra. 
\end{equation}
Thus, if we assume \eqref{E:com7}, then \eqref{E:com11} and \eqref{E:com7} yields \eqref{E:com7b}.  

Thus, it remains to prove \eqref{E:com7}.  Note that the spectrum of $\mathcal{L}^2$ (being the square of the spectrum of $\mathcal{L}$) consists of $0$ as an isolated eigenvalue with eigenspace spanned by $\nabla Q$, and the rest of the spectrum of $\mathcal{L}^2$ lies in $[\alpha,+\infty)$ for some $\alpha>0$.   Since $\la w, \nabla Q \ra=0$, \eqref{E:com7} would follow immediately if $g=w$.  We can, however, use Lemma \ref{L:com2} to show that 
\begin{equation}
\label{E:com8}
|\la g, \nabla Q \ra | \lesssim \delta^{1/4} \|w\|_{L^2}
\end{equation}
which suffices to establish \eqref{E:com7}  as follows.   (We explain how to obtain \eqref{E:com8} from Lemma \ref{L:com2} at the end of the proof.)

Let $P_0$ be the operator of orthogonal projection onto $\nabla Q$, and let $P_c = I-P_0$.  Explicitly,
$$
P_0 g = \frac{ \la g, \nabla Q\ra}{\|\nabla Q\|_{L^2}} \frac{\nabla Q}{\| \nabla Q\|_{L^2}}.
$$
By \eqref{E:com8} and \eqref{E:com11}, we get
\begin{equation}
\label{E:com12}
\|P_0 g\|_{L^2}^2 \lesssim \delta^{1/2} (\|g\|_{L^2}^2 + \la \mathcal{L}^2g,g \ra).
\end{equation}
Since the spectrum of $P_c\mathcal{L}^2$ starts at $\alpha>0$,
\begin{equation}
\label{E:com13}
\alpha \|P_c g \|_{L^2}^2 \leq \la \mathcal{L}^2 g, g \ra.
\end{equation}
We have 
$$
\|g\|_{L^2}^2 = \|P_0g\|_{L^2}^2 + \|P_c g\|_{L^2}^2.
$$
Plugging in \eqref{E:com12} and \eqref{E:com13},
$$
\|g\|_{L^2}^2 \leq  \frac{1}{\alpha} \la \mathcal{L}^2 g, g \ra + C \delta^{1/2}( \|g\|_{L^2}^2+ \la \mathcal{L}^2 g, g \ra)
$$
for some constant $C>0$.  Taking $\delta_0$ sufficiently small gives \eqref{E:com7}.

Finally, we explain the proof of \eqref{E:com8} from Lemma \ref{L:com2}.  Applying Lemma \ref{L:com2} to $f=(1-\delta^{1/4}\Delta)^{-1/2}\nabla Q$, we obtain
$$
|\la (1- \delta \Delta)^{-1/2} (1-\delta^{1/4} \Delta)^{-1/2} \nabla Q, w \ra - \la (1-\delta^{1/4}\Delta)^{-1/2}  \nabla Q, w \ra | \lesssim  \delta \| w\|_{L^2}.
$$
Replacing $\delta$ by $\delta^{1/4}$ in Lemma \ref{L:com2} and taking $f=\nabla Q$, yields
$$
| \la (1-\delta^{1/4} \Delta)^{-1/2} \nabla Q, w \ra - \la  \nabla Q, w \ra | \lesssim \delta^{1/4} \|w\|_{L^2}.
$$
Combing the above two estimates, we have
$$
| \la (1-\delta \Delta)^{-1/2} (1-\delta^{1/4}\Delta)^{-1/2} \nabla Q, w \ra - \la \nabla Q, w \ra | \lesssim \delta^{1/4} \|w\|_{L^2}.
$$
Since $\la \nabla Q, w \ra =0$, this reduces to \eqref{E:com8}.
\end{proof}

The following two commutator lemmas (Lemma \ref{L:com1} and \ref{L:com2}) were used in the proof of Lemma \ref{L:H1transfer} above.

\begin{lemma}[1st commutator lemma]
\label{L:com1}
The compositions
\begin{equation}
\label{E:com5}
(1-\delta^{1/4} \Delta)^{1/2} [ \mathcal{L}, (1-\delta \Delta)^{-1/2} ] (1-\delta^{1/4} \Delta)^{-1/2}
\end{equation}
and
\begin{equation}
\label{E:com6}
(1-\delta^{1/4} \Delta)^{-1/2} [ \mathcal{L}, (1-\delta \Delta)^{-1/2} ] (1-\delta^{1/4} \Delta)^{+1/2}
\end{equation}
are $L^2\to L^2$ bounded with operator norm $\lesssim \delta^{1/2}$.
\end{lemma}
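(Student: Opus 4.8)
The plan is to reduce the statement to one clean commutator identity and then estimate it through the resolvent representation of $(1-\delta\Delta)^{-1/2}$. Write $\epsilon = \delta^{1/4}$, and set $A_\delta = (1-\delta\Delta)^{-1/2}$, $B_\epsilon = (1-\epsilon\Delta)^{1/2}$, and $R_s = (1+s-\delta\Delta)^{-1}$ for $s\ge 0$; each of $A_\delta$, $B_\epsilon$, $R_s$ and the components of $\nabla$ is a Fourier multiplier operator, so they all commute with one another. Since $\mathcal{L} = 1-\Delta-3Q^2$ and the operators $1$ and $-\Delta$ commute with $A_\delta$, we have $[\mathcal{L}, A_\delta] = -3\,[Q^2, A_\delta]$, and by \eqref{prop-Q} the function $V := 3Q^2$ together with all of its derivatives is Schwartz. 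Hence it suffices to prove
\[
\big\| B_\epsilon^{\pm1}\,[V,A_\delta]\,B_\epsilon^{\mp1} \big\|_{L^2_{xy}\to L^2_{xy}} \lesssim \delta^{1/2},
\]
which gives both \eqref{E:com5} and \eqref{E:com6}.

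First I would record the representation $A_\delta = \tfrac1\pi\int_0^\infty s^{-1/2} R_s\,ds$, which is norm-convergent because $\|R_s\|_{L^2\to L^2}\le (1+s)^{-1}$ and $\int_0^\infty s^{-1/2}(1+s)^{-1}\,ds = \pi$; it follows from the scalar identity $a^{-1/2}=\tfrac1\pi\int_0^\infty s^{-1/2}(a+s)^{-1}\,ds$ for $a>0$ together with the spectral theorem. Using the elementary commutator identity $[V,R_s] = R_s\,[\,1+s-\delta\Delta,\,V\,]\,R_s = -\delta\,R_s\big((\Delta V) + 2\nabla V\!\cdot\!\nabla\big)R_s$ and integrating in $s$, then commuting $B_\epsilon^{\pm1}$ past the multiplier factors $R_s$ and $\nabla$, one obtains the exact formula
\[
B_\epsilon^{\pm1}[V,A_\delta]B_\epsilon^{\mp1}
= -\frac{\delta}{\pi}\int_0^\infty s^{-1/2}\, R_s\Big(B_\epsilon^{\pm1}(\Delta V)B_\epsilon^{\mp1} + 2\big(B_\epsilon^{\pm1}(\nabla V)B_\epsilon^{\mp1}\big)\!\cdot\!\nabla\Big) R_s\,ds,
\]
all the formal manipulations being legitimate first on Schwartz data and then by density.

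The two remaining ingredients are: (i) the resolvent bounds $\|R_s\|_{L^2\to L^2}\le (1+s)^{-1}$ and $\|\nabla R_s\|_{L^2\to L^2}\lesssim \big(\delta(1+s)\big)^{-1/2}$, both immediate from the symbols $(1+s+\delta|\xi|^2)^{-1}$ and $\xi(1+s+\delta|\xi|^2)^{-1}$; and (ii) the fact that conjugation of a Schwartz multiplier $W$ by $B_\epsilon^{\pm1}$ is $L^2$-bounded uniformly in $\epsilon\in(0,1]$. For (ii) I would prove the sharper statement $\|[B_\epsilon,W]\|_{L^2\to L^2}\lesssim \epsilon^{1/2}$: on the Fourier side $[B_\epsilon,W]$ has kernel $\big(b_\epsilon(\xi)-b_\epsilon(\zeta)\big)\widehat W(\xi-\zeta)$ with $b_\epsilon(\xi)=(1+\epsilon|\xi|^2)^{1/2}$, and since $|\nabla b_\epsilon(\xi)| = \epsilon|\xi|(1+\epsilon|\xi|^2)^{-1/2}\le \epsilon^{1/2}$ uniformly in $\xi$, the mean value theorem gives $|b_\epsilon(\xi)-b_\epsilon(\zeta)|\le \epsilon^{1/2}|\xi-\zeta|$, so the kernel is bounded by $\epsilon^{1/2}|\xi-\zeta|\,|\widehat W(\xi-\zeta)|\lesssim \epsilon^{1/2}\langle\xi-\zeta\rangle^{-3}$ for $W$ Schwartz; Schur's test then finishes it. Consequently $B_\epsilon^{\pm1} W B_\epsilon^{\mp1} = W + E^{\pm}$ with $E^{+} = [B_\epsilon,W]B_\epsilon^{-1}$ and $E^{-} = -B_\epsilon^{-1}[B_\epsilon,W]$; since $B_\epsilon^{-1}$ is an $L^2$-contraction, $\|E^{\pm}\|_{L^2\to L^2}\lesssim\epsilon^{1/2}$, so these conjugations are bounded uniformly in $\epsilon$ (in particular $\|B_\epsilon^{\pm1}(\Delta V)B_\epsilon^{\mp1}\|+\|B_\epsilon^{\pm1}(\nabla V)B_\epsilon^{\mp1}\|\lesssim 1$).

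Finally I would combine: inserting (i) and (ii) into the displayed formula term by term,
\[
\big\| B_\epsilon^{\pm1}[V,A_\delta]B_\epsilon^{\mp1}\big\|
\;\lesssim\; \delta\int_0^\infty s^{-1/2}(1+s)^{-1}\Big(1 + \big(\delta(1+s)\big)^{-1/2}\Big)\,ds
\;\lesssim\; \delta + \delta^{1/2}\;\lesssim\;\delta^{1/2},
\]
using $\int_0^\infty s^{-1/2}(1+s)^{-1}\,ds=\pi$ and $\int_0^\infty s^{-1/2}(1+s)^{-3/2}\,ds=2$. The only genuinely substantive point is ingredient (ii), the uniform-in-$\epsilon$ commutator bound; but as just sketched it reduces, via the explicit Fourier kernel and the pointwise estimate $|\nabla b_\epsilon|\le\epsilon^{1/2}$, to a one-line Schur-test argument, and everything else is bookkeeping with resolvent symbols.
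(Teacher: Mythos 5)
Your argument is correct, and it reaches the conclusion by a genuinely different route than the paper. The paper first proves the un-conjugated bound $\|[\mathcal{L},(1-\delta\Delta)^{-1/2}]\|_{L^2\to L^2}\lesssim \delta^{1/2}$ by working with the physical-space kernel $\delta^{-1}k(\delta^{-1/2}(x-x'))(Q^2(x)-Q^2(x'))$, using the Lipschitz bound $|Q^2(x)-Q^2(x')|\lesssim|x-x'|$ and Schur's test with the kernel $|x|k(x)$; it then absorbs the outer factor $(1-\delta^{1/4}\Delta)^{1/2}$ by proving separately that the operator and $\delta^{1/8}\nabla$ applied to it (each composed with $(1-\delta^{1/4}\Delta)^{-1/2}$) are bounded by $\delta^{1/2}$, which requires distributing derivatives on the kernel and an integration by parts in $x'$. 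You instead expand $(1-\delta\Delta)^{-1/2}$ through the Balakrishnan resolvent integral, compute $[V,R_s]=-\delta R_s((\Delta V)+2\nabla V\cdot\nabla)R_s$ exactly, and observe that the outer conjugation by $(1-\delta^{1/4}\Delta)^{\pm1/2}$ passes freely through every Fourier multiplier and lands only on the multiplication operators $\Delta V$, $\nabla V$, where it is controlled by your uniform-in-$\epsilon$ Fourier-side commutator bound $\|[B_\epsilon,W]\|\lesssim\epsilon^{1/2}$. Your route treats both signs \eqref{E:com5}--\eqref{E:com6} symmetrically (the paper reduces \eqref{E:com6} to \eqref{E:com5} by duality), avoids any discussion of the 2D Bessel kernel's local singularity and decay, and is essentially dimension-independent since it uses only symbol bounds on $R_s$ and $\nabla R_s$; the price is the extra layer of the resolvent representation and the need to justify the operator-valued integral, which you correctly dispose of by norm-convergence and density on Schwartz data. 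All the individual estimates check out: the resolvent bounds $\|R_s\|\le(1+s)^{-1}$ and $\|\nabla R_s\|\lesssim(\delta(1+s))^{-1/2}$, the gradient bound $|\nabla b_\epsilon|\le\epsilon^{1/2}$ feeding the mean value theorem, and the two $s$-integrals $\int_0^\infty s^{-1/2}(1+s)^{-1}\,ds=\pi$ and $\int_0^\infty s^{-1/2}(1+s)^{-3/2}\,ds=2$, yielding $\delta+\delta^{1/2}\lesssim\delta^{1/2}$ as claimed.
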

\begin{proof}
In this proof, we use $x$ to represent the 2D coordinate $\mathbf{x}=(x,y)$.
Since \eqref{E:com6} is the adjoint of \eqref{E:com5}, it suffices to prove the claim for \eqref{E:com5}.  For this, we start by showing that 
\begin{equation}
\label{E:com3}
[ \mathcal{L}, (1-\delta \Delta)^{-1/2} ] \text{ is }L^2\to L^2 \text{ bounded with norm }\lesssim \delta^{1/2}.
\end{equation}
Let $\hat k(\xi) = (1+|\xi|^2)^{-1/2}$ (in two dimensions).   Then the Fourier transform of $\delta^{-1} k(\delta^{-1/2} x)$ is $(1+\delta |\xi|^2)^{-1/2}$.  Note that
$$
[\mathcal{L}, (1-\delta \Delta)^{-1/2} ] = -3 [ Q^2, (1-\delta \Delta)^{-1/2} ]. 
$$
Dropping the factor $-3$, the kernel is $K(x,x')$, where
$$
K(x,x') = \delta^{-1}k(\delta^{-1/2}(x-x'))(Q(x)^2 - Q(x')^2).
$$
Since $|Q(x)^2 - Q(x')^2 | \lesssim |x-x'|$, we have
$$
| K(x,x')| \lesssim \delta^{1/2} \cdot \delta^{-1}\tilde k( \delta^{-1/2}(x-x')),
$$
where $\tilde k(x) = |x| k(x)$.  Since $\tilde k \in L^1$, we obtain
$$
\| K \|_{L_{x'}^\infty L_x^1} \lesssim \delta^{1/2} \,, \qquad \| K \|_{L_x^\infty L_{x'}^1} \lesssim \delta^{1/2}.
$$
By Schur's test, we obtain \eqref{E:com3}.   To prove that \eqref{E:com5} is $L^2\to L^2$ bounded with operator norm $\lesssim \delta^{1/2}$, it suffices to prove the following two statements:
\begin{equation}
\label{E:com1}
[ \mathcal{L}, (1-\delta \Delta)^{-1/2} ] (1-\delta^{1/4} \Delta)^{-1/2}  \text{ is }L^2\to L^2 \text{ bounded with norm }\lesssim \delta^{1/2}
\end{equation}
and
\begin{equation}
\label{E:com2}
\delta^{1/8} \nabla[ \mathcal{L}, (1-\delta \Delta)^{-1/2} ] (1-\delta^{1/4} \Delta)^{-1/2}   \text{ is }L^2\to L^2 \text{ bounded with norm }\lesssim \delta^{1/2}.
\end{equation}
The claim \eqref{E:com1} follows immediately from \eqref{E:com3}, since $(1-\delta^{1/4}\Delta)^{-1/2}$ is $L^2\to L^2$ bounded with operator norm $\lesssim 1$.   For \eqref{E:com2}, we use the notation $K(x,x')$ and $k$ introduced in the proof of \eqref{E:com3}.  The operator in \eqref{E:com2} applied to a function $f$ takes the form
\begin{equation}
\label{E:oper1}
\delta^{1/8} \nabla_x \int_{x'} K(x,x') [(1-\delta^{1/4}\Delta)^{-1/2} f](x') \, dx'.
\end{equation}
Substituting $K(x,x') = \delta^{-1} k(\delta^{-1/2}(x-x')) (Q^2(x)-Q^2(x'))$ and distributing the $x$ derivative, we write \eqref{E:oper1} as
\begin{align*}
& = \delta^{1/8} \int_{x'}  \left[\nabla_x \left( \delta^{-1} k(\delta^{-1/2}(x-x')) \right) \right] (Q^2(x)-Q^2(x')) [(1-\delta^{1/4}\Delta)^{-1/2} f](x') \, dx' \\
& \qquad + \delta^{1/8} \int_{x'} \delta^{-1} k(\delta^{-1/2}(x-x')) (\nabla_xQ^2(x)) \, [(1-\delta^{1/4}\Delta)^{-1/2} f](x') \, dx'.
\end{align*}
In the first term, we can replace $\nabla_x$ by $-\nabla_{x'}$ and then integrate by parts to continue
\begin{align*}
& = \int_{x'} \delta^{-1} k(\delta^{-1/2}(x-x')) (Q^2(x)-Q^2(x')) \,  \delta^{1/8} \nabla_{x'} [(1-\delta^{1/4}\Delta)^{-1/2} f](x') \, dx' \\
& \qquad + \delta^{1/8} \int_{x'} \delta^{-1} k(\delta^{-1/2}(x-x')) (\nabla(Q^2)(x)- \nabla(Q^2)(x')) \, [(1-\delta^{1/4}\Delta)^{-1/2} f](x') \, dx'.
\end{align*}
By the same argument that established \eqref{E:com3}, the second line is an $L^2\to L^2$ bounded operator acting on $f$ with operator norm $\lesssim \delta^{1/8}\cdot \delta^{1/2}$.  The first line is equal to the following operator acting on $f$:
\begin{equation}
\label{E:com4}[ \mathcal{L}, (1-\delta \Delta)^{-1/2}] \delta^{1/8} \nabla (1-\delta^{1/4}\Delta)^{-1/2}. 
\end{equation}
Note that $\delta^{1/8}\nabla (1-\delta^{1/4}\Delta)^{-1/2}$ is $L^2\to L^2$ bounded with operator norm $\lesssim 1$.  This, combined with \eqref{E:com3}, gives that \eqref{E:com4} is $L^2\to L^2$ bounded with operator norm $\lesssim \delta^{1/2}$, completing the proof of \eqref{E:com5}.
\end{proof}

\begin{lemma}[2nd commutator lemma]
\label{L:com2}
For any functions $f$ and $w$, we have the estimate
$$
| \la (1-\delta \Delta)^{-1/2} f, w \ra - \la f, w \ra | \lesssim \delta \| f \|_{\dot H^2} \|w\|_{L^2}.
$$
\end{lemma}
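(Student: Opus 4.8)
The plan is to reduce the estimate to a pointwise Fourier-multiplier bound. Writing $\widehat{\,\cdot\,}$ for the Fourier transform on $\mathbb{R}^2$ and applying Plancherel's theorem, one has
\begin{equation*}
\la (1-\delta \Delta)^{-1/2} f, w \ra - \la f, w \ra = \int_{\mathbb{R}^2} m_\delta(\xi) \, \widehat f(\xi) \, \overline{\widehat w(\xi)} \, d\xi, \qquad m_\delta(\xi) \defeq \frac{1}{(1+\delta|\xi|^2)^{1/2}} - 1.
\end{equation*}
Thus the lemma is reduced to controlling the symbol $m_\delta$.

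The key elementary step is the inequality $0 \le 1 - (1+t)^{-1/2} \le t$ for all $t \ge 0$. This follows from convexity of $t \mapsto (1+t)^{-1/2}$: its graph lies above the tangent line at $t=0$, which is $1 - \tfrac t2$, so $1 - (1+t)^{-1/2} \le \tfrac t2 \le t$. Taking $t = \delta |\xi|^2$ gives the pointwise bound $|m_\delta(\xi)| \le \delta |\xi|^2$, valid uniformly in $\xi$ (not just near the origin, which is what makes the argument painless). Then by the triangle inequality and Cauchy--Schwarz,
\begin{equation*}
\left| \la (1-\delta \Delta)^{-1/2} f, w \ra - \la f, w \ra \right| \le \delta \int_{\mathbb{R}^2} |\xi|^2 \, |\widehat f(\xi)| \, |\widehat w(\xi)| \, d\xi \le \delta \, \big\| |\xi|^2 \widehat f \big\|_{L^2_\xi} \, \big\| \widehat w \big\|_{L^2_\xi} = \delta \, \| f \|_{\dot H^2} \, \| w \|_{L^2},
\end{equation*}
where the last identity is Plancherel again together with the definition of the homogeneous Sobolev norm.

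There is essentially no obstacle here; this is a routine computation. The only point worth noting is that the weight $|\xi|^2$ (rather than $\la \xi \ra^2$) matches exactly the $\dot H^2$ norm appearing on the right-hand side, and that $m_\delta$ is genuinely $O(\delta |\xi|^2)$ at large frequency as well as near zero, both captured by the single bound $|m_\delta(\xi)| \le \delta|\xi|^2$. (One could avoid the Fourier transform by writing $(1-\delta\Delta)^{-1/2} - I = \delta\, (1-\delta\Delta)^{-1/2}\big[(1-\delta\Delta)^{1/2}+I\big]^{-1}(-\Delta)$ and bounding the operator in front of $-\Delta$ by $O(\delta^0)$ in operator norm, but the Fourier argument is cleaner and self-contained.)
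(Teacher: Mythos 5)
Your proof is correct and follows essentially the same route as the paper: both reduce the estimate to the pointwise symbol bound $|(1+\delta|\xi|^2)^{-1/2}-1|\lesssim \delta|\xi|^2$ and conclude by Cauchy--Schwarz and Plancherel. The only cosmetic difference is that the paper obtains the symbol bound from the algebraic identity $(1+t)^{-1/2}-1=-t\bigl[(1+t)^{1/2}(1+(1+t)^{1/2})\bigr]^{-1}$ rather than your convexity argument, which is equally valid.
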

\begin{proof}
We have
$$
| \la (1-\delta \Delta)^{-1/2} f, w \ra - \la f, w \ra | \lesssim \| [(1-\delta \Delta)^{-1/2}-1] f \|_{L^2} \|w\|_{L^2}.
$$
As a Fourier multiplier, the operator $(1-\delta \Delta)^{-1/2}-1$ takes the form
$$
(1+\delta |\xi|^2)^{-1/2} - 1 = \frac{ - \delta |\xi|^2}{ (1+\delta |\xi|^2)^{1/2}( 1 + (1+\delta |\xi|^2)^{1/2})},
$$
and hence,
$$
|(1+\delta |\xi|^2)^{-1/2} - 1 | \lesssim \delta |\xi|^2,
$$
from which the conclusion follows.
\end{proof}

\section{Appendix}
\label{S:numerical}
Here, we discuss the discrete spectrum of the operator $A$ that arizes in the virial estimate in Lemma \ref{S:virial-1}. We note that here we show a direct numerical computation of the spectrum of the operator $A$, and in \cite{HRY2025} we provide several other numerically-assisted methods to prove the spectral properties of $A$. For convenience of the numerical computations, we double the operator $A$ and search for the spectrum below 1 (instead of $\frac12$ as discussed after \eqref{E:P}): 
\begin{equation}
2A \equiv 2(B+P) =-3\partial_{xx}-\partial_{yy}+1-3Q^2-6xQQ_x+2P,
\end{equation}
where $P$ is defined in \eqref{E:P} as the following operator with inner products:
\begin{align}\label{E:2P}
2Pv=\frac{6Q^2Q_x}{\|Q\|_{L^2}^2}\langle v, xQ \rangle + \frac{xQ}{\|Q\|_{L^2}^2}\langle v, 6Q^2Q_x \rangle.
\end{align}

Similar to, for example, \cite{CGNT} or as we did for the 3D ZK in \cite{FHRY}, we numerically calculate the spectrum of the operator $2(B+P)$ in the following steps:
\begin{itemize}
\item[1.] 
discretize the operator into a form of a matrix,
\item[2.] 
find the eigenvalues and corresponding eigenvectors of the matrix; that will produce the spectrum of the operator $2(B+P)$,
\item[3.] 
the eigenvalues that are less than $1$ are the ones that are relevant (give the discrete spectrum below $1$), since $2(B+P)$ has continuous spectrum from $1$, see Lemma \ref{L:v-virial}.
\end{itemize}

It has been shown in \cite{CGNT} that the matlab commands \texttt{eig} or \texttt{eigs}, which incorporate ARPACK in \cite{ARPACK}, is an efficient way to compute the eigenvalues for large matrices. Therefore, it suffices to show how to discretize the operator $2(B+P)$ into a matrix form.
The discretization of the operator $B$ and imposing the homogeneous Dirichlet boundary conditions are standard, for that we follow the same procedure as in \cite[Chapters 6, 9, 12]{T2001}.

We next describe how we discretize the projection operator $P$. We also introduce a mapping to show how we choose the collocation points via Chebyshev method and make them more concentrated in the central region, where the functions (including the ground state $Q$ and eigenfunction) have the largest amplitude and gradient.
\smallskip

{\it Discretization of the projection term.}  %\texttt{\\}
In this part, we introduce our discretization of the projection term $P$. We give a general formula for discretizing the operator $P$ in the form 
$$
Pu= \langle u,f  \rangle \, g,
$$
where $u, f, g \in L^2(\mathbb{R}^d)$. We discuss the 1D case, then it can be easily extended to $d\geq 2$ by standard numerical integration technique for multi-dimensions, e.g., see \cite[Chapter 6, 12]{T2001}.

We use the notation $f_i$ to be the discretized value of the function $f(x)$ at the point $x_i$ and write the vector $\vec{f}=(f_0,f_1,\cdots,f_N)^T$ to represent the values of the function $f$ (at a given time step), similar notation is used for other variables. The pointwise multiplication of the vectors or matrices with the same dimension is denoted by ``$.*$", i.e., $\vec{a}.*\vec{b}:=(a_0b_0,\cdots,a_Nb_N)^{\mathrm{T}}$; the notation ``$*$" is kept for the standard vector or matrix multiplication.

Let $w(x)$ to be a weight for a given quadrature. For example, for the composite trapezoid rule with step-size $h$ the weight is 
$\vec{w}=(w_0,w_1,\cdots ,w_N)^T=\tfrac{h}{2}(1,2,\cdots,2,1)^T,$
since the composite trapezoid rule can be written as
{\small
\begin{align*}
\int_a^b f(x) dx \approx \sum_{i=0}^{N} f_i w_i=\vec{f}^{~T} *\vec{w}.
\end{align*} }
For the Chebyshev Gauss-Lobatto quadrature, which is exactly what we use in this work, 
$$
\int_{-1}^1 f(x) dx \approx \sum_{i=0}^N w_i f(x_i)=\vec{f}^{~T} * \vec{w}
$$
with the corresponding weight $\vec{w}=(w_0,w_1,\cdots ,w_N)^T$, where
{\small
\begin{align*}
w_i=\frac{\pi}{N}\sqrt{1-x_i^2}, ~i=1,2,\cdots, N-1, ~
w_0=\frac{\pi}{2N}\sqrt{1-x_0^2}, ~~ \mbox{and} ~~ w_N=\frac{\pi}{2N}\sqrt{1-x_N^2}.
\end{align*}
}
Using this, we write
{\small
\begin{align*}
Pu=\langle u,f \rangle g= (\sum_{i=0}^N w_i \,f_i\, u_i) \, \vec{g} 
= \left[\begin{matrix}
g_0\\
g_1\\
\vdots\\
g_N
\end{matrix}\right]  (\sum_{i=0}^N w_i f_i u_i) = \left[\begin{matrix}
g_0\\
g_1\\
\vdots\\
g_N
\end{matrix}\right]  (\vec{w}^{{T}}.*\vec{f}^{{~T}}) * \vec{u} %\\
~:= \mathbf{P} \vec{u},
\end{align*}
with the matrix
\begin{align}\label{E: P term}
\mathbf{P}=\vec{g}*(\vec{w}^{\,\mathrm{T}}.*\vec{f}^{~\mathrm{T}})
\end{align}}
being the discretized approximation of the projection operator $P$. 
Observe that the matrix $\mathbf{P}$ is a dense matrix. This is the reason that in 2D computation we can only use a limited number of spectral collocation points (or Chebyshev collocation points). 

We next introduce a mapping, which changes the domain from $[-1,1]$ to $[-L,L]$, with more points concentrated around the center.
\smallskip

{\it Chebyshev collocation points.} %\texttt{\\}
We take the computational square region $[-L,L] \times [-L,L]$ to approximate the real space $\mathbb{R}^2$, and since $Q$ decays exponentially fast, this is a reasonable computational domain even if $L$ is not too large. 
We can not assign too many grid points in each dimension, however, we put more grid points in the center region of the computational domain $[-L,L] \times [-L,L]$, where our functions such as the ground state $Q$ have the largest amplitude and gradient. We, therefore, redistribute the mesh grid based on Chebyshev collocation points. 

Consider the mapping $T$ that concentrates more points near the origin:
\begin{align}
T: \, [-1,1] \rightarrow [-L,L], \quad T(\xi)=x,
\end{align}
here, $\xi$ represents the Chebyshev collocation points and $x$ represents the grid points in the computational interval, which are more concentrated at the center. 
One possible mapping is
\begin{align}\label{D: x mapping}
x(\xi)=L\frac{e^{a\xi}-e^{-a\xi}}{e^{a}-e^{-a}}
\end{align}
for some constant parameter $a$.

During our computation, we take $L=20$ and $a=4$ or $5$. Without this process, the stiffness and inaccuracy may occur, since we are restricted to using just a few points (for example, taking $N=72$ leads to an eigenvalue problem with a $72^2 \times 72^2$ matrix) in each dimension to have an acceptable computational cost. With this process, on the other hand, the number of grid points in each dimension could be taken to be equal to $N=32,48$ and $64$. These three different choices lead to almost the same results. 

After the mapping is applied, we also apply the chain rule for both derivatives, $\partial_x$ and $\partial_y$. Denote by the vectors $\vec{x}_{\xi}$, $\vec{x}_{\xi \xi}$, $\vec{y}_{\eta}$ and $\vec{y}_{\eta \eta}$ -- the vectors discretized from $x_{\xi}$, $x_{\xi \xi}$, $y_{\eta}$, $y_{\eta \eta}$. 
Denote by \texttt{diag}$(\vec{v})$ the diagonal matrix generated from the vector $\vec{v}$. The matrices $\mathbf{{D}^{(1)}_{\xi}}$ and $\mathbf{{D}^{(2)}_{\xi}}$ are the differential matrices of $\partial_{\xi}$ and $\partial_{\xi \xi}$ generated by the Chebyshev collocation differentiation. Similarly, the matrices $\mathbf{{D}^{(1)}_{\eta}}$ and $\mathbf{{D}^{(2)}_{\eta}}$ are those for $\partial_{\eta}$ and $\partial_{\eta \eta}$.

By the chain rule, we have 
$$
\mathbf{\tilde{D}_x^{(1)}}=\operatorname{diag}(\frac{1}{\vec{x}_{\xi}})\mathbf{D_{\xi}^{(1)}},
$$
and 
$$
\mathbf{\tilde{D}_x^{(2)}}=\operatorname{diag}(\frac{1}{\vec{x}_{\xi}^{~2}})\mathbf{D_{\xi}^{(2)}}+ \operatorname{diag}((\mathbf{D_{\xi}^{(1)}}*\frac{1}{\vec{x}_{\xi}}).*\frac{1}{\vec{x}_{\xi}})\mathbf{D_{\xi}^{(1)}}.
$$
Similarly, we can generate the differential matrices $\mathbf{\tilde{D}_y^{(1)}}$ and $\mathbf{\tilde{D}_y^{(2)}}$ in terms of $\eta$.

Finally, the operator $2(B+P)$ is discretized in the form (denoted by $\mathbf{M})$:
\begin{align}\label{E: M-matrix}
\mathbf{M}=-3\mathbf{\tilde{D}_x^{(2)}}-\mathbf{\tilde{D}_y^{(2)}}+ \operatorname{diag}(\vec{1}-3*\vec{Q^2}-6*\vec{x}.*\vec{Q}.*\vec{Q}_x) +\mathbf{P},
\end{align}
where $\mathbf{P}$ is the matrix for the inner products that can be discretized from the formula \eqref{E: P term}, and $\vec{1}=(1,1,\cdots,1)^T$ is the vector with the same size of variables as $\vec{Q}$.
\smallskip

{\it Numerical results.} 
After setting the operator $2(B+P)$ into a matrix form $\mathbf{M}$ as in \eqref{E: M-matrix}, we use the matlab command \texttt{eigs} to find the eigenvalues below $1$. 
We obtain two negative eigenvalues of $2(B+P)$ below $1$:
\begin{align}\label{D: numerical eigenvalues}
\lambda_o =  -1.0735, \quad \lambda_e = -0.2151.
\end{align}
(Hence, for $B+P$, the eigenvalues in \eqref{E:lambdas} 
are halves of the above.)  
\begin{figure}[ht]
\includegraphics[width=0.47\textwidth,height=.38\textwidth]{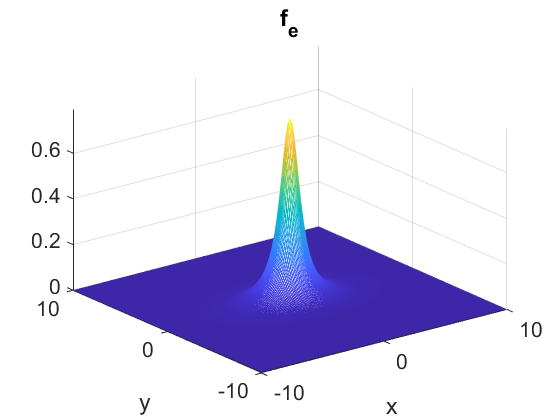}
\includegraphics[width=0.47\textwidth,height=.38\textwidth]{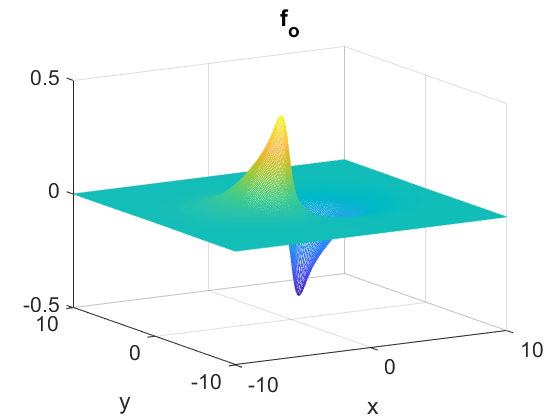}
\caption{{\footnotesize The eigenfunctions: $f_e$, even in both $x$ and $y$ (left), and $f_o$, odd in $x$ (right).}}
\label{F:f} 
\end{figure}
\begin{figure}[ht]
~~\includegraphics[width=0.45\textwidth,height=.32\textwidth]{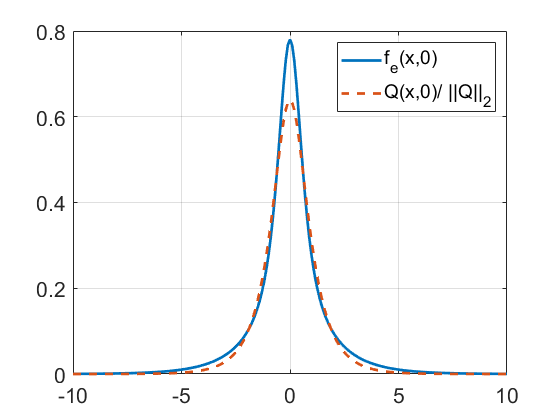}
\includegraphics[width=0.45\textwidth,height=.32\textwidth]{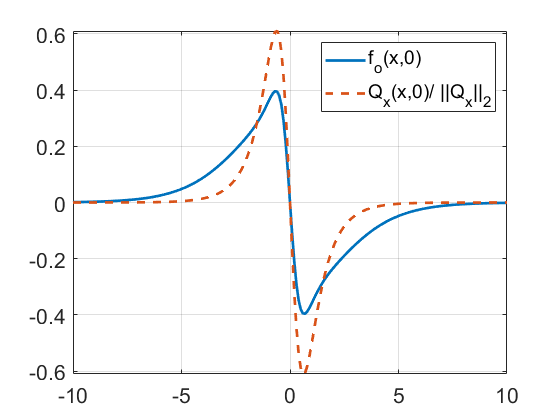}
\caption{{\footnotesize Cross-sections (in $y=0$ plane) of the eigenfunctions $f_e$ and $f_o$ in comparison with  $\frac{Q}{\|Q\|_2}$ and $\frac{Q_x}{\|Q_x\|_2}$ from the orthogonal conditions \eqref{E:ortho-v}.}}
\label{F:f2}
\end{figure}
We also obtain two eigenfunctions corresponding to the above eigenvalues, $f_o$ and $f_e$, illustrated in Figure \ref{F:f}.

After that we obtain the normalized inner products, stated in \eqref{D:geometry}. We show the cross-section (by $y=0$ plane) of profiles of $f_e$ and $f_o$ compared to the functions $\frac{Q}{\|Q\|_{L^2}}$ and $\frac{Q_x}{\|Q_x\|_{L^2}}$ in Figure \ref{F:f2}.

We note that the values of inner products and eigenvalues are consistent when using $N=32, \, 48, \, 64, \, \mbox{or} \,\, 72 $ collocation points up to the first 4 digits.

%%%%%%%%%%%%%%%%%%%%5

\newpage

%%%%%%%%%%%%%%%%%%%%%%%%%%%%%%%%%%%%%%%%%%%%%%%%%%%%%%%%%%%%%%%%%%

{\bf Data Availability.} 
Data sharing not applicable to this article as no datasets were generated or analyzed during the current study.

\bigskip
%%%%%%%%%%%%%%%%%%%%%%%%%%%%%%%%%%%%%%%%%%%%%555

{\bf Competing Interests.} 
The authors have no competing interests to declare that are relevant to the content of this article.

\bigskip
%%%%%%%%%%%%%%%%%%%%%%%%%%%%%%%%%%%%%%%%%%%%%%

{\bf ORCID}

{\it Luiz Gustavo Farah} 
\qquad {\ https://orcid.org/0000-0003-1034-3480}

{\it Justin Holmer}
\qquad {https://orcid.org/0000-0003-3644-5549}

{\it Svetlana Roudenko}
\qquad {https://orcid.org/0000-0002-7407-7639}

{\it Kai Yang}
\qquad {http://orcid.org/0000-0002-2289-9403}

%%%%%%%%%%%%%%%%%%%%%%%%%%%%%%%%%%%%%%%%%%%%%%%%%%%%%%%%%%%%%%%%%%%%%%

\bibliographystyle{mrl}

\end{document}